\documentclass{amsart}

\usepackage{amssymb}
\usepackage{amsmath}
\usepackage{amsthm}
\usepackage{color}
\usepackage{euscript}
\usepackage[linktoc=page,colorlinks,citecolor = green!50!black,linkcolor= blue,urlcolor = blue]{hyperref}
\usepackage{enumitem}
\usepackage{tikz-cd} 
\usepackage{mathrsfs}
\usepackage[all]{xy}
\usepackage{verbatim}


\theoremstyle{plain}
\newtheorem{thm}{Theorem}[section]
\newtheorem{lem}[thm]{Lemma}

\newtheorem{cor}[thm]{Corollary}
\newtheorem{prop}[thm]{Proposition}
\newtheorem{condition}[thm]{Condition}
\newtheorem{assump}[thm]{Assumption}

\theoremstyle{definition}
\newtheorem{defn}[thm]{Definition}
\newtheorem{exmp}[thm]{Example}

\theoremstyle{definition}
\newtheorem{note}[thm]{Note}
\newtheorem{rk}[thm]{Remark}
\newtheorem*{notation}{Notation}

\newcommand{\C}{\mathbb{C}}
\newcommand{\Z}{\mathbb{Z}}

\newcommand{\Om}{\mathcal{O} }
\newcommand{\A}{\mathcal{A}}

\newcommand{\T}{\EuScript{T}}

\newcommand{\cG}{\mathcal{G}}

\newcommand{\tr}{\mathfrak{tr}}

\newcommand{\Tt}{\tilde{\EuScript{T}}}
\newcommand{\B}{\mathcal{B}}

\newcommand{\fM}{\mathfrak{M}}

\newcommand{\scrC}{\EuScript{C}}
\newcommand{\scrF}{\EuScript{F}}

\newcommand{\cW}{\mathcal{W}}

\newcommand{\cQ}{\mathcal{Q}}
\newcommand{\cS}{\mathcal{S}}

\newcommand{\wh}[1]{\widehat{#1}}

\numberwithin{equation}{section}
\numberwithin{figure}{section}
\title[Distinguishing open symplectic mapping tori]{Distinguishing open symplectic mapping tori via their wrapped Fukaya categories}
\author{Yusuf Bar{\i}\c{s} Kartal}
\date{}
\keywords{symplectic mapping torus, mapping torus category, wrapped Fukaya category, Fukaya categories of symplectic fibrations, twisted tensor products, twisted K\"unneth theorem, Floer homology on infinite type Liouville domains}
\begin{document}
\maketitle
\begin{abstract}
In this paper, we present partial results towards a classification of symplectic mapping tori using dynamical properties of wrapped Fukaya categories. More precisely, we construct a symplectic manifold $T_\phi$ associated to a Weinstein domain $M$, and an exact, compactly supported symplectomorphism $\phi$. $T_\phi$ is another Weinstein domain and its contact boundary is independent of $\phi$. In this paper, we 
distinguish $T_\phi$ from $T_{1_M}$, under certain assumptions (Theorem \ref{mainsymp}). As an application, we obtain pairs of diffeomorphic Weinstein domains with the same contact boundary and whose symplectic cohomology groups are the same, as vector spaces, but that are different as Liouville domains. To our knowledge, this is the first example of such pairs that can be distinguished by their wrapped Fukaya category. 

Previously, we have suggested a categorical model $M_\phi$ for the wrapped Fukaya category $\mathcal{W}(T_\phi)$, and we have distinguished $M_\phi$ from the mapping torus category of the identity. In this paper, we prove $\mathcal{W}(T_\phi)$ and $M_\phi$ are derived equivalent (Theorem \ref{comparison}); hence, deducing the promised Theorem \ref{mainsymp}. Theorem \ref{comparison} is of independent interest as it preludes an algebraic description of wrapped Fukaya categories of locally trivial symplectic fibrations as twisted tensor products.
\end{abstract}
\tableofcontents
\parskip1em
\parindent0em
\section{Introduction}\label{sec:intro2}
\subsection{Statement of results}
One way of studying a symplectic manifold is via its dynamics. In particular, there is a classical invariant of symplectic manifolds called flux, which roughly measures the amount of periodic symplectic isotopies up to Hamiltonians. This invariant has algebraic incarnations as studied by Seidel in \cite{flux}, which can be used to distinguish $A_\infty$-categories associated to symplectic manifolds as shown by us in \cite{ownpaperalg}. 

In \cite{ownpaperalg}, we constructed a category $M_\phi$ that was expected to model the wrapped Fukaya category of an open symplectic mapping torus and we exploited the dynamics of these categories to distinguish them. In this paper, we prove the equivalence of $M_\phi$ with the wrapped Fukaya category and give example applications of the main theorem, such as construction of non-deformation equivalent Liouville domains with the same topology and whose symplectic cohomology groups are the same, as vector spaces. To our knowledge, these are the first examples of different fillings of a contact manifold that cannot be distinguished by the symplectic cohomology groups (as vector spaces), but that can be distinguished by their wrapped Fukaya categories. 

One way to motivate the main construction is as follows: given a symplectic manifold $M$ and a symplectomorphism $\phi$, one defines the (closed) symplectic mapping torus as\begin{equation}
\mathbb{R}\times S^1\times  M/(s,\theta,x)\sim (s+1,\theta,\phi (x)) 
\end{equation}
This is a locally trivial symplectic fibration over the two torus $\mathbb{R}\times S^1 /(s,\theta)\sim (s+1,\theta)$, and one can see it as a construction that turns discrete dynamics into continuous dynamics, i.e. one can realize the action of $\phi$ on the $M$-component as the time $1$-flow of the symplectic vector field obtained by descending $-\partial_s$. On the other hand, the Fukaya category of this manifold is hard to study, as it contains few Lagrangians. For instance, to construct a Lagrangian fibered over a circle in the two torus $\mathbb{R}\times S^1 /(s,\theta)\sim (s+1,\theta)$ in the $s$-direction, one needs a $\phi$-invariant Lagrangian of $M$, which does not necessarily exist. However, if one removes a fiber of this fibration, one obtains an open symplectic manifold, which admits several Lagrangians fibered over the non-compact curves in the punctured torus. This open manifold is roughly what we call the open symplectic mapping torus, and as we will see in this paper its (wrapped) Fukaya category admits a simple algebraic model in terms of the (wrapped) Fukaya category of $M$--- the category $M_\phi$ mentioned above. Heuristically, one can see the category of the closed mapping torus as a deformation of the category associated to open one. One can now study algebraic deformations of the wrapped Fukaya category of the open symplectic mapping torus and show that some of the dynamical properties of the closed mapping torus are inherited by these algebraic deformations. For instance, one can show that the algebraic incarnations of the flux we mentioned above are different for $\phi=1_M$ and for $\phi$ acting non-trivially on the wrapped Fukaya category of $M$ (under some assumptions on $M$). See also Remark \ref{rk:dynamicalprops}. 

The rigorous construction of open symplectic mapping torus is as follows: let $M$ be a Weinstein domain with vanishing first and second Betti numbers and let $\phi$ be a compactly supported (i.e. $\phi$ acts trivially on a neighborhood of $\partial M$), exact symplectomorphism of $M$. Let $\widehat M$ denote the Liouville completion of $M$ (see Seidel \cite[(2.3)]{biasedview}). One can define \textbf{the open symplectic mapping torus of $\phi$} as \begin{equation}
\widehat{T}_\phi:=(\mathbb{R}\times S^1\setminus \Z\times\{1\})\times \widehat M/(s,\theta,x)\sim (s+1,\theta,\phi (x)) 
\end{equation}
There is an obvious projection map $\pi:\widehat T_\phi\rightarrow \widehat T_0$, where $\widehat T_0$ is the punctured 2-torus. $\pi$ is a symplectic fibration with a flat symplectic connection and with fibers isomorphic to $\widehat M$. The symplectic form is
\begin{equation}
\{\omega_{\widehat  M}\}+\pi^*\omega_{\widehat T_0}
\end{equation}
Here, $\omega_{\widehat M}$, resp. $\omega_{\widehat T_0}$ is the symplectic form on $\widehat M$, resp. $\widehat T_0$, and $\{\omega_{\widehat M} \}$ denotes fiberwise $\omega_{\widehat M}$. 

$\widehat T_0$ can be seen as the completion of a torus with one boundary component. This domain will be denoted by $T_0$, and its $\Z$-fold covering space corresponding to covering \begin{equation}
(\mathbb R\times S^1\setminus \Z\times \{1\}) 
\to \widehat{T}_0\end{equation} will be denoted by $\tilde T_0$. One can build a Weinstein domain \begin{equation}\label{eq:firstdefn}
T_\phi:=\tilde T_0\times M/ (s,\theta,x)\sim (s+1,\theta,\phi(x) )
\end{equation}
whose completion gives $\widehat T_\phi$. See Figure \ref{figure:sympmt}. 

We will later prove (Proposition \ref{prop:weinstein}) that $T_\phi$ carries a natural Liouville structure that is deformation equivalent to a Weinstein structure. Moreover, $\partial T_\phi=\partial (T_0\times M)$ as contact manifolds. Our main result is about distinguishing the fillings $T_\phi$ and $T_0\times M$. More precisely:
\begin{thm}\label{mainsymp}
Suppose $M$ satisfies Assumption \ref{assumption:symp} below, and $\phi$ induces a non-trivial action on $\cW(M)$. Then, $T_\phi$ and $T_0\times M$ have inequivalent wrapped Fukaya categories. In particular, they are not graded symplectomorphic.
\end{thm}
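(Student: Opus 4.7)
The proof will be a short combination of the comparison result and the algebraic distinction already carried out in \cite{ownpaperalg}. The plan is to reduce the symplectic statement to the corresponding statement about mapping torus categories.

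First, I would apply Theorem \ref{comparison} to both Weinstein domains appearing in the theorem. Observe that $T_0\times M$ is nothing but $T_{1_M}$, the open symplectic mapping torus of the identity symplectomorphism, so Theorem \ref{comparison} yields derived equivalences $\cW(T_\phi)\simeq M_\phi$ and $\cW(T_0\times M)\simeq M_{1_M}$. Consequently, the inequivalence of $\cW(T_\phi)$ and $\cW(T_0\times M)$ is reduced to the inequivalence of the mapping torus categories $M_\phi$ and $M_{1_M}$.

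Second, I would invoke the main result of \cite{ownpaperalg}: under Assumption \ref{assumption:symp}, whenever $\phi$ induces a non-trivial action on $\cW(M)$, the categories $M_\phi$ and $M_{1_M}$ are not derived equivalent. The argument there uses a flux-type dynamical invariant living on the Hochschild-level structure of the mapping torus category, which vanishes for $M_{1_M}$ but is detected by the $\phi$-action for $M_\phi$. Combining this with the previous step proves the first assertion.

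For the "in particular" clause, I would note that a graded symplectomorphism between Weinstein domains induces a quasi-equivalence of the associated wrapped Fukaya categories (once a grading structure is fixed, $\cW(-)$ is an invariant of the graded Liouville structure up to quasi-equivalence), hence a derived equivalence; the existence of such a symplectomorphism between $T_\phi$ and $T_0\times M$ therefore contradicts the conclusion of the preceding step. The main obstacle in this whole program is Theorem \ref{comparison} itself, which is the geometric heart of the paper and relies on identifying $\cW(T_\phi)$ with a twisted tensor product suited to a locally trivial symplectic fibration; once this comparison and the algebraic distinction of \cite{ownpaperalg} are in hand, the deduction of Theorem \ref{mainsymp} is essentially formal.
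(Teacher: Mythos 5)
Your proposal follows the same two-step route as the paper: apply Theorem \ref{comparison} to both $T_\phi$ and $T_0\times M=T_{1_M}$, then invoke Theorem \ref{mainthmalg} from \cite{ownpaperalg}. The one step you gloss over is that Theorem \ref{mainthmalg} is stated under the algebraic Assumption \ref{assumption:alg} on $\A=\cW(M)$ (smoothness, properness in each degree, vanishing of $HH^i$ for $i<0,1,2$, and $HH^0\cong\C$), not directly under the symplectic Assumption \ref{assumption:symp}; the paper bridges these by observing that $M$ Weinstein implies $\cW(M)$ is non-degenerate in the sense of \cite{generation}, hence smooth by \cite[Theorem 1.2]{sheelthesis}, and that $HH^*(\cW(M))\cong SH^*(M)$ by \cite[Theorem 1.1]{sheelthesis}, so the $SH^*$-vanishing hypotheses translate to $HH^*$-vanishing. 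You should include that translation explicitly; with it, your argument matches the paper's.
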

By assumptions on $H^1(M)$ and $H^2(M)$, $K_M=\bigwedge^n_\C T^*M$ has a canonical trivialization (that is unique up to homotopy); hence, $\cW(M)$ and $SH^*(M)$ can be $\Z$-graded. Moreover, $K_{T_0}$ can be trivialized using the double cover $\mathbb{R}^2\setminus \mathbb{Z}^2\to T_0$, and this induces a natural trivialization on $K_{T_\phi}$ and a $\Z$-grading on $\cW(T_\phi)$. Theorem \ref{mainsymp} distinguishes the wrapped Fukaya categories with this particular grading. The conclusion of this is that there is no exact symplectomorphism between two domains that preserves the homotopy class of trivializations (i.e. they are not graded symplectomorphic).

$\cW(M)$ and $SH^*(M)$ can be defined with coefficients in $\Z$, but we assume they are defined over $\C$. The assumption we need for Theorem \ref{mainsymp} is:
\begin{assump}\label{assumption:symp}
$\cW(M)$ is cohomologically proper and bounded below in each degree (see Assumption \ref{assumption:alg}), $SH^*(M)$ vanishes for $*<0,*=1,*=2$, and $SH^0(M)=\C$.
\end{assump}
There are many examples of symplectic manifolds satisfying Assumption \ref*{assumption:symp}. 
For instance:
\begin{exmp}\label{exmp:1}
Let $X$ be a smooth hypersurface in $\C\mathbb P^7$ of degree greater than or equal to $9$ and $D\subset X$ be a transverse hyperplane section. Let $M=X\setminus D$ and let $\phi$ be the square of a Dehn twist along a spherical Lagrangian (one can find such Lagrangians easily by considering degenerations of $M$ into varieties with quadratic singularities). Then by Theorem \ref*{mainsymp}, $T_\phi$ and $T_0\times M$ are not graded symplectomorphic. On the other hand, $\phi$ is smoothly isotopic to identity by an unpublished work of Giroux (see Maydanskiy \cite[\S 5.3]{maydlef} and Siegel \cite{kylersquare}). Thus, $T_\phi$ and $T_0\times M$ are diffeomorphic. 
\end{exmp}
\begin{exmp}\label{exmp:2}
Similarly, let $X$ be a smooth hypersurface in $\C\mathbb P^5$ of degree greater than or equal to $7$, and $M$ be complement of a transverse hyperplane section. Let $\phi$ be the eighth power of a Dehn twist. One can show using Krylov \cite{krylov1} and Kauffman and Krylov \cite{krylovkauffman} that $\phi$ is smoothly isotopic to identity (see remarks at the end of \cite[Section 3.1]{krylovkauffman}). Hence, again we obtain a Weinstein domain $T_\phi$ that is different from $T_0\times M$ as a graded Liouville domain, but they are the same as smooth manifolds. 
\end{exmp}
\begin{rk}
As we will show later (Lemma \ref{lem:samesh}), it is possible to give examples that cannot be distinguished by their symplectic cohomology groups, as vector spaces, either. Indeed, this is true for Example \ref{exmp:1} and Example \ref{exmp:2} if we assume the degree of the hypersurface $X$ is at least $14$, resp. $10$.
\end{rk}
That these manifolds satisfy Assumption \ref{assumption:symp} is proven in Section \ref{sec:examples} (see Corollary \ref{cor:hypersurface}). The reason $\phi$ acts non-trivially in either case is that when $\cW(M)$ is $\Z$-graded, $\tau_L$ , the Dehn twist along a spherical Lagrangian $L$, acts on $L$ (considered as an object of $\cW(M)$) as shift by $1-n$. One can consider the situations where an $A_k$-configuration of Lagrangian spheres is embedded into $M$ to produce more sophisticated examples (i.e. examples where action of $\phi$ is different from a shift). 

As these two examples demonstrate, $T_\phi$ is a construction that turns exotic symplectomorphisms (i.e. symplectomorphisms that are isotopic to $1$ in $Diff(M,\partial M) $ but act non-trivially on $\cW(M)$) into exotic Liouville structures. In particular, we can use Theorem \ref{mainsymp} to obtain pairs of diffeomorphic, but not (graded) symplectomorphic Liouville domains for every even $n\geq 4$. Indeed, as we explain now, it is possible to produce non-symplectomorphic examples as well.

Assume $\pi_1(M)=1$ and $n>1$. One can attach subcritical handles along the same isotropic spheres on the boundary of $T_\phi$ and $T_0\times M$ to obtain Weinstein manifolds $M_1$ and $M_2$ satisfying $\pi_1(M_1)=\pi_1(M_2)=1$.
Moreover, attaching subcritical handles does not change the derived equivalence class of the wrapped Fukaya category. A proof of this statement can be found in Ganatra, Pardon and Shende \cite[Cor 1.21]{GPS2}, where one uses the Weinstein property for generation as in \cite[Theorem 1.20]{GPS2} (see also Cieliebak \cite{cieliebakhandle}, Irie \cite{iriehandle} and Bourgeois, Ekholm, Eliashberg \cite{BEE}).  
Combining Theorem \ref{mainsymp} with this fact, we obtain:
\begin{cor}\label{handlecor}
	$M_1$ and $M_2$ give different exact fillings of $\partial M_1=\partial M_2$.
\end{cor}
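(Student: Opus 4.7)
The plan is to reduce Corollary \ref{handlecor} to Theorem \ref{mainsymp} by invoking the invariance of the wrapped Fukaya category under subcritical handle attachment. The argument is essentially a two-line deduction, but I would want to verify the book-keeping at the level of gradings and contact identifications before claiming the conclusion.

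First, I would observe that because the handles are attached along the same isotropic spheres with respect to the contact identification $\partial T_\phi = \partial(T_0 \times M)$ provided in the introduction, the resulting Weinstein domains $M_1$ and $M_2$ have contact boundaries that are canonically identified; so $M_1$ and $M_2$ are indeed two exact (in fact Weinstein) fillings of one and the same contact manifold. Second, I would apply the invariance result from \cite[Cor.~1.22]{GPS2} (together with the Weinstein generation statement \cite[Cor.~1.21]{GPS2}) to obtain derived equivalences
\begin{equation*}
\cW(M_1) \simeq \cW(T_\phi), \qquad \cW(M_2) \simeq \cW(T_0 \times M).
\end{equation*}
Theorem \ref{mainsymp} then says the right-hand sides are inequivalent, so $\cW(M_1)$ and $\cW(M_2)$ are inequivalent as well; since the wrapped Fukaya category (up to quasi-equivalence, or more weakly derived equivalence) is an invariant of the Liouville structure, this forces $M_1$ and $M_2$ to be inequivalent as exact fillings of their common contact boundary.

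The one step I would want to treat carefully is the grading. Theorem \ref{mainsymp} distinguishes $\cW(T_\phi)$ from $\cW(T_0 \times M)$ as $\Z$-graded categories, where the grading comes from the trivializations of the canonical bundles constructed in the introduction. I would check that attaching subcritical Weinstein handles preserves this trivialization data up to homotopy: since subcritical handles have index $< n$, they do not touch the top form fiberwise in a way that obstructs extending the trivialization of $K_{T_\phi}$ (resp.\ $K_{T_0 \times M}$) across the handle cores, and the equivalence of \cite[Cor.~1.22]{GPS2} is then compatible with the natural $\Z$-gradings. Thus the inequivalence really passes from $T_\phi$ vs.\ $T_0 \times M$ to $M_1$ vs.\ $M_2$.

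The hypothesis $\pi_1(M) = 1$ with $n > 1$ enters only to guarantee that the subcritical handles can be chosen so as to kill $\pi_1$ of the total space, giving simply connected $M_1$, $M_2$; this is not needed to prove the inequivalence, but it is what makes the corollary useful (one cannot distinguish simply connected smooth manifolds merely by $\pi_1$). The main potential obstacle is the grading check above; everything else is a clean composition of cited results with Theorem \ref{mainsymp}.
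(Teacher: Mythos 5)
Your overall strategy is the same as the paper's: invoke \cite[Cor.~1.22]{GPS2} together with the Weinstein generation statement to identify $\cW(M_i)$ with $\cW(T_\phi)$ and $\cW(T_0\times M)$ up to derived equivalence, then feed these into Theorem~\ref{mainsymp}. However, there is a genuine gap in how you handle the grading, and you have misattributed the role of the hypothesis $\pi_1(M)=1$.

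Theorem~\ref{mainsymp} distinguishes $\cW(T_\phi)$ from $\cW(T_0\times M)$ only as $\Z$-graded categories with the \emph{specific} gradings coming from the trivializations of the canonical bundles constructed in the introduction. Your argument that these trivializations extend across subcritical handles (and that the GPS equivalence respects the resulting gradings) is correct as far as it goes, but it only shows that $\cW(M_1)$ and $\cW(M_2)$ are inequivalent with respect to \emph{those particular} grading choices. This rules out graded symplectomorphisms $M_1\to M_2$ compatible with the chosen trivializations, but not an arbitrary symplectomorphism that carries one trivialization to some other homotopy class of trivialization. Since the set of homotopy classes of trivializations of $K$ is a torsor over $H^1(\cdot;\Z)$, the step you are missing is precisely the observation (made explicitly in the paper right after the corollary) that once $M_1$, $M_2$ are simply connected, the trivialization of the canonical bundle is \emph{unique} up to homotopy, so ``not graded symplectomorphic'' upgrades automatically to ``not symplectomorphic.'' This is the actual reason the hypothesis $\pi_1(M)=1$ (and the choice of subcritical handles killing $\pi_1$) is needed; it is not merely a matter of making the example nontrivial, as you claim in your final paragraph. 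Without this uniqueness step, your proof only delivers the weaker graded conclusion, not the stated one about different exact fillings.
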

Notice, after handle attachment, the trivialization of the canonical bundle is unique up to homotopy. Hence, Corollary \ref{handlecor} produces non-symplectomorphic fillings (which is stronger than not being graded symplectomorphic). 

The proof of Theorem \ref{mainsymp} is in two steps. The first is to define an algebraic model $M_\phi$ for $\cW(T_\phi)$, and prove an analogue of Theorem \ref{mainsymp}. This is achieved in \cite{ownpaperalg}. More precisely, we have proven:
\begin{thm}\label{mainthmalg}\cite[Theorem 1.3]{ownpaperalg}
Suppose Assumption \ref{assumption:alg} is satisfied. Assume $\phi$ is not equivalent to the identity functor $1_\A$. Then, $M_\phi$ and $M_{1_\A}$ are not Morita equivalent. In particular, they are not derived equivalent.
\end{thm}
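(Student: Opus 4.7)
The plan is to distinguish $M_\phi$ and $M_{1_\A}$ using an algebraic avatar of the flux invariant from \cite{flux}, exploiting the fact that $M_\phi$ has a ``loop direction'' coming from the $\Z$-action, and that deforming along this loop carries non-trivial information precisely when $\phi$ acts non-trivially on $\A$.

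First I would construct a one-parameter family of $A_\infty$-categories $\{M_{\phi,q}\}_{q\in \G}$ deforming $M_\phi=M_{\phi,1}$, obtained by rescaling the generator of the $\Z$-action by $q$ (equivalently, weighting the $n$-th winding piece of the mapping torus construction by $q^n$). This is the categorical analogue of rotating the $S^1$ factor of $\widehat T_\phi$ by an amount corresponding to $\log q$ in the universal cover, and it is the natural place to apply a flux-type obstruction.

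Next I would establish two complementary statements. First, if $\phi$ is equivalent to $1_\A$, then all the $M_{1_\A, q}$ are mutually Morita equivalent via an explicit rescaling equivalence coming from gauging by powers of $q$ inside $\A$. Second, if $\phi$ is not equivalent to $1_\A$, then there is a Morita invariant attached to each $M_{\phi,q}$ --- for example, a suitably weighted Hochschild cohomology $HH^*(M_{\phi,q})$ --- that genuinely varies with $q$. The latter should follow from a spectral sequence whose $E_2$-page computes $HH^*(M_{\phi,q})$ in terms of the $q$-twisted $\phi$-coinvariants of $HH^*(\A)$, so that the variation in $q$ is detected by the non-triviality of the $\phi$-action on $HH^*(\A)$ (or on some other Morita invariant of $\A$ accessible through the spectral sequence).

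Granted these, the theorem follows: any putative Morita equivalence $M_\phi \simeq M_{1_\A}$ would transport the family $\{M_{\phi, q}\}$ to a family Morita equivalent to $\{M_{1_\A, q}\}$, which is constant by the first statement, contradicting the second. The main technical obstacle is the second statement: one has to verify that the twisted Hochschild-type invariant is non-zero and actually varies in $q$. This is where Assumption \ref{assumption:alg} should enter --- its cohomological properness and boundedness hypotheses are what make the relevant spectral sequence converge and the coinvariants detect the action of $\phi$, so that the whole obstruction is non-trivial rather than collapsing to a tautology.
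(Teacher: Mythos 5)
The paper itself does not prove Theorem \ref{mainthmalg}: it is quoted from \cite{ownpaperalg}, and the only hints given here are that the proof ``uses dynamical properties of (deformations) of categories $M_\phi$'' and is a categorical analogue of a flux comparison. Your high-level framing is therefore consistent with the cited approach, but the logical skeleton you propose has a genuine gap. The family $\{M_{\phi,q}\}_{q\in\G}$ you construct by rescaling the generator of the $\Z$-action depends on the \emph{presentation} of $M_\phi$ as a smash product (equivalently, as a twisted tensor product $\Om(\T_0)_{dg}\otimes_{tw}\A$), not on $M_\phi$ as an abstract $A_\infty$-category. A putative Morita equivalence $M_\phi\simeq M_{1_\A}$ knows nothing about this presentation, so it does not transport $\{M_{\phi,q}\}$ to $\{M_{1_\A,q}\}$; it transports it to \emph{some} family of deformations of $M_{1_\A}$, about which you have no a priori information. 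For the transport step to make sense you must first show that the germ of the family is canonically attached to the quasi-equivalence class of $M_\phi$ --- and this is precisely where the hypotheses $HH^0(\A)=\C$, $HH^1(\A)=HH^2(\A)=0$ in Assumption \ref{assumption:alg} actually earn their keep: they pin down a one-dimensional distinguished piece of $HH^2(M_\phi)$ (coming from the $\T_0$-direction) along which the deformation proceeds, so that the family is Morita-invariantly determined. As written, your sketch invokes the assumption only to make a spectral sequence converge, which misidentifies its role.

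A second issue is the detector you propose. It is not clear that $HH^*(M_{\phi,q})$ as a graded vector space varies with $q$ at all --- for flat algebraic families over $\G$ one often gets a constant answer. The dynamical content needed is finer than a dimension count: one must track a specific Morita-invariant \emph{structure} --- in \cite{ownpaperalg}, essentially the behaviour of the diagonal bimodule under translation by the family, and the resulting categorical flux group --- and show that it degenerates when $\phi$ acts non-trivially on $\A$ and does not when $\phi\simeq 1_\A$. Your proposal gestures toward a ``$q$-twisted $\phi$-coinvariants'' spectral sequence but hedges on whether it produces a genuinely varying Morita invariant; without a concrete candidate that provably depends on $q$, the second leg of your argument is a conjecture rather than a proof. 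In short: right strategy, but both the canonicity of the family and the choice of invariant detecting $q$ are unresolved gaps.
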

In the statement of Theorem \ref{mainthmalg}, $\A$ is an $A_\infty$-category over $\C$, $\phi$ denotes an auto-equivalence of $\A$, and $M_\phi$ is constructed based on this data (one can assume $\A$ is dg and $\phi$ is strict for the construction of $M_\phi$ and for the proof of the theorem). Theorem \ref{mainthmalg} is an obvious algebraic analogue of Theorem \ref{mainsymp}. 
\begin{assump}\label{assumption:alg}	$\A$ is (homologically) smooth (see Kontsevich and Soibelman \cite{koso} for a definition), proper in each degree and bounded below, i.e. $H^*(hom_\A(x,y))$ is finite dimensional in each degree and vanishes for $*\ll 0$ for any $x,y\in Ob(\A)$. Moreover, $HH^i(\A)$, the $i^{th}$ Hochschild cohomology group of $\A$, is $0$ for $i<0, i=1, i=2$ and is isomorphic to $\C$ for $i=0$.
\end{assump}
\begin{figure}\centering
	\includegraphics[height=4 cm]{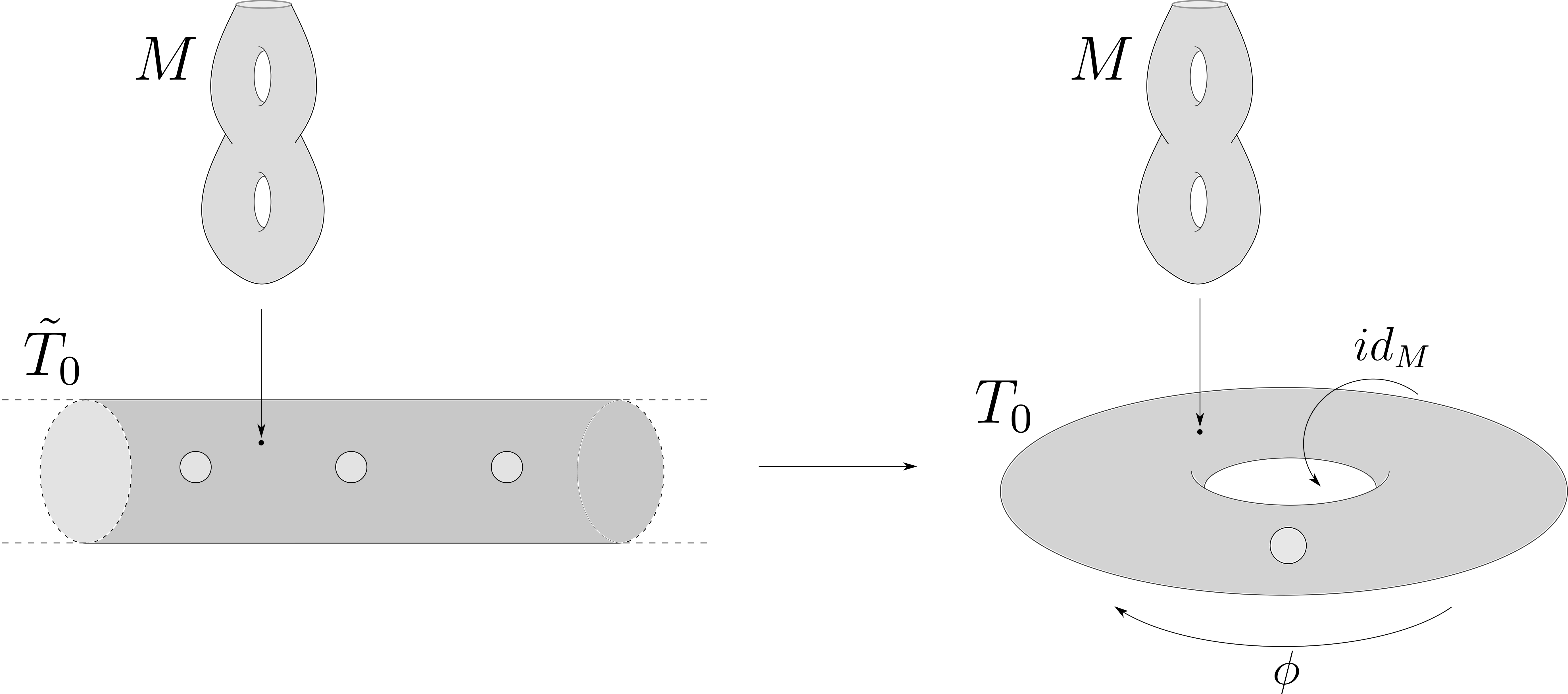}
	\caption{$T_\phi$ and its $\Z$-fold cover $\tilde T_0\times M$}
	\label{figure:sympmt}
\end{figure}

The second step in the proof of Theorem \ref{mainsymp} is the comparison of $M_\phi$ with $\cW(T_\phi)$, and this is the goal of this paper. In other words, we prove:
\begin{thm}\label{comparison}
	$M_\phi$ is Morita equivalent to $\cW(T_\phi)$, if $\A=\cW(M)$ and the auto-equivalence $\phi$ is induced by the given symplectomorphism (which was also denoted by $\phi$).
\end{thm}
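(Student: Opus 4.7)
The plan is to exploit the locally trivial symplectic fibration $\pi:\widehat T_\phi\to \widehat T_0$ with fiber $\widehat M$ to produce a generating set for $\cW(T_\phi)$ whose endomorphism $A_\infty$-algebra recovers the algebraic model $M_\phi$. First, I would choose Lagrangian cocores $\{L_i\}$ generating $\cW(M)$ (using Ganatra--Pardon--Shende generation for Weinstein domains, as already invoked after Corollary \ref{handlecor}) together with an arc $F\subset T_0$ generating $\cW(T_0)$. Using the flat symplectic connection, I would parallel-transport $L_i$ along $F$ to obtain Lagrangians $\widetilde F_i\subset \widehat T_\phi$ that project to $F$ and restrict to $L_i$ over the basepoint; this is unambiguous since $F$ is contractible. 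These $\widetilde F_i$ are the proposed generators of $\cW(T_\phi)$.

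Second, I would compute morphism complexes by choosing Floer data of split form $H=H_{\text{fib}}+\pi^*H_{\text{base}}$ together with an almost complex structure preserving the horizontal/vertical decomposition of the flat connection. With such data, Floer strips in $\widehat T_\phi$ project to Floer strips in $\widehat T_0$, and their fiberwise content identifies with Floer strips in $\widehat M$ whose Lagrangian boundary conditions, read along the strip, are twisted by a power of $\phi$ determined by the winding of the projected strip around the puncture generating the monodromy. Organizing by relative homotopy class of the base strip, this should yield
\begin{equation}
    CW^*(\widetilde F_i,\widetilde F_j)\;\cong\;\bigoplus_{\gamma} CW^*(F,F)_\gamma \otimes CW^*(\phi^{w(\gamma)} L_i, L_j),
\end{equation}
with $w(\gamma)$ the winding number, and the $A_\infty$-operations assembled from higher polygons should reproduce the twisted tensor product presentation of $M_\phi$ from \cite{ownpaperalg}.

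Third, I would prove generation of $\cW(T_\phi)$ by the $\widetilde F_i$ via covering-space descent: on the $\Z$-fold cover $\widetilde T_0\times \widehat M$, a Künneth-type theorem for wrapped Fukaya categories shows the corresponding lifts split-generate, and $\Z$-equivariant descent transfers this to $\widehat T_\phi$. Combined with the morphism calculation above, this yields the Morita equivalence $\cW(T_\phi)\simeq M_\phi$ and hence Theorem \ref{comparison}.

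The principal obstacle is the Floer-theoretic splitting in the presence of wrapping at infinity. The manifold $T_\phi$ is only a genuine product near $\partial T_\phi$ (where $\phi=1_M$), so the wrapping Hamiltonian must be constructed to be globally fibered while inducing the correct wrapping on both base and fiber, and an integrated maximum principle together with a careful choice of almost complex structure on the cylindrical ends is needed to force Floer solutions to project to honest holomorphic/Floer solutions on $\widehat T_0$. Secondary challenges include verifying the covering-space generation argument for the specific objects chosen here, and matching signs and gradings via the trivializations of $K_{T_\phi}$ discussed after Theorem \ref{mainsymp} so that the identification with $M_\phi$ is strict enough for an honest $A_\infty$-quasi-equivalence.
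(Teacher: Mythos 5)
Your overall geometry is aligned with the paper's: product-type (parallel-transported) Lagrangians as generators, split Floer data, and an integrated-maximum-principle argument to force projected solutions to the base. However, there are two genuine gaps. First, a single arc $F$ cannot generate $\cW(T_0)$: the punctured torus has a Weinstein handle structure with two critical handles, and the paper's Corollary \ref{torusgenerators} accordingly produces generators $L_{gr}\times_\phi L'$ and $L_{pur}\times_\phi L'$ from \emph{two} cocore arcs. On the mirror side $\cW(T_0)\simeq D^b(Coh(\T_0))$ for the nodal cubic, whose relevant generators are $\widetilde\Om_{\T_0}$ and the skyscraper $\Om_x$; neither split-generates the other, so no single arc suffices.

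Second, and more seriously, the step ``the $A_\infty$-operations assembled from higher polygons should reproduce the twisted tensor product presentation of $M_\phi$'' conceals the real content. Recall that $M_\phi$ is defined purely algebraically as $(\Om(\Tt_0)_{dg}\otimes\A)\#\Z$ from a dg model $\Om(\Tt_0)_{dg}$ for coherent sheaves on the nodal elliptic curve. Even if you compute $\cW(T_\phi)$ as a twisted tensor product of $\cW(T_0)$ and $\cW(M)$, identifying that with $M_\phi$ requires that the mirror equivalence $\Om(\T_0)_{dg}\simeq\cW(T_0)$ respect the extra $\Z$-grading at the $A_\infty$ level, not merely in cohomology. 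The paper explicitly flags this (Lemma \ref{lem:extragrdlemma}): two gauge-equivalent minimal $A_\infty$-structures on a bigraded vector space need not be related by a gauge equivalence preserving the second grading, so a cohomological match of the extra grading is not enough. Resolving this is the content of Section \ref{subsec:extragr}, which re-derives the equivalence via the GPS gluing formula as a localization of an extra-graded Grothendieck construction (Lemma \ref{lem:descendingextra}). Relatedly, the paper does not compare $A_\infty$-operations directly; it instead constructs a quilted-strip trimodule giving a functor $\cW(T_\phi)\to Bimod_{tw}(\cW(T_0),\cW(M))$ and establishes fully faithfulness by a Yoneda/unit-insertion argument (Lemma \ref{lem:twyoneda}), with compactness handled through $\Om^2$-type almost complex structures and the periodic annuli $\tilde A_i$. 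Your proposal would need an analogous mechanism for both of these before it can close.
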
 
Theorem \ref{mainsymp} is clearly implied by Theorem \ref{mainthmalg} and Theorem \ref{comparison}. More precisely, since $M$ is Weinstein it is non-degenerate in the sense of Ganatra \cite[Definition 1.1]{sheelthesis} (i.e. the generation criterion from Abouzaid \cite{generation} holds). Therefore, it is smooth by \cite[Theorem 1.2]{sheelthesis} and its Hochschild cohomology is isomorphic to $SH^*(M)$ by \cite[Theorem 1.1]{sheelthesis}. Hence, if $\A=\cW(M)$ and the auto-equivalence $\phi$ is induced by the given symplectomorphism, then Assumption \ref{assumption:alg} follows from Assumption \ref{assumption:symp}. Hence, $\cW(T_\phi)\simeq M_\phi$ is different from $\cW(T_0\times M)=\cW(T_{1_M})\simeq M_{1_\A}$ by Theorem \ref{mainthmalg}.
\begin{rk}\label{rk:dynamicalprops}
The proof of Theorem \ref{mainthmalg} uses dynamical properties of (deformations) of categories $M_\phi$. From one perspective, it may be seen as the comparison of a categorical version of Flux groups of $M_\phi$ and $M_{1_\A}$. However, the dynamics is not visible at a geometric level alone; hence, one has to exploit dynamics of Fukaya categories. Moreover, Corollary \ref{handlecor} gives examples of simply connected diffeomorphic fillings that are distinguished by categorical dynamics. As they have vanishing first cohomology, one cannot expect to use any form of flux. Therefore, the dynamics is only visible at the level of Fukaya categories.
\end{rk}
\begin{rk}
Given a pair of strictly commuting auto-equivalences $\phi$ and $\phi'$ on $\A$, one can generalize the mapping torus category $M_\phi$ to the double monodromy mapping torus category $M_{\phi,\phi'}$ (as a twisted tensor product with respect to group action/extra grading by $\Z\times\Z$, see Section \ref{subsec:generalizationtogeneraltwisted}). In this case, $\cW(T_\phi)$ with different choice of gradings will correspond to $M_{\phi[m],[n]}$, where $[m]$ and $[n]$ denote the shift functors. Moreover, one can presumably generalize Theorem \ref{mainthmalg} to distinguish $M_{\phi[m],[n]}$ from $M_{[m'],[n']}$ unless $\phi$ is quasi-isomorphic to a shift functor 
(this requires a simple modification of the technique of \cite{ownpaperalg}, as well as some other minor technical checks, see the remark at \cite[Section 1]{ownpaperalg}). Hence, this would imply that $T_\phi$ and $T_0\times M$ are not symplectomorphic if $\phi$ does not act as a shift, which is stronger than not symplectomorphic as graded symplectic manifolds. 

Similarly, a generalization of Theorem \ref{mainthmalg} that distinguish $M_{\phi}$, resp. $M_{\phi[m],[n]}$ by the order of $\phi$, resp. the order of $\phi$ modulo shifts (see the speculation at the end of \cite[Section 1]{ownpaperalg}) would produce infinitely many different Liouville domains (that are diffeomorphic in the examples above).
\end{rk}
\subsection{Summary of the proof of Theorem \ref*{comparison}}\label{sec:summary}
To prove Theorem \ref{comparison}, we need to give a simpler description of $M_\phi$. We claim it is a ``twisted tensor product'' of $\Om(\T_0)_{dg}$ and $\A$. Here, $\Om(\T_0)_{dg}$ is a dg model for the derived category of coherent sheaves on the nodal elliptic curve $\T_0$ over $\C$ and $\A$ is the $A_\infty$-category used to construct $M_\phi$. This claim is proven in Section \ref{sec:twtensor}. After introducing the notion of twisted tensor products, the claim follows from the definition of $M_\phi$. We recall the definition of $M_\phi$ from \cite{ownpaperalg} for the convenience of the reader.

By the results of Lekili and Polishchuk \cite{lekpol}, $\Om(\T_0)_{dg}$ is also quasi-equivalent to $\cW(T_0)$. On the other hand, the notion of twisted tensor product requires extra gradings on $\Om(\T_0)_{dg}$ and $\cW(T_0)$. We define the extra gradings, and we show in Section \ref{subsec:extragr} that $\Om(\T_0)_{dg}$ and $\cW(T_0)$ are quasi-equivalent as categories with extra grading as well (by reproving the equivalence of these categories via the gluing formula of \cite{GPS2}).

Hence, Theorem \ref{comparison} reduces to the following:
\begin{thm}\label{mainthmsymp}
$\cW(T_\phi)$ is quasi-equivalent to twisted tensor product of $\cW(T_0)$ and $\cW(M)$.
\end{thm}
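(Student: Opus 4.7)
The plan is to exploit the symplectic fibration $\pi: T_\phi \to T_0$ in combination with the Künneth and sectorial gluing formulas of \cite{GPS2}. Concretely, I would choose a Liouville arc (equivalently, a hypersurface/stop) in $T_0$ representing the loop around which the monodromy is $\phi$, and cut $T_0$ along it. This produces a simpler Weinstein sector $T_0^{\mathrm{cut}}$ --- essentially a once-punctured strip --- over which $\pi$ trivializes, so that $\pi^{-1}(T_0^{\mathrm{cut}}) \subset T_\phi$ is symplectomorphic to the product sector $T_0^{\mathrm{cut}} \times M$. By the Künneth theorem of \cite{GPS2}, the wrapped Fukaya category of this product is quasi-equivalent to $\cW(T_0^{\mathrm{cut}}) \otimes \cW(M)$.

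Next, I would reassemble $T_\phi$ from $T_0^{\mathrm{cut}} \times M$ via the sectorial gluing formula of \cite{GPS2}. Gluing the two sides of the cut in $T_0$ is implemented symplectically by the monodromy of $\pi$, and hence acts as $\phi$ on the $\cW(M)$ factor. The gluing formula then presents $\cW(T_\phi)$ as a homotopy colimit of $\cW(T_0^{\mathrm{cut}}) \otimes \cW(M)$ along two bimodule maps, one of which is the identity on $\cW(M)$ and the other twisted by $\phi$. Performing the analogous (fiberless) gluing on $T_0$ itself recovers $\cW(T_0)$ equipped with the extra $\Z$-grading from Section \ref{subsec:extragr}, that grading being precisely the count of how many times a wrapping passes through the cut. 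Comparing the two colimit presentations and matching the $\phi$-twist with the algebraic twist in the definition of twisted tensor product given in Section \ref{sec:twtensor} should yield the desired quasi-equivalence $\cW(T_\phi) \simeq \cW(T_0) \otimes^{\phi} \cW(M)$ as categories with extra grading.

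The main obstacle I expect is matching the $A_\infty$ structure of the geometric colimit with the algebraic twisted tensor product. The gluing formula presents $\cW(T_\phi)$ as a colimit/bimodule whose operations come from counts of holomorphic disks that cross the cut, whereas the twisted tensor product is defined purely algebraically. Ensuring that the full $A_\infty$ data agrees, including higher operations, Koszul signs, and compatibility with the extra grading, is the delicate point. A subsidiary technical issue is verifying that the Künneth identification of \cite{GPS2} interacts cleanly with the stops introduced by the cut --- that partial wrapping at the corresponding hypersurface in $T_\phi$ is compatible with the tensor product decomposition --- and that the quasi-equivalence of $\cW(T_0)$ with $\Om(\T_0)_{dg}$ from Section \ref{subsec:extragr} is made compatible with the cut-based decomposition used on the $T_\phi$ side.
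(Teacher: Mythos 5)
Your proposal is correct, and it matches the \emph{alternative} proof that the paper only sketches in Appendix~\ref{sec:appendixgluing}, not the main proof given in Section~\ref{sec:kunneth}. The paper's primary argument works directly: it defines a K\"unneth-type functor $\cW^2(T_\phi)\to Bimod_{tw}(\Om(T_0),\Om(M))$ by counting quilted strips (following \cite{sheelthesis} and \cite{GPS2}), shows the images of product-type branes are twisted Yoneda bimodules (Lemma~\ref{lem:mequalsyoneda}), and proves fully faithfulness geometrically by composing the functor with a unit/continuation-element insertion and recognizing the result as a continuation map. This requires custom compactness arguments — the integrated maximum principle for the conical end plus the periodic-annuli trick of Section~\ref{subsubsec:noescapelr} to prevent escape to the left/right ends of $\wh{\tilde{T}_0}$ — and is the part that generalizes most naturally to wrapped Fukaya categories of arbitrary locally trivial fibrations (Section~\ref{subsec:generalizationtogeneraltwisted}). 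By contrast, your route cuts $T_\phi$ into $T\times M$ and $N\times M$, applies the sectorial K\"unneth theorem of \cite{GPS2} to each piece, reassembles $\cW(T_\phi)$ as a homotopy coequalizer $\cW(M)\rightrightarrows\Om(\mathbb{P}^1)_{dg}\otimes\cW(M)$, and compares this with the analogous coequalizer presentation of $M_\phi$ deduced from the push-out description of $\Om(\Tt_0)_{dg}$ in Section~\ref{subsec:mttwi}. This is shorter and avoids the bespoke analysis entirely, at the cost of leaning on \cite{GPS2}'s gluing machinery as a black box and on a careful bookkeeping (via Grothendieck constructions and smash products) to verify that the two coequalizer diagrams and the $\phi$-twist really match — precisely the ``delicate point'' you flag. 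Both approaches also need Lemma~\ref{lem:extragrdlemma} (equivalence of $\cW(T_0)$ and $\Om(\T_0)_{dg}$ respecting the extra grading), which you correctly identify as a subsidiary issue.
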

Note that the notions of twisted tensor product and twisted bimodules are similar to the notion that can be found in Bergh, Oppermann \cite{twistedproduct} and Grimley, Nguyen, Witherspoon \cite{twisted} and perhaps its appearance should not be as surprising for K\"unneth type theorems for symplectic fibrations. 
%
This is a twisted version of K\"unneth theorem for wrapped Fukaya categories. Untwisted versions of this theorem are proven in Ganatra \cite{sheelthesis}, Gao \cite{gaofunctor}, and more recently in Ganatra, Pardon, Shende \cite{GPS2}. 

For simplicity, first ignore compactness issues and describe the main TQFT argument for the proof of fully faithfulness. For this, one can adapt the more analytic definition of wrapped Fukaya categories given in \cite{generation}. To define the ``K\"unneth'' functor, we use the same count of pseudo-holomorphic quilts as in \cite{sheelthesis} and \cite{GPS2} (see Figures \ref{figure:labelledquilt} and \ref{figure:labelledtwisted}), but with target $\wh{\tilde T_0}\times \wh{M}$, and obtain a functor
\begin{equation}\label{eq:kunnethfunctorintro}
\cW(T_\phi)\to Bimod_{tw}(\cW(T_0),\cW(M) )
\end{equation}
where the right hand side denotes the category of twisted bimodules defined in Section \ref{sec:twtensor}. This is simply the category of $\cW(T_0)$-$\cW(M)$-bimodules when $\phi=1_M$. This category is (weakly) generated by ``twisted Yoneda bimodules'' and we show that a generating set of Lagrangian branes (that look like product type) in $\cW(T_\phi)$ map into twisted Yoneda bimodules. The span of twisted Yoneda bimodules is equivalent to the twisted tensor product (in fact, their span can be taken to be the definition of twisted tensor product). We then use a version of Yoneda lemma (Lemma \ref{lem:twyoneda}) and a geometric description of Yoneda map (i.e. the unit insertion) to prove fully faithfulness. 

Unfortunately, applying this idea to the definition of wrapped Fukaya categories via quadratic Hamiltonians given in \cite{generation} comes with analytic difficulties. We have shown in our thesis \cite{kartalthesis} how to solve these problems. However, for this paper we have chosen to switch to the definition of wrapped Fukaya categories given in Ganatra, Pardon and Shende \cite{GPS1}. The latter definition does not use Cauchy-Riemann equations with Hamiltonian terms, instead an auxiliary category $\Om(M)$ is defined first and the ``wrapping'' is done via an algebraic process called localization. This description realizes wrapped Fukaya categories as a colimit. In Section \ref{subsec:algmodify}, we imitate the argument in Section \ref{subsec:defnkunnethearly} for these auxiliary categories. It does not give us a fully faithful functor, but it becomes fully faithful after one passes to colimit.

One still needs to find a class of almost complex structures such that Gromov compactness holds for the moduli spaces of pseudo-holomorphic curves in $\wh{\tilde T_0}\times\wh{M}$. For this, one has to prevent the curves from escaping to
\begin{enumerate}
	\item conical end
	\item left/right ends of the infinite type domain $\wh{\tilde T_0}$ 
\end{enumerate}
To take care of the first, we prefer (integrated) maximum principle of Abouzaid and Seidel \cite{abousei}. More precisely, we define a category $\Om^2(T_\phi)$, that is similar to $\Om^{prod}(X\times Y)$ of \cite{GPS2}. $\Om^{prod}(X\times Y)$ is defined using split type almost complex structures on the product. $T_\phi$ is not a product; however, its conical end looks like that of the product and using almost complex structures that are of split type on the conical end suffices for our purposes. Second issue is solved by choosing infinitely many annuli on $\tilde T_0$ that are placed periodically. We choose our almost complex structures so that each time a curve passes one of these annuli, its energy increases at least by a fixed amount. Therefore, a curve with fixed energy cannot cross infinitely many annuli and has to remain in a finite type subdomain of $\wh{\tilde T_0}\times \wh{M}$.

Therefore, a similar count of ``quilted strips'' gives us a functor 
\begin{equation}
\Om^2(T_\phi)\to Bimod_{tw}(\Om(T_0),\Om(M))
\end{equation}
which gives us a fully faithful functor (\ref{eq:kunnethfunctorintro}) in the limit (more precisely, this functor goes from a category $\cW^2(T_\phi)$ obtained by localizing $\Om^2(T_\phi)$, but one can show it is equivalent to $\cW(T_\phi)$ using the same proof in \cite{GPS2} for the untwisted case).

Finally, we would like to mention another possible proof of Theorem \ref{mainthmsymp}. Recently, a gluing formula for wrapped Fukaya categories appeared in \cite{GPS2}. One can also cut $T_\phi$ into Liouville sectors (that are products of $M$ with simpler sectors). Then, one can use the gluing formula (\cite[Theorem 1.20]{GPS2}), ordinary K\"unneth theorem for sectors (proven in \cite{GPS2}) and the framework of twisted tensor products given in Section \ref{sec:twtensor} to give another proof of Theorem \ref{mainthmsymp}. We sketch this in Appendix \ref{sec:appendixgluing}.
\subsection{Outline}\label{sec:outline}
In Section \ref{sec:structures}, we start by investigating Liouville and Weinstein structures on $T_\phi$. In other words, we show $T_\phi$ carries a natural Liouville structure that is deformation equivalent to a Weinstein structure. We give a description of the cocores of this Weinstein manifold, giving us generators by Chantraine, Dimitrioglou Rizell, Ghiggini, Golovko \cite{Weinsteinsector}, or by \cite[Theorem 1.10]{GPS2}.

In Section \ref{sec:twtensor}, we set up the algebra of twisted tensor products and bimodules. 
We then demonstrate how one can realize $M_\phi$ as a twisted tensor product and comment on the extra gradings on $\cW(T_0)$. In particular, we prove equivalence of $\cW(T_0)$ and $\Om(\T_0)_{dg}$ as extra graded categories.

Section \ref{sec:kunneth} is devoted to proof of Theorem \ref{mainthmsymp}. We start by giving an exposition that ignores analytic difficulties to better illustrate the idea. Then, we switch the definitions, and show how to solve compactness issues and how to modify the idea to fit into this algebraic definition.

In Section \ref{sec:examples}, we give a large class examples of symplectic manifolds satisfying Assumption \ref{assumption:symp}, which let us apply Theorem \ref{mainsymp}, and construct exotic Liouville manifolds as in Corollary \ref{handlecor}.

\subsection{Wrapped Fukaya categories of more general locally trivial fibrations}\label{subsec:generalizationtogeneraltwisted}
It is easy to generalize the notion of twisted tensor product to general discrete groups: if $G$ is a discrete group acting strictly on $\B'$ and $\B$ is an $A_\infty$-category with an extra $G$-grading, then one can define twisted bimodules, Yoneda embeddings etc. similar to Section \ref{sec:twtensor} (note non-commutative groups require some care). This suggests an algebraic description of wrapped Fukaya categories of more general locally trivial symplectic fibrations, i.e. if $G$ acts on the Weinstein domain $M$ by compactly supported exact symplectomorphisms, and if $\tilde B\to B$ is a $G$-fold covering of a Weinstein domain $B$, then one can construct the symplectic manifold $\tilde B\times_G M$ that is a locally trivial symplectic fibration over $B$. Then, one can try to prove the generalization of Theorem \ref{mainthmsymp} that states the wrapped Fukaya category of $\tilde B\times_G M$ is equivalent to the twisted tensor product of the wrapped Fukaya categories of $B$ and $M$. We believe slight modifications of the arguments in Section \ref{sec:structures} and \ref{sec:kunneth} should be sufficient to prove this. For instance, to define a class of almost complex structures for which the moduli of pseudo-holomorphic maps into $\tilde B\times M$ are compact, one can start with a triangulation of $B$ and a fixed neighborhood of codimension $1$-faces of this triangulation. Then, one considers almost complex structures that are of product type over each $b\in \tilde B$ that map to a point in this neighborhood under $\tilde B\to B$ (the $M$-component of the almost complex structure may vary). By a similar argument to Section \ref{subsubsec:noescapelr}, one can arrange this so that the energy of the curve increases by a fixed amount each time it crosses the codimension $1$ skeleton. 
\subsection*{Acknowledgments}
This work is part of a doctoral thesis written
under the supervision of Paul Seidel. I would like to thank him for suggesting the problem and numerous discussions. I would also like to thank Jingyu Zhao, Vivek Shende, Dmitry Tonkonog,  Yank{\i} Lekili, Sheel Ganatra, John Pardon and Zack Sylvan for many helpful conversations and/or explaining their work. I would like to thank Vivek Shende for pointing out to push-out preserving property of $Coh$. Finally, I would like to thank the referee for several useful suggestions. 
This work was partially supported by NSF grant DMS-1500954 and by the Simons Foundation (through a Simons Investigator award).
\section{Structures on the mapping torus}\label{sec:structures}
\subsection{Liouville structure}\label{subsec:liouv}
Let $\wh M$ and $\wh T_0$ denote the completions of $ M$ and $T_0$. Let $\lambda_M$ and $Z_M$ denote the Liouville form and vector field on the completion $\wh M$ as well. We assume $\phi$ is exact, i.e. there exists a smooth function $K$ with compact support in the interior of the Liouville domain $M$ such that $\phi_*(\lambda_M)=\lambda_M+dK$. 
\begin{prop}\label{prop:liouvtphi}
$\wh T_\phi$ has a natural Liouville structure.
\end{prop}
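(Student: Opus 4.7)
The plan is to construct a primitive for the given symplectic form on $\widehat{T}_\phi$ by descent from its $\mathbb{Z}$-fold cover $\tilde T_0 \times \widehat M \to \widehat T_\phi$, and then to verify it defines a genuine Liouville structure (completeness of the Liouville vector field at infinity).

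First, pick a Liouville form $\lambda_{\widehat T_0}$ on $\widehat T_0$ (which exists, since $\widehat T_0$ is a punctured torus and hence Liouville), and let $\lambda_{\tilde T_0}$ denote its pullback to the cover. On $\tilde T_0 \times \widehat M$, the obvious candidate primitive is
\begin{equation*}
\tilde\lambda := \lambda_{\tilde T_0} + \lambda_M,
\end{equation*}
and one checks immediately that $d\tilde\lambda = \pi^*\omega_{\widehat T_0} + \{\omega_{\widehat M}\}$. However, under the deck transformation $T(s,\theta,x) = (s+1,\theta,\phi(x))$, the $\lambda_{\tilde T_0}$ piece is invariant (since it is pulled back from $\widehat T_0$) while $T^*\tilde\lambda - \tilde\lambda = \phi^*\lambda_M - \lambda_M = dh$, where $h := -K\circ\phi$ is compactly supported on $\widehat M$ by the exactness assumption. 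Hence $\tilde\lambda$ fails to descend, but only by an exact $1$-form with compactly supported potential.

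The next step is to kill this discrepancy by adding a global exact term $d\psi$ whose potential $\psi\colon \tilde T_0 \times \widehat M \to \mathbb{R}$ satisfies the cocycle condition $\psi\circ T - \psi = -h$. I would construct $\psi$ fundamental-domain by fundamental-domain: pick a smooth cutoff $\chi\colon [0,1]\to[0,1]$, with $\chi\equiv 0$ near $0$ and $\chi\equiv 1$ near $1$ (all derivatives vanishing at both endpoints), set
\begin{equation*}
\psi(s,\theta,x) := -\chi(s)\, h(\phi^{-1}(x)) \quad \text{for } s\in[0,1],
\end{equation*}
and then extend uniquely to all of $\tilde T_0$ by iterating $\psi(s+1,\theta,\phi(x)) = \psi(s,\theta,x) - h(x)$. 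The boundary values match on $\{s=1\}$ (by construction), and the flatness of $\chi$ at the endpoints ensures smoothness across the seams. After adding $d\psi$, the form $\tilde\lambda + d\psi$ is $T$-invariant and so descends to a $1$-form $\lambda_{\widehat T_\phi}$ on $\widehat T_\phi$ whose exterior derivative is the symplectic form.

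The final step is to verify the Liouville structure at infinity. Away from a compact subset of $\widehat M$ we have $\phi = 1_M$, $h = 0$, and $\psi = 0$, so on this region $\lambda_{\widehat T_\phi}$ is simply the product Liouville form $\lambda_{\widehat T_0} + \lambda_M$; similarly the puncture end of $\widehat T_0$ contributes a product region. Thus the dual Liouville vector field $Z_{\widehat T_\phi}$ agrees with the sum of the cylindrical Liouville vector fields on each factor outside a compact set, so it is complete and outward-conical at infinity. The main obstacle I would anticipate is the bookkeeping in the construction of $\psi$ (verifying smoothness and the cocycle relation across all integer translates), but this is a standard averaging/cutoff argument; no geometric difficulty enters because $h$ is compactly supported in the fiber direction and the base deck action is free.
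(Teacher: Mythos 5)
Your proof is correct and ends on exactly the same Liouville form as the paper's. Unwinding your notation, $h = -K\circ\phi$ so $h(\phi^{-1}(x)) = -K(x)$, and hence $\psi = \chi(s)K$; your corrected form $\lambda_{\tilde T_0} + \lambda_M + d(\chi(s)K)$ coincides with the paper's $\lambda_{T_0} + \lambda_s + \rho'(s)K\,ds$ once one recognizes $\lambda_s + \rho'(s)K\,ds = \lambda_M + \rho(s)\,d_M K + \rho'(s)K\,ds = \lambda_M + d(\rho(s)K)$. The two arguments are, however, organized in dual ways. The paper's ansatz $\{\lambda_s\} + \pi^*\lambda_{T_0}$ with $\lambda_s = \lambda_M + dK_s$ is built to satisfy the invariance condition $\phi_*\lambda_s = \lambda_{s+1}$ from the start, and only afterwards does the paper notice $d\lambda_{T_\phi} = \omega_{T_\phi} + \rho'(s)\,ds\wedge dK$ and add $\rho'(s)K\,ds$ to repair primitivity. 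You start instead from the genuine primitive $\lambda_{\tilde T_0} + \lambda_M$ on the cover, observe it fails to be invariant by $T^*\tilde\lambda - \tilde\lambda = dh$, and add the exact term $d\psi$ to restore $T$-invariance via the cocycle $\psi\circ T - \psi = -h$. Your route has the minor advantage that primitivity never has to be re-checked (adding an exact term is harmless), so the only thing to verify is the cocycle and the smoothness of $\psi$ across the seams; the paper's route makes descent to $T_\phi$ manifest but requires a small calculation to confirm the corrected form is still a primitive. Your argument for the conical ends, that $\psi\equiv 0$ where $\phi = 1_M$ and that $\chi$ is flat near integer $s$ (hence near the puncture), matches the paper's stipulation that $\{\lambda_s\}$ be locally constant near $s\in\Z$.
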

\begin{proof}
We first try the following ansatz for the Liouville form: \begin{equation}\label{eq:ansatz1}
\lambda_{T_\phi}=\{\lambda_s \}+\pi^*\lambda_{T_0} 
\end{equation} where $\lambda_{T_0}$ is a choice of Liouville form on $T_0$ and $\{\lambda_s \}$ refers to a family of Liouville forms on $M$ parametrized by the first coordinate of $\wh T_0$ (and extend them to completions). We will construct $\lambda_s$ as $\lambda_M+dK_s$, where $\{K_s \}$ is a smooth family of compactly supported functions on $M$ as above (extended by $0$ to $\wh M$). If we take the parameter $s$ in $\mathbb{R}$, we have to show that the ansatz (\ref{eq:ansatz1}) induces a 1-form on $\wh T_\phi$, i.e.
\begin{equation}\label{invform}
\phi_*\lambda_s=\lambda_{s+1}
\end{equation}  We will choose $\{\lambda_s\}$ to be constant near every $s\in\Z$ (indeed over $(s-\epsilon,s+\epsilon)$ for a fixed $\epsilon$ such that the hole of the domain $T_0\subset \mathbb{R}\times S^1/(s,\theta)\sim (s+1,\theta)$ have $s$-component in this interval).  

For (\ref{invform}) to hold, we need \begin{equation}d(\phi_*(K_s)+K )=dK_{s+1} 
\end{equation} on $\wh M$ which would be implied by  \begin{equation}\label{eq:invcond} \phi_*(K_s)+K=K_{s+1} \end{equation}(\ref{eq:invcond}) gives us enough data to define family $\{K_s\}$. Namely, fix a small $\epsilon> 0$ (one may assume it is large enough to cover the $s$-component of the hole of $T_0$). Let $\rho:(-\epsilon,1+\epsilon)\rightarrow[0,1]$ be a function such that $\rho(s)=0$ for $s\in(-\epsilon,\epsilon)$ and $\rho(s)=1$ for $s\in(1-\epsilon,1+\epsilon)$. Define $K_s=\rho(s)K$, for $s\in (-\epsilon,1+\epsilon)$. The equality $\phi_*(K_s)+K=K_{s+1}$ holds for $s\in (-\epsilon,\epsilon)$ and we can extend $K_s$ to all $s\in \mathbb{R}$ using (\ref{eq:invcond}). 

Unfortunately, $\lambda_{T_\phi}$ is not a primitive for the original symplectic form. More explicitly \begin{equation}
d(\lambda_{T_\phi})=\{d_M(\lambda_s) \}+d(\lambda_{T_0})+\rho'(s)ds\wedge dK=\omega_{T_\phi}+\rho'(s)ds \wedge dK
\end{equation}
Here, $d_M$ is the exterior derivative along the fiber direction and $d(\lambda_{T_0})$ is used to mean $\pi^*d(\lambda_{T_0})$. We are implicitly using the coordinates $s\in (0,1)$ and the fact that $\rho'(s)$ vanishes near $s=0,1$. We can correct the form $\lambda_{T_\phi}$ as \begin{equation}\label{eq:corrlf}
\lambda_{T_\phi}+\rho'(s)Kds 
\end{equation} and its derivative is clearly $\omega_{T_\phi}$. Moreover, (\ref{eq:corrlf}) looks like $\lambda_s$ near the conical ends of fibers and $\lambda_M+\lambda_{T_0}$ near the puncture; hence, it is a Liouville form.
\end{proof}
\subsection{Weinstein structures on $T_\phi$}\label{subsec:weinstein}
\begin{defn}
A triple $(M,\lambda_M,f_M)$ is called Weinstein if $(M,\lambda_M)$ is a Liouville manifold with Liouville vector field $Z_M$ and $f_M$ is a proper (generalized) Morse function on $M$ such that 
\begin{equation}\label{Lyapunov}
Z_M(f_M)\geq \epsilon(|Z_M|^2+|df_M|^2) 
\end{equation}
for some $\epsilon>0$ (and for some Riemannian metric). If a pair $(Z_M,f_M)$ satisfies (\ref{Lyapunov}), $Z_M$ is called gradient-like for $f_M$ and $f_M$ is called Lyapunov for $Z_M$ (see \cite{cieliebakeliashbergbook} for more details).
\end{defn}
Assume $(M,\omega_M)$ is Weinstein, with Weinstein structure $(M,\lambda_M,Z_M,f_M)$. We construct a Weinstein structure on $T_\phi$ that is Liouville deformation equivalent to the Liouville structure constructed in Proposition  \ref{prop:liouvtphi}. 

Fix a Weinstein structure on $T_0$ such that the handlebody decomposition is as in Figure \ref{figure:handletorus}. The yellow and orange strips (i.e. the vertical and horizontal strips respectively) are the $1$-handles, and the blue and yellow curves (i.e. the vertical and horizontal curves) are the cocores. We denote this Weinstein structure by $(\lambda_{T_0},Z_{T_0},f_{T_0})$. 
\begin{figure}\centering
\includegraphics[height=4 cm]{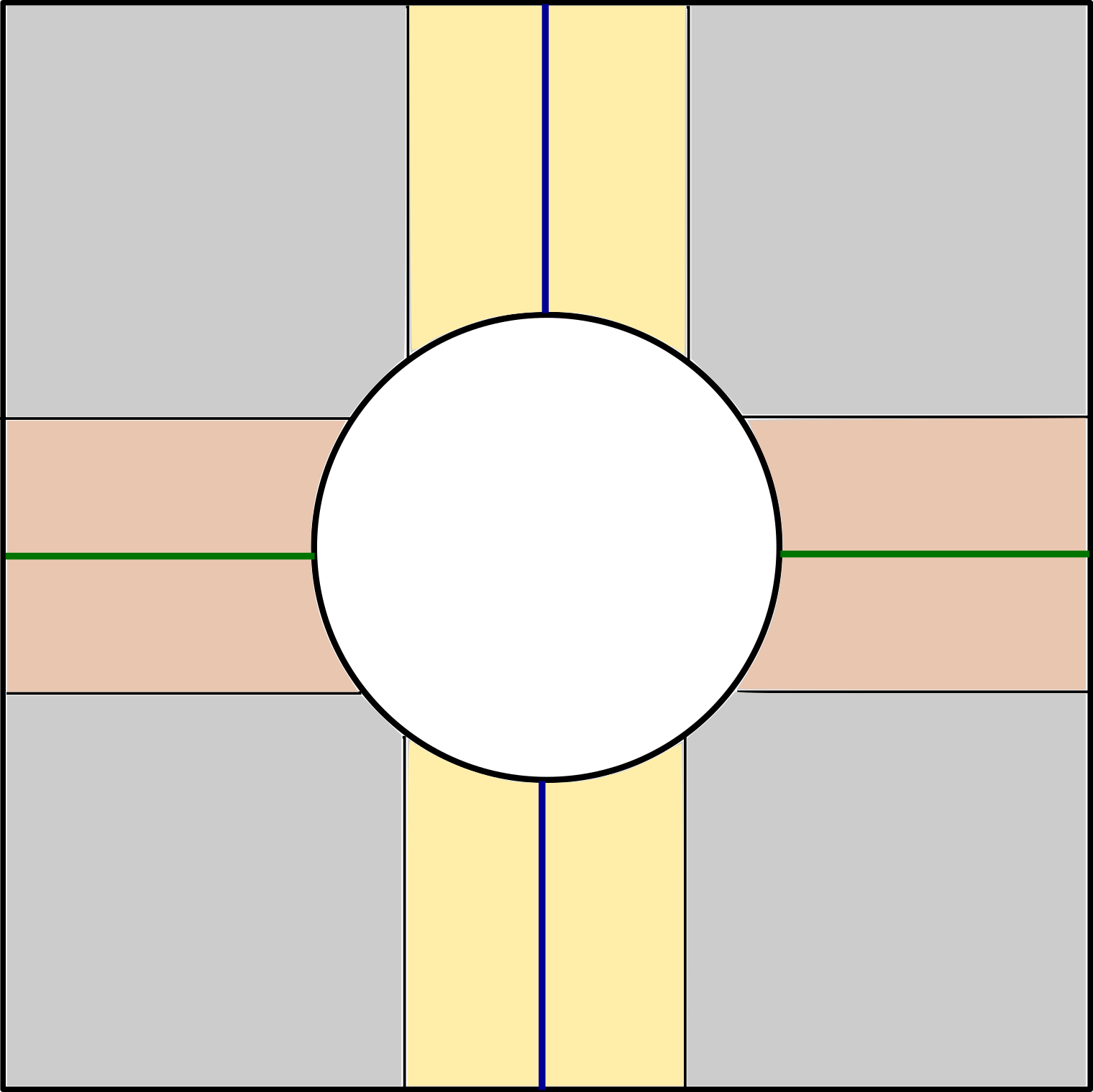}
\caption{Handlebody decomposition of $T_0$}
\label{figure:handletorus}
\end{figure}
The ansatz for the Weinstein structure on $T_\phi$ is the following: Let $\{\lambda_z:z\in \tilde T_0\}$ be a family of Liouville structures on $M$ that descend to mapping torus (in other words $\lambda_{(s+1,t)}=(\phi^{-1})^*\lambda_{(s,t)}$). Assume \begin{equation}\lambda_z=\lambda_M+dK_z
\end{equation} for a family of functions $K_z$ with support uniformly contained in a compact subset of $M\setminus \partial M$. Let $\{f_z:z\in \tilde T_0\}$ be a family of functions on the fibers of $T_\phi\to T_0$ (i.e. a family of functions on $M$ parametrized by $\tilde T_0$ that descend to $T_\phi$). Assume near the critical points of $f_{T_0}$, $\lambda_z$ and $f_z$ do not depend on $z\in \tilde T_0$ and form a Weinstein structure on $M$. Then, \begin{align}
\label{ansatz1}\lambda_{T_0}+C^{-1}\lambda_z\\ f_{T_0}+C^{-1}f_z
\end{align} 
is the ansatz for the Weinstein structure. First, notice (\ref{ansatz1}) is a Liouville form for large enough $C$. To see this consider its differential: 
\begin{equation}
\omega_{T_0}+C^{-1}\omega_M+C^{-1}\nabla \lambda_z
\end{equation}
where $\nabla$ is the natural flat symplectic connection of local system of symplectic manifolds $T_\phi\to T_0$ (in other words, locally it is differentiation in the base direction). Take $(n+1)^{th}$ exterior power to obtain \begin{equation}
C^{-n}\omega_{T_0}\wedge\omega_M^n+O(C^{-n-1}) 
\end{equation}
To ensure it is Liouville, we could assume that $\lambda_z$ is constant near $\partial T_0$ (the middle circle in Figure \ref{figure:handletorus}). However, this is not the best option for other purposes. Instead, we arrange it to be constant over a neighborhood of the part of $\partial T_0$ bounding gray and orange areas. We enlarge this area slightly to include part of yellow strip as well. That it is pointing outward over the rest will follow from the computation below. 

Consider the mapping torus as a fibration over this Weinstein domain. Let $T$ denote the yellow (i.e. vertical) middle strip in Figure \ref{figure:handletorus}. The monodromy $\phi$ is forgotten if we take out the pre-image of $T$. In other words, the complement is a product $(T_0\setminus T)\times M$. Hence, 
the mapping torus can be constructed topologically by gluing $\overline{T_0\setminus T}\times M$ and $T\times M$. We identify the left boundary of $T$ (times $M$) by $id_M$, but we need to twist the right boundary by $\phi$.

Now, we demand the family $(\lambda_z,f_z)$ to be constant and equal to $(\lambda_M,f_M)$ over a small neighborhood of the orange and gray area in Figure \ref{figure:handletorus} (i.e. in a neighborhood of $T_0\setminus T$). Here, we use a trivialization of the local system of symplectic manifolds over this area. To construct a $1$-form and a function over the $1$-handle $T$, we need to construct a family $\{(\lambda_z,f_z):z\in T\}$ that is constant near right and left boundary of $T$ and that interpolates between $(\lambda_M,f_M)$ and $(\phi^{-1})^*(\lambda_M,f_M)$.

The $1$-handle $T$ can be identified with $[-1,1]\times [-1,1]$ in the $qp$-plane such that \begin{align}
f_T=(p^2-\epsilon q^2)/2,\omega_T=dpdq\\
\lambda_T=\frac{pdq+\epsilon qdp}{1-\epsilon}, Z_T=\frac{p\partial_p-\epsilon q\partial_q}{1-\epsilon}
\end{align}
Note that we can simply assume $\epsilon\in (0,1)$ is $1/2$ as we will not let it vary. 
\begin{figure}\centering
	\includegraphics[height=4 cm]{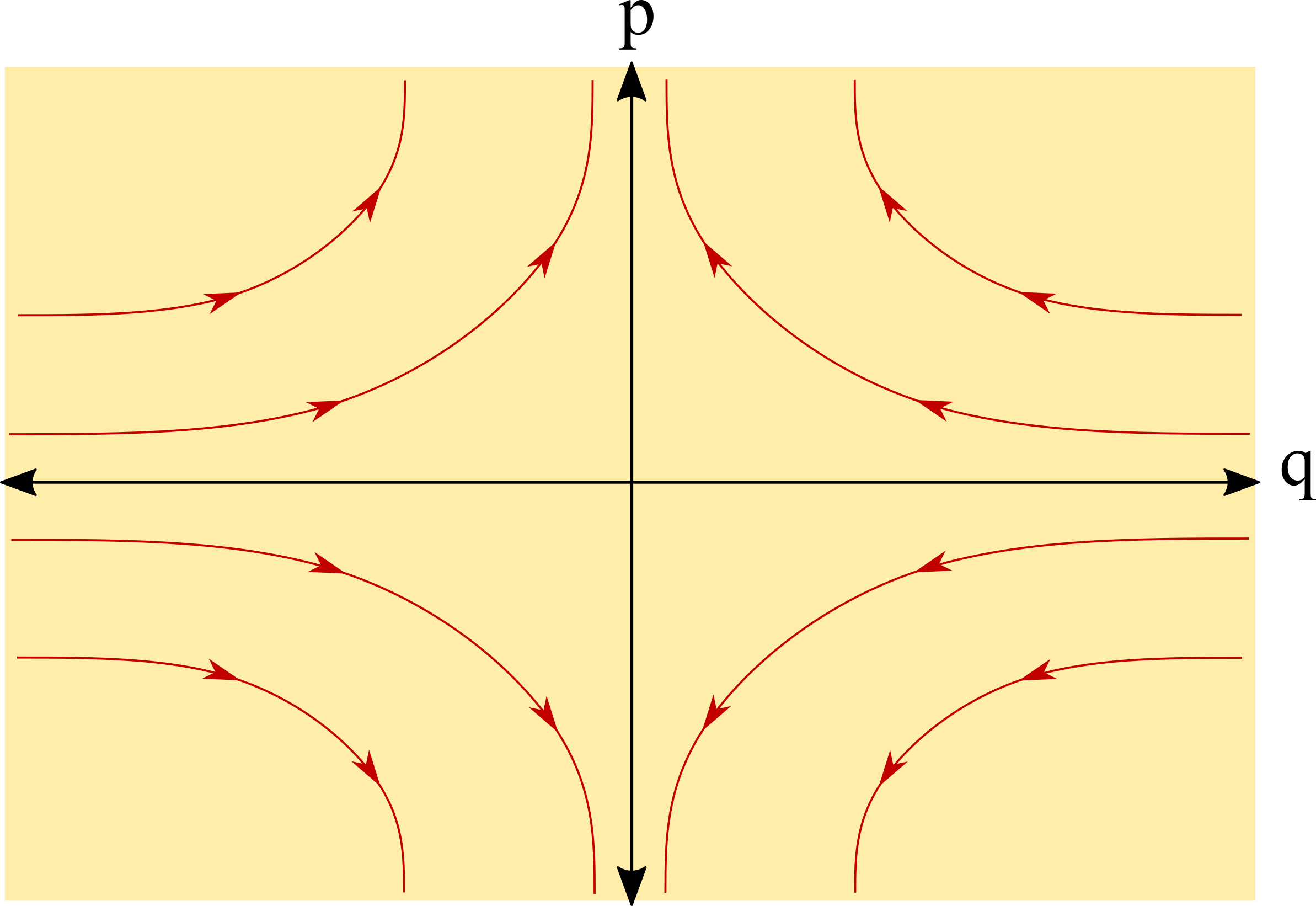}
	\caption{One handle $T$ in $qp$-coordinates}
	\label{figure:onehandle}
\end{figure}

As $(\phi^{-1})^*\lambda_M=\lambda_M+dK$, for some compactly supported function $K$, we can interpolate between $\lambda_M$ and $(\phi^{-1})^*\lambda_M$ by a family $\lambda_q$ (it only depends on the $q$-coordinate) that only differ by compactly supported exact $1$-forms on $M$ such that $\lambda_q=\lambda_M$ on $q\in[-1,\delta]$ for some small $\delta$ and $\lambda_q=(\phi^{-1})^*\lambda_M$ on $q\in [1-\delta,1]$. Similarly, we choose a family $f_q$ of functions on $M$ such that $f_q=f_M$ for $q\in[-1,\delta]$ and $f_q=(\phi^{-1})^*f_M$ for $q\in [1-\delta,1]$. Define 
\begin{align}
\lambda_C=\label{ansatz2}\lambda_{T}+C^{-1}\lambda_q\\ f_C=f_{T}+C^{-1}f_q
\end{align} 
Then we have \begin{equation}\omega_C:=d\lambda_C=dpdq+C^{-1}\omega_M+C^{-1}dq\wedge \nabla_q(\lambda_q)
\end{equation}
which is symplectic for large $C$ as we remarked before. 
We need to compute symplectic dual $Z_C$ of $\lambda_C$. Write \begin{equation}
Z_C=Z_T+Z_q+Z_{corr}
\end{equation}
where $Z_q$ is the Liouville vector field corresponding to $\lambda_q$. Note that $Z_{corr}$ is a correction term not a Liouville vector field. 
Then \begin{multline}\label{zcorr}
i_{Z_{corr}}\omega_C=i_{Z_C}\omega_C-i_{Z_T}\omega_C-i_{Z_q}\omega_C=\\ \lambda_C-\lambda_T+\frac{\epsilon}{1-\epsilon}C^{-1}q\nabla_q(\lambda_q)-C^{-1}\lambda_q+C^{-1}i_{Z_q}\nabla_q(\lambda_q)dq=\\
\frac{\epsilon}{1-\epsilon}C^{-1}q\nabla_q(\lambda_q)+C^{-1}i_{Z_q}\nabla_q(\lambda_q)dq
\end{multline}
Clearly, \begin{equation}
C^{-1}i_{Z_q}\nabla_q(\lambda_q)dq=C^{-1}i_{Z_q}\nabla_q(\lambda_q) i_{\partial_p}\omega_C
\end{equation}
Define \begin{equation}
Z_{corr,1}=Z_{corr}-C^{-1}i_{Z_q}\nabla_q(\lambda_q) \partial_p
\end{equation}
Then \begin{equation}\label{eq:son}
i_{Z_{corr,1}}\omega_C=\frac{\epsilon}{1-\epsilon}C^{-1}q\nabla_q(\lambda_q)
\end{equation}
Write $Z_{corr,1}=g\partial_p+v_f$, where $v_f$ is in fiber direction (there is no $g_1\partial_q$ as this would produce $-g_1dp$ term, which cannot be eliminated). (\ref{eq:son}) implies
\begin{equation}
gi_{\partial_p}\omega_C+i_{v_f}\omega_C=\frac{\epsilon}{1-\epsilon}C^{-1}q\nabla_q(\lambda_q)
\end{equation}
In other words, \begin{equation}
gdq+C^{-1}i_{v_f}\omega_M-C^{-1}i_{v_f}\nabla_q(\lambda_q)dq=\frac{\epsilon}{1-\epsilon}C^{-1}q\nabla_q(\lambda_q)
\end{equation}
Using the natural splitting of tangent spaces into horizontal and vertical directions, we conclude \begin{equation}
	g=C^{-1}i_{v_f}\nabla_q(\lambda_q)\text{ and }i_{v_f}\omega_M=\frac{\epsilon}{1-\epsilon}q\nabla_q(\lambda_q)
\end{equation}
The symplectic dual of $\nabla_q\lambda_q$ is clearly $\nabla_qZ_q$. Hence, \begin{equation}
v_f=\frac{\epsilon}{1-\epsilon}q\nabla_qZ_q\text{ and }g=C^{-1}q\frac{\epsilon}{1-\epsilon}i_{\nabla_qZ_q}(\nabla_q\lambda_q)
\end{equation}
(the latter term actually vanishes). To sum up
\begin{equation}
Z_{corr}=v_f+O(C^{-1})\partial_p
\end{equation}
and thus \begin{equation}Z_C=Z_T+Z_q+\frac{\epsilon}{1-\epsilon}q\nabla_qZ_q+O(C^{-1})\partial_p
\end{equation}
As we mentioned, we do not let $\epsilon$ vary, but for sufficiently large $C$, $Z_T$ dominates $O(C^{-1})\partial_p$ near the upper and lower boundary of $T$. Hence, it is pointing outward there. On the other hand, $Z_q$ are all the same near $\partial M$, so it is pointing outward on $T\times \partial M$ as well. In short, the form is Liouville over the $1$-handle $T$. 

Now, let us examine $f_C$. First, the only critical point of $f_T$ is at $(0,0)$. For large enough $C$, $df_T$ dominates $C^{-1}df_q+C^{-1}dq\wedge \partial_q f_q$ away from $(0,0)$. Near $(0,0)$, $f_q$ is constant and equal to the Morse function $f_M$. Hence, the only critical points of $f_C$ live over $q=p=0$ and they are all non-degenerate. 

Moreover, \begin{equation}Z_C(f_C)=Z_T(f_T)+O(C^{-1})
\end{equation}since $v_f(f_T)=0$. Hence, away from the critical point $(0,0)$, $Z_T(f_T)$ dominates other terms and the Lyapunov property (\ref{Lyapunov}) is satisfied.

Near $(0,0)$, $\lambda_q,f_q$ are constant in $q$; hence, by (\ref{zcorr}) $Z_{corr}=0$, $Z_q=Z_M,f_q=f_M$. This implies \begin{equation}Z_C(f_C)=Z_T(f_T)+C^{-1}Z_M(f_M)
\end{equation}
From this, Lyapunov property is clear. 

By gluing the ``Weinstein structures'' on $\overline{T_0\setminus T}\times M$ and $T\times M$, we obtain:
\begin{prop}\label{iniweiprop}
There exist a Weinstein structure on $T_\phi$ that is of the form \begin{align}
\label{babla}\lambda_{T_\phi}=\lambda_{T_0}+C^{-1}\lambda_z\\ 
f_{T_\phi}=f_{T_0}+C^{-1}f_z\label{iniweimorse}
\end{align}
where $\lambda_z$ is a family of Liouville forms, $f_z$ is a family of functions on $M$, both are locally constant (in $z$) outside one handle $T$ and around the critical point of $T$. This is Weinstein for all sufficiently large $C$.
\end{prop}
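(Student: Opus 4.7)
My plan is to assemble the Weinstein structure by combining the two computations already carried out in the excerpt: (i) the trivial-monodromy region $\overline{T_0 \setminus T} \times M$, and (ii) the $1$-handle region $T \times M$ with twisted monodromy. The proposition then follows by a careful gluing argument.

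First, over the complement of $T$ in $T_0$, the fibration is trivial (as observed above), so we take $(\lambda_z, f_z) = (\lambda_M, f_M)$ constant in $z$, and the ansatz reads $\lambda_{T_\phi} = \lambda_{T_0} + C^{-1}\lambda_M$, $f_{T_\phi} = f_{T_0} + C^{-1}f_M$. On this piece the symplectic form is $\omega_{T_0} + C^{-1}\omega_M$, the Liouville vector field is $Z_{T_0} + C^{-1}Z_M$ (no correction term since $\nabla_q\lambda_q = 0$ here), and the Lyapunov property is immediate from summing the Lyapunov inequalities for $(Z_{T_0}, f_{T_0})$ and $(Z_M, f_M)$. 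The critical points of $f_{T_0} + C^{-1}f_M$ are exactly products of critical points of $f_{T_0}$ and $f_M$.

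Next, over the $1$-handle $T$, I would use precisely the families $\lambda_q = \lambda_M + dK_q$ and $f_q$ interpolating between $(\lambda_M, f_M)$ on $q \in [-1, \delta]$ and $(\phi^{-1})^*(\lambda_M, f_M)$ on $q \in [1-\delta, 1]$ that were introduced above. The computation culminating in the formula $Z_C = Z_T + Z_q + \frac{\epsilon}{1-\epsilon} q\nabla_q Z_q + O(C^{-1})\partial_p$ then shows, for $C$ large, that $Z_C$ is outward-pointing on the upper and lower boundaries of $T \times M$ (dominated by $Z_T$) and on $T \times \partial M$ (dominated by $Z_q = Z_M$ near the boundary), so the form is Liouville. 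The analysis of critical points shows that the only critical locus of $f_C$ lies over $(q,p) = (0,0)$ where $(\lambda_q, f_q)$ is constant in $q$ and equals the Weinstein data of $M$; there the critical points are non-degenerate products of the handle center with critical points of $f_M$, and the computation $Z_C(f_C) = Z_T(f_T) + C^{-1}Z_M(f_M)$ gives Lyapunov.

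Finally, the two pieces must be glued. Since we arranged $(\lambda_z, f_z)$ to be locally constant and equal to $(\lambda_M, f_M)$ in a neighborhood of the boundary between $\overline{T_0 \setminus T}$ and $T$, the Liouville form and Morse function match on an open overlap, so the gluing is tautological and produces a well-defined global $(\lambda_{T_\phi}, f_{T_\phi})$ of the required form. The boundary $\partial T_\phi$ is contact because the Liouville field is outward-pointing there, assembled from the two cases above. The main delicate point is confirming, uniformly over the compact base $T_0$, that a single $C$ works: the error terms in the Liouville and Lyapunov estimates over $T$ depend smoothly on $q$ and on the fiber point but are supported in a compact region, so one can indeed pick $C$ large enough to dominate all of them simultaneously. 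This uniformity is what I would expect to be the main technical point to double-check, but it follows from the compact support of $K$ and the smooth dependence of $\lambda_q, f_q$ on $q$.
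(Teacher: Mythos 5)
Your proposal follows the paper's argument exactly: the handle computation already laid out in the text supplies the key estimates over $T\times M$, over $\overline{T_0\setminus T}\times M$ the structure is a genuine product of Weinstein structures, and the two pieces are then glued along a collar where $(\lambda_z,f_z)$ is constant, just as the paper concludes ``by gluing the `Weinstein structures' on $\overline{T_0\setminus T}\times M$ and $T\times M$.'' One small slip worth correcting: the Liouville vector field dual to $\lambda_{T_0}+C^{-1}\lambda_M$ with respect to $\omega_{T_0}+C^{-1}\omega_M$ is $Z_{T_0}+Z_M$, not $Z_{T_0}+C^{-1}Z_M$ (the $C^{-1}$ cancels in $i_{Z_M}(C^{-1}\omega_M)=C^{-1}\lambda_M$), which is consistent with the paper's handle formula $Z_C = Z_T + Z_q + \tfrac{\epsilon}{1-\epsilon}q\nabla_q Z_q + O(C^{-1})\partial_p$; this does not affect your Lyapunov conclusion.
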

Recall how we made the original symplectic structure on $T_\phi$ Liouville. We found a primitive of the form \begin{equation}\label{zort}
\lambda'_{T_\phi}=\lambda_{T_0}+\lambda_s+\rho'(s)Kds
\end{equation}
where $\lambda_s=\lambda_M+\rho(s)d_M(K )$.

Turning $C$ parameter on would effect these only by 
\begin{equation}\label{blaba}
\lambda'_{T_\phi,C}=\lambda_{T_0}+C^{-1}\lambda_s+C^{-1}\rho'(s)Kds
\end{equation}
Now, for large enough $C$, the Liouville structure (\ref{babla}) and (\ref{blaba}) are linearly interpolated by Liouville forms. 
Hence, they are deformation equivalent. (\ref{blaba}) is clearly deformation equivalent to  (\ref{zort}). In summary:
\begin{prop}\label{prop:weinstein}
$T_\phi$ with its standard symplectic structure is Liouville and the corresponding Liouville form is deformation equivalent to Liouville form of a Weinstein structure.
\end{prop}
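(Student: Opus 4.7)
The plan is to realize the proposition as a two-stage deformation: first deform the Liouville form of Proposition \ref{iniweiprop} into its $C$-scaled cousin of the naive form \eqref{zort}, then turn off the $C$-scaling. Both stages should be carried out through explicit convex interpolations of 1-forms, with the bulk of the work being verification that the interpolants remain Liouville.

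First I would introduce the one-parameter family
\begin{equation}
\lambda'_{T_\phi,C}=\lambda_{T_0}+C^{-1}\lambda_s+C^{-1}\rho'(s)K\,ds,
\end{equation}
as in \eqref{blaba}, and check directly that it is Liouville for every $C\geq 1$. Its exterior derivative equals $\omega_{T_0}+C^{-1}\omega_M+C^{-1}(\text{connection terms})$, which for large $C$ has leading-order top power $C^{-n}\omega_{T_0}\wedge\omega_M^n$, as in the computation already performed in Section \ref{subsec:weinstein}; so it is symplectic. The Liouville vector field splits as $Z_{T_0}+Z_s+O(C^{-1})$ plus a bounded vertical correction coming from the $\rho'(s)K\,ds$ piece (which is compactly supported in $s$ and in the fiber). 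Near the conical ends of the fibers and near the puncture of $T_0$, the corrections vanish, so outward-pointing is preserved. Hence $\lambda'_{T_\phi,C}$ furnishes a Liouville homotopy from $\lambda'_{T_\phi,1}=$ \eqref{zort} (at $C=1$) to $\lambda'_{T_\phi,C_0}$ for any fixed large $C_0$.

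Next, for $C=C_0$ sufficiently large, I would write down the linear interpolation
\begin{equation}
\lambda_t=(1-t)\bigl(\lambda_{T_0}+C_0^{-1}\lambda_z\bigr)+t\bigl(\lambda_{T_0}+C_0^{-1}\lambda_s+C_0^{-1}\rho'(s)K\,ds\bigr),\qquad t\in[0,1],
\end{equation}
between the Weinstein Liouville form \eqref{babla} and $\lambda'_{T_\phi,C_0}$. The point is that the leading-order piece $\lambda_{T_0}$ is identical on both sides, so $d\lambda_t=\omega_{T_0}+C_0^{-1}(\text{fiber terms depending on }t)$, whose top exterior power is $C_0^{-n}\omega_{T_0}\wedge \omega_M^n+O(C_0^{-n-1})$ uniformly in $t$; choosing $C_0$ large enough makes every $\lambda_t$ symplectic. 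The associated Liouville vector field is $Z_{T_0}$ plus fiberwise and base corrections of size $O(C_0^{-1})$, so — exactly as in the analysis following \eqref{zcorr} — $Z_{T_0}$ dominates at the horizontal boundary strata and the fiberwise piece dominates at the vertical ones, keeping the Liouville vector field outward pointing at $\partial T_\phi$ throughout the homotopy. (Here one uses that the families $\{\lambda_z\}$ and $\{\lambda_s\}$ are both compactly supported deformations of $\lambda_M$ away from $\partial M$, so the vertical boundary behavior is unaffected.)

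The main obstacle is the simultaneous control of the two conditions — symplecticity and outward-pointing of the Liouville field — along the interpolation; this is what forces the parameter $C_0$ to be large. Once the two Liouville homotopies above are established, concatenating them yields a Liouville deformation from \eqref{zort} to \eqref{babla}, the latter being the Liouville form of the Weinstein structure produced in Proposition \ref{iniweiprop}. This proves both assertions of Proposition \ref{prop:weinstein}.
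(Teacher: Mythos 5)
Your proposal is correct and takes essentially the same route as the paper: both arguments concatenate the same two Liouville homotopies—a linear interpolation at fixed large $C$ between the Weinstein form (\ref{babla}) and the $C$-scaled form (\ref{blaba}), and a scaling of the $C$-parameter down to $C=1$—you simply run them in the opposite order. The paper leaves both stages as one-line assertions, whereas you verify the symplecticity and outward-pointing conditions explicitly; it is the same proof, spelled out.
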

\subsection{Generators for $\cW(T_\phi)$}\label{subsec:generators}
Now, we will write an explicit set of generators for $\cW(T_\phi)$. As shown in \cite{Weinsteinsector} and \cite[Theorem 1.10]{GPS2}, the cocores of a Weinstein manifold generate its wrapped Fukaya category. The cocores of the Weinstein structure in Proposition \ref{iniweiprop} can be described as follows: The cocores of $T_0$ with the chosen structure are given by green and purple curves in Figure \ref{figure:handletorus} (i.e. the dividing horizontal and vertical curves), which we denote by $L_{gr}$, and $L_{pur}$ respectively. Fix lifts of these curves to $\Z$-fold cover $\tilde T_0\to T_0$, and denote them by $\tilde L_{gr},\tilde L_{pur}$. 
\begin{defn}\label{defn:twlagr}
Let $L'\subset M$ and $L\subset T_0$ be cylindrical Lagrangians with a fixed lift $\tilde L\subset\tilde T_0$ of the latter. Let $L\times_\phi L'$ denote the image of $\tilde L\times L'$ under the projection map $\tilde T_0\times M\to T_\phi$.
\end{defn}
It is easy to see the cocores of critical handles of (\ref{iniweimorse}) are among the Lagrangians $L_{gr}\times_\phi L'$, $L_{pur}\times_\phi L'$. More precisely, if $L'$ is a cocore disc for $M$, moving it along green and purple curves in Figure \ref{figure:handletorus} gives us the cocores of $T_\phi$. 

It is easy to see that by careful choices $L_{gr}\times_\phi L'$ and $L_{pur}\times_\phi L'$ can be forced to stay as exact Lagrangians throughout the Liouville deformations involved. Hence, we have proven
\begin{cor}\label{torusgenerators}$\cW(T_\phi)$ is generated by objects of the form $L_{gr}\times_\phi L'$ and $L_{pur}\times_\phi L'$, where $L'$ is a cocore for $M$.
\end{cor}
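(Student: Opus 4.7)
The plan is to invoke the result of \cite{Weinsteinsector} (or \cite[Theorem 1.9]{GPS2}) that cocores of top-index critical handles generate the wrapped Fukaya category of a Weinstein manifold, and then to identify the cocores of $T_\phi$ equipped with the Weinstein structure of Proposition \ref{iniweiprop} as precisely the Lagrangians $L_{gr}\times_\phi L'$ and $L_{pur}\times_\phi L'$ (up to an exact cylindrical isotopy absorbing the Liouville deformation of Proposition \ref{prop:weinstein}).

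The first step is to enumerate the critical points. By the construction of Proposition \ref{iniweiprop}, the family $(\lambda_z,f_z)$ is locally constant in $z$ and equal to $(\lambda_M,f_M)$ on a neighborhood of each critical point of $f_{T_0}$. Hence near such neighborhoods $Z_{T_\phi}$ and $f_{T_\phi}$ split as sums of data on $T_0$ and on $M$, and so
\begin{equation*}
\mathrm{Crit}(f_{T_\phi})=\mathrm{Crit}(f_{T_0})\times \mathrm{Crit}(f_M),
\end{equation*}
with Morse indices adding. The top-index critical points are pairs $(c,c')$ with $c$ the center of one of the two $1$-handles of $T_0$ and $c'$ an index-$n$ critical point of $f_M$; the local cocore at $(c,c')$ is the product $L_*\times L'$ of the local cocore $L_*\in\{L_{gr},L_{pur}\}$ of $c$ in $T_0$ and the cocore $L'\subset M$ of $c'$.

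The second step is to extend this local cocore globally via the stable manifold of $-Z_{T_\phi}$. Over $T_0\setminus T$ the Liouville field is a product, so the extension is literally of product form. Over the $1$-handle $T$, the explicit computation before Proposition \ref{iniweiprop} gives $Z_C=Z_T+Z_q+\tfrac{\epsilon}{1-\epsilon}q\,\nabla_q Z_q+O(C^{-1})\partial_p$; the correction $\nabla_q Z_q$ lies in the fiber direction and drags the fiber slice of the stable manifold along the family $\{Z_q\}$. Since $Z_q$ interpolates between $Z_M$ and $(\phi^{-1})_*Z_M$, the slice that starts as $L'$ on the left boundary of $T$ ends as $\phi^{-1}(L')$ on the right boundary. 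Lifting to $\tilde T_0\times M$ and choosing the lift $\tilde L_*\subset\tilde T_0$ of $L_*$, the stable manifold becomes precisely $\tilde L_*\times L'$, which descends to $L_*\times_\phi L'$ by Definition \ref{defn:twlagr}.

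Finally one checks exactness: the form $\lambda_{T_\phi}=\lambda_{T_0}+C^{-1}\lambda_z$ restricts to an exact form on $L_*\times_\phi L'$ because $\lambda_{T_0}|_{L_*}$ and $\lambda_z|_{L'}$ are both exact, and the Liouville deformation of Proposition \ref{prop:weinstein} can be chosen to keep these Lagrangians cylindrical and exact (the correction $\rho'(s)K\,ds$ vanishes along the chosen cocores outside a compact set). The main difficulty is the second step: one must verify that the fiber drift induced by $\nabla_q Z_q$ across $T$ matches the mapping torus gluing by $\phi^{-1}$. This compatibility is essentially built into the ansatz, since $\lambda_q$ was defined to interpolate between $\lambda_M$ and $(\phi^{-1})^*\lambda_M$, but a careful analysis of the integral curves of $Z_C$ across $T$ is needed to make the identification precise.
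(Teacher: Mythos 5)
Your overall strategy matches the paper's: invoke that cocores of the top-index handles generate (via \cite{Weinsteinsector} or \cite[Theorem 1.9]{GPS2}), identify these cocores for the Weinstein structure of Proposition \ref{iniweiprop} with the Lagrangians $L_{gr}\times_\phi L'$, $L_{pur}\times_\phi L'$, and absorb the Liouville deformation of Proposition \ref{prop:weinstein} by keeping the cocores exact and cylindrical throughout. Your step one (critical points are products, indices add) and step three (exactness persists under deformation) are in line with the paper's brief sketch.

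However, your step two contains a genuine confusion between the core and cocore directions of the handle $T$. In the ansatz, the $\phi$-interpolation happens purely in the $q$-direction: $\lambda_q=\lambda_M$ for $q\in[-1,\delta]$ and $\lambda_q=(\phi^{-1})^*\lambda_M$ for $q\in[1-\delta,1]$. With $f_T=(p^2-\epsilon q^2)/2$, the $q$-direction is the \emph{descending} (core) direction and $p$ is the \emph{ascending} (cocore) direction. The unstable manifold of $Z_C$ at the top-index critical point $(q,p)=(0,0)$, $c'\in\mathrm{Crit}(f_M)$, therefore lies entirely over $q=0$. Over $q\in[-1,\delta]$ the family $\lambda_q$ is constant, so $\nabla_q\lambda_q=0$, $\nabla_qZ_q=0$, and every correction term in $Z_C$ vanishes identically there; $\{q=0\}\times M$ is preserved by the Liouville flow and over it $Z_C$ is honestly of split form $Z_T+Z_M$. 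Consequently the cocore is an honest product $\{q=0\}\times L'$ in the $T\times M$ trivialization, extended by the Liouville flow into the (product) conical end. There is no fiber drift along the cocore, and in particular the cocore never reaches the $q=\pm 1$ faces where the $\phi$-gluing lives. The drift from $L'$ to $\phi^{-1}(L')$ that you describe is a feature of the \emph{core} (the $p=0$ slice), not the cocore — and moreover your argument is internally inconsistent, since a Lagrangian that genuinely drifted from $L'$ to $\phi^{-1}(L')$ across $T$ would \emph{not} lift to the split Lagrangian $\tilde L_*\times L'$ in $\tilde T_0\times M$. The correct reason the cocore equals $L_*\times_\phi L'$ is simply Definition \ref{defn:twlagr}: the cocore, being a split product in a fundamental domain, lifts to $\tilde L_*\times L'$ in $\tilde T_0\times M$ and descends to its image $L_*\times_\phi L'$; the ``twist'' in the notation records the choice of lift, not a geometric drift along the cocore.
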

\section{Mapping torus categories and twisted tensor products}\label{sec:twtensor}
\subsection{Twisted tensor product, twisted bifunctors and bimodules}
Let $A$ and $A'$ be two ordinary algebras. Assume $A$ carries an extra $\Z$-grading and $A'$ carries an automorphism $\phi$. Following \cite{twistedproduct}, we can define $A\otimes_{tw} A'$ as the algebra with underlying vector space $A\otimes A'$ and with multiplication \begin{equation}\label{eq:commtw}(a_1\otimes a_1').(a_2\otimes a_2')=a_1a_2\otimes \phi^{-|a_2|}(a_1')a_2'
\end{equation}
where $|a_2|$ is the degree of $a_2$ in the extra grading. Hence, one can describe a right module over $A\otimes_{tw}A'$ as a vector space $M$ with a right $A$-module structure 
\begin{equation}
(m,a)\mapsto \mu^{1|1;0}(m|a;)
\end{equation}
and a right $A'$-module structure, 
\begin{equation}
(m,a')\mapsto \mu^{1|0;1}(m|;a')
\end{equation} satisfying
\begin{equation}
\mu^{1|0;1}\big(   \mu^{1|1;0}(m|a;)|  ;a'\big)-
\mu^{1|1;0}\big(   \mu^{1|0;1}(m|;\phi^{|a| }(a') )| a ;\big)=0
\end{equation}
for any $m\in M,a\in A,a'\in A'$.
This is the same as saying $(m.a).a'=(m.\phi^{|a|}(a') ).a  $. 

The definition of such bimodules extends to $A_\infty$-categories immediately. Namely, let $\B$ and $\B'$ be two $A_\infty$-categories. Assume $\B$ carries an extra $\Z$-grading such that the $A_\infty$-structure maps preserve the degree, and $\B'$ is endowed with a strict automorphism $\phi$ without higher maps. 
\begin{figure}\centering
\includegraphics[height= 4 cm]{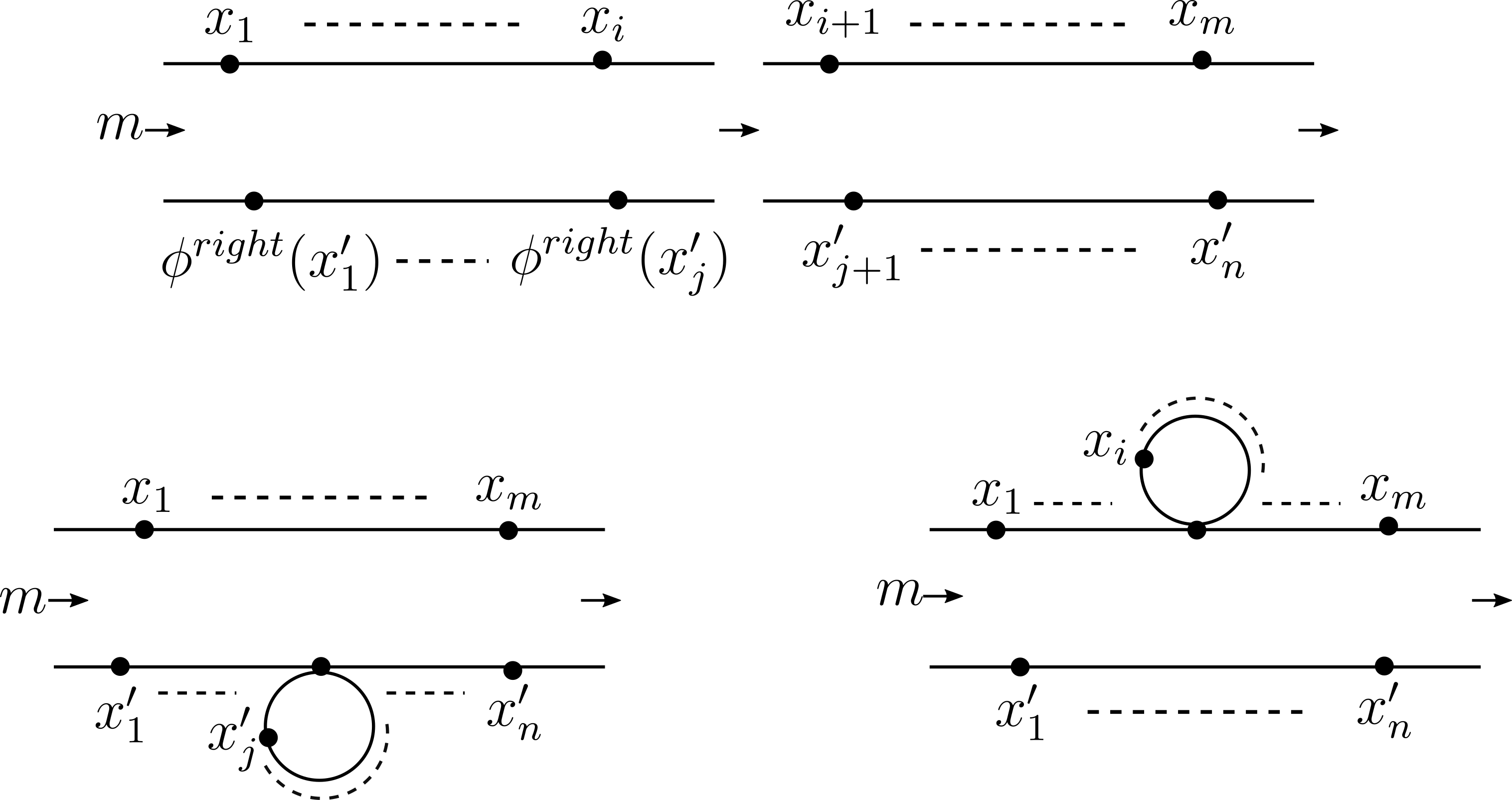}
\caption{A pictorial representation of twisted $A_\infty$-bimodule equations}
\label{figure:maubimoduleequations}
\end{figure}
\begin{defn}A (right-right) twisted $A_\infty$-bimodule $\fM$ over $\B$-$\B'$ is given by an assignment \begin{equation}
(L,L')\in ob(\B)\times ob(\B')\mapsto \fM(L,L')	
\end{equation}
of a $\Z$-graded vector space $\fM(L,L')$ and maps 
\begin{equation}\label{twbimodmaps}
\fM(L_0,\phi^gL'_0)\otimes \B(L_1,L_0)^{g_1}\otimes \B(L_2,L_1)^{g_2}\otimes \dots \otimes \B(L_m,L_{m-1})^{g_m}\otimes\atop \B'(L'_1,L'_0)\otimes\dots \otimes \B'(L'_n,L'_{n-1})\to \fM(L_m,L'_n)[1-m-n]
\end{equation}
where $\B(L_1,L_0)^{g_1}$ denotes the degree $g_1$-part of $\B(L_1,L_0)$ in the extra grading, and $g=\sum g_i$. We will denote these maps by $\mu_\fM=\mu_\fM^{1|m;n}$, omitting $L_i,L'_j$ and the degrees from the notation. These maps are required to satisfy 
\begin{align*}
\sum\pm \mu_\fM(\mu_\fM(m|x_1,\dots x_i;\phi^{right}(x_1',\dots x_j') )| x_{i+1}, \dots x_m; x_{j+1}',\dots ,x_n')+\\\sum\pm \mu_\fM(
m| x_1\dots \mu_\B(\dots)\dots, x_m; x_1'\dots x_n')+\\\sum\pm
\mu_\fM(m| x_1\dots x_m; x_1', \dots\mu_{\B'}(\dots) \dots x_n')=0
\end{align*}
where $\phi^{right}$ denotes $\phi^{|x_{i+1}|+|x_{i+2}|+\dots+|x_m|}$, i.e. $\phi$ applied as many times as the total degree of $x_s$'s on the right, and $\phi^{right}(x_1',\dots x_j')$ means $\phi^{right}$ is applied to each $x_1',\dots, x_j'$ separately, rather than a higher component of $\phi$ (we use this notation in order to shorten the expression). See Figure \ref{figure:maubimoduleequations} for a pictorial representation of the terms in the twisted $A_\infty$-bimodule equations, adapted by \cite{quiltedstrip}. 

A (pre-bimodule) homomorphism $f$ from a twisted bimodule $\fM$ to another $\fM'$ is defined to be a collection of maps 
\begin{equation}\label{twbimodmorphisms}
f^{1|m;n}:\fM(L_0,\phi^gL'_0)\otimes \B(L_1,L_0)^{g_1}\otimes \B(L_2,L_1)^{g_2}\otimes \dots \otimes \B(L_m,L_{m-1})^{g_m}\otimes\atop \B'(L'_1,L'_0)\otimes\dots \otimes \B'(L'_n,L'_{n-1})\to \fM'(L_m,L'_n)[-m-n]
\end{equation}
The differential of the pre-bimodule homomorphism $f$ is 
\begin{align*}
\sum\pm \mu_{\fM'}(f(m|x_1,\dots x_i;\phi^{right}(x_1',\dots x_j') )| x_{i+1}, \dots x_m; x_{j+1}',\dots ,x_n')+\\
\sum\pm f(\mu_\fM(m|x_1,\dots x_i;\phi^{right}(x_1',\dots x_j') )| x_{i+1}, \dots x_m; x_{j+1}',\dots ,x_n')+\\\sum\pm f(
m| x_1\dots \mu_\B(\dots)\dots, x_m; x_1'\dots x_n')+\\\sum\pm
f(m| x_1\dots x_m; x_1', \dots\mu_{\B'}(\dots) \dots x_n')
\end{align*}
(when this is $0$, we say $f$ is a homomorphism of twisted bimodules). Twisted bimodules form a dg category denoted by $Bimod_{tw}(\B,\B')$. The composition is similar to composition of pre-bimodule homomorphisms (see \cite{seidelbook}); however, with a similar twisting rule (see Note \ref{note:rule}).
\end{defn}
\begin{note}\label{note:rule}
One can see ``a twisted $A_\infty$-object'' as an $A_\infty$-object in a symmetric monoidal category of graded vector spaces with an extra $\Z$-grading and a $\Z$-action preserving the extra grading. While the monoidal structure of this category is defined in the obvious way, the braiding map becomes 
\begin{equation}
x\otimes x'\mapsto \pm g^{|x|}.x'\otimes x.g^{|x'|}:= \pm g^{|x|}.x'\otimes g^{-|x'|}.x
\end{equation}
where $|x|$ denotes the extra degree, $g^{|x|}$ denote the corresponding automorphism, and the sign comes from the Koszul convention for the original grading. In our situation, we assume that the extra $\Z$-grading is trivial on one vector space and the $\Z$-action is trivial on the other. Therefore, the general rule to define a twisted $A_\infty$-object is that when swapping morphisms $x$ of $\B$ and $x'$ of $\B'$, one acts on $x'$ by $\phi^{|x|}$ (i.e. $x\otimes x'\mapsto \phi^{|x|}(x')\otimes x$) (for instance, this is the case with composition of pre-bimodule maps etc.). 
\end{note}
\begin{note}\label{note:sign}
We have not specified signs here, but any set of sign conventions for ordinary $A_\infty$-bimodules can be used (in particular signs that one can obtain by unfolding Koszul signs in bar constructions). 
\end{note}
\begin{rk}\label{rk:definetrimodule}
Given another $A_\infty$-category $\B''$, we define a left-right-right $\B''$-$\B$-$\B'$-trimodule with a twisting between the last two components 
as an $A_\infty$-functor $\B''\to Bimod_{tw}(\B,\B')$. In Section \ref{sec:kunneth}, we will define such a functor \begin{equation}
\cW(T_\phi)\to Bimod_{tw}(\cW(T_0),\cW(M) )
\end{equation} to prove Theorem \ref{mainthmsymp}. 
\end{rk}
\begin{rk}
One can weaken the assumption that $\phi$ is a strict auto-equivalence without higher maps. Namely, one can assume $\phi$ is bijective on objects and $\phi^1$ (the first component of $\phi$) is bijective on hom-sets. In other words, the action of $\phi$ induces an automorphism on the coalgebra $T\B'[1]$. In this case, the rule in swapping morphisms $x$ and $x'$ becomes 
\begin{equation}
 x\otimes (x_1'\otimes\dots x_n')\longmapsto\atop \sum_{i_1+\dots+i_k=n}\pm ( (\phi^{|x|})^{i_1}(x_1',\dots x_{i_1}')\otimes \dots \otimes (\phi^{|x|})^{i_k}(x_{n-i_k+1}'\dots x_n') )\otimes x 
\end{equation}
In other words, as before $\phi^{|x|}$ is applied to $x_1'\otimes\dots x_n'$ while moving $x$ to its right; however, this time $\phi^{|x|}$ is considered to be an automorphism of the dg coalgebra $T\B'[1]$. While this allows us to work with minimal models, the definition of twisted Yoneda bimodules (see Example \ref{twistedyoneda}) becomes much more complicated. 
%
We therefore prefer to assume $\phi$ has no higher maps. 
\end{rk}
\begin{exmp}\label{twistedyoneda}(Twisted Yoneda bimodule) Let $(L,L')\in ob(\B)\times ob(\B')$. Define the twisted Yoneda bimodule as \begin{equation}\label{eq:twyon}
\bigoplus_{r\in \Z}\B(\cdot,L)^r\otimes \B'(\cdot,\phi^{-r}(L'))	
\end{equation}
To define structure maps, one uses $A_\infty$-structures of $\B$ and $\B'$, but twists $\B'$ component by the degree of elements of $\B$ on its right. 
More explicitly, the bimodule structure is given by:
\begin{equation}\label{eq:yonedabistructure}
(y\otimes y'|x_1,\dots,x_m;x_1',\dots x_n')\longmapsto\begin{cases}
\pm\mu^1_\B(y)\otimes y'\pm y\otimes \mu^1_{\B'}(y') &m=n=0 \\
\pm\mu_\B(y,x_1,\dots,x_m)\otimes \phi^{-left}(y'),&m\neq 0,n= 0\\ 
\pm y\otimes \mu_{\B'}(y',x_1',\dots,x_n'),&m=0,n\neq 0\\
0,& m\neq 0, n\neq 0
\end{cases}
\end{equation}
where $\phi^{-left}=\phi^{-|x_1|-\dots-|x_m|}$. We will denote twisted Yoneda bimodules by $h_L\otimes_{tw} h_{L'}$, or simply by $h_L\otimes h_{L'}$ when the twisting is trivial. Notice that \begin{equation}
h_L\otimes_{tw}h_{L'}\simeq h_{L\langle 1\rangle }\otimes_{tw}h_{\phi^{-1}(L')}
\end{equation}
where $L\langle 1\rangle$ is the ``shift'' of $L$ defined by $\B(\cdot,L\langle 1 \rangle )^r=\B(\cdot,L)^{r+1}$ (one may enlarge $\B$ by adding these objects, and make it closed under such shifts).
Also, note that $h_L\otimes_{tw} h_{L'}$ only depends on the right Yoneda modules $h_L$, $h_{L'}$ and the compatible extra grading on $h_L$. To show this, identify (\ref{eq:twyon}) with 
\begin{equation}\label{eq:twyon2}
\bigoplus_{r\in \Z}\B(\cdot,L)^r\otimes B'(\phi^r(\cdot),L')=	\bigoplus_{r\in \Z} h_L(\cdot)^r\otimes h_{L'}(\phi^r(\cdot))
\end{equation} and describe structure maps for the latter. 
\end{exmp}
\begin{defn}\label{defn:twdga}
Assume $\B$ and $\B'$ are dg categories. Define the twisted tensor product 	$\B\otimes_{tw}\B'$ to be the dg category satisfying
\begin{enumerate}
\item $ob(\B\otimes_{tw}\B')=ob(\B)\times ob(\B')$
\item $hom(L_1\times L_1',L_2\times L'_2)=\bigoplus_{r\in\Z}\B(L_1,L_2)^r\otimes \B'(L_1',\phi^{-r} (L_2') ) $ as chain complexes
\end{enumerate}
with the composition defined by (\ref{eq:commtw}) (but with Koszul signs). 
\end{defn}
\begin{rk}
Given a model for the tensor product of $A_\infty$ algebras such as the model in \cite{loday}, one can presumably define its twisted version and a Yoneda embedding. However, we will bypass this by considering the full subcategory of $Bimod_{tw}(\B,\B')$ spanned by twisted Yoneda bimodules. This is equivalent to giving an explicit model by Yoneda Lemma (see Lemma \ref{lem:twyoneda}).
\end{rk}
From now on, assume the categories $\B$ and $\B'$ are cohomologically unital with units denoted by $e_L$ and $e_{L'}$. Further assume $\phi$ acts freely on objects of $\B'$, $e_L$ is homogeneous of degree $0$ (in the extra grading) and $\phi$ sends $e_{L'}$ to $e_{\phi(L')}$ for all $L'$. Once we have cohomological unitality, these can be arranged easily by passing to quasi-equivalent models. For instance, given cohomologically unital $\B'$, one can replace it with the category whose objects are pairs $[L',n]$, where $L'\in Ob(\B'),n\in\Z $, and whose homomorphisms from $[L_1',n_1]$ to $[L_2',n_2]$ are given by $\B'(L_1',L_2')$. Then, we let $\phi$ act by $[L',n]\mapsto [\phi(L'),n+1]$. 

We would like to investigate the structure of the category $Bimod_{tw}(\B,\B')$. Most of the following proofs are standard (up to remembering the rule of twisting $x\otimes x'\to \phi^{|x|}(x')\otimes x$ and $x'\otimes x\to x\otimes \phi^{-|x|}(x')$). Nevertheless, we will include them for the convenience of the reader. 
 
First, let us prove something for graded twisted bimodules over graded algebras:
\begin{lem}\label{lem:standardbar}
Let $A$ and $A'$ be graded algebras equipped with an extra grading and an automorphism $\phi$ respectively. Let $M$ be a twisted graded (right-right) $A$-$A'$-bimodule. Then there exists a bar type resolution of $M$ as a twisted bimodule consisting of shifted direct sums of $M\otimes A^{\otimes m}\otimes A'^{\otimes n}$.
\end{lem}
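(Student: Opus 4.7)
The plan is to reduce to the standard bar resolution for right modules over an ordinary algebra, using the translation between twisted bimodules and modules over the twisted tensor product. First I would observe that a twisted right-right $A$-$A'$-bimodule structure on a graded vector space $M$ is the same data as a right module structure on $M$ over the graded algebra $R := A \otimes_{tw} A'$ defined by (\ref{eq:commtw}): the twisted compatibility $(m\cdot a)\cdot a' = (m\cdot \phi^{|a|}(a'))\cdot a$ translates exactly into associativity of the right $R$-action under the identifications $m\cdot a = m\cdot (a\otimes 1_{A'})$ and $m\cdot a' = m\cdot (1_A \otimes a')$, and morphisms of twisted bimodules are precisely $R$-linear maps.

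With this equivalence of categories in hand, I would apply the standard one-sided bar resolution of $M$ as a right $R$-module,
\begin{equation*}
\dots \to M\otimes R^{\otimes 2}\otimes R \to M\otimes R\otimes R \to M\otimes R \to M \to 0,
\end{equation*}
whose $k$-th term $M\otimes R^{\otimes k}\otimes R$ is a free right $R$-module (the rightmost factor supplies the module structure) and whose differentials are the usual alternating sums built from the multiplication of $R$ and the action on $M$. As a graded vector space, the $k$-th term is $M\otimes (A\otimes A')^{\otimes (k+1)}$, which I would identify with $M\otimes A^{\otimes (k+1)}\otimes A'^{\otimes (k+1)}$ via the permutation of tensor factors (with Koszul signs) that groups all $A$-factors first and all $A'$-factors last. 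This puts each term of the resolution in the required form $M\otimes A^{\otimes m}\otimes A'^{\otimes n}$, with $m = n = k+1$, up to shift.

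The main technical point is verifying that this reordered complex really is a complex of twisted bimodules: once the bar differential is rewritten in the separated basis, powers of $\phi$ must be inserted whenever (\ref{eq:commtw}) would require moving an $A$-factor past an $A'$-factor, and Koszul signs coming from the extra grading must be tracked consistently. I do not expect any conceptual obstacle here, since the whole construction can be phrased as transport of the standard bar resolution through the equivalence of categories established in the first step, which makes the compatibilities automatic; the $\phi$- and sign-bookkeeping is essentially the only place where the twist really enters.
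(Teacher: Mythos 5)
Your proof is correct for the lemma as literally stated, but it takes a genuinely different route from the paper's, and the difference has a downstream consequence worth flagging.

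You reduce to the standard one-sided bar resolution of $M$ as a right module over $R = A\otimes_{tw}A'$ (using the equivalence between twisted right-right $A$-$A'$-bimodules and right $R$-modules, which is correct as you check), and then unshuffle the tensor factors. The $k$-th level is then $M\otimes A^{\otimes(k+1)}\otimes A'^{\otimes(k+1)}$, so the two tensor exponents are always equal. The paper instead builds a \emph{sequential} double bar resolution: first the bar resolution of $M$ over $A$, producing $M\otimes A^{\otimes m}$; then it equips each of those terms with a twisted $A'$-action by the rule $(m\otimes a)\cdot a' := m\phi^{|a|}(a')\otimes a$ and re-resolves over $A'$; then it takes a total complex. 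That yields terms $M\otimes A^{\otimes m}\otimes A'^{\otimes n}$ with $m$ and $n$ ranging \emph{independently}. Both are resolutions of the form promised by the statement, and both can be justified by the same categorical principle you invoke.

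Where the distinction matters: the paper subsequently uses Lemma \ref{lem:standardbar} inside the proof of Lemma \ref{lem:eqbimod} to identify the $E_1$-page of a length-filtration spectral sequence for the complex $\fM\otimes_\B \B\otimes_{\B'}\B'$ (formed by first tensoring over $\B$ and then over $\B'$) with a bar resolution, in order to conclude acyclicity. That $E_1$-page has the bigraded, sequentially-built shape $M\otimes A^{\otimes m}\otimes A'^{\otimes n}$ with $m,n$ independent, not the diagonal shape $m=n$ coming from the $R$-module bar construction. So while your resolution satisfies the sentence of the lemma, it is not the one the paper actually matches against later; anyone re-deriving Lemma \ref{lem:eqbimod} from your version would need an additional comparison between the diagonal and double bar resolutions. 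Your approach is cleaner as a proof of existence (the twisted bookkeeping is handled once and for all by the equivalence of categories), while the paper's iterated construction produces exactly the complex needed in the application; if you intend your lemma to be plug-compatible with the rest of Section \ref{sec:twtensor}, you should instead prove the double bar version.
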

\begin{proof}
First, consider the bar resolution of $M$ with respect to $A$. It is given by \begin{equation}
\{M\otimes A^{\otimes m} \}\to M
\end{equation} with the map $M\otimes A\to M$ being $m\otimes a\mapsto ma$. One can endow this resolution with an $A'$-action making it $A\otimes_{tw} A'$-linear. For instance, define $(m\otimes a).a':=m\phi^{|a|}(a')\otimes a$. Now, apply the standard bar construction as $A'$-modules to each term in the resolution to obtain a double resolution of type $M\otimes A^{\otimes m}\otimes A'^{\otimes n}$. Likewise, one can equip each of these terms with an $A$-action making the double complex $A\otimes_{tw} A'$-linear. By taking the total complex of this double resolution, we obtain what we desire.
\end{proof}
We would also like to prove independence of $Bimod_{tw}(\B,\B')$ from the quasi-equivalence type. Namely:
\begin{lem}\label{lem:eqbimod}
Let $f:\B\to \underline{\B}$ be a quasi-equivalence of extra graded $A_\infty$ categories. Let $f':\B'\to\underline{\B'}$ be a quasi-equivalence of $A_\infty$-categories that are equipped with strict auto-equivalences $\phi$ and $\underline{\phi}$ without higher maps. Assume $f$ is compatible with the extra grading and $f'$ strictly commutes with given auto-equivalences. Then, there is an induced dg quasi-equivalence 
\begin{equation}
F:Bimod_{tw}(\underline{\B},\underline{\B'})\to Bimod_{tw}(\B,\B')
\end{equation} 
\end{lem}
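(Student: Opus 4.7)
The plan is to define $F$ as the pullback along $f$ and $f'$ and then show it is cohomologically fully faithful and essentially surjective. Concretely, for $\fM \in Bimod_{tw}(\underline{\B},\underline{\B'})$, put $F(\fM)(L,L') := \fM(f(L),f'(L'))$, with structure maps
\begin{equation*}
\mu_{F(\fM)}^{1|m;n}(m\,|\,x_1,\dots,x_m;\,x_1',\dots,x_n') \,=\, \sum\nolimits \pm\, \mu_\fM\bigl(m\,|\,f(\cdots);\,f'(\cdots)\bigr),
\end{equation*}
where the sum runs over all ways of grouping the $x_i$ and $x_j'$ into inputs for the higher components of $f$ and $f'$. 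Compatibility of $f$ with the extra grading ensures the degree bookkeeping in the twisting is preserved, and strict commutation $f'\circ\phi = \underline{\phi}\circ f'$ (together with the absence of higher components on $\phi,\underline{\phi}$) guarantees the $\phi^{right}$ insertions pull back correctly. A similar formula defines $F$ on pre-bimodule homomorphisms, and the $A_\infty$-relations and closure/composition formulas for $f,f'$ make $F$ a dg functor.

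For quasi-fully-faithfulness, use the bar filtration on the morphism complex $\mathrm{Hom}_{Bimod_{tw}(\underline{\B},\underline{\B'})}(\fM,\fN)$ obtained by filtering pre-bimodule homomorphisms by the total word-length $m+n$ of the inputs from $\underline{\B},\underline{\B'}$. The dg functor $F$ respects this filtration, and on the associated graded the induced map is a direct product of maps of the form
\begin{equation*}
\mathrm{Hom}\bigl(\fM(\underline{L}_0,\underline{\phi}^g\underline{L}_0')\otimes \underline{\B}(\underline{L}_\bullet,\underline{L}_{\bullet-1})^{\bullet}\otimes \underline{\B'}(\underline{L}'_\bullet,\underline{L}'_{\bullet-1}),\,\fN(\underline{L}_m,\underline{L}'_n)\bigr),
\end{equation*}
pulled back along a tensor product of copies of the chain maps $f^1, f'^1$ (at the $E_1$ page of the spectral sequence only the linear parts survive, since higher components of $f,f'$ strictly increase filtration level). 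Since $f^1$ and $f'^1$ are quasi-isomorphisms on all hom-complexes, the $E_1$ map is a quasi-isomorphism term by term, and a standard convergence argument (or an inductive truncation of the bar complex) gives that $F$ is quasi-fully-faithful.

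For quasi-essential surjectivity, one argues that twisted Yoneda bimodules weakly generate $Bimod_{tw}(\B,\B')$ via the bar-type resolution of Lemma \ref{lem:standardbar} (the proof there carries over to $A_\infty$-categories essentially verbatim, replacing $M\otimes A^{\otimes m}\otimes A'^{\otimes n}$ by direct sums of twisted Yoneda bimodules shifted according to the extra grading). For a single twisted Yoneda bimodule $h_L\otimes_{tw} h_{L'}$ over $\B$-$\B'$, quasi-essential surjectivity of $f$ and $f'$ provides $\underline{L}\in ob(\underline{\B}),\underline{L}'\in ob(\underline{\B'})$ with $f(\underline{L})\simeq L$ and $f'(\underline{L}')\simeq L'$, and then $F(h_{\underline{L}}\otimes_{tw} h_{\underline{L}'})$ is quasi-isomorphic to $h_L\otimes_{tw} h_{L'}$ by quasi-fully-faithfulness applied to the Yoneda identifications. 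A general twisted bimodule $\mathfrak{N}$ is then lifted by lifting each term of its bar resolution and inductively lifting the structure maps using the already-established fully-faithfulness of $F$ on hom-complexes.

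The main technical obstacle is the correct bookkeeping of the twisting rule of Note \ref{note:rule} inside the bar resolution: one must check that the bar differential really does define a resolution compatible with the $\phi^{|x|}$ twistings, and that twisted Yoneda bimodules behave functorially under $F$. Both points are formal but rely heavily on the hypotheses that $f$ strictly preserves the extra grading and that $f'$ strictly commutes with $\phi,\underline{\phi}$; if either compatibility were only cohomological, one would have to introduce curved/gerby corrections of the kind alluded to in the preceding remark, which is precisely what Lemma \ref{lem:eqbimod} is designed to avoid.
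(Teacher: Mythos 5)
Your fully faithfulness argument (pullback functor $F$, filtration by total word length, reduction to tensor powers of $f^1,f'^1$ on the $E_1$-page) coincides with the paper's. For essential surjectivity, however, you and the paper take genuinely different routes, and yours has a couple of gaps.

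The paper constructs an explicit quasi-inverse
\begin{equation*}
G(\fM) := ``\,\fM\otimes_\B\underline{\B}\otimes_{\B'}\underline{\B'}\,",
\end{equation*}
the twisted version of extension of scalars along $f$ and $f'$, and then shows $FG(\fM)\simeq\fM\otimes_\B\B\otimes_{\B'}\B'\to\fM$ is a quasi-isomorphism by filtering the cone by total length and invoking Lemma~\ref{lem:standardbar} on the $E_1$-page. This sidesteps any need to know a priori that twisted Yoneda bimodules generate the target category, and it avoids obstruction theory entirely. Your approach instead tries to establish generation first and then lift. Two points where this is shaky. First, the claim that the bar-resolution statement of Lemma~\ref{lem:standardbar} ``carries over to $A_\infty$-categories essentially verbatim'' elides the real content: one still has to construct the $A_\infty$-bar bimodule $\fM\otimes_\B\B\otimes_{\B'}\B'$ with the correct twisted structure maps and then run a length-filtration argument to reduce to the ordinary case, which is precisely the work the paper does in constructing $G$; note also that the paper only proves Corollary~\ref{cor:genbimodtw} (generation by twisted Yoneda bimodules) \emph{as a consequence} of the bar resolution built inside the proof of this lemma, so you cannot quote generation without first rebuilding that resolution. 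Second, the step ``lift each term of the bar resolution and inductively lift the structure maps using fully-faithfulness'' is a nontrivial obstruction-theoretic argument you do not flesh out; to conclude essential surjectivity from ``generation of the target by the essential image'' one also needs that $Bimod_{tw}(\underline{\B},\underline{\B'})$ is closed under the relevant (shifted, infinite) colimits and that $F$ preserves them, which is true here but should be stated. A minor slip: you write $f(\underline{L})\simeq L$ with $\underline{L}\in ob(\underline{\B})$, but $f:\B\to\underline{\B}$, so for a twisted Yoneda bimodule over $\B$-$\B'$ one should simply take $\underline{L}=f(L)$, $\underline{L}'=f'(L')$; quasi-essential surjectivity of $f,f'$ is not what is needed for this step. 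In short, your approach can be pushed through, but it ends up reconstructing the paper's bar resolution anyway and then adds a lifting argument on top; the paper's explicit quasi-inverse $G$ is both shorter and more robust.
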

\begin{proof}
For simplicity assume all $A_\infty$-categories have only one object (hence, they are $A_\infty$-algebras). The induced map is standard and twisting does not effect the definition. Namely, if $\fM\in Bimod_{tw}(\underline{\B},\underline{\B'})$, then define $F(\fM)$ to be the bimodule with the same underlying chain complex, and with structure maps
\begin{equation}
(m|x_1,\dots,x_m;x_1',\dots,x_n')\mapsto\atop \sum\pm\mu_\fM(m|f^{i_1}(x_1,\dots, x_{i_1}), f^{i_2}(x_{i_1+1},\dots)\dots;f'^{j_1}(\dots),\dots  )
\end{equation}
Likewise, for a pre-bimodule homomorphism $g$, $F(g)$ is defined by 
\begin{equation}
(m|x_1,\dots,x_m;x_1',\dots,x_n')\mapsto\atop \sum\pm g(m|f^{i_1}(x_1,\dots, x_{i_1}), f^{i_2}(x_{i_1+1},\dots)\dots;f'^{j_1}(\dots),\dots  )
\end{equation}
It is easy to see that $F$ is a dg functor. To see $F$ is cohomologically fully faithful, filter the hom-complexes by the total length. Then, the induced map between associated graded complexes is clearly a quasi-isomorphism. 

To see it is essentially surjective, one can construct a quasi inverse $G$ as 
\begin{equation}\label{eq:mbbar}
``\fM\otimes_\B \underline{\B}\otimes_{\B'}\underline{\B'}"
\end{equation}
In other words, as a complex, $G(\fM)$ is obtained as shifted sums of \begin{equation}\fM\otimes \B^{\otimes m}\otimes \underline{\B}\otimes \B'^{\otimes n}\otimes \underline{\B'}
\end{equation} similar to untwisted case. The only difference is, when defining the structure maps (and action of the functor $G$ on morphisms), one has to take twisting into account. For instance, $\mu_\fM(m|;b')\otimes b$ term in the differential of $m\otimes b\otimes b'$ is replaced by  $\mu_\fM(m|;\phi^{|b|}(b'))\otimes b$. Then,
$FG(\fM)$ is quasi isomorphic to bimodule given by \begin{equation}\label{eq:barmodfg}``\fM\otimes_\B \B\otimes_{\B'}\B'"
\end{equation}as a twisted $\B$-$\B'$-bimodule, where the quasi-isomorphism is induced by $f$ and $f'$ seen as maps of $\B$-$\B$, resp. $\B'$-$\B'$ bimodules $\B\to\underline{\B}$, resp. $\B'\to\underline{\B'}$. There exists a natural map
\begin{equation}\label{eq:barmodcone}
\fM\otimes_\B \B\otimes_{\B'}\B'\to\fM
\end{equation}
of twisted bimodules and one can filter the cone of (\ref{eq:barmodcone}) by the total length. The $E_1$-page of the corresponding spectral sequence is (a union of the summands of) the standard bar resolutions of Lemma \ref{lem:standardbar}, for $A=H^*(\B)$, $A'=H^*(\B')$ and $M=H^*(\fM)$; hence, it is acyclic (to make sure the $E_1$-page agrees with the standard bar resolution in Lemma \ref{lem:standardbar}, one can construct (\ref{eq:mbbar}) and (\ref{eq:barmodfg}) by first taking $\otimes_{\B}\underline{\B}$ and then $\otimes_{\B'}\underline{\B'}$). This implies, the natural map from (\ref{eq:barmodfg}) to $\fM$ is a quasi-isomorphism and we are done.
\end{proof}
The bimodules we will encounter in Section \ref{sec:kunneth} will fall into span of twisted Yoneda bimodules. However, the following is a natural corollary of the proof of Lemma \ref{lem:eqbimod} and we include it here:
\begin{cor}\label{cor:genbimodtw}
The category $Bimod_{tw}(\B,\B')$ is generated by twisted Yoneda bimodules (in the sense that every object is quasi-isomorphic to a (homotopy) colimit of finite complexes of twisted Yoneda bimodules).
\end{cor}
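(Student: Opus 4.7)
The plan is to adapt the standard bar resolution to the twisted setting, reusing exactly the machinery that appears in the construction of the quasi-inverse $G$ in the proof of Lemma \ref{lem:eqbimod}. Given a twisted bimodule $\fM \in Bimod_{tw}(\B,\B')$, I would build a double-indexed resolution $B_{\bullet,\bullet}(\fM) \to \fM$ whose $(m,n)$-term is the direct sum, over all tuples of objects $L_0,\dots,L_m$, $L'_0,\dots,L'_n$ and degrees $g_1,\dots,g_m$ with $g = \sum g_i$, of
$$\fM(L_0, \phi^g L'_0) \otimes \B(L_1,L_0)^{g_1} \otimes \cdots \otimes \B(L_m,L_{m-1})^{g_m} \otimes \B'(L'_1,L'_0) \otimes \cdots \otimes \B'(L'_n,L'_{n-1}) \otimes \bigl(h_{L_m} \otimes_{tw} h_{L'_n}\bigr).$$
The outer factor $h_{L_m}\otimes_{tw} h_{L'_n}$ makes each $B_{m,n}$ an (infinite) direct sum of shifts of twisted Yoneda bimodules. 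The differentials combine the $A_\infty$-structure of $\fM$, $\B$, $\B'$ with the bar/Yoneda action maps, where every swap of a $\B$-morphism past a $\B'$-morphism is accompanied by the appropriate power of $\phi$ dictated by Note \ref{note:rule}.

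Next, I would verify that the augmentation $B_{\bullet,\bullet}(\fM) \to \fM$, obtained by contracting the outer Yoneda units against the rightmost tensorands of $\B$ and $\B'$, is a quasi-isomorphism of twisted bimodules. This is the central computation behind the proof of Lemma \ref{lem:eqbimod}: filter the cone by the total length $m+n$ of the bar factors, and observe that the associated graded is the standard cohomological bar resolution of $H^*(\fM)$, viewed as a twisted graded $H^*(\B)$-$H^*(\B')$-bimodule in the sense of Lemma \ref{lem:standardbar}. That lemma yields acyclicity of the $E_1$-page and hence of the cone of the augmentation.

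Finally, realizing the totalization of $B_{\bullet,\bullet}(\fM)$ as the homotopy colimit of its finite truncations $B_{\leq N,\leq N}(\fM)$ exhibits $\fM$ as a homotopy colimit of finite complexes built from direct sums of shifts of twisted Yoneda bimodules, which is exactly the content of the corollary.

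The main technical obstacle is purely book-keeping: one must carefully track how powers of $\phi$ propagate through the bar differential when morphisms of $\B$ and $\B'$ are rearranged (so that the extra grading on each $\B$-factor matches the shift applied to the corresponding $\B'$-factor, in accordance with Example \ref{twistedyoneda}), and check sign consistency under the conventions of Note \ref{note:sign}. No conceptually new input beyond Lemma \ref{lem:standardbar} and the twisted bar computation already used in Lemma \ref{lem:eqbimod} is required.
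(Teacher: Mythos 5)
Your proposal is correct and follows essentially the same route as the paper: you reuse the twisted bar resolution $\fM\otimes_\B\B\otimes_{\B'}\B'$ that already appears in the proof of Lemma \ref{lem:eqbimod} (whose outer $\B\otimes\B'$ factor is precisely an infinite sum of shifted twisted Yoneda bimodules), and you exhibit $\fM$ as a homotopy colimit of its length truncations. The only cosmetic difference is that you truncate by $\max(m,n)\leq N$ whereas the paper truncates by total bar length; both give nested subcomplexes and both lead directly to the stated conclusion.
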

\begin{proof}
The resolution (\ref{eq:barmodfg}) can be seen as an infinite resolution by twisted Yoneda bimodules. More precisely, let $(\fM\otimes_\B \B\otimes_{\B'}\B')^{\leq n}$ denote the submodule of $\fM\otimes_\B \B\otimes_{\B'}\B'$ spanned by chains of length less than $n+1$ (this is a submodule since the structure maps of the bimodule are not increasing the length). This is clearly a finite complex of infinite sums of (shifted) twisted Yoneda bimodules. Moreover, $\fM\otimes_\B \B\otimes_{\B'}\B'$ is a (homotopy) colimit of $(\fM\otimes_\B \B\otimes_{\B'}\B')^{\leq n}$, since homotopy colimits of injective inclusions can be taken as the ordinary colimit (one can describe the homotopy colimit as a cone of two direct sums, with an induced map into (\ref{eq:barmodfg}), then the induced map is a chain equivalence, since the statement that homotopy colimit is the same as the limit is true at the chain level). Therefore, $\fM\otimes_\B \B\otimes_{\B'} \B'$ is a colimit of twisted Yoneda bimodules. 
\end{proof}
As expected, we also have the following:
\begin{lem}\label{lem:twyoneda}[Yoneda Lemma]
The chain complexes $hom(h_L\otimes_{tw}h_{L'},\fM)$ and $\fM(L,L')$ are quasi-isomorphic with a quasi-isomorphism given by 
\begin{align}
\gamma_{L,L'}:hom(h_L\otimes_{tw}h_{L'},\fM)\to \fM(L,L')\\ f \longmapsto f^{1|0;0}(e_L\otimes e_{L'})
\end{align}
\end{lem}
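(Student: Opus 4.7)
My plan is to adapt the standard proof of the $A_\infty$-bimodule Yoneda lemma to the twisted setting, carefully tracking the twisting rule of Note \ref{note:rule}. A direct computation shows $\gamma_{L,L'}$ is a chain map: expanding $d(f^{1|0;0}(e_L \otimes e_{L'}))$ using the pre-homomorphism equation for $f$ and the structure of $h_L \otimes_{tw} h_{L'}$, all error terms involve $\mu_\B^1(e_L)$, $\mu_{\B'}^1(e_{L'})$, or $\phi^k(e_{L'})-e_{\phi^k(L')}$, each of which vanishes by cohomological unitality and our assumption that $\phi$ preserves units on the nose.

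Next, I would construct a homotopy-inverse $\delta_{L,L'}:\fM(L,L') \to \mathrm{hom}(h_L \otimes_{tw} h_{L'},\fM)$ by setting
\begin{equation*}
\delta(m)^{1|p;q}(y\otimes y'\,|\,x_1,\dots,x_p;\,x_1',\dots,x_q') = \pm\,\mu_\fM^{1|1+p;1+q}\bigl(m\,\big|\,y,x_1,\dots,x_p;\,\phi^{-(|x_1|+\cdots+|x_p|)}(y'),x_1',\dots,x_q'\bigr).
\end{equation*}
The $\phi$-power on $y'$ is exactly the twist dictated by Note \ref{note:rule} when $y'$ is moved past $x_1,\dots,x_p$, and the degree counting is consistent: if $y \in \B(-,L)^r$ and $y'\in \B'(-,\phi^{-r}L')$, then $\phi^{-(|x_1|+\cdots+|x_p|)}(y') \in \B'(-,\phi^{-(r+|x_1|+\cdots+|x_p|)}L')$, which is precisely the input required for $\mu_\fM$ applied to $m \in \fM(L,L')$. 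Checking that $\delta(m)$ is a pre-homomorphism and that $\delta$ is a chain map reduces, after bookkeeping of $\phi$-twists and signs (using the conventions of Note \ref{note:sign}), to the twisted bimodule $A_\infty$-equations of $\fM$.

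Then $\gamma \circ \delta(m) = \mu_\fM^{1|1;1}(m|e_L;e_{L'})$, which is cohomologous to $m$ via the standard explicit homotopy built from higher products with units (after replacing $\fM$ by a strictly unital model if needed, justified by Lemma \ref{lem:eqbimod}). For $\delta \circ \gamma \simeq \mathrm{id}$, I would use the standard unit-insertion homotopy $H$ on $\mathrm{hom}(h_L \otimes_{tw} h_{L'},\fM)$ -- summing over positions where an extra $e_L$ or $e_{L'}$ can be inserted among the inputs of $f$, with $\phi$-twists dictated by Note \ref{note:rule} -- and verify $dH+Hd = \delta\gamma - \mathrm{id}$ directly. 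The main obstacle is the sign and twist bookkeeping in this last identity: one must match each contribution to $dHf$ either with a term of $Hdf$ or with one of the $\delta\gamma - \mathrm{id}$ contributions. Because $\phi$ is strict on $\B'$ and preserves units, the $\phi^{|x|}$-factors produced by Note \ref{note:rule} behave as formal automorphisms of the bar complex and cancel in matched pairs, so the untwisted argument extends with only cosmetic notational changes.
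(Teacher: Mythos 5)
Your proposed quasi-inverse $\delta$ is incorrect, and this is a genuine gap rather than a bookkeeping issue. The simplest way to see it: take $\fM$ to be a strictly unital dg twisted bimodule (no higher structure maps, so $\mu_\fM^{1|m;n}=0$ whenever $m+n\geq 2$). Then your formula gives $\delta(m)^{1|p;q}=\mu_\fM^{1|1+p;1+q}(m|\cdots)=0$ identically, so $\delta\equiv 0$ and certainly $\gamma\delta\not\simeq\mathrm{id}$. The correct dg-case formula, which the paper uses, is the \emph{iterated} expression $\lambda(d)^{1|0;0}(b\otimes b')=\mu_\fM^{1|0;1}(\mu_\fM^{1|1;0}(d|b;)|;b')$, not $\mu_\fM^{1|1;1}(d|b;b')$; for strictly unital bimodules $\mu_\fM^{1|1;1}(m|e_L;e_{L'})=0$, whereas $\mu_\fM^{1|0;1}(\mu_\fM^{1|1;0}(m|e_L;)|;e_{L'})=m$. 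So your claim that $\gamma\circ\delta(m)=\mu_\fM^{1|1;1}(m|e_L;e_{L'})$ is "cohomologous to $m$ via the standard homotopy" is false.

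The deeper reason: your $\delta$ is not even a chain map. When you expand $d(\delta(m))-\delta(d_\fM m)$ against the twisted bimodule $A_\infty$-relation for $\fM$ with inputs $(m;y,x_1,\dots,x_p;\phi^{-s}(y'),x_1',\dots,x_q')$, the terms where the inner $\mu_\fM$ takes $i\geq 1$ $\B$-inputs but $j=0$ $\B'$-inputs (and symmetrically $i=0$, $j\geq 1$) have nowhere to go: they do not correspond to any term of the pre-homomorphism differential applied to $\delta(m)$, nor to any structure map of the twisted Yoneda bimodule (which vanishes when both the $\B$- and $\B'$-input strings are nonempty). This obstruction is special to bimodules; the analogous one-term formula \emph{does} give a chain map for one-sided modules, which may be why it looked plausible. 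A correct general-$A_\infty$ formula for $\lambda$ exists but involves a sum over ways to split the $\B$- and $\B'$-inputs between two applications of $\mu_\fM$, and the twist bookkeeping becomes correspondingly more delicate. The paper deliberately sidesteps this: it writes the clean dg-case formula, establishes $\gamma$ is a quasi-isomorphism there via the length-filtration/bar-resolution argument dual to Lemma \ref{lem:standardbar}, and then reduces the general $A_\infty$ case to the dg case using Lemma \ref{lem:eqbimod} together with Corollary \ref{cor:genbimodtw}. You would be better served following that reduction than trying to write the general formula.
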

\begin{proof}
The proof of this is similar to \cite[Lemma 2.12]{seidelbook}. Namely, one writes a quasi-inverse
\begin{equation}
\lambda:\fM(L,L')\to hom(h_L\otimes_{tw}h_{L'},\fM)
\end{equation}
similar to \cite[(1.25)]{seidelbook}. For simplicity, assume $\B$ and $\B'$ are dg categories and $\fM$ is a twisted dg bimodule (i.e. has vanishing higher structure maps). Let $d\in \fM(L,L')$. $\lambda(d)$ given by 
\begin{align}
\lambda(d)^{1|0;0}(b\otimes b')=\mu_\fM(\mu_\fM(d|b;)|;b' )\\ \lambda(d)^{1|i;j}=0\text{ if }i\neq 0\text{ or }j\neq 0
\end{align}
defines a right quasi-inverse to $\gamma_{L,L'}$. To see $\lambda$ is a quasi-isomorphism, one can apply the same length filtration spectral sequence argument in \cite[Lemma 2.12]{seidelbook} (more precisely, one has to show exactness of another bar resolution for twisted bimodules: for this one can simply take the dual of the resolution in Lemma \ref{lem:standardbar} or follow its proof to construct the other bar resolution). One can 
generalize the map $\lambda$ to the general $A_\infty$ case (note one has to take twisting into account, the rule is as always $b\otimes x'\to \phi^{-|b|}(x')\otimes b$ etc.) and apply the same proof; however, we take the following route:

Alternative to using more general $\lambda$, one can choose quasi-equivalences from $\B$ and $\B'$ to dg categories $\underline{\B}$ and $\underline{\B'}$ carrying an extra grading and a strict auto-equivalence respectively such that the quasi-equivalences are strictly compatible with extra grading, resp. strictly commute with given auto-equivalences (one can also assume the chosen cohomological units map to strict units, but this is not necessary). Then, the induced map \begin{equation}\label{eq:restbimod}
Bimod_{tw}(\underline{\B},\underline{\B'})\to Bimod_{tw}(\B,\B')
\end{equation} 
is an equivalence by Lemma \ref{lem:eqbimod}. Hence, Yoneda lemma holds in the essential image of dg bimodules. As twisted Yoneda bimodules over $\underline{\B}$-$\underline{\B'}$ are dg and their image under (\ref{eq:restbimod}) are quasi-isomorphic to twisted Yoneda bimodules, the essential image of dg bimodules is all $Bimod_{tw}(\B,\B')$ by Corollary \ref{cor:genbimodtw}. This finishes the proof.
\end{proof}
\subsection{Mapping torus category as a twisted tensor product}\label{subsec:mttwi}
Let $\Tt_0$ denote the universal cover of the nodal elliptic curve $\T_0$, which is an infinite chain of projective lines. See Figure \ref{figure:catgluingcover}. $\Tt_0$ carries a translation automorphism denoted by $\tr$. It moves every projective line to the next (to the right in Figure \ref{figure:catgluingcover}) and generates the group of Deck transformations of $\Tt_0\to\T_0$, where $\T_0$ is the nodal elliptic curve over $\C$, which can as well be defined by $\T_0:=\Tt_0/(y\sim\tr(y))$. In \cite{ownpaperalg}, we constructed a dg category $\Om(\Tt_0)_{dg}$ such that \begin{equation}
H^0( tw^\pi(\Om(\Tt_0)_{dg} )  )\simeq D^b(Coh_p(\Tt_0) )
\end{equation}where $Coh_p(\Tt_0)$ is the abelian category of coherent sheaves with proper support on $\Tt_0$. The objects of $\Om(\Tt_0)_{dg}$ correspond to sheaves $\Om_{C_i}(-1)$ and $\Om_{C_i}$. We use $\Om_{C_i}(-1)$ and $\Om_{C_i}$ to denote the corresponding objects of $\Om(\Tt_0)_{dg}$ as well. Push-forward along $\tr$ induces a strict dg auto-equivalence of $\Om(\Tt_0)_{dg}$, which we still denote by $\tr$. 

Recall the following construction from \cite{ownpaperalg}: let $\A$ be a dg category, and let $\phi$ be a strict dg auto-equivalence of $\A$. Define \textbf{the mapping torus category} $M_\phi$ as the dg category with objects 
\begin{equation}\label{eq:obsofcat}
ob(M_\phi):=ob(\Om(\Tt_0)_{dg})\times ob(\A)
\end{equation} and with morphisms \begin{equation}\label{eq:homsofcat}
M_\phi(\scrF\times a,\scrF'\times a')=\bigoplus_{n\in \Z} \Om(\Tt_0)_{dg}(\scrF,\tr^{-n}(\scrF'))\otimes \A(a,\phi^{-n}(a'))
\end{equation} for $\scrF,\scrF'\in ob(\Om(\Tt_0)_{dg})$ and $a,a'\in ob(\A)$. (\ref{eq:obsofcat}) and (\ref{eq:homsofcat}) can be written concisely as \begin{equation}M_\phi:=(\Om(\Tt_0)_{dg}\otimes \A)\#\Z
\end{equation}
To define the mapping torus category for a more general $A_\infty$-category $\A$ with a strict quasi-equivalence $\phi$ (possibly with higher components), one has to find a dg category $\A^{str}$, a strict dg auto-equivalence $\phi^{str}$ on $\A^{str}$ and a quasi-equivalence $\A\to \A^{str}$ that commutes with $\phi$ and $\phi^{str}$. For instance, one can let $\A^{str}$ to be the Yoneda image of $\A$ under the right Yoneda embedding and $\phi^{str}$ to be $(\phi^{-1})^*$, the pull-back under the strict inverse $\phi^{-1}$ of $\phi$. 

Alternatively, one can describe $\Om(\Tt_0)_{dg}\otimes \A$ using a model for tensor products of $A_\infty$-algebras (such as \cite{loday}) and $(\Om(\Tt_0)_{dg}\otimes \A)\#\Z$ as a Grothendieck construction (see Note \ref{note:finalnote}).
\begin{rk}In \cite{ownpaperalg}, hom-sets were defined as \begin{equation}\label{eq:mphihom2}
\bigoplus_{n\in \Z} \Om(\Tt_0)_{dg}(\tr^n(\scrF),\scrF')\otimes \A(\phi^n(a),a')\end{equation} instead of (\ref{eq:homsofcat}). It is easy to identify (\ref{eq:homsofcat}) and (\ref{eq:mphihom2}) as chain complexes and under this identification, one can describe the product structure on $M_\phi$ by (\ref{eq:mphiprd2}).
Hence, the definitions are equivalent, but (\ref{eq:homsofcat}) is better suited for description of $M_\phi$ as a twisted tensor product.
\end{rk}
Let $\Om(\T_0)_{dg}$ denote the category with objects $\Om_{C_0}(-1),\Om_{C_0}$ and morphisms \begin{equation}\label{eq:ommorphism}
\Om(\T_0)_{dg}(\scrF,\scrF')=\bigoplus_{n\in \Z} \Om(\Tt_0)_{dg}(\scrF,\tr^{-n}(\scrF'))
\end{equation}
Endow $\Om(\T_0)_{dg}$ with an extra $\Z$-grading by setting $\Om(\Tt_0)_{dg}(\scrF,\tr^{-n}(\scrF'))$ to be the degree $n$ morphisms of $\Om(\T_0)_{dg}(\scrF,\scrF')$. 
\begin{prop}\label{prop:mttwistedtensor}
Let $\A$ be a dg category and $\phi$ be a strict dg auto-equivalence. Then, $M_\phi$ is the twisted tensor product of $\Om(\T_0)_{dg}$ and $\A$.
\end{prop}
\begin{proof}
This becomes a tautology once one recalls the product structure on $M_\phi$ and $\Om(\T_0)_{dg}$. For instance, given \begin{equation}
\alpha_1\otimes f_1\in \Om(\Tt_0)_{dg}(\scrF,\tr^{-m}(\scrF'))\otimes \A(a,\phi^{-m}(a') )\subset M_\phi(\scrF\times a,\scrF'\times a')\atop
\alpha_2\otimes f_2\in \Om(\Tt_0)_{dg}(\scrF',\tr^{-n}(\scrF''))\otimes \A(a',\phi^{-n}(a'') )\subset M_\phi(\scrF'\times a',\scrF''\times a'')
\end{equation}
the product in $M_\phi$ is defined as \begin{equation}\label{eq:mphiprd2}
(\alpha_2\otimes f_2)(\alpha_1\otimes f_1)=\pm\tr^{-m}(\alpha_2)\alpha_1\otimes \phi^{-m}(f_2)f_1
\end{equation}
where $\pm$ is the Koszul sign coming from switching $f_2$ and $\alpha_1$. On the other hand, considered as morphisms in $\Om(\T_0)_{dg}$, i.e. elements of (\ref{eq:ommorphism}), the product of $\alpha_1$ and $\alpha_2$ is given by $\tr^{-m}(\alpha_2)\alpha_1$. By definition of the extra grading on $\Om(\T_0)_{dg}$, $\alpha_1$ is homogenous of degree $m$. Therefore, the description (\ref{eq:mphiprd2}) of the product in $M_\phi$ coincide with the product in $\Om(\T_0)_{dg}\otimes_{tw}\A$ under the natural identification of hom-complexes of $M_\phi$ and $\Om(\T_0)_{dg}\otimes_{tw}\A$ (compare Definition \ref{defn:twdga} with (\ref{eq:homsofcat})). 
\end{proof}
A natural question one can ask is the dependence of quasi-equivalence type of $M_\phi$ on the dg model $\Om(\Tt_0)_{dg}$. One knows any other dg model for $D^b(Coh_p(\Tt_0))$ is quasi-equivalent to $\Om(\Tt_0)$ by the main result of \cite{orlovlunts}. Moreover,
one can improve (zigzags) of quasi-equivalence(s) to make it strictly $\tr$-equivariant. More precisely:
\begin{lem}\label{lem:zigzagt0}
Consider pairs $(\B,\psi)$, where $\B$ is an $A_\infty$-category, $\psi$ is an auto-equivalence acting bijectively on objects and hom-sets, and acting freely on objects. Let $(\Om',\tr')$ be another model for $D^b(Coh_p(\Tt_0))$ with the same set of objects as $\Om(\Tt_0)_{dg}$ and with a strict lift $\tr'$ of $\tr_*$. Then, there exists a zigzag of $A_\infty$-quasi-equivalences between $(\Om(\Tt_0)_{dg},\tr)$ and $(\Om',\tr')$ through pairs $(\B,\psi)$.
\end{lem}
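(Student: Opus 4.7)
The plan is to combine the Orlov--Lunts uniqueness theorem for enhancements of $D^b(Coh_p(\Tt_0))$ with a rigidification procedure that turns a homotopy-coherent $\Z$-action into a strict one. First, by \cite{orlovlunts}, any two dg enhancements of $D^b(Coh_p(\Tt_0))$ are connected by a zigzag of $A_\infty$-quasi-equivalences; ignoring equivariance, this yields a zigzag $\Om(\Tt_0)_{dg} \leftarrow \B_0 \to \Om'$. If more intermediate terms are required, we run the subsequent construction termwise along the zigzag, so without loss of generality we work with a single intermediate node.

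Next, the two functors $\B_0 \to \Om(\Tt_0)_{dg}$ and $\B_0 \to \Om'$ need not intertwine the translation auto-equivalences strictly. However, since both $\tr$ and $\tr'$ descend to the same $\tr_*$ on $D^b(Coh_p(\Tt_0))$, the pullback of $\tr$ via a homotopy inverse is homotopic to the analogous pullback of $\tr'$, so $\B_0$ carries a $\Z$-action that is strict only up to higher coherence. To rigidify, I would enlarge $\B_0$ to a category $\B$ whose object set is $\Z \times ob(\B_0)$ with $\psi$ acting by shift on the $\Z$-factor. The morphism complexes are defined as a telescope/bar-type construction that absorbs the homotopy between $F \circ \tr$ and $\tr' \circ F$ into the hom-spaces; one arranges them so that $\psi$ is a strict auto-equivalence and so that the natural projections $\B \to \Om(\Tt_0)_{dg}$ and $\B \to \Om'$ become strictly $\Z$-equivariant $A_\infty$-quasi-equivalences. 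That $\Z$ is a free group of cohomological dimension one is what makes this telescoping so concrete: only the single homotopy between $F \circ \tr$ and $\tr' \circ F$ needs to be encoded, no higher coherence data is required.

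Finally, one verifies the lemma's hypotheses on $(\B, \psi)$: by construction, $\psi$ acts bijectively on objects and on morphism complexes, and it acts freely on $ob(\B) = \Z \times ob(\B_0)$ because the shift of $\Z$ on itself is free. The resulting zigzag $\Om(\Tt_0)_{dg} \leftarrow \B \to \Om'$ then consists of strictly equivariant $A_\infty$-quasi-equivalences through pairs in the prescribed class. The main obstacle is this rigidification step: constructing $\B$ explicitly and checking the $A_\infty$-axioms for the telescoped structure maps, together with strict compatibility with $\tr$ and $\tr'$ on both sides, requires care with signs and with ensuring that the chosen homotopies between $F \circ \tr$ and $\tr' \circ F$ are compatible with composition up to coherent homotopy. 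Once this is in place, the remainder of the proof is a routine check.
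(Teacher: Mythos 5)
Your approach differs substantially from the paper's own proof. The paper (see Note~\ref{note:zigzagbylocal}) exploits the concrete homotopy-pushout description of $\Om(\Tt_0)_{dg}$ as the localization of the Grothendieck construction for $Pt_\infty \rightrightarrows \Om(\mathbb{P}^1\times\Z)_{dg}$: since $\Om(\mathbb{P}^1)_{dg}$ can be taken minimal, one can always produce a dg functor $\Xi_0$ from it to $\Om'$ lifting the pushforward $\mathbb{P}^1 \to C_0$, and defining $\Xi_i := (\tr')^i\circ\Xi_0$ yields a \emph{strictly} $\tr'$-equivariant functor from the localized Grothendieck construction to $\Om'$. The localized Grothendieck construction itself is independent of $\Om'$, so it serves as the common hub of the zigzag. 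The equivariance is built in from the start; no homotopy-coherence data needs to be rectified.

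Your proposal instead follows the route the paper only sketches as an alternative (the Fourier--Mukai remark following the lemma), and it has a genuine gap precisely where that remark flags one. You assert that ``since both $\tr$ and $\tr'$ descend to the same $\tr_*$ on $D^b(Coh_p(\Tt_0))$, the pullback of $\tr$ via a homotopy inverse is homotopic to the analogous pullback of $\tr'$.'' This is exactly the claim that any two dg/$A_\infty$ lifts of $\tr_*$ are naturally quasi-isomorphic, and it is \emph{not} a formal consequence of Orlov--Lunts uniqueness of enhancements. The Orlov--Lunts theorem produces a quasi-equivalence $\Om \to \Om'$ compatible with the identity on $H^0$, but it says nothing about whether an autoequivalence of $\Om$ inducing the identity on $H^0$ must be homotopic to the identity; the difference $\tau^{-1}\tau'$ in your argument is exactly such an autoequivalence, and to trivialize it you need a representability-by-kernels statement for quasi-functors in this setting. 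The paper explicitly notes that proving this requires developing an analogue of \cite[Theorem 4.6.2]{indcoh} for properly supported coherent sheaves on the infinite-type scheme $\Tt_0$, which is nontrivial. So the step you present as immediate is actually the crux of the alternative proof, and it remains to be justified. (The subsequent telescope/rectification of a strict $\Z$-action from a single coherent homotopy is reasonable in outline, and you correctly note that cohomological dimension one of $\Z$ keeps the coherence data manageable, but it is moot until the homotopy between the two pulled-back translations is actually established.)
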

A very simple proof of the lemma can be given by using a push-out description of $\Om(\Tt_0)_{dg}$. We will explain this later in this section. See Note \ref{note:zigzagbylocal}.

Lemma \ref{lem:zigzagt0} implies that the extra grading on $\Om(\T_0)_{dg}=\Om(\Tt_0)_{dg}\#\Z$ is independent of the chosen dg model for which $\tr$ lifts as a strict dg auto-equivalence. Hence:
\begin{lem}
$M_\phi$ does not depend on the chosen dg model $\Om(\Tt_0)_{dg}$ or on the chosen strictification $(\A^{str},\phi^{str})$	
\end{lem}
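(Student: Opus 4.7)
The plan is to identify $M_\phi$ with the full subcategory of $Bimod_{tw}(\Om(\T_0)_{dg},\A^{str})$ spanned by twisted Yoneda bimodules (via Lemma \ref{lem:twyoneda} and Corollary \ref{cor:genbimodtw}) and then transport equivalences of the twisted bimodule category along zigzags of quasi-equivalences of the two factors of the twisted tensor product. First I would address the dg model: given two models $(\Om(\Tt_0)_{dg},\tr)$ and $(\Om',\tr')$, Lemma \ref{lem:zigzagt0} produces a zigzag of $A_\infty$-quasi-equivalences strictly intertwining the chosen lifts of the translation. Applying the $\#\Z$ construction termwise yields a zigzag of extra-$\Z$-graded $A_\infty$-quasi-equivalences between the two resulting versions of $\Om(\T_0)_{dg}$.

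For the second factor, given any two strictifications $(\A^{str},\phi^{str})$ and $(\A^{str'},\phi^{str'})$ of $(\A,\phi)$, a standard argument with cofibrant $A_\infty$-resolutions produces a zigzag of $A_\infty$-quasi-equivalences between them that strictly commute with the given auto-equivalences (using any fixed quasi-equivalences $\A\to \A^{str}$, $\A\to \A^{str'}$ as the common input). Combining the two zigzags and applying Lemma \ref{lem:eqbimod} to each successive link, I would obtain a zigzag of quasi-equivalences
\[
Bimod_{tw}(\Om(\T_0)_{dg},\A^{str})\simeq \cdots \simeq Bimod_{tw}(\Om'\#\Z,\A^{str'}).
\]
Inspection of the explicit formulas for the functor $F$ and its quasi-inverse $G$ in the proof of Lemma \ref{lem:eqbimod} shows that twisted Yoneda bimodules map to twisted Yoneda bimodules up to natural quasi-isomorphism; hence the zigzag restricts to one between the corresponding full subcategories, which are precisely the two instances of $M_\phi$.

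The main obstacle is the appearance in Lemma \ref{lem:zigzagt0} of intermediate pairs $(\B,\psi)$ where $\psi$ is only required to act bijectively on objects and hom-sets, rather than being a strict auto-equivalence without higher components as demanded by Lemma \ref{lem:eqbimod}. I would bridge this using the framework of the remark after Example \ref{twistedyoneda}: for such relaxed pairs, twisted Yoneda bimodules are defined as images under the cohomologically fully faithful embedding $Bimod_{tw}(\B,\B'^{str})\hookrightarrow Bimod_{tw}(\B,\B')$, and the proof of Lemma \ref{lem:eqbimod} adapts verbatim once this relaxation is installed (the length-filtration argument for fully-faithfulness is insensitive to higher components of $\psi$, and the bar-type quasi-inverse $G$ of (\ref{eq:mbbar}) still computes because the twisting rule extends to the coalgebra-automorphism form stated in the remark). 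With this relaxation in place the zigzag is composable through the intermediate pairs, and independence of $M_\phi$ from both choices follows.
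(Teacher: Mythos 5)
Your proposal follows essentially the same route as the paper: identify $M_\phi$ with the full subcategory of $Bimod_{tw}(\Om(\T_0)_{dg},\A^{str})$ spanned by twisted Yoneda bimodules, then transport this span along zigzags of quasi-equivalences of the two factors using Lemma \ref{lem:eqbimod} (which is exactly the paper's two-sentence proof, with the zigzag on the $\Om(\Tt_0)_{dg}$ side coming from Lemma \ref{lem:zigzagt0} and the sentence preceding the lemma).

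There is, however, a genuine confusion in the ``bridge'' you propose for handling the intermediate pairs $(\B,\psi)$ with relaxed $\psi$. You invoke the remark after Example \ref{twistedyoneda}, which concerns a relaxed auto-equivalence $\phi$ on the \emph{second} factor $\B'$ of $Bimod_{tw}(\B,\B')$. But the zigzag from Lemma \ref{lem:zigzagt0} lives on $\Om(\Tt_0)_{dg}$, which after applying $\#\Z$ becomes the \emph{first}, extra-graded factor $\B=\Om(\T_0)_{dg}$; Lemma \ref{lem:eqbimod} imposes no ``strict auto-equivalence'' condition on $\B$, only compatibility with the extra grading. The actual issue a relaxed $\psi$ would create is upstream of the bimodule category: whether the smash product $\B\#\Z$ is even a well-defined $A_\infty$-category carrying a genuine extra $\Z$-grading that the structure maps preserve. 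The remark's coalgebra-automorphism framework does not address that question. Fortunately the concern is moot: Note \ref{note:zigzagbylocal} constructs the zigzag explicitly as $\Om(\Tt_0)_{dg}\leftarrow C^{-1}\cG r\to \Om'$, where the middle term is a localization of a dg Grothendieck construction with a \emph{strict} $\tr$-action, and the two $A_\infty$-quasi-equivalences commute strictly with $\tr$. Applying $\#\Z$ termwise then unambiguously produces a zigzag of extra-graded categories with extra-grading-preserving quasi-equivalences, to which Lemma \ref{lem:eqbimod} applies directly. Replacing your bridge with an appeal to Note \ref{note:zigzagbylocal} closes the argument; the rest of your proof (including the observation that $F$ and $G$ from Lemma \ref{lem:eqbimod} carry twisted Yoneda bimodules to objects quasi-isomorphic to twisted Yoneda bimodules, and the treatment of the strictification side) is sound.
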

\begin{proof}
The twisted tensor product of dg categories is equivalent to span of twisted Yoneda bimodules. Changing the model for $\Om(\Tt_0)_{dg}$ or the strictification for $(\A,\phi)$ does not change this span by Lemma \ref{lem:eqbimod}.
\end{proof}
Now, we want to express $\Om(\Tt_0)_{dg}$ as a homotopy push-out, as this will be used in Section \ref{subsec:extragr}. This will also give a proof of Lemma \ref{lem:zigzagt0}.

Consider the normalization map 
\begin{equation}\label{eq:normal}
\pi_N:\mathbb{P}^1\times \Z\to \Tt_0
\end{equation}
In the notation of \cite{ownpaperalg}, $\mathbb{P}^1\times\{i\}$ is the component that maps to $C_i\subset \Tt_0$, $0\in\mathbb{P}^1$ maps to the nodal point $x_{i-1/2}\in\Tt_0$ and $\infty\in\mathbb{P}^1$ maps to the nodal point $x_{i+1/2}\in\Tt_0$. We also assume $\tr$ lifts to the normalization as $(y,i)\mapsto (y,i+1)$ (the lift is also denoted by $\tr$). Choose a dg enhancement for $D^b(Coh(\mathbb{P}^1) )$ and take the subcategory spanned by $\Om_{\mathbb P^1},\Om_{\mathbb P^1}(-1),\Om_0$ and $\Om_\infty$. Denote it by $\Om(\mathbb P^1)_{dg}$. Without loss of generality enlarge the category $\Om(\Tt_0)_{dg}$ by adding objects corresponding to nodes $\Om_{x_{i+1/2}}$ in a $\tr$-equivariant way. This does not change the twisted envelope obviously and it causes $\Om(\T_0)_{dg}=\Om(\Tt_0)_{dg}\#\Z$ to enlarge in its twisted envelope as well (together with the natural extra grading). One can choose the enhancement $\Om(\mathbb{P}^1)_{dg}$ so that
\begin{enumerate}
\item There is a dg functor $\Xi_0$ from $\Om(\mathbb P^1)_{dg}$ to $\Om(\Tt_0)_{dg}$ lifting the push-forward of $\mathbb P^1\to C_0\subset\Tt_0$ 
\item There are dg functors $i_0,i_\infty:\C\to \Om(\mathbb P^1)_{dg}$ lifting the push-forward of $\{0\}\to\mathbb{P}^1$ and $\{\infty\}\to\mathbb{P}^1$ 
\item Compositions $\Xi_0\circ i_0,\Xi_0\circ i_\infty:\C\to \Om(\mathbb P^1)_{dg}\to\Om(\Tt_0)_{dg}$ are strictly related by $\tr$, i.e.  $\tr\circ\Xi_0\circ i_0=\Xi_0\circ i_\infty$
\end{enumerate}
Define $\Xi_i:=\tr^i\circ\Xi_0$. These are dg functors lifting the push-forward of $\mathbb P^1\to C_i\subset\Tt_0$. Taking $\Z$-many pairwise orthogonal copies of this dg category, we obtain a dg model for the properly supported coherent sheaves on $\mathbb{P}^1\times \Z$, denoted by $\Om(\mathbb{P}^1\times \Z)_{dg}$. 

There is a dg lift of the push-forward of normalization map (\ref{eq:normal}) which we also denote by $\pi_N$. Let $Pt_\infty$ denote the dg category consisting of infinitely many copies of $\C$ indexed by $i+1/2,i\in\Z$. Denote its objects by $*_{i+1/2}$. $Pt_\infty$ has an auto-equivalence mapping $*_{i-1/2}$ to $*_{i+1/2}$, which we still denote by $\tr$. The collection of functors \begin{equation}
i_0:(\C)_{i+1/2}=:\C\to \Om(\mathbb{P}^1 )_{dg}\cong\Om(\mathbb{P}^1\times \{i+1\} )_{dg}
\end{equation}
(i.e. $i_0$ used to map $*_{i+1/2}$ to $(i+1)^{th}$ $\mathbb{P}^1$) gives a functor \begin{equation}
Pt_\infty\to \Om(\mathbb{P}^1\times \Z)_{dg}
\end{equation} which is essentially the push-forward of $0\in \mathbb{P}^1$ at each component. Denote this functor by $i_0$ as well. Similarly, the collection of functors $i_\infty:(\C)_{i+1/2}\to \Om(\mathbb{P}^1\times \{i\} )_{dg}$ gives a functor \begin{equation}
Pt_\infty\to \Om(\mathbb{P}^1\times \Z)_{dg}
\end{equation} which is essentially push-forward of $\infty\in \mathbb{P}^1$ at each component (to a different $\mathbb P^1$ though). Denote it by $i_\infty$. 
Clearly, $\pi_N\circ i_0=\pi_N\circ i_\infty$. In other words, we have a strictly commutative diagram
\begin{equation}\label{eq:acommdiag}
\xymatrix{ &\Om(\Tt_0)_{dg}&\\ 
\Om(\mathbb{P}^1\times \Z)_{dg} \ar[ru]^{\pi_N} &  &  \Om(\mathbb{P}^1\times \Z)_{dg}\ar[lu]_{\pi_N} \\ 
& Pt_\infty\ar[lu]^{i_\infty} \ar[ru]_{i_0} & }
\end{equation}
Thus, we have an induced map 
\begin{equation}\label{eq:hocolimtot0}
hocolim\big(Pt_\infty\rightrightarrows \Om(\mathbb{P}^1\times \Z)_{dg}\big) \to \Om(\Tt_0)_{dg}
\end{equation}
One can define the homotopy coequalizer above as $\Om(\mathbb{P}^1\times \Z)_{dg} \coprod_{(Pt_\infty\coprod Pt_\infty)}Pt_\infty $, i.e. by gluing $\Om(\mathbb{P}^1\times \Z)_{dg}$ and $Pt_\infty$ along $i_0\coprod i_\infty:(Pt_\infty\coprod Pt_\infty)\to \Om(\mathbb{P}^1\times \Z)_{dg}$ and $id\coprod id$ (note it is not the same as colimit of (\ref{eq:acommdiag}), see Figure \ref{figure:catgluingcover} for a schematic picture). See \cite{oldGPS2better} for the definition of homotopy push-outs.
\begin{figure}\centering
	\includegraphics[height= 4 cm]{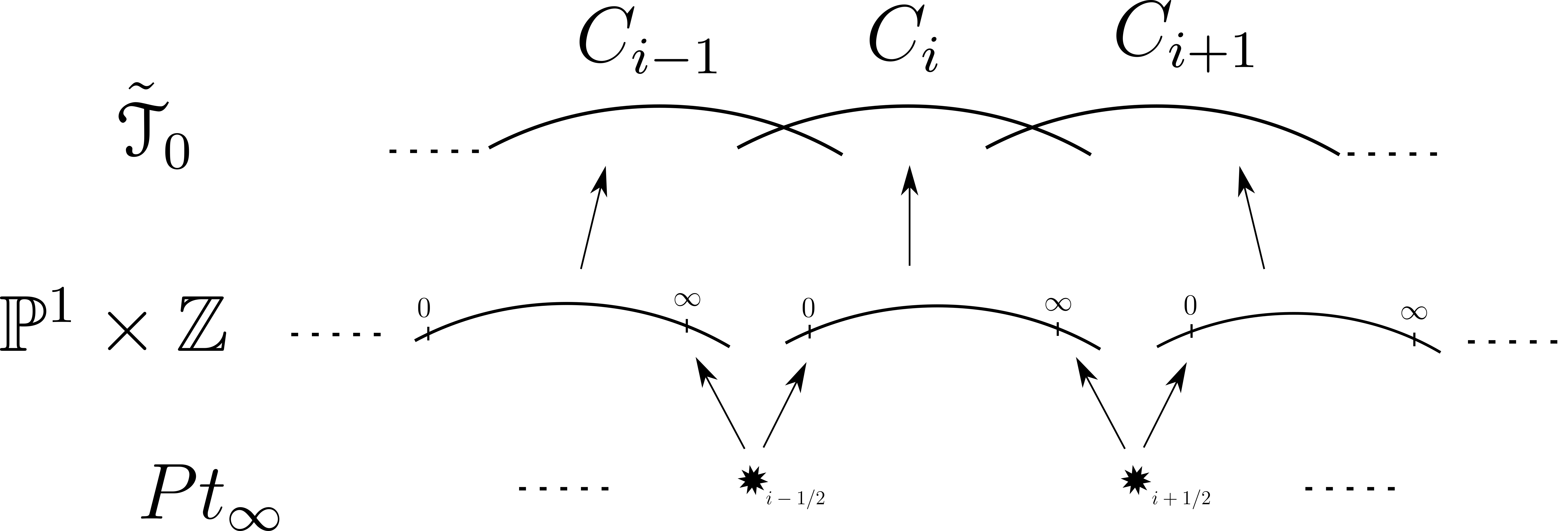}
	\caption{A schematic picture of diagram (\ref{eq:acommdiag})}
	\label{figure:catgluingcover}
\end{figure}

For convenience, let us spell out a description of this coequalizer via Grothendieck construction, following \cite{oldGPS2better} (more precisely, we give an equivalent, slightly modified version that works for coequalizer diagrams, this version is equivalent to one given in \cite{thomasongroth}). Consider the category $\mathcal{G}r$ with objects \begin{equation}\label{eq:obgr1}
ob(\mathcal{G}r)=ob(\Om(\mathbb{P}^1\times \Z)_{dg})\coprod ob(Pt_\infty)
\end{equation} 
We define the morphisms to be 
\begin{equation}\label{eq:homgr1}
hom(X,X'):=\begin{cases}
\Om(\mathbb{P}^1\times \Z)_{dg}(X,X'),&\text{if }X,X'\in \Om(\mathbb{P}^1\times \Z)_{dg}\\ Pt_\infty (X,X'),&\text{if }X,X'\in Pt_\infty\\ \Om(\mathbb{P}^1\times \Z)_{dg}(i_0(X),X') \oplus\\ \Om(\mathbb{P}^1\times \Z)_{dg}(i_\infty(X),X'),& \text{if }X\in Pt_\infty ,X'\in \Om(\mathbb{P}^1\times \Z)_{dg}  \\ 0,& \text{if }X\in \Om(\mathbb{P}^1\times \Z)_{dg},X'\in Pt_\infty 
\end{cases}
\end{equation}
In other words, $\mathcal{G}r$ is a category that contains $Pt_\infty$ and $\Om(\mathbb P^1\times\Z)_{dg}$ as full subcategories, and contains additional morphisms corresponding to maps $i_0(X)\to X'$ and $i_\infty(X)\to X'$. In particular, if we let $X'$ be $i_0(X)$, resp. $i_\infty(X)$, then $\mathcal{G}r(X,X')$ contains morphisms corresponding to identity. Denote the family of these morphisms by $C$. The homotopy coequalizer can be defined as \begin{equation}\label{eq:hocolimbilbil}
hocolim\big(Pt_\infty\rightrightarrows \Om(\mathbb{P}^1\times \Z)_{dg}\big):=C^{-1}\cG r
\end{equation}
For the definition of localization, see \cite{GPS1} or proof of Lemma \ref{lem:descendingextra}. 

Now we prove:
\begin{lem}
(\ref{eq:hocolimtot0}) is a quasi-equivalence.
\end{lem}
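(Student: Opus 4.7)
The plan is to verify that the induced dg functor is essentially surjective on cohomology and cohomologically fully faithful. Essential surjectivity is immediate after the enlargement of $\Om(\Tt_0)_{dg}$: every generator is of the form $\Om_{C_i}$, $\Om_{C_i}(-1)$, or $\Om_{x_{i+1/2}}$. The first two are images of $\Om_{\mathbb P^1}, \Om_{\mathbb P^1}(-1)\in \Om(\mathbb P^1\times\{i\})_{dg}$ under $\Xi_i$, and $\Om_{x_{i+1/2}}$ is the image of $*_{i+1/2}\in Pt_\infty$. All three families live in $\cG r$, hence in $C^{-1}\cG r$, and are mapped to the respective target objects by the compatibility diagram (\ref{eq:acommdiag}).

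For cohomological full faithfulness, I would compute hom-complexes on both sides and match them using the nodal Mayer--Vietoris sequence. On the target side, the normalization $\pi_N:\mathbb P^1\times \Z\to\Tt_0$ gives the short exact sequence
\begin{equation}
0\to\Om_{\Tt_0}\to(\pi_N)_*\Om_{\mathbb P^1\times\Z}\to\Om_{\mathrm{nodes}}\to 0,
\end{equation}
which for any properly supported $F,G\in Coh(\Tt_0)$ fits $R\mathrm{Hom}_{\Tt_0}(F,G)$ into a triangle with $R\mathrm{Hom}$ on the normalization and $R\mathrm{Hom}$ at the nodes. Equivalently, $\Tt_0$ is the pushout of $\mathbb P^1\times\Z$ along the two closed embeddings $Pt_\infty\rightrightarrows \mathbb P^1\times\Z$ (by $0$ and $\infty$ sections shifted relative to each other by $\tr$), and this pushout is preserved by $Coh$ (Shende's observation from the acknowledgments). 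On the source side, by the general description in \cite{GPS1,GPS2}, inverting the class $C$ of identity arrows $X\to i_0(X)$ and $X\to i_\infty(X)$ in $\cG r$ produces hom-complexes given by a telescope / bar construction; concretely, for $X,Y$ in $\Om(\mathbb P^1\times\Z)_{dg}$, this telescope presents $hom_{C^{-1}\cG r}(X,Y)$ as the homotopy pushout of hom-complexes in $\Om(\mathbb P^1\times\Z)_{dg}$ and $Pt_\infty$ glued along the maps induced by $i_0$ and $i_\infty$, which is exactly the Mayer--Vietoris computation above.

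The main technical obstacle is making precise the identification between the bar/telescope model of hom-complexes in $C^{-1}\cG r$ and the derived pushout on $\Tt_0$. One needs the Grothendieck/localization construction to compute an honest homotopy pushout of hom-complexes (not a naive one); this should follow from the fact that $i_0,i_\infty$ land in pairwise orthogonal full dg subcategories of $\Om(\mathbb P^1\times\Z)_{dg}$, so the two inversions do not interfere and one genuinely inverts a set of representable natural transformations. A convenient reduction is to treat a single node first: show that the localization that glues $\Om(\mathbb P^1\times \{i\})_{dg}$ and $\Om(\mathbb P^1\times\{i+1\})_{dg}$ along $\C\cong (Pt_\infty)_{i+1/2}$ recovers the full subcategory of $\Om(\Tt_0)_{dg}$ spanned by the two adjacent components and the node (a computation that boils down to the standard quasi-isomorphism $\Om_{C_i\cup C_{i+1}}\simeq \mathrm{hofib}(\Om_{C_i}\oplus\Om_{C_{i+1}}\to\Om_{x_{i+1/2}})$), and then bootstrap to the full nodal chain by taking a countable homotopy colimit compatible with $\tr$-equivariance.
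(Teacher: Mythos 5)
Your proposal is a correct strategy, but it follows a genuinely different route than the one the paper actually develops. The paper's proof mentions the direct verification you attempt only in passing ("one can localize after taking the cohomology... it is not hard to check"), and then develops the second approach in full: it takes the push-out-preserving property of $\mathrm{IndCoh}$/$\mathrm{Coh}$ from \cite[Theorem A.1.2]{gaitsgoryrozenblyumbook2} as the engine, first reproving that $f\mapsto f^!$ sends push-outs along closed embeddings of Noetherian \emph{projective} schemes to pull-backs (affine case plus Zariski descent via an affine cover), then writes $\Tt_0$ as a countable $\Z$-equivariant colimit of finite nodal chains $\Tt_0(n)=X\sqcup_Z Y$, identifies $\scrC oh(\Tt_0(n))$ with the corresponding push-out $Od(n)\sqcup_{Pt_n}Ev(n)$, and passes to the $n\to\infty$ colimit to conclude $\Om(\Tt_0)_{dg}\simeq Od\sqcup_{Pt_\infty}Ev$, which is $C^{-1}\cG r$ by construction. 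Your argument instead computes hom-complexes on both sides and matches them by Mayer--Vietoris: on the algebro-geometric side via the nodal short exact sequence and the pushout-preservation statement (taken as a black box rather than re-derived), and on the localization side via the telescope/bar description. The single-node reduction plus bootstrapping by orthogonality is a sensible organizing device that the paper does not spell out, and it makes the full-faithfulness check concrete. What your approach buys is a more elementary, hands-on verification that does not rely on the full strength of the Gaitsgory--Rozenblyum machinery; what the paper's approach buys is a uniform structural statement that avoids having to justify the bar/telescope presentation of hom-complexes in the localized Grothendieck construction, which in your write-up remains the main unresolved technical step (you acknowledge this; note that the cone of an identity arrow $X\to i_0(X)$ being acyclic is a statement about the quotient category, and deriving from it a \emph{Mayer--Vietoris triangle} of hom-complexes is exactly the content being proved, so the orthogonality observation needs to be turned into an actual spectral-sequence or filtration argument to avoid circularity).
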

\begin{proof}
One way to see this is by direct computation: namely, write the Grothendieck construction for the diagram, then check that the induced functor from the localization is a quasi-equivalence. This is a cohomology level check, in the sense that one can localize after taking the cohomology. The localization of the cohomological category of $\cG r$ has an explicit description in terms of sequences of morphisms and their formal inverses, and it is not hard to check in this case that the induced functor into $H^0(\Om(\Tt_0)_{dg})$ is an equivalence.

Another option is to use the fact ``$Coh$ sends colimits to colimits''. As it was explained to us by Vivek Shende, this follows from \cite[Theorem A.1.2]{gaitsgoryrozenblyumbook2}.
See also \cite[Corollary 2.5]{nadlerwrapped} for the explanation on how the statement follows from \cite[Theorem A.1.2]{gaitsgoryrozenblyumbook2}.

More precisely, \cite[Theorem A.1.2]{gaitsgoryrozenblyumbook2} states that the contravariant functor $X\mapsto IndCoh(X)$, $f\mapsto f^!$ restricted to the category of affine, Noetherian schemes with closed embeddings sends push-outs to pull-back squares (in a category of dg categories). Unfortunately, this does not immediately apply to our situation; however, we can use it easily. 

First, assume the statement that $X\mapsto IndCoh(X)$, $f\mapsto f_*$ sends push-outs to push-outs for Noetherian, projective schemes with closed embeddings holds. Then the same holds with $IndCoh$ replaced by $Coh$. Let $Ev(n)$ denote the full subcategory of $\Om(\mathbb P^1\times \Z)_{dg}$ spanned by sheaves on even indexed curves $C_{-2n},C_{-2n+2},\dots,C_{2n}$. In other words,
\begin{equation}
Ev(n)=\bigsqcup_{i= -2n,-2n+2,\dots,2n} \Om(\mathbb P^1)_{dg}
\end{equation}
Let $Ev$ denote the union of all $Ev(n)$. Similarly let $Od(n)$ denote the subcategory corresponding to curves with odd index $-2n+1,-2n+3,\dots ,2n-1$ and $Od$ denote their union. Let $Pt_n$ denote the subcategory of $Pt_\infty$ spanned by points with index $-2n+1/2,-2n+3/2,\dots ,2n-1/2$ (i.e. the points of intersection of curves in $Ev(n)$ and $Od(n)$). Finally let $\Tt_0(n)$ denote the reduced subvariety of $\Tt_0$ given by the union of $C_{-2n},C_{-2n+1},\dots, C_{2n}$. Clearly, $\Tt_0(n)$ is a push-out of curves involved in $Ev(n)$ and $Od(n)$ along $Pt_n$. As we assume that ``push-outs to push-outs'' holds for Noetherian projective schemes with closed embeddings, we have 
\begin{equation}\label{eq:finiteequivalence}
Od(n)\sqcup_{Pt_n}Ev(n)\simeq \scrC oh(\Tt_0(n))
\end{equation}
where $\scrC oh(\Tt_0(n))$ is a dg-model for $Coh(\Tt_0(n))$. Moreover, the colimit of $\scrC oh(\Tt_0(n))$ gives a dg model for properly supported coherent sheaves on $\Tt_0$, which is derived equivalent to $\Om(\Tt_0)_{dg}$ (the hom's between given two objects stabilize for large $n$, since all objects are properly supported on $\Tt_0$). Hence, 
\begin{equation}
\Om(\Tt_0)_{dg}\simeq hocolim_n \scrC oh(\Tt_0(n))\simeq hocolim_n (Od(n)\sqcup_{Pt_n}Ev(n))\simeq \atop(hocolim_n Od(n))\sqcup_{(hocolim_n Pt_n)} (hocolim_n Ev(n))=Od\sqcup_{Pt_\infty} Ev
\end{equation}
The second equivalence follows from the fact that (\ref{eq:finiteequivalence}) is induced by push-forward maps. Hence, these maps commute up to homotopy with maps $Od(n)\sqcup_{Pt_n}Ev(n)\to Od(n+1)\sqcup_{Pt_{n+1}}Ev(n+1)$ and $\scrC oh(\Tt_0(n))\to \scrC oh(\Tt_0(n+1))$, and they induce an equivalence of colimits. 
The third equivalence is an abstract category theory statement. It is easy to see that $Od\sqcup_{Pt_\infty} Ev$ is equivalent to $hocolim\big(Pt_\infty\rightrightarrows\Om(\mathbb P^1\times\Z)_{dg} \big)$. Hence, the claim follows.

Now, we need to show why ``push-outs to push-outs'' hold for projective Noetherian schemes, at least in our specific case. We only need to show the functor $X\mapsto IndCoh(X)$, $f\mapsto f^{!}$ sends push-out diagrams of projective Noetherian schemes along closed embeddings to pull-back diagrams, as ``push-outs to push-outs'' follows in the same way as \cite[Corollary 2.5]{nadlerwrapped}. The basic idea is to combine the statement for affine push-outs with Zariski descent.

Let $X$ be the union of curves $C_{-2n},C_{-2n+2},\dots,C_{2n}$ and $Y$ be the union of curves $C_{-2n+1},C_{-2n+3},\dots,C_{2n-1}$. Let $Z$ be the union of nodal points in the intersections of $C_{-2n},C_{-2n+1},\dots,C_{2n}$. Then $\Tt_0(n)=X\sqcup_Z Y$. Let $\{U_i\}$ be an affine open cover over $\Tt_0$. Assume $\{U_i\}$ is closed under intersections and the index set is ordered so that $i\leq j$ if and only if $U_i\subset U_j$. Let $U_i^X=U_i\cap X$, $U_i^Y=U_i\cap Y$ and $U_i^Z=U_i\cap Z$. These give affine open covers of $X$, $Y$ and $Z$. By Zariski descent for $IndCoh$ (see \cite{indcoh})
\begin{equation}
IndCoh(\Tt_0(n) )\simeq holim IndCoh(U_i) 
\end{equation}
Indeed, this can be stated by saying that the functor $V\mapsto IndCoh(V), f\mapsto f^{!}$ from the category of schemes with open embeddings sends colimits to limits (note that $f^*=f^!$ for open embeddings). 
Similar descent statement hold for $X,Y$ and $Z$. Moreover, $U_i=U_i^X\sqcup_{U_i^Z} U_i^Y$ and they are affine so push-outs to pull-back hold for them. Thus,
\begin{align*}
IndCoh(\Tt_0(n) )\simeq holim IndCoh(U_i) \simeq holim IndCoh(U_i^X\sqcup_{U_i^Z} U_i^Y)\simeq \\ holim (IndCoh(U_i^X)\times_{IndCoh(U_i^Z)} IndCoh(U_i^Y))\simeq\\ holim (IndCoh(U_i^X))\times_{holim(IndCoh(U_i^Z))} holim(IndCoh(U_i^Y))\simeq \\
IndCoh(X)\times_{IndCoh(Z)} IndCoh(Y)
\end{align*}
This completes the proof.
%
\end{proof}
Hence, (\ref{eq:hocolimbilbil}) generates another enhancement for $D^b(Coh_p(\Tt_0) )$ and (\ref{eq:hocolimtot0}) is $\Z$-equivariant. Taking smash products with respect to $\Z$-action (see \cite[Section 4]{ownpaperalg}), we obtain 
\begin{cor}
There is a quasi-equivalence $(C^{-1}\cG r)\#\Z\to\Om(\T_0)_{dg}$ that is compatible with extra gradings. 
\end{cor}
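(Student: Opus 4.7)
The plan is to upgrade the quasi-equivalence (\ref{eq:hocolimtot0}) from the previous lemma to a strictly $\Z$-equivariant one, and then apply the smash product construction $\#\Z$ to both sides. The translation $\tr$ acts by a strict dg auto-equivalence on each of $\Om(\Tt_0)_{dg}$, $\Om(\mathbb{P}^1\times\Z)_{dg}$ (by permuting the copies of $\Om(\mathbb P^1)_{dg}$, sending the $i^{th}$ copy to the $(i+1)^{st}$), and $Pt_\infty$ (by $*_{i-1/2}\mapsto *_{i+1/2}$). The compatibility condition $\tr\circ\Xi_0\circ i_0=\Xi_0\circ i_\infty$ built into the enhancement $\Om(\mathbb P^1)_{dg}$, together with the definition $\Xi_i:=\tr^i\circ\Xi_0$, implies that every arrow in diagram (\ref{eq:acommdiag}) strictly commutes with the given $\tr$-actions.

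I would then transfer this to the Grothendieck construction. From the explicit description (\ref{eq:obgr1})-(\ref{eq:homgr1}), $\tr$ induces a strict dg auto-equivalence of $\cG r$: it permutes objects in each factor and acts on hom-sets using the $\tr$-actions on $\Om(\mathbb{P}^1\times\Z)_{dg}$ and $Pt_\infty$. Crucially, $\tr$ preserves the distinguished set $C$ of morphisms (which consists of the identity-type arrows $i_0(X)\to i_0(X)$ and $i_\infty(X)\to i_\infty(X)$) setwise, because the indexing of these morphisms is shifted compatibly with the shift of objects. Hence $\tr$ descends to a strict dg auto-equivalence of the localization $C^{-1}\cG r$, and the induced functor $C^{-1}\cG r\to\Om(\Tt_0)_{dg}$ is strictly $\tr$-equivariant by construction.

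Finally, I would apply $\#\Z$ to this $\Z$-equivariant quasi-equivalence. In general, for dg categories $\B_1,\B_2$ equipped with strict $\Z$-actions that act freely on objects and a strictly equivariant quasi-equivalence $\B_1\to\B_2$, the induced dg functor $\B_1\#\Z\to\B_2\#\Z$ is a quasi-equivalence: this is visible on hom-complexes since $(\B_i\#\Z)(x,y)=\bigoplus_{n\in\Z}\B_i(x,\tr^{-n}(y))$, and the induced map is a direct sum of the original quasi-isomorphisms. Moreover, the extra $\Z$-grading on either side is by construction the grading that records the summand index $n$, and the induced functor preserves this grading tautologically. Applied to our situation this gives the required quasi-equivalence $(C^{-1}\cG r)\#\Z\to\Om(\Tt_0)_{dg}\#\Z=\Om(\T_0)_{dg}$.

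The main point to check carefully will be the second step: that $\tr$ genuinely induces a strict (not merely homotopy) auto-equivalence of the localization $C^{-1}\cG r$. This amounts to verifying that $\tr$ permutes the objects of $\cG r$ freely and preserves $C$ as a set of morphisms (rather than up to some natural isomorphism), both of which follow from the strictness of the $\tr$-actions on the three corner categories and the strict commutativity of $\tr\circ\Xi_0\circ i_0=\Xi_0\circ i_\infty$. Everything else is essentially formal from the definitions of the smash product and the extra grading.
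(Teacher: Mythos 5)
Your proposal is correct and matches the paper's own argument: the paper likewise notes that (\ref{eq:hocolimtot0}) is $\Z$-equivariant (which relies on the strictness of the $\tr$-actions on the corner categories and the compatibility $\tr\circ\Xi_0\circ i_0=\Xi_0\circ i_\infty$) and then applies the smash product construction $\#\Z$. You spell out the intermediate checks (that $\tr$ preserves $C$ and hence descends to $C^{-1}\cG r$, and that $\#\Z$ applied to a strictly equivariant quasi-equivalence gives a quasi-equivalence compatible with the extra gradings) more explicitly than the paper does, but it is the same route.
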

Notice that $(C^{-1}\cG r)\#\Z$ does not depend on the choice of enhancement made for $D^b(Coh (\mathbb P^1))$ by \cite{orlovlunts}.
\begin{note}\label{note:zigzagbylocal}
Lemma \ref{lem:zigzagt0} also follows from these considerations. Namely, given any other model $(\Om',\tr')$ for $\Om(\Tt_0)$ with a strict auto-equivalence $\tr'$ lifting $\tr_*$, we can choose a dg functor similar to $\Xi_0$ and define $\Xi_i$ by composing with $\tr^i$. Assume the chosen model $\Om(\mathbb P^1)_{dg}$ is minimal. We then obtain a diagram similar to (\ref{eq:acommdiag}), corresponding Grothendieck construction and a functor to $\Om'$. This is strictly compatible with translation. Hence, there is a quasi-equivalence from the explicit localization of the Grothendieck construction to $\Om'$ that is strictly compatible with translation. The localization of Grothendieck construction does not depend on $\Om'$; hence, it gives us a zigzag as promised in Lemma \ref{lem:zigzagt0}.
\end{note}

By $\Z$-equivariance, one can also realize $(C^{-1}\cG r)\#\Z$ as a localization of $\cG r\#\Z$. This localization carries an extra grading by Lemma \ref{lem:descendingextra} and it is quasi-equivalent to $(C^{-1}\cG r)\#\Z$ (hence to $\Om(\Tt_0)_{dg}\#\Z=\Om(\T_0)_{dg}$) since the localization map $\cG r\#\Z\to (C^{-1}\cG r)\#\Z$ is compatible with extra grading.

Notice $\cG r\#\Z$ is equivalent to Grothendieck construction for 
\begin{equation}\label{eq:prediag1}
Pt_\infty\#\Z\rightrightarrows \Om(\mathbb P^1\times\Z)\#\Z
\end{equation}
which is defined similar to (\ref{eq:homgr1}) by replacing hom-sets with the hom-sets of smash product. Recall, $Pt_\infty$ consists of infinitely many pairwise orthogonal objects denoted by $*_{i+1/2}$ and the $\Z$-action on $Pt_\infty$ sends $*_{i-1/2}$ to $*_{i+1/2}$. Since the action is transitive, all objects become isomorphic in $Pt_\infty\#\Z$. The morphisms from $*_{i+1/2}$ to $*_{j+1/2}$ can be identified with $\bigoplus_{n\in \Z} hom(*_{i+n+1/2},*_{j+1/2})=\C$ (it is non-zero only at $n=j-i$). In other words, $Pt_\infty\#\Z\simeq \C$. Similarly, $\Om(\mathbb P^1\times \Z)\#\Z\simeq \Om(\mathbb P^1)_{dg}$. Using these identifications, one can see that the Grothendieck construction for (\ref{eq:prediag1}) is equivalent to a dg category with objects $ob(\Om(\mathbb{P}^1)_{dg})\coprod \{*\}$ and with morphisms
\begin{equation}\label{eq:homsforsmallgrothalg}
hom(X,X'):=\begin{cases}
\Om(\mathbb{P}^1)_{dg}(X,X'),&\text{if }X,X'\in \Om(\mathbb{P}^1)_{dg}\\ \C,&\text{if }X=X'\in\{*\}\\ \Om(\mathbb{P}^1)_{dg}(i_0(X),X') \oplus\\ \Om(\mathbb{P}^1)_{dg}(i_\infty(X),X'),& \text{if }X\in \{*\} ,X'\in \Om(\mathbb{P}^1)_{dg}  \\ 0,& \text{if }X\in \Om(\mathbb{P}^1)_{dg},X'\in \{*\} 
\end{cases}
\end{equation} 
In other words, this is a dg category consisting of (some) coherent sheaves on $\mathbb P^1$, an extra object $*$ and morphisms $*\to \scrF$ corresponding to morphisms $\Om_0\to\scrF$ and $\Om_\infty\to\scrF$. This category is the Grothendieck construction for the diagram $\C\rightrightarrows \Om(\mathbb P^1)_{dg}$, which is defined similar to $\cG r$. Let us denote this category by $\cG r\#\Z$ as well.

$\Om(\T_0)_{dg}$ is obtained by localizing $\cG r\#\Z$ at two morphisms from $*$ corresponding to identity maps of $\Om_0$ and $\Om_\infty$. Denote these morphisms by $c_0$ and $c_\infty$. This process geometrically corresponds to identifying $0$ and $\infty$ on $\mathbb{P}^1$. 

The corresponding extra grading is given by setting the summand \begin{equation}\Om(\mathbb{P}^1)_{dg}(i_\infty(X),X')\subset hom(X,X')
\end{equation} to be the degree $1$-part and the remaining expressions in (\ref{eq:homsforsmallgrothalg}) to be of degree $0$. The extra grading descends to localization $\Om(\T_0)_{dg}$ (see Lemma \ref{lem:descendingextra}) and it clearly matches the extra grading coming from smash product with respect to $\Z$-action. 
\begin{rk}
This extra grading on (\ref{eq:homsforsmallgrothalg}) comes from the identification with $\cG r\#\Z$ via $*\mapsto x_{-1/2}$ and $\mathbb{P}^1\mapsto \mathbb{P}^1\times\{0\}\subset\mathbb{P}^1\times \Z$. In a different identification, one can set elements of $\Om(\mathbb{P}^1)_{dg}(i_0(X),X')$ to be of degree $-1$ and the rest to be of degree $0$. The descriptions become equivalent after localization, and we will go with the former.
\end{rk}
\begin{rk}
In Appendix \ref{sec:appendixgluing}, we give a description of $M_\phi$ as a homotopy coequalizer of $\A\rightrightarrows\Om(\mathbb{P}^1)_{dg}\otimes \A$, where the arrows are given by $i_0\otimes1_\A$ and $i_\infty\otimes\phi$. Assume $\A$ is a dg enhancement for $D^b (Coh(M^\vee))$, where $M^\vee$ is a projective variety and $\phi$ is push-forward along an automorphism $\phi_{M^\vee}$. One can construct an algebraic space 
\begin{equation}\label{eq:agmappingtorus}
\mathbb{P}^1\times M^\vee/(0,x)\sim (\infty,\phi_{M^\vee}(x))
\end{equation}
that is isomorphic to the one given in \cite[Example 1.1]{ownpaperalg}. Then push-out preserving property of $Coh$ combined with a K\"unneth theorem for $Coh$ (similar to\cite[Prop 4.6.2]{indcoh}) proves that $M_\phi$ is derived equivalent to coherent sheaves on this algebraic space. Note that we have not checked the details as this is not our main interest. If $\cW(M)\simeq D^b(Coh(M^\vee))$ (i.e. $M$ and $M^\vee$ are homological mirrors), combining this statement with Theorem \ref{mainthmsymp} implies a homological mirror symmetry statement for $T_\phi$ and the algebraic space (\ref{eq:agmappingtorus}). 
\end{rk}
\subsection{Extra grading on $\cW(T_0)$}\label{subsec:extragr}
By \cite{Weinsteinsector} and \cite[Theorem 1.10]{GPS2}, $\cW(T_0)$ is generated by Lagrangians that lift under the covering map $\tilde T_0\to T_0$. We will consider only these Lagrangians as objects of $\cW(T_0)$, and we fix a lift for each object of $\cW(T_0)$. For a Lagrangian $L\subset T_0$, denote the lift by $\tilde L$. Given $r\in \Z$, let $\tilde L\langle r\rangle$ denote another lift of $L$ obtained by shifting $\tilde L$ by $r$ in the positive direction. 

According to the definition of \cite{generation}, the chain complexes $CW(L_1,L_0)$ are generated by Hamiltonian chords from $L_1$ to $L_0$ for a fixed Floer datum. This chord lifts to a path from $\tilde L_1$ to $\tilde L_0\langle -r\rangle$, for a unique $r$. We define the extra grading by letting this chord be of degree $r$. This should not be confused with the original grading of $\cW(T_0)$. See Figure \ref{figure:liftingchordsincovering} for a lift $x$ of a chord of degree $1$ and a lift $y$ of a chord of degree $-1$ from $L_0$ to $L_0$.
\begin{figure}\centering
	\includegraphics[height=3 cm]{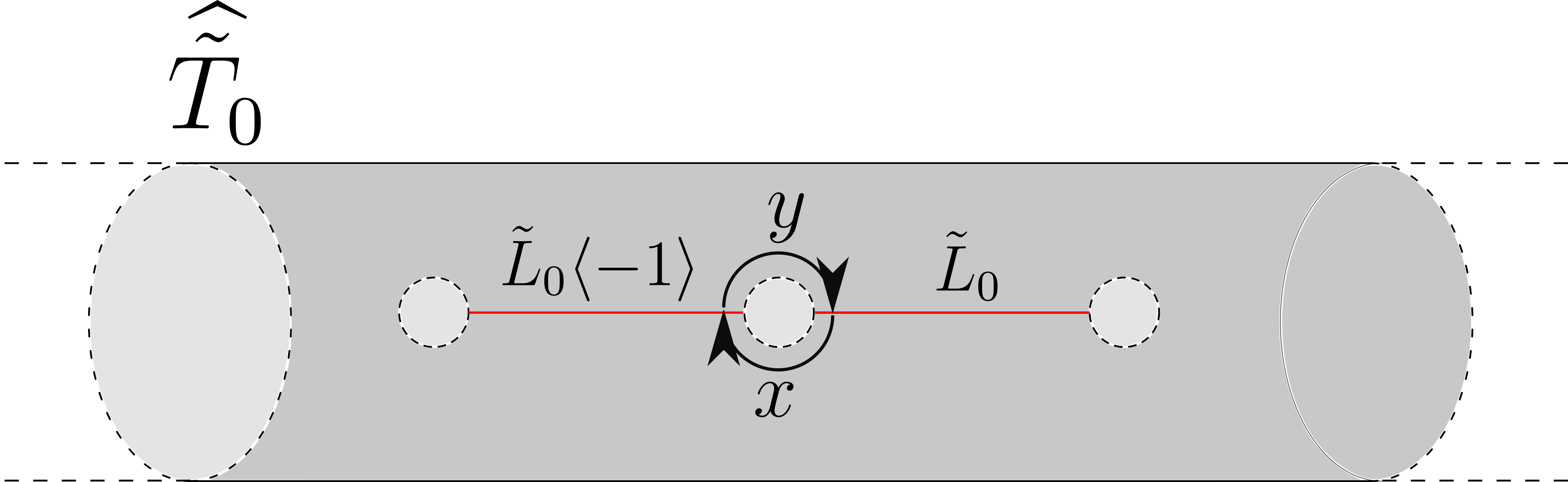}
	\caption{A lift $x$ of a degree $1$ chord and a lift $y$ of a degree $-1$ chord from $L_0$ to $L_0$}
	\label{figure:liftingchordsincovering}
\end{figure}

We now wish to compare the categories $\Om(\T_0)_{dg}$ and $\cW(T_0)$, while taking their extra gradings into account. Recall:
\begin{lem}\cite[Lemma 9.9]{ownpaperalg} $\Om(\T_0)_{dg}$ generates a dg model for $D^b(Coh(\T_0))$.
\end{lem}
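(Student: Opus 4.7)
The plan is to apply the same push-out reasoning used in the preceding proof, but directly to $\T_0$ rather than to $\Tt_0$. The nodal elliptic curve admits a presentation as a push-out
\[
\T_0 \cong \mathbb{P}^1 \sqcup_{pt\, \sqcup\, pt}\, pt
\]
in the category of Noetherian projective schemes with closed embeddings, obtained by identifying $0$ and $\infty$ in $\mathbb{P}^1$. This is the schematic shadow of the presentation of the dg category $\Om(\T_0)_{dg}$ we already extracted in the preceding subsection.

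First, I would invoke the push-out-preserving property of $Coh$ established in the proof above via \cite[Theorem A.1.2]{gaitsgoryrozenblyumbook2} combined with Zariski descent. The descent step is simpler here than for $\Tt_0$, since $\T_0$ admits an affine cover by two opens: a neighborhood of the node (whose preimage in $\mathbb{P}^1$ is a neighborhood of $\{0,\infty\}$) and its complement. The verification then proceeds verbatim as in the preceding proof and yields a quasi-equivalence
\[
D^b(Coh(\T_0)) \simeq D^b(Coh(\mathbb{P}^1)) \sqcup_{\C \oplus \C} \C,
\]
where the two arrows $\C \oplus \C \rightrightarrows D^b(Coh(\mathbb{P}^1))$ are the pushforwards along $\{0\}, \{\infty\} \hookrightarrow \mathbb{P}^1$.

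Next, I would identify $\Om(\T_0)_{dg}$ with this dg push-out. Using the description of $\Om(\T_0)_{dg}$ as the localization of $\cG r\#\Z$ at $c_0, c_\infty$ from the previous discussion (and in particular (\ref{eq:homsforsmallgrothalg})), this localization realizes precisely the homotopy coequalizer of $\C \rightrightarrows \Om(\mathbb{P}^1)_{dg}$ along the chosen dg lifts $i_0, i_\infty$ of the two closed embeddings. Since $\Om(\mathbb{P}^1)_{dg}$ contains Beilinson's generators $\Om_{\mathbb{P}^1}$ and $\Om_{\mathbb{P}^1}(-1)$, and hence is a dg enhancement split-generating $D^b(Coh(\mathbb{P}^1))$, comparing the two push-out descriptions yields that $\Om(\T_0)_{dg}$ split-generates $D^b(Coh(\T_0))$.

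The main technical obstacle is the push-out-preserving property of $Coh$, but this has been handled in the preceding proof and specialises to the present simpler geometric setting with no change. A minor bookkeeping point is to ensure that the chosen model $\Om(\mathbb{P}^1)_{dg}$ admits strict dg lifts of $i_0, i_\infty$ compatible with the extra grading; this was already arranged in its construction earlier in the subsection.
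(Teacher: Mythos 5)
Your plan to realize $\T_0$ as the push-out $\mathbb{P}^1 \sqcup_{pt \,\sqcup\, pt} pt$ and feed it into the descent machinery has a gap: in that square the map $pt \sqcup pt \to pt$ is the fold map, which is \emph{not} a closed embedding (Spec$(\C\times\C)\to$ Spec$(\C)$ is finite surjective, not a monomorphism). The descent argument given in the paper for $\Tt_0$ is specifically calibrated to push-outs along \emph{two} closed embeddings — that is what lets one apply \cite[Theorem A.1.2]{gaitsgoryrozenblyumbook2} on an affine cover — and it works for $\Tt_0(n)$ precisely because the even-indexed and odd-indexed components give a genuine decomposition $X\sqcup_Z Y$ with $Z\hookrightarrow X$, $Z\hookrightarrow Y$ closed. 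The irreducible curve $\T_0$ admits no such two-component decomposition, so your square is of Ferrand/pinching type and the Zariski-descent reduction to the affine closed-embedding case does \emph{not} proceed verbatim; you would need a different input (proper/h-descent for $IndCoh$ along the normalization, or a Ferrand–Milnor square statement for $Coh$).

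A second point: in this paper the lemma is cited from \cite[Lemma 9.9]{ownpaperalg} and is not reproved here; the push-out description of $\Om(\T_0)_{dg}$ developed in this section is deployed for the \emph{extra-grading} comparison (Lemma~\ref{lem:extragrdlemma}) and the zigzag statement (Lemma~\ref{lem:zigzagt0}), not for the lemma you are addressing. The route actually consistent with this section would be to prove the push-out formula for $\Tt_0$ as the paper does (all embeddings closed), observe $\Z$-equivariance of (\ref{eq:hocolimtot0}), and then transport to $\T_0$ via the $\#\Z$ construction together with \'etale descent for $Coh$ along the free $\Z$-quotient $\Tt_0\to\T_0$ — rather than attempting a direct coequalizer descent on $\T_0$ itself.
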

\begin{thm}\label{thm:hmst0}\cite[Theorem B.(ii)]{lekpol} $\cW(T_0)$ is derived equivalent to $Coh(\T_0)$. Indeed, $tw(\cW(T_0))$ is an $A_\infty$-enhancement of $D^b(Coh(\T_0))$, and one can choose the equivalence such that $L_{gr}$ and $L_{pur}$ (green and purple curves in Figure \ref{figure:handletorus}) correspond to $\widetilde{\Om}_{\T_0}$ and $\Om_x$ respectively. Here, $\widetilde{\Om}_{\T_0}$ is the push-forward of the structure sheaf under normalization map and $\Om_x$ is the structure sheaf of the singular point.
\end{thm}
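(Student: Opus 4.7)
My plan has two parts: first, establish an abstract derived equivalence $\cW(T_0) \simeq D^b(Coh(\T_0))$; second, verify that the specific generators $L_{gr}, L_{pur}$ correspond to the specific sheaves $\widetilde{\Om}_{\T_0}, \Om_x$.

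For the derived equivalence, I would mirror on the A-side the push-out/Grothendieck construction developed in Section \ref{subsec:mttwi} for $\Om(\T_0)_{dg}$. On the B-side, the normalization $\pi : \mathbb{P}^1 \to \T_0$ expressed $\Om(\T_0)_{dg}$ as a homotopy coequalizer of $Pt \rightrightarrows \Om(\mathbb{P}^1)_{dg}$, where the two maps are pushforward from $0$ and $\infty$. I would decompose $T_0$ into Liouville sectors so that the corresponding A-side picture is parallel: take the annular neighborhood of a curve isotopic to $\pi_N(\mathbb{P}^1)$ as one sector (wrapped category equivalent to $\Om(\mathbb P^1)_{dg}$ via the known HMS for the once-punctured sphere $\simeq \C^\times$), and two small disks near the would-be-nodal point (wrapped category equivalent to $\C$) as the ``point'' sectors. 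The gluing formula of \cite{GPS2} then presents $\cW(T_0)$ as the corresponding homotopy push-out. Matching the two push-out diagrams yields the desired equivalence.

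For the second part, one needs to track the Lagrangian objects through the gluing. Arranging the sector decomposition so that $L_{gr}$ lives in the $\mathbb{P}^1$-sector as the image of $\Om_{\mathbb{P}^1}$ (an arc wrapping the annulus once), and $L_{pur}$ is the image of the structure sheaf of the nodal point coming from the ``point'' sector (a cocore disk through the node), the identifications $L_{gr}\leftrightarrow \widetilde{\Om}_{\T_0}$ and $L_{pur}\leftrightarrow \Om_x$ follow automatically from functoriality of the gluing equivalence. As an independent check, one computes $CW^*(L_{gr},L_{gr})$, $CW^*(L_{pur},L_{pur})$, and $CW^*(L_{gr},L_{pur})$ via chord counts in the universal cover $\tilde T_0$, and matches them with $\mathrm{Ext}^*$-computations on the $\T_0$ side (using an explicit two-term locally free resolution of $\Om_x$ and the adjunction $\pi^*\dashv \pi_*$).

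The main obstacle is pinning down the $A_\infty$-structure precisely enough. Generation by $L_{gr}, L_{pur}$ is immediate from \cite{Weinsteinsector} since these are the cocores of the Weinstein structure in Figure \ref{figure:handletorus}, and generation by $\widetilde{\Om}_{\T_0}, \Om_x$ on the B-side follows from the short exact sequence $0 \to \Om_{\T_0} \to \widetilde{\Om}_{\T_0} \to \Om_x \to 0$ together with the observation that line bundles on $\T_0$ are resolved by $\widetilde{\Om}_{\T_0}$'s twists. The delicate point is that a derived equivalence between $\cW(T_0)$ and $\Om(\T_0)_{dg}$ at the level of abstract $A_\infty$-algebras is not automatically compatible with specific generator identifications; the push-out strategy is preferable precisely because it transports objects along with the equivalence, bypassing any explicit uniqueness argument for the $A_\infty$-structure on $\mathrm{Ext}^*(\widetilde{\Om}_{\T_0}\oplus \Om_x, \widetilde{\Om}_{\T_0}\oplus \Om_x)$.
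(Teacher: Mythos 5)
Your proposal follows essentially the same route the paper itself takes in Section \ref{subsec:extragr}: while Theorem \ref{thm:hmst0} is cited from Lekili--Polishchuk, the paper re-derives it (and its extra-graded strengthening, Lemma \ref{lem:extragrdlemma}) by cutting $T_0$ along the one-handle $T$ and applying the gluing formula of \cite{GPS2} to match the homotopy coequalizer $\cW(T)\rightrightarrows\cW(N)\to\cW(T_0)$ with the algebraic coequalizer presenting $\Om(\T_0)_{dg}$, and then transporting the generators $L_{pur}\in\cW(T)$ and $L_{gr}\in\cW(N)$ through the functors induced by inclusion of sectors, exactly as you propose. One precision to insist on in your write-up: the annular sector $N=\overline{T_0\setminus T}$ has $\cW(N)\simeq D^b(Coh(\mathbb{P}^1))$ because of the \emph{two stops} that the sector boundary produces --- equivalently, $\cW(N)$ is the Fukaya--Seidel category of the Hori--Vafa mirror $(\C^\times,z+z^{-1})$ --- not because of plain HMS for $\C^\times\cong T^*S^1$, whose unstopped wrapped Fukaya category is $\C[t,t^{-1}]$; and $\C^\times$ is the twice-punctured sphere, not the once-punctured one.
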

Choose suitable generators for $\cW(T_0)$ ($L_{gr}$, i.e. the green curve in Figure \ref{figure:handletorus}, and the diagonal curve not shown in the picture). One can rephrase \cite[Theorem B.(ii)]{lekpol} as:
\begin{cor}\label{cor:hmst0}
$\Om(\T_0)_{dg}$ and $\cW(T_0)$ are quasi-equivalent $A_\infty$-categories. Moreover, under this quasi-equivalence $\widetilde \Om_{\T_0}$ corresponds to $L_{gr}$, $\widetilde \Om_{\T_0}(-1)$ corresponds to the curve that wraps around the torus once (which would be the diagonal curve in Figure \ref{figure:handletorus}), and $\Om_x$--- the structure sheaf of the node, as an object of $tw(\Om(\T_0)_{dg})$--- corresponds to $L_{pur}$. 
\end{cor}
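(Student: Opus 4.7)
The strategy is to recognize the corollary as essentially a combination of Lemma 9.9 and Theorem~\ref{thm:hmst0} together with uniqueness of dg enhancements, and then to pin down the object-level dictionary. I would proceed as follows.

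First, by Theorem~\ref{thm:hmst0} the twisted envelope $tw(\cW(T_0))$ is a dg enhancement of $D^b(Coh(\T_0))$, and by Lemma 9.9 the twisted envelope of $\Om(\T_0)_{dg}$ also gives a dg enhancement of $D^b(Coh(\T_0))$ (here I use that $\T_0$ is a compact nodal curve, so $Coh_p(\T_0)=Coh(\T_0)$). Since $D^b(Coh(\T_0))$ has a unique dg enhancement up to quasi-equivalence by the main result of \cite{orlovlunts}, the two dg enhancements are quasi-equivalent. This already yields the first sentence of the corollary, and I only need to verify that the correspondence on generators is as stated.

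For the object-level identification, Theorem~\ref{thm:hmst0} already gives $L_{gr}\leftrightarrow\widetilde\Om_{\T_0}$ and $L_{pur}\leftrightarrow\Om_x$. On the algebraic side, the object $\Om_{C_0}\in\Om(\Tt_0)_{dg}$ pushes forward (under the normalization map lifted to derived categories, i.e.\ its image in $tw(\Om(\T_0)_{dg})$) precisely to $\widetilde\Om_{\T_0}$; analogously $\Om_{C_0}(-1)$ lands in $\widetilde\Om_{\T_0}(-1)$. Hence under any such equivalence, $L_{gr}$ matches $\Om_{C_0}$. For the singular structure sheaf $\Om_x$, one checks (as in \cite{lekpol}) that it is represented in $tw(\Om(\T_0)_{dg})$ as the cone of a canonical morphism between shifts of $\Om_{C_0}$ implementing the nodal identification, and similarly on the symplectic side $L_{pur}$ can be realized as the analogous twisted complex relating $L_{gr}$ to itself via a Reeb chord at the puncture.

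The only remaining identification is the diagonal curve with $\widetilde\Om_{\T_0}(-1)$. For this I would compute $HW^*(L_{gr},L_{diag})$ and $HW^*(L_{diag},L_{diag})$ and compare with $\mathrm{Ext}^*(\widetilde\Om_{\T_0},\widetilde\Om_{\T_0}(-1))$ and $\mathrm{End}^*(\widetilde\Om_{\T_0}(-1))$ (equivalently, identify $L_{diag}$ with the image of $L_{gr}$ under a symplectic automorphism realizing the line-bundle twist, as already known in Lekili--Polishchuk). Since the Picard group of $\T_0$ distinguishes line bundles by degree and by the node-based coordinates, the outcome is that $L_{diag}\mapsto\widetilde\Om_{\T_0}(-1)$, which is the content of the corollary.

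The main obstacle in the plan is not the invocation of Orlov--Lunts, but making sure the dictionary on objects matches exactly the one claimed: in particular, that the diagonal is the degree $-1$ twist and not some other line bundle that could in principle be reached by post-composing the equivalence with a Fourier--Mukai auto-equivalence of $D^b(Coh(\T_0))$. This is why a direct computation (or a reference to the normalization/cone picture used in \cite{lekpol}) is needed; once the three objects $L_{gr},L_{pur},L_{diag}$ are matched as stated, every other identification follows formally from Yoneda and uniqueness of dg enhancements.
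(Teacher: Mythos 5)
Your proof is correct in spirit and arrives at the same conclusion, but the paper itself treats Corollary~\ref{cor:hmst0} essentially as a direct restatement of \cite[Theorem~B.(ii)]{lekpol}: after recalling that $\Om(\T_0)_{dg}$ generates a dg model for $D^b(Coh(\T_0))$ (Lemma~9.9 of \cite{ownpaperalg}) and that $tw(\cW(T_0))$ is a dg enhancement with the prescribed dictionary on $L_{gr},L_{pur}$ (Theorem~\ref{thm:hmst0}), the paper simply chooses the generators $\{L_{gr},L_{diag}\}$ and declares the corollary to be a rephrasing of the Lekili--Polishchuk equivalence. You take a more layered route: establish abstract quasi-equivalence via Orlov--Lunts uniqueness of enhancements, then pin down the object dictionary separately. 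This is a legitimate alternative, and you correctly flag the genuine subtlety that Orlov--Lunts only gives you \emph{some} quasi-equivalence, so one must rule out post-composition by a nontrivial auto-equivalence of $D^b(Coh(\T_0))$ before claiming the dictionary is the stated one.

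Two caveats worth recording. First, the cohomological check you propose for the diagonal curve does not by itself suffice: $\widetilde\Om_{\T_0}(1)$ and $\widetilde\Om_{\T_0}(-1)$ are exchanged by the involution of $\T_0$ and hence have identical Ext-group dimensions against $\widetilde\Om_{\T_0}$ and against themselves, so $\mathrm{Ext}^*$-comparisons alone cannot distinguish the two degree-$(\pm 1)$ twists. Your parenthetical fallback --- identifying $L_{diag}$ as the image of $L_{gr}$ under the symplectomorphism realizing the line-bundle twist, i.e.\ invoking the specific Lekili--Polishchuk correspondence rather than uniqueness abstractly --- is the cleaner and in fact necessary route, and is effectively what the paper does by citing \cite{lekpol} for the whole dictionary. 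Second, note that Corollary~\ref{cor:hmst0} by itself is not quite what the paper needs downstream (Lemma~\ref{lem:extragrdlemma} requires the quasi-equivalence to respect extra gradings), which is why the paper goes on to reprove the statement via the gluing formula of \cite{GPS2}; your argument, resting on Orlov--Lunts, does not directly produce that refinement.
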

However, we need the comparison of categories $\Om(\T_0)_{dg}$ and $\cW(T_0)$ as $A_\infty$-categories with extra grading. In other words, we need to relate these categories by zigzags of quasi-equivalences which respect extra gradings.
\begin{rk}
It is well known that the notions of an $A_\infty$-category and a dg category are equivalent over fields of characteristic $0$. More precisely, any dg category is an $A_\infty$-category with no higher operations and for any $A_\infty$-category, one can construct a dg category and $A_\infty$-quasi-equivalences between the two. Moreover, the $A_\infty$-morphisms from a dg category to another are equivalent to zigzags of dg quasi-equivalences followed by a dg morphism to the target. We specify whether we are considering a dg category in the category of dg categories or $A_\infty$-categories. 
\end{rk}
\begin{rk}
By choosing homotopy transfer data to be of degree $0$, one can construct minimal models with extra grading. Moreover, the constructed quasi-equivalences and homotopies all respect the extra grading as well. See \cite{markltransfer} for more about transferring $A_\infty$-structures. It is easy to see that such a minimal model is unique up to a gauge equivalence that preserves the extra grading. Hence, one can equivalently define the notion of equivalence for extra graded $A_\infty$-categories as the graded quasi-equivalence of their minimal models.
\end{rk}
One can check by hand that the extra gradings on $\Om(\T_0)_{dg}$ and $\cW(T_0)$ match at a cohomological level (up to a minor modification of the quasi-equivalence between them, for instance using symmetries of $\cW(T_0)$). However, this does not directly imply that they are equivalent, for the same reason that two gauge equivalent minimal $A_\infty$-structures on a extra graded vector space (i.e. a doubly graded vector space, note that $A_\infty$-maps are of degree $0$ in the second grading) are not necessarily equivalent via a gauge equivalence that respects the grading. 

Nevertheless, one can prove:
\begin{lem}\label{lem:extragrdlemma}
$\Om(\T_0)_{dg}$ and $\cW(T_0)$ are quasi-equivalent as  $A_\infty$-categories with an extra grading. 
\end{lem}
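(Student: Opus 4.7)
The plan is to reprove Corollary \ref{cor:hmst0} via the gluing formula of \cite{GPS2}, arranging the equivalence to be strictly compatible with the $\Z$-covering $\tilde T_0 \to T_0$. Both $\cW(T_0)$ and $\Om(\T_0)_{dg}$ will be exhibited as smash products (with respect to the $\Z$-action) of quasi-equivalent localizations of Grothendieck constructions, directly mirroring the homotopy push-out description of $\Om(\Tt_0)_{dg}$ given in Section \ref{subsec:mttwi}, so that the smash-product construction itself induces the extra grading on both sides.

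First I would cut $\tilde T_0$ along the full preimages of the cocore $L_{pur}$ from Figure \ref{figure:handletorus}. This decomposition is $\Z$-equivariant by construction and exhibits $\tilde T_0$ as a union of Liouville sectors $P_i$ glued along annular sectors $A_i$, $i\in\Z$, mirroring the normalization $\mathbb P^1 \times \Z \to \Tt_0$ in which $A_i$ plays the role of the node $x_{i+1/2}$. The GPS2 gluing formula then expresses $\cW(\tilde T_0)$ as the localization, at the continuation morphisms, of a symplectic Grothendieck construction $\mathcal{G}r^{sym}$ for the coequalizer diagram $\bigsqcup_i \cW(A_i) \rightrightarrows \bigsqcup_i \cW(P_i)$. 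Since the decomposition is $\Z$-equivariant, so is $\mathcal{G}r^{sym}$; its smash product with $\Z$ then models $\cW(T_0)$ together with the extra grading of Section \ref{subsec:extragr}.

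The second step is to match $\mathcal{G}r^{sym}$ with the algebraic Grothendieck construction built from $Pt_\infty \rightrightarrows \Om(\mathbb P^1 \times \Z)_{dg}$ in Section \ref{subsec:mttwi}. HMS for the pair of pants provides a quasi-equivalence $\cW(P_i) \simeq \Om(\mathbb P^1)_{dg}$ sending the two cylindrical ends along which we cut to neighborhoods of $0,\infty \in \mathbb P^1$, while $\cW(A_i) \simeq \C$ is generated by a single cotangent fibre. After enlarging objects by $\tr$-translates, these piecewise identifications should assemble into a zigzag of quasi-equivalences between the two Grothendieck constructions that is strictly $\Z$-equivariant, invoking Lemma \ref{lem:eqbimod} and Lemma \ref{lem:zigzagt0} to rectify homotopy-coherent data into strict data. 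Taking smash products with $\Z$ and localizing on both sides produces the desired zigzag between $\cW(T_0)$ and $\Om(\T_0)_{dg}$ compatible with the extra grading, since on both sides the extra grading is induced by the same smash-product construction.

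The main obstacle is verifying that the boundary inclusions $\cW(A_i) \to \cW(P_i)$ and $\cW(A_i) \to \cW(P_{i+1})$ correspond, after HMS for the pair of pants, to the functors $i_0$ and $i_\infty$ of Section \ref{subsec:mttwi} in a way that is $\Z$-equivariant -- the symplectic analog of the identity $\tr\circ\Xi_0\circ i_0 = \Xi_0\circ i_\infty$ arranged on the algebraic side by choice of the minimal model $\Om(\mathbb P^1)_{dg}$. Lemma \ref{lem:zigzagt0} is designed precisely to absorb the ambiguity in such choices, so in practice one only needs this compatibility up to a zigzag of quasi-equivalences, which should be accessible from explicit generating Lagrangians for $\cW(P_i)$ and Floer-theoretic computations near the boundary annuli.
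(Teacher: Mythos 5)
Your plan is essentially the paper's proof of Lemma \ref{lem:extragrdlemma}: cut the surface, apply the GPS gluing formula to write $\cW(T_0)$ as a localization of a Grothendieck construction, and match this construction with the algebraic one underlying $\Om(\T_0)_{dg}$. A few things in your set-up deserve comment, though. First, the detour through $\cW(\tilde T_0)$ is both unnecessary and somewhat delicate: $\tilde T_0$ is an infinite-type Liouville manifold, and the gluing theorem of \cite{GPS2} is stated for (finite decompositions of) finite-type sectors, so asserting ``$\cW(\tilde T_0)$ is the localization of $\cG r^{sym}$'' needs an extension of their framework that the paper deliberately sidesteps. You do not actually need that claim — what you use is that $\cW(T_0)$ is the localization of $\cG r^{sym}\#\Z$, which is precisely what you get by applying the gluing formula directly to the finite decomposition of $T_0$ into the $1$-handle $T$ and the sector $N=\overline{T_0\setminus T}$, as the paper does with its category $\cG r_s$. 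Second, the key technical lemma for your plan is Lemma \ref{lem:descendingextra}, not Lemma \ref{lem:eqbimod} (which is about twisted bimodules, a different context). You need Lemma \ref{lem:descendingextra} to ensure that the extra grading on $\cG r_s$ (or on your $\cG r^{sym}\#\Z$) descends through localization, and that graded quasi-equivalences of the Grothendieck constructions induce graded quasi-equivalences of the localizations. Third, the phrase ``HMS for the pair of pants'' is a red herring: the sector $P_i$ (a copy of $N$) is a cylinder with a stop on each boundary component, and its partially wrapped Fukaya category is identified with $D^b(Coh(\mathbb P^1))$ via the Kronecker quiver or, equivalently, via the Landau--Ginzburg model $(\C^*, z+z^{-1})$ — not via mirror symmetry for the pair of pants, whose mirror is different. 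Finally, you should also spell out the check the paper makes explicitly: that the extra grading arising from the smash product/localization agrees with the one defined in Section \ref{subsec:extragr} via lifts of Hamiltonian chords to $\tilde T_0$ — this amounts to verifying that the localization functor $\cG r_s \to \cW(T_0)$ is compatible with both gradings, which the paper does by tracking which lift a chord lands on according to whether it sits in a $j_0$- or $j_1$-component.
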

To prove Lemma \ref{lem:extragrdlemma}, we will reprove  Theorem \ref{thm:hmst0} and Corollary \ref{cor:hmst0} using the gluing formula of \cite{GPS2}. In other words, we will give a description of $\cW(T_0)$ as a homotopy coequalizer similar to description of $\Om(\T_0)_{dg}$ in Section \ref{subsec:mttwi}. $\cW(T_0)$ will be described as a localization of an intermediate category $\cG r_s$ with extra grading. To obtain extra grading on $\cW(T_0)$, we need:
\begin{lem}\label{lem:descendingextra}
Let $\B$ be a category with extra grading and $C$ be a set of homogeneous morphisms. Then the category $C^{-1}\B$ can be endowed with an extra grading such that the localization map respects gradings of $\B$ and $C^{-1}\B$. Moreover, if $\B_1$ and $\B_2$ are quasi-equivalent as extra graded categories, and $C_1$ and $C_2$ correspond to each other in cohomology under the (zigzag of) quasi-equivalence(s), then $C_1^{-1}\B_1$ is quasi-equivalent to $C_2^{-1}\B_2$ as an extra graded category.
\end{lem}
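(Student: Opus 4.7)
The plan is to build $C^{-1}\B$ using the Drinfeld quotient model from \cite{GPS1}: one takes the pre-triangulated envelope $tw(\B)$ and adjoins contracting homotopies for the cones $\mathrm{cone}(c)$ with $c\in C$. First I would observe that $tw(\B)$ itself inherits an extra grading from $\B$: any twisted complex built out of homogeneous morphisms carries a natural extra grading, with a homogeneous connecting morphism of extra degree $d$ forcing a compatible shift by $d$ of the extra grading on the corresponding summand. Concretely, for $c\in C$ of extra degree $|c|$, the cone $\bigl(X\xrightarrow{c}Y\bigr)$ is graded by placing $Y$ in extra degree $0$ and $X$ in extra degree $|c|$, so that $c$ acts as a structure map of total extra degree $0$.

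Next I would add, for each cone $K=\mathrm{cone}(c)$, a contracting homotopy $h_K$ of cohomological degree $-1$, declaring $h_K$ to be homogeneous of extra degree $0$. This assignment is forced by the relation $\mu^1(h_K)=1_K$ and is consistent because every higher $A_\infty$-operation in the Drinfeld quotient is a signed sum of alternating compositions of $\B$-morphisms and insertions of $h_K$'s; such an expression has total extra degree equal to the sum of the extra degrees of its inputs, by induction on the number of insertions. This produces the extra grading on $C^{-1}\B$ and makes the universal map $\B\to C^{-1}\B$ strictly grading-preserving.

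For the invariance statement, I would invoke the functoriality of the above construction. A grading-preserving quasi-equivalence $F\colon\B_1\to\B_2$ with $F(C_1)$ matching $C_2$ in cohomology extends to a grading-preserving quasi-equivalence $tw(F)\colon tw(\B_1)\to tw(\B_2)$ which sends cones of $C_1$ to objects quasi-isomorphic to cones of $C_2$. Sending each $h_{K_1}$ to $h_{F(K_1)}$ then induces a grading-preserving functor $C_1^{-1}\B_1\to C_2^{-1}\B_2$, which is a quasi-equivalence by the ungraded invariance of localization proved in \cite{GPS1}. A zigzag is handled edge by edge.

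The main obstacle I expect is the bookkeeping of signs and extra degrees when the explicit combinatorics of the Drinfeld quotient is unfolded, in particular the need to choose the $h_K$ uniformly homogeneous across cones arising from different $c\in C$ of distinct extra degrees $|c|$. This reduces to the observation that $h_K$ is a morphism $K\to K$ and therefore lies in the degree $0$ component of the extra grading on $tw(\B)(K,K)$, so no incompatibility between different cones can arise. Once this is checked, both assertions of the lemma follow essentially formally from the universal property of the Drinfeld quotient together with its standard homotopy invariance.
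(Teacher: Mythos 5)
Your first half --- inducing an extra grading on the Drinfeld quotient by grading the cone of a homogeneous $c\in C$ with the evident shift and declaring the contracting homotopies $h_K$ to have extra degree $0$ --- is exactly what the paper does, and is correct.

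The second half has a gap. You propose that a grading-preserving functor on the localizations is induced by ``sending each $h_{K_1}$ to $h_{F(K_1)}$,'' but $h_{F(K_1)}$ is not a generating morphism of $C_2^{-1}\B_2$: the Drinfeld quotient $C_2^{-1}\B_2$ adjoins contracting homotopies only for the cones $\mathrm{cone}(c_2)$ with $c_2\in C_2$, and your hypothesis guarantees only that $tw(F)(\mathrm{cone}(c_1))$ is \emph{quasi-isomorphic} to such a cone, not equal to one. To make the assignment well-defined you would still need to produce a contracting homotopy for $tw(F)(\mathrm{cone}(c_1))$ inside $C_2^{-1}\B_2$ and verify that it can be chosen to have extra degree $0$; this is an additional step, not an automatic consequence of the ungraded invariance result in \cite{GPS1}. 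The paper avoids it by first reducing to the situation where the quasi-equivalence carries $C_1$ to $C_2$ strictly --- e.g.\ by replacing $\B_2$ with a minimal model, chosen so that the passage preserves the extra grading --- so that $tw(F)(\mathrm{cone}(c_1))$ literally equals one of the killed cones $\mathrm{cone}(c_2)$ and the induced map on contracting homotopies is tautological and manifestly grading-preserving.
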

\begin{proof}
First, let us recall the definition of localization following \cite{GPS1}: consider the set of cones $\B_C$ of elements of $C$. Take the Lyubashenko-Ovsienko/Drinfeld quotient of $\B$ by $\B_C$ (see \cite{lyubaquot}, \cite{partiallywrapped}, \cite{GPS1}). In general, this specific model allows one to endow the quotient $\B/\B'$ with an extra grading when $\B$ is an extra graded category and $\B'\subset \B$ is a full subcategory. In our case, one has to enlarge $\B$ within $tw(\B)$ by adding cones of $C_1$, resp. $C_2$. However, the extra grading also extends to this larger subcategory as the morphisms of $C$ are all homogeneous. 
Hence, the first claim follows. 
	
For the second claim assume without loss of generality that there is a quasi-equivalence $\B_1\to \B_2$ that respects the extra gradings and that carries $C_1$ to $C_2$ strictly (for instance assume $\B_2$ is minimal). Then, enlarge both categories by adding cones. The quasi-equivalence extends to a (graded) quasi-equivalence of enlarged categories as well (that sends cones of $C_1$ to cones of $C_2$). The natural functor from the localization with respect to cones of $C_1$ to the localization with respect to cones of $C_2$ preserves the extra gradings.
Hence, there is an induced quasi-equivalence $C_1^{-1}\B_1\to C_2^{-1}\B_2$ that preserves the gradings. 
\end{proof}
To apply gluing formula, we need to decompose $T_0$ into Liouville sectors. Recall:
\begin{defn}(\cite[Definition 1.1]{GPS1}, \cite[Definition 1.1]{GPS2}) A Liouville sector $X$ is a Liouville manifold with boundary such that there exists a function $I:\partial X\to\mathbb{R}$ that is linear at infinity and whose Hamiltonian vector field is pointing outward along $\partial X$. 	
\end{defn}
The examples of Liouville sectors include Liouville manifolds, $T^*N$, where $N$ is a manifold with boundary, and open Riemann surfaces with boundary such that the boundary has no closed component. For this section, we are only concerned about the last type of examples. We refer to $\partial X$ as the finite boundary of $X$ (as opposed to boundary at infinity defined analogous to contact boundary at infinity of Liouville manifolds). 
\begin{figure}\centering
\includegraphics[height=4 cm]{torushandles2.png}
\caption{Handlebody decomposition of $T_0$}
\label{figure:handletorus2}
\end{figure}

Decompose $T_0$ into sectors as follows: let $T$ denote the vertical $1$-handle in Figure \ref{figure:handletorus2} that is shown in yellow. Cut $T_0$ into sectors $T$ and $N$ along the boundary of $T$, where $N=\overline{T_0\setminus T}$. The finite boundary of these sectors correspond to side edges of $T$. See Figure \ref{figure:handletorus2} or Figure \ref{figure:sectorgluing} for a clearer picture (we are being sloppy about the notation as $T_0$ previously referred to the Liouville domain rather than its completion, similarly with $T$). 

As a Liouville sector, $T$ is equivalent to $T^*[0,1]$. Note that the notion of a Liouville sector is equivalent to the notion of a Liouville manifold with stops (see \cite{partiallywrapped}). Under this equivalence, $T$ corresponds to a disc with $2$-stops at its boundary, whereas $N=\overline{T\setminus T_0}$ corresponds to a cylinder with one stop at each of its boundary components. 

In \cite{GPS1}, the authors associate an $A_\infty$-category $\cW(X)$ to a Liouville sector $X$, called the wrapped Fukaya category. We will later recall the definition (see Section \ref{subsec:reminderwfuk}); however,
for our current purposes, we take this notion as a black box. The following are easy to prove:
\begin{exmp}\label{exmp:wt=c}(\cite[Example 1.22]{GPS2})
$\cW(T)=\cW(T^*[0,1])$ is derived equivalent to $\mathbb{C}$. It is generated by $L_{pur}$, the purple curve in Figure \ref{figure:handletorus2}.
\end{exmp}
\begin{figure}\centering
	\includegraphics[height= 4 cm]{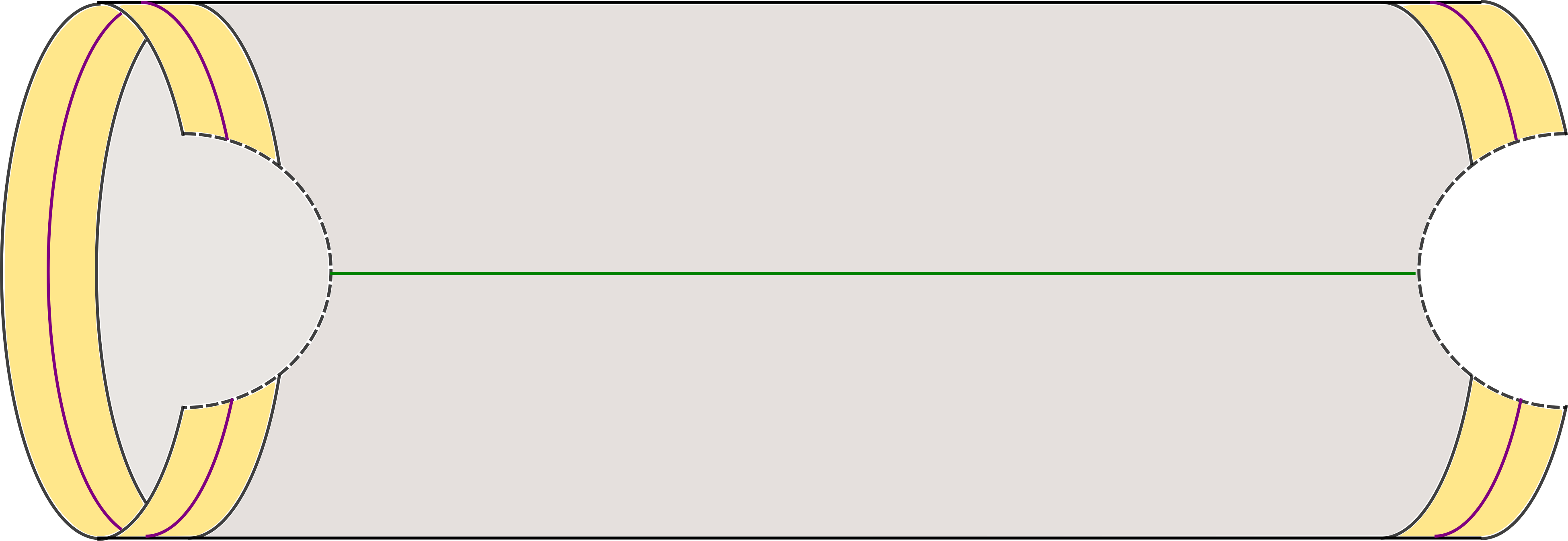}
	\caption{The sector $N=\overline{T_0\setminus T}$ and two inclusions of $T$ into $N$}
	\label{figure:cuttori}
\end{figure}
\begin{exmp}\label{exmp:wn=p1}(\cite[Example 1.18]{GPS2})
$\cW(N)$ is derived equivalent to $D^b(Coh(\mathbb{ P}^1) )$ (the partially wrapped category computed in \cite[Example 1.18]{GPS2} is equivalent to $\cW(N)$). As generators one can take the green curve in Figure \ref{figure:cuttori} and another curve that wraps around once without intersecting the green curve (the diagonal of Figure \ref{figure:handletorus2}). The subcategory spanned by these curves is equivalent to Kronecker quiver. For an alternative set of generators one can take the horizontal green curve and the vertical purple curves in Figure \ref{figure:cuttori}. Indeed, under the equivalence with $D^b(Coh(\mathbb{ P}^1) )$, the green curve corresponds to a line bundle and the purple curves correspond to skyscraper sheaves, and together they generate the category.
As a side note, this category is equivalent to Fukaya-Seidel category of Landau-Ginzburg model $(\C,z+z^{-1})$, which is well known to be a mirror to $\mathbb P^1$. 
\end{exmp}
\begin{figure}\centering
	\includegraphics[height= 2 cm]{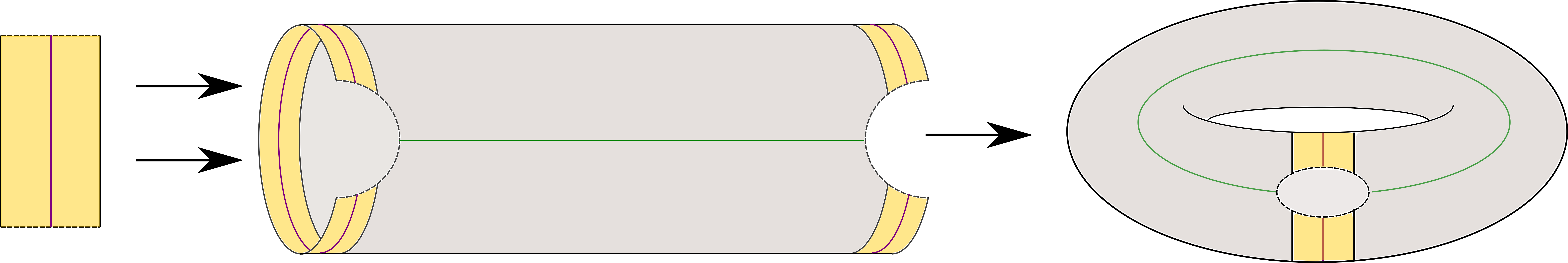}
	\caption{The inclusions of sector $T$ into $N=\overline{T_0\setminus T}$ and the inclusion of $N$ into $T_0$}
	\label{figure:sectorgluing}
\end{figure}
$T$ and $N=\overline{T_0\setminus T}$ are glued along the sector given by a neighborhood of their shared edges (represented by two yellow sectors in Figure \ref{figure:cuttori}). This sector is isomorphic to $T^*[0,1]\coprod T^*[0,1]$. Then, by the gluing formula \cite[Theorem 1.20]{GPS2}, we have a pushout diagram:
\begin{equation}\label{eq:gluediagsymp}
\xymatrix{ \cW( \overline{T_0\setminus T})=\cW(N)\ar[r]& \cW(T_0)\\ \cW(T^*[0,1])\coprod \cW(T^*[0,1])\ar[u]\ar[r]& \cW(T^*[0,1])\simeq \cW(T)\ar[u]  }
\end{equation}
Even though the inclusion of two yellow sectors into the 1-handle $T$ is not an isomorphism, it induces an equivalence between their wrapped Fukaya categories. In other words, the lower horizontal arrow in (\ref{eq:gluediagsymp}) can be seen as the identity on each component. Hence, we can write this gluing diagram as 
\begin{equation}\label{eq:coeqmap}
\C\simeq \cW(T^*[0,1])=\cW(T)\rightrightarrows \cW( N )\to \cW(T_0)
\end{equation}
where $\cW(T_0)$ is equivalent to homotopy coequalizer of
\begin{equation}\label{eq:coeq}
\cW(T)\rightrightarrows \cW( N )
\end{equation}
A pictural representation of (\ref{eq:coeqmap}) is given by Figure \ref{figure:sectorgluing} (Figure \ref{figure:sectorgluing} can also be seen as a coequalizer diagram; however, from the perspective of \cite{GPS1}, \cite{GPS2} this picture is slightly informal, as the maps of sectors in Figure \ref{figure:sectorgluing} are not global inclusions, but rather like ``\'etale maps'' for sectors). 

Abusing notation, let $j_0$, resp. $j_1$, be the left inclusion, resp. right inclusion, of both $T\rightrightarrows N=\overline{T_0\setminus T}$ and $\cW(T)\rightrightarrows\cW(N)$. To make statement about homotopy coequalizer precise, consider the category $\cG r_s$  with objects 
\begin{equation}\label{eq:obgr2}
ob(\cW(T))\coprod ob(\cW(N))
\end{equation}
and with morphisms 
\begin{equation}\label{eq:homgr2}
hom(X,X'):=\begin{cases}
\cW(N)(X,X'), &\text{if }X,X'\in \cW(N)\\
\cW(T)(X,X')=\C, &\text{if }X=X'\in \cW(T)\\
\cW(N)(j_0 (X),X' )\oplus \cW(N)(j_1 (X),X' ), &\text{if } X\in \cW(T), X'\in\cW(N)\\
0,&\text{if } X\in\cW(N), X'\in\cW(T)
\end{cases}
\end{equation}
As before, $\cG r_s$ can be seen as the Grothendieck construction for (\ref{eq:coeq}). The rightmost arrow in (\ref{eq:coeqmap}) induces a functor 
\begin{equation}\label{eq:locloc}
\cG r_s\to \cW(T_0)
\end{equation}
That (\ref{eq:coeqmap}) is a homotopy coequalizer diagram means that (\ref{eq:locloc}) is a localization at two morphisms $L_{pur}\to j_0(L_{pur})$ and $L_{pur}\to j_1(L_{pur})$ corresponding to identities of $j_0(L_{pur})$ and $j_1(L_{pur})$ (we are abusing the notation and denoting the purple curves in copies of $T$ in Figure \ref{figure:sectorgluing} by $L_{pur}$ as well). Denote the set of these two morphisms by $C_s$. 

Grade $\cG r_s$ as we graded (\ref{eq:homsforsmallgrothalg}): let morphisms of type $\cW(N)(j_0(X),X' )$ be of degree $1$ and the remaining components be of degree $0$. 
This extra grading descends to localization $C_s^{-1}\cG r_s$ by Lemma \ref{lem:descendingextra}. To see the extra grading obtained by localizing $\cG r_s$ matches the previously given one on $\cW(T_0)$ at the beginning of this section, one only has to show the map (\ref{eq:locloc}) respects the extra grading. For instance, let $L$, resp. $L'$ denote the image of $j_1(X)$, resp. $X'\subset N$ in $T_0$. One can arrange the lifts $\tilde L$, resp. $\tilde{ L'}$, of $L$, resp. $L'$, such that the image of a chord from $j_1(X)$ to $X'$ lifts to a chord from $\tilde L$ to $\tilde{L'}$ (hence, degree $0$ in the previously given extra grading). This implies that the image of any chord from $j_0(X)$ to $X'$ lifts to a chord from $\tilde L$ to $\tilde{L'}\langle -1\rangle$ (hence, degree $1$). The other cases are easier. 
%
%

To reprove Theorem \ref{thm:hmst0}, observe that $\cG r\#\Z$ (i.e. the Grothendieck construction for (\ref{eq:prediag1}) with hom-complexes given by (\ref{eq:homsforsmallgrothalg})) is equivalent to $\cG r_s$ (defined by (\ref{eq:obgr2}) and (\ref{eq:homgr2})) and the sets of morphisms $C$ and $C_s$ correspond to each other under the equivalence (in cohomology). Hence, the homotopy colimits $C^{-1}(\cG r\#\Z)$ and $C_s^{-1}\cG r_s$ are equivalent. Moreover, the equivalence respects the extra gradings on $\cG r\#\Z$ and $\cG r_s$; hence, the induced equivalence also respects the extra grading by Lemma \ref{lem:descendingextra}. This proves Lemma \ref{lem:extragrdlemma}. 
\begin{note}
Homological mirror symmetry for $T_0$ and $\T_0$ with extra $\Z/n$-grading can be seen as homological mirror symmetry for their $n$-fold covers. Similarly, equivalence with extra $\Z$-grading can informally be thought as mirror symmetry between $\Tt_0$ and $\tilde{T}_0$.	
\end{note}
\section{K\"unneth and twisted K\"unneth theorems}\label{sec:kunneth}
\subsection{Introduction}
In this section, we define an $A_\infty$-functor \begin{equation}\label{eq:twfunct}
\cW(T_\phi)\rightarrow Bimod_{tw}(\cW(T_0),\cW(M))
\end{equation}
and show that it is full and faithful. We also show that the essential image is spanned by twisted Yoneda bimodules. This finishes the proof of:
\begingroup
\def\thethm{\ref{mainthmsymp}}
\begin{thm}
$\cW(T_\phi)$ is quasi-equivalent to twisted tensor product of $\cW(T_0)$ and $\cW(M)$.
\end{thm}
\addtocounter{thm}{-1}
\endgroup
For simplicity, we will first expose the reader to the main idea on how to use quilted strips to define (\ref{eq:twfunct}) and to the basic TQFT argument

Then, we will explain how to do the same for wrapped Fukaya categories. We start by defining a category $\cW^2(T_\phi)$ that is analogous to category $\cW^2$ in \cite{sheelthesis} and $\cW^{prod}$ in \cite{GPS2}. It is equivalent to $\cW(T_\phi)$ by an argument similar to \cite{GPS2}.

By Corollary \ref{torusgenerators}, $\cW(T_\phi)$ is split generated by Lagrangians of type $L\times_\phi L'$. Hence, we will restrict attention to only these objects, we will prove their images are quasi-isomorphic to Yoneda bimodules, and that (\ref{eq:twfunct}) is fully faithful on these objects. 
\subsection{Quilted strips}
Moduli of $n$-quilted strips are defined in \cite{quiltedstrip}, and they control $A_\infty$ n-modules. Their main strata can be identified with $n$-parallel lines with markings in $\C$ with fixed distance from each other (up to conformal equivalence). For $n=3$, this is used in \cite{sheelthesis} to define functors from a version of wrapped Fukaya category on $M\times M^-$ to bimodules over $\cW(M)$. In general, defining a functor 
\begin{equation}
\cW(X\times Y)\to Bimod(\cW(X),\cW(Y))
\end{equation}
is equivalent to defining a left-right-right $\cW(X\times Y)$-$\cW(X)$-$\cW(Y)$-trimodule. We would like to exploit similar ideas to define (\ref{eq:twfunct}) (see Remark \ref{rk:definetrimodule}). Let us start by describing moduli of quilted strips first:
\begin{defn}
	Let $\textbf d=(d_1,d_2,d_3)\in\Z_{\geq 0}^3$. A $3$-quilted strip with $\textbf d$-markings is 
	\begin{itemize}
		\item a pair of strips $r_1,r_2$ biholomorphic to $\mathbb R\times[0,1]$
		\item $d_1$-markings on the upper boundary of $r_1$, and $d_2$-markings on the upper boundary of $r_2$
		\item $d_3$-markings on the lower boundary of $r_1$ and $r_2$
		\item an identification of $r_1$ and $r_2$ preserving the incoming/outgoing ends of the strip and mapping lower markings of $r_1$ to lower markings of $r_2$
	\end{itemize}
\end{defn}
The isomorphisms of such quilted strips are given by isomorphisms of both strips commuting with the identification (i.e. by simultaneous isomorphisms of $r_1$ and $r_2$).
\begin{defn}
	Let $\cQ(\textbf d)=\cQ(d_1,d_2,d_3)$ denote the moduli space of $3$-quilted strips up to isomorphism.
\end{defn}
\begin{figure}\centering
	\includegraphics[height= 3 cm]{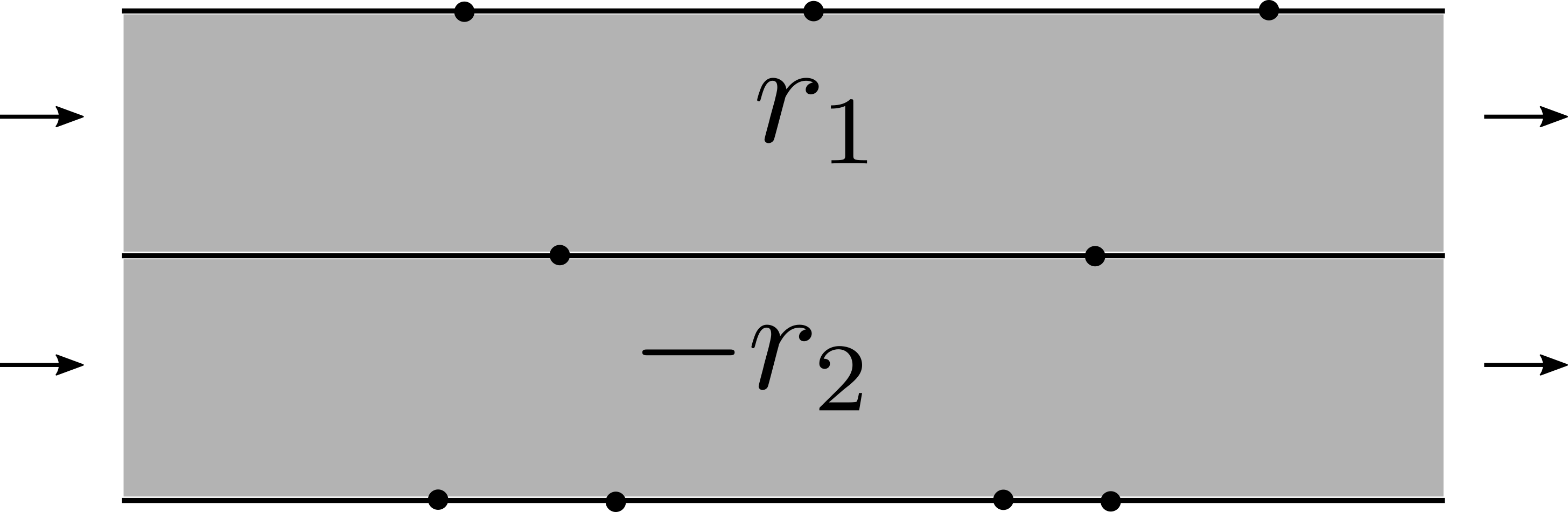}
	\caption{An element of $\cQ(3,4,2)$}
	\label{figure:3quiltedreg}
\end{figure}
The identification of $r_1$ and $r_2$ is uniquely determined up to translation. When $d_3>0$, it is uniquely determined. When $d_3=0$, different identifications give different elements. It is not hard to identify $\cQ(d_1,d_2,d_3)$ with the space of $3$-quilted lines in \cite{quiltedstrip}. We will indeed picture these objects as in Figure \ref{figure:3quiltedreg} which is similar to \cite{sheelthesis} and \cite{GPS2}. In this figure, $-r_2$ is the strip $r_2$ with conjugate holomorphic structure. This quilted surface can be folded to obtain a single strip, thanks to global identification of $r_1$ and $r_2$. The complement of the markings in the quilted strip $r=(r_1,r_2)\in \cQ(\textbf d)$ will be denoted by $\cS^q_r$. The complement of markings in the folding of the strip  will be denoted by $\cS^f_r$ (the superscript $q$ stands for quilted and $f$ stands for folded). The family of these surfaces form universal bundles over $\cQ(\textbf d)$, denoted by $\cS^q$ and $\cS^f$ respectively. The complement of the markings in $r_1$ and $r_2$ will be denoted by $\cS^{(1)}_r$ and $\cS^{(2)}_r$ respectively (hence, $\cS^f_r=\cS^{(1)}_r\cap\cS^{(2)}_r$).

$\cQ(\textbf d)$ admits a natural compactification described in detail in \cite{quiltedstrip}. We denote this compactification by $\overline{\cQ(\textbf{d})}$.  In Figure \ref{figure:comp3quilt}, we give example of a typical boundary element of  $\overline{\cQ(\textbf{d})}$. 
\begin{figure}\centering
	\includegraphics[height=4 cm]{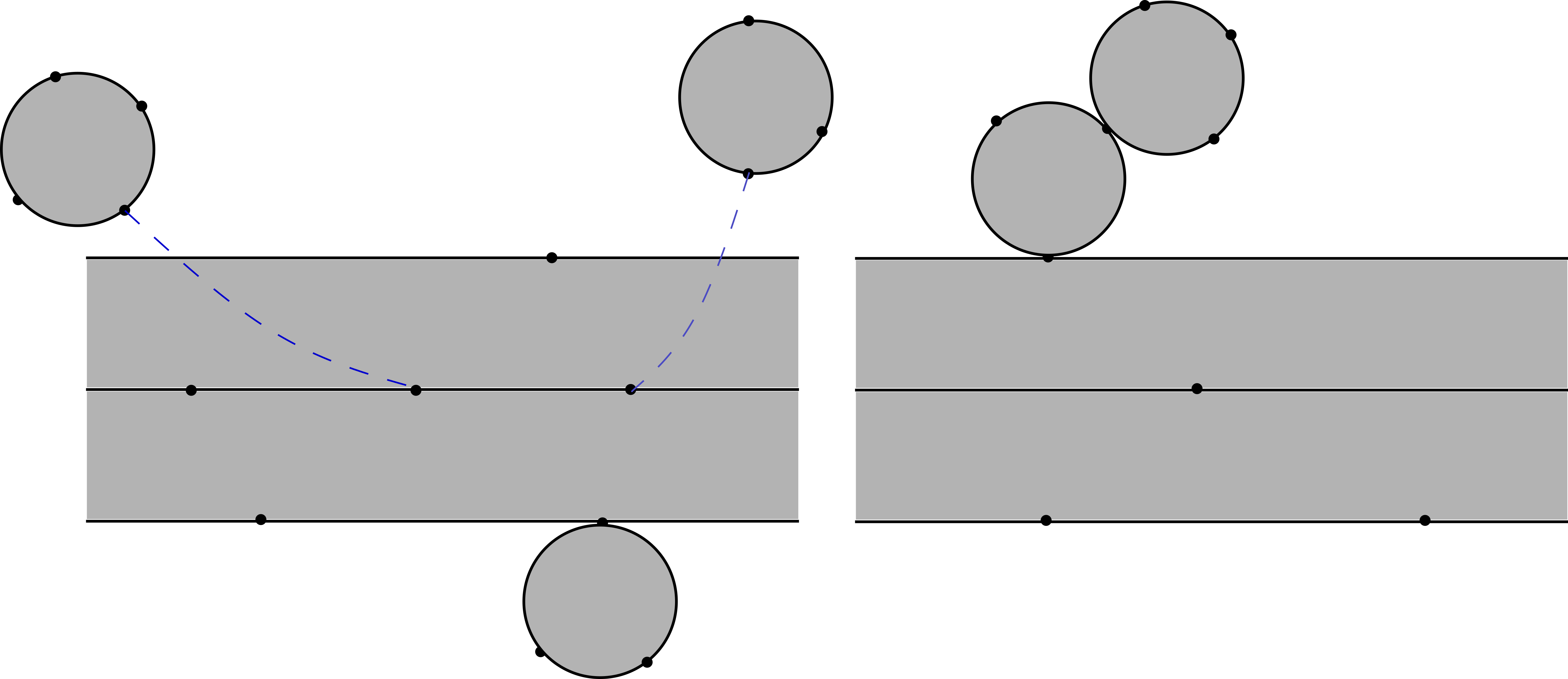}
	\caption{An element of $\partial \overline{\cQ(5,5,7)}$}
	\label{figure:comp3quilt}
\end{figure}
In summary, if we have a sequence of elements of $\cQ(\textbf{d})$ such that a set of points on the upper/lower boundary or on the seam collide, the limit has a disc bubbled off that component, or seam. If the horizontal distance between two sets of boundary marked points goes to infinity, this causes strip breaking. Figure \ref{figure:comp3quilt} illustrates both situations. It represents an element of the codimension $6$ strata of $\partial \overline{\cQ(5,5,7)}$, one for each bubbling and one for disc breaking. 
\begin{note}
To describe the boundary structure, one has to choose strip-like ends for each boundary marking as well as the incoming and outgoing ends of the quilted strip. These are conformal maps from the half infinite strip $Z_\pm=\mathbb{R}_\pm \times[0,1]$ to marked discs/components of the strip, and they allow one to  describe how to glue boundary elements for given gluing parameters. In our case, the global identification of $r_1$ and $r_2$ (i.e. folding) has to be kept as well. If we allow arbitrary strip-like ends, the identification would require Riemann mapping theorem, and the bottom markings of $r_1$ and $r_2$ (i.e. the marked points on the seam) would no longer match. Therefore, we have to restrict strip-like ends as follows: we use rational strip-like ends on the disc bubbles, i.e. they extend to biholomorphic maps from the whole strip $Z=\mathbb{R}\times[0,1]$ to the disc. If this is a strip-like end for an incoming marked point on a disc bubble, we also ask the biholomorphic extension to $Z$ to converge at the other infinite end to outgoing marked point of the disc. For the ends on the quilted component, we choose standard ends on the left and right. To choose ends for the markings on the upper boundary of $r_1$ and $r_2$ (i.e. lower boundary of $-r_2$), we consider the natural embedding of the strip into upper half-plane so that upper boundary maps to real axis, and that does not change the strip width. This embedding is determined up to translation, and we require the incoming strip-like ends corresponding to upper markings of $r_1$ and $r_2$ to be as before, i.e. rational, and such that the extended biholomorphic map from $Z=\mathbb R\times [0,1]$ converges to $\infty$ (of the ambient half-plane). The ends for the middle markings are similar. 

For a pictural explanation of gluing, consider elements of boundary strata described as in Figure \ref{figure:3quiltedwithhalf}.
Gluing a hyperplane to the strip is essentially taking a large half-disc in that hyperplane, taking out a small half-disc from the edge of the strip, and gluing the large half-disc after rescaling. 
\end{note}
\begin{figure}\centering
\includegraphics[height=4 cm]{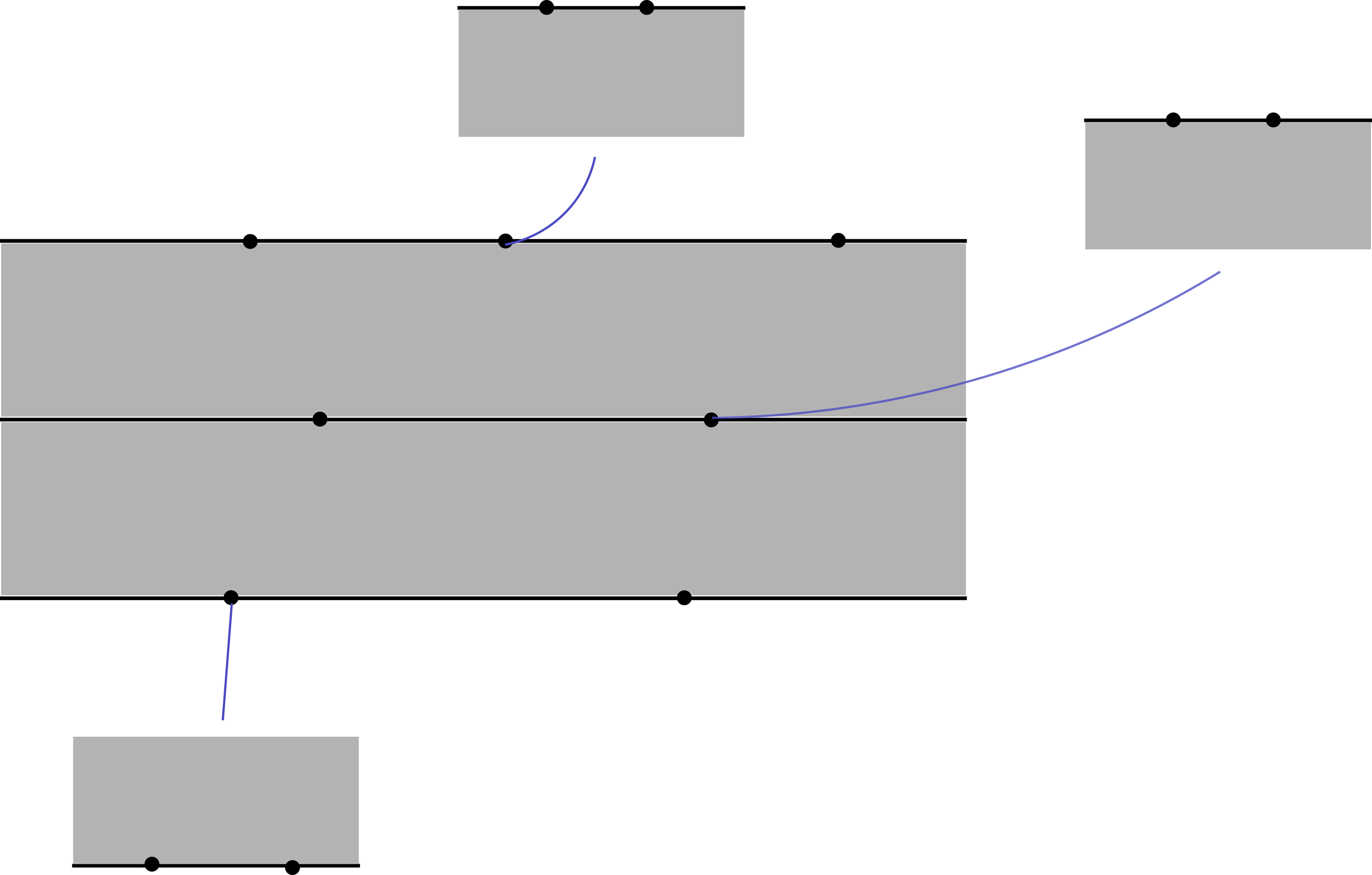}
\caption{A more convenient description of boundary elements of $\cQ(\textbf d)$ for the purposes of gluing}
\label{figure:3quiltedwithhalf}
\end{figure}
Fix a consistent choice of strip-like ends for all moduli spaces $\cQ(\textbf{d})$. In other words, we fix strip-like ends on each $\cS_r^{(1)}$ and $\cS_r^{(2)}$, and the ends chosen for the left/right end and lower boundary (i.e. the seam) coincide for $\cS_r^{(1)}$ and $\cS_r^{(2)}$.
\subsection{Definition of K\"unneth functor and proof of its fully faithfulness via count of quilts}\label{subsec:defnkunnethearly}
In this section, we show how to use counts of quilted strips to define a functor \begin{equation}\label{eq:twfunct2}
\cW(T_\phi)\to Bimod_{tw}(\cW(T_0),\cW(M))
\end{equation}
Indeed, for simplicity, we first define it for $\phi=1_M$ (i.e. for $T_\phi=T_0\times M$), and later explain the twisted case. Then, we will show how to prove fully faithfulness by a TQFT argument. As this section is intended to be expository, we use the definition of wrapped Fukaya category via quadratic Hamiltonians \`a la \cite{generation}, and we ignore all technical problems like rescaling and compactness issues. In the following sections, we will switch to definition given in \cite{GPS1}, show necessary compactness results, and necessary modifications in definition and proof of fully faithfulness of (\ref{eq:twfunct}). See \cite{kartalthesis} for the same argument worked with the definition of \cite{generation}.

Such a functor in the case $\phi=1_M$ and $T_\phi=T_0\times M$ is defined in \cite{sheelthesis} for a version of wrapped Fukaya category of $T_0\times M$. Namely, define $\cW^2(T_0\times M)$ to be a category whose objects are given by $L\times L'$, where $L$ and $L'$ are exact, cylindrical Lagrangian branes, i.e. exact, cylindrical Lagrangians equipped with grading and spin structures, in $\wh{T_0}$ and $\wh{M}$, respectively. Cylindrical means the corresponding Lagrangian is invariant under the Liouville flow outside a compact subset. Define an $A_\infty$-structure using Floer data that decompose into Floer data on $\wh{\tilde T_0}$ and $\wh{M}$ (i.e. split type Floer data), whose components are cylindrical and quadratic at infinity. Defining an $A_\infty$-functor
\begin{equation}
\cW^2(T_0\times M)\to Bimod(\cW(T_0),\cW(M) )
\end{equation}
is equivalent to defining a left-right-right trimodule over $\cW^2(T_0\times M)$-$\cW(T_0)$-$\cW(M)$. Given exact Lagrangian branes $L\subset \wh{T_0}$, $L'\subset \wh{M}$ and $L''\subset \wh{T_0}\times \wh{M}$, define $\fM(L'',L,L')$ as the linear span of Hamiltonian chords $L\times L'\to L''$. To make it a trimodule, define structure maps 
\begin{align*}
CW(L_{p-1}'',L_p'')\otimes CW(L_{p-2}'',L_{p-1}'')\otimes \dots \otimes CW(L_0'',L_1'')\otimes  \fM(L_0'',L_0,L_0')\otimes \\ CW(L_1,L_0)\otimes \dots CW(L_m,L_{m-1})\otimes CW(L_1',L_0')\dots \otimes CW(L_n',L_{n-1}')\\ \to \fM(L_p'',L_m,L_n')[1-m-n-p]
\end{align*} by counting pseudo-holomorphic quilted strips as in Figure \ref{figure:labelledquilt} with target $\wh{T_0}\times \wh{M}$ (i.e. upper component maps to $\wh{T_0}$ and bottom component maps to $\wh{M}$).
\begin{figure}
\centering
\includegraphics[height=3cm]{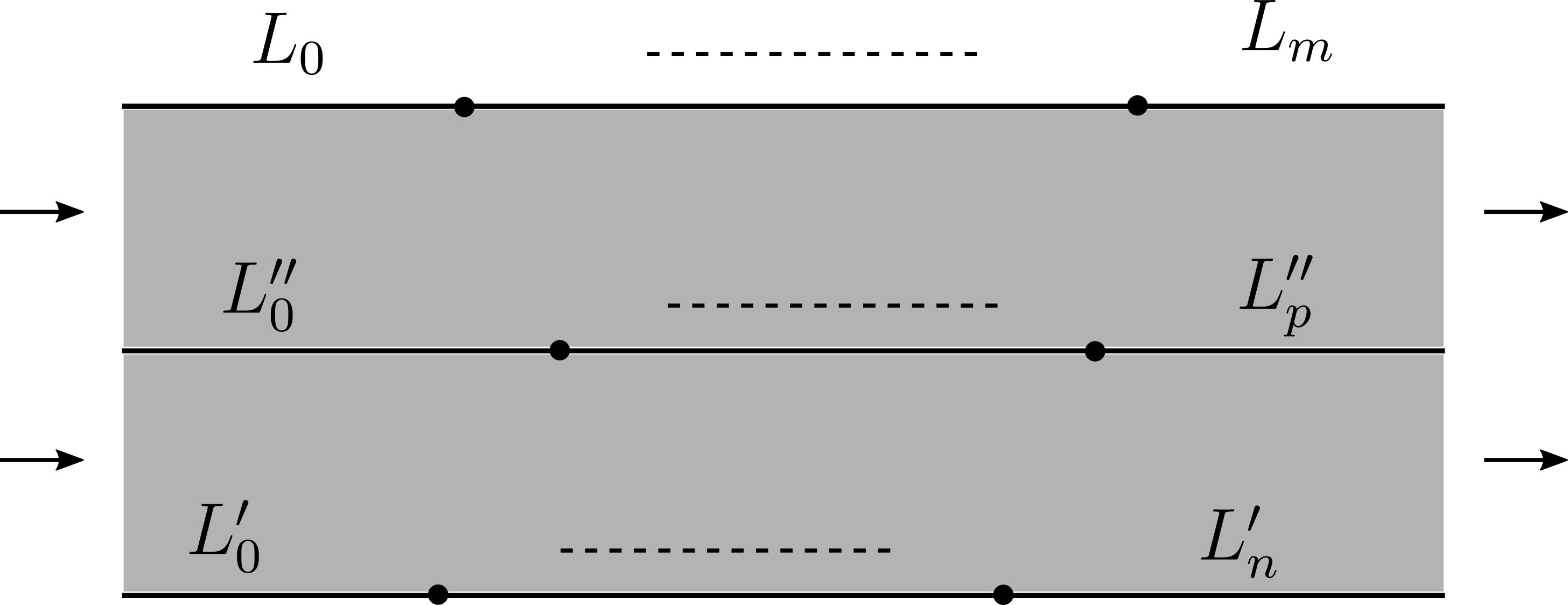}
\caption{The quilts defining bimodule/functor structures on $\fM$}
\label{figure:labelledquilt}
\end{figure}
For this count, one chooses Floer data for upper and lower strips, $\cS_r^{(1)}$ and $\cS_r^{(2)}$ separately. 

Similarly, defining a functor (\ref{eq:twfunct2}) is equivalent to defining a twisted left-right-right trimodule over $\cW(T_\phi)$-$\cW(T_0)$-$\cW(M)$, where twisting is between $\cW(T_0)$ and $\cW(M)$. One can first define a version of $\cW(T_\phi)$ analogous to $\cW^2(T_0\times M)$. Namely, as we will remark later, even though $T_\phi$ is not a product itself, its conical end $\wh{(T_0\times M)}\setminus (T_0\times M)$ can be identified with the conical end of $\wh{T_0}\times \wh{M}$ (since $\phi$ is compactly supported). Hence, define $\cW^2(T_\phi)$ to be a category with objects $L\times_\phi L'$, where $L\subset T_0,L'\subset M$ are exact Lagrangian branes (see Definition \ref{defn:twlagr}). Define the $A_\infty$-structure using Floer data that is split at the conical end. 

Note that, we assume $\cW(T_0)$ consists of Lagrangians with fixed lifts under $\wh{\tilde T_0}\to\wh{T_0}$ (so that $\cW(T_0)$ has an extra grading as in Section \ref{subsec:extragr}). On the other hand, the symplectomorphism $\phi$ does not a priori induce a strict auto-equivalence of the Fukaya category. To make it strict, one needs to add quasi-isomorphic objects $[L',n]$ for each Lagrangian brane $L'\subset M$ and $n\in\Z$ (and let $\phi$ act by $[L',n]\mapsto [\phi(L'),n+1]$). This allows one to choose Floer data invariant under $\phi$. We will abuse the notation and keep denoting the objects of $\cW(M)$ by letters such as $L'$ and their images under the induced strict autoequivalence by $\phi(L')$.

\begin{figure}
	\centering
	\includegraphics[height=3cm]{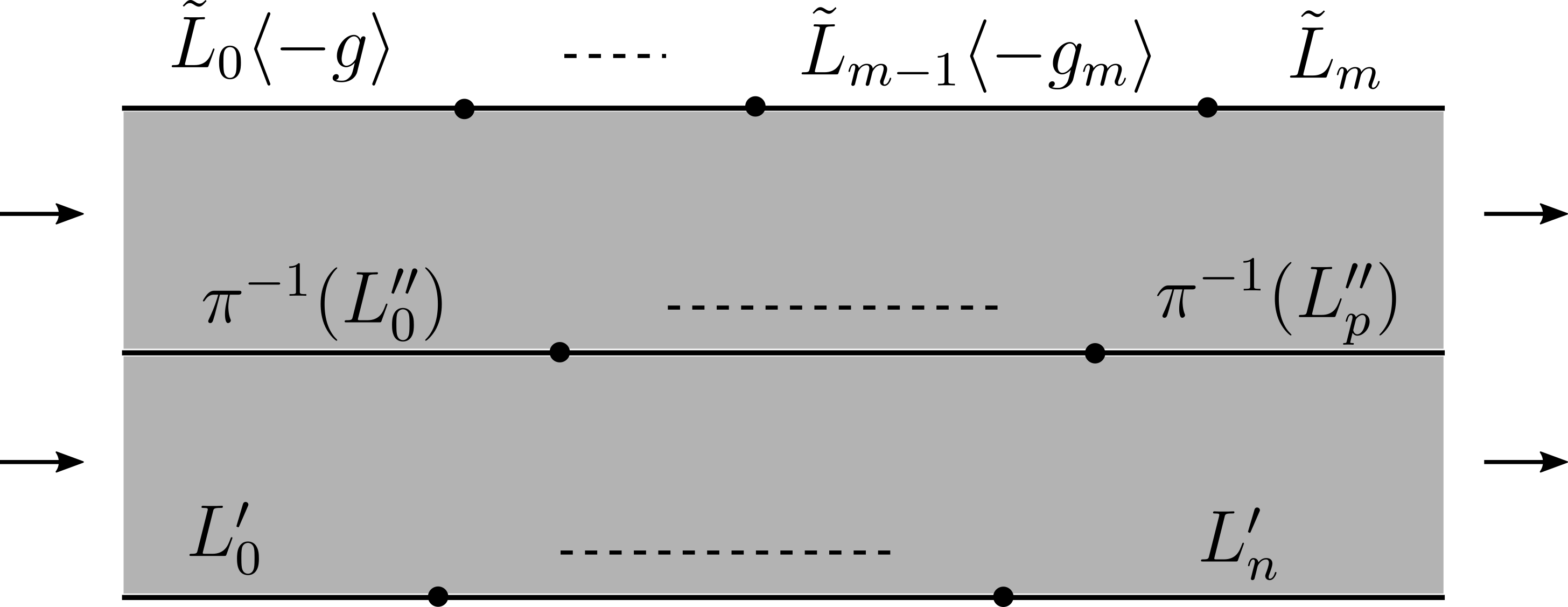}
	\caption{Labeling for quilted strips defining $\fM$ in twisted case}
	\label{figure:labelledtwisted}
\end{figure}
\begin{figure}
	\centering
	\includegraphics[height=3cm]{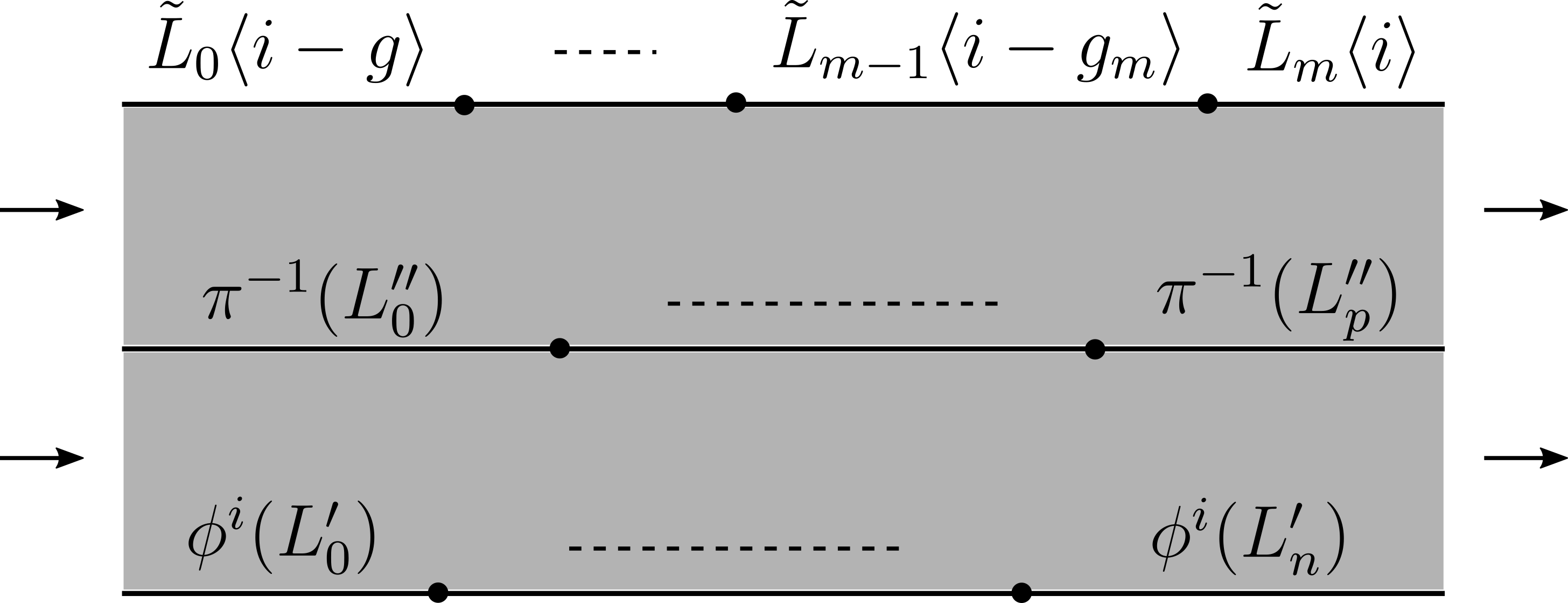}
	\caption{Equivalent labeling for quilted strips defining $\fM$}
	\label{figure:labelledtwisted2}
\end{figure}
To define the $\cW^2(T_\phi)$-$\cW(T_0)$-$\cW(M)$-trimodule $\fM$, we would like to count quilted strips as in Figure \ref{figure:labelledtwisted} mapping to $\wh{\tilde T_0}\times \wh{M}$. However, to ensure $A_\infty$-equations are satisfied, we would need the Floer data to come from $\wh{T_\phi}$ via pull-back along $\pi:\wh{\tilde T_0}\times \wh{M}\to \wh{T_\phi}$ near the marked points on the seam, and to be split type near the markings on upper and lower boundary components. Hence, instead of choosing Floer data on $\cS_r^{(1)}$ and $\cS_r^{(2)}$ separately, we choose data parametrized by $\cS_r^{f}$ on the symplectic manifold $\wh{\tilde T_0}\times \wh{M}$. Following \cite{GPS1}, let $N_\epsilon$ denote the set of points that have distance less than $\epsilon$ to at least one of the marked points on the lower boundary of $\cS_r^{f}$ (i.e. the markings on the seam). We ask the perturbation data on the folded strip $\cS_r^f$ to satisfy:
\begin{enumerate}
\item\label{fdexp:trphi} it is $\tr\times\phi$-invariant on $N_{1/3}$ (i.e. it is pull-back of a perturbation datum on $\wh{T_\phi}$)
\item\label{fdexp:prd} it is split type on $\cS_r^f\setminus N_{2/3}$ (i.e. it decomposes into perturbation data on $\wh{\tilde T_0}$ and $\wh{M}$)
\item\label{fdexp:prdfirstcomp} $\wh{\tilde T_0}$-component extends to a perturbation datum on $\cS_r^{(1)}\setminus N_{2/3}$ and it is $\tr$-invariant (i.e. it is pull-back of a perturbation datum on $\wh{T_0}$)
\item\label{fdexp:prdMcomponent} $\wh{M}$-component extends to a perturbation datum on $\cS_r^{(2)}\setminus N_{2/3}$
\end{enumerate}
Choose such data consistently such that 
\begin{condition}\label{condition:equivdata}
The data for labelings as in Figure \ref{figure:labelledtwisted} and Figure \ref{figure:labelledtwisted2} are related by $(\tr\times\phi)^i$.	
\end{condition}
Since $\phi$ is acting freely on objects of $\cW(M)$, this condition is unproblematic. The purpose of Condition \ref{condition:equivdata} is to ensure the counts with labeling as in Figure \ref{figure:labelledtwisted} and Figure \ref{figure:labelledtwisted2} are equivalent. In particular, chords from $\tilde L\langle i-g\rangle\times \phi^i(L')$ to $\pi^{-1}(L'')$ can be identified with chords from $\tilde L\langle -g\rangle\times L'$ to $\pi^{-1}(L'')$. Condition \ref{condition:equivdata} is needed not only for bimodule equation but also to obtain structure maps (\ref{eq:trimodule}). For example, the $\fM$-input in (\ref{eq:trimodule}) is a chord from $\tilde L_0\times\phi^g(L_0')$ to $\pi^{-1}(L_0'')$, but it identifies with  a chord from $\tilde L_0\langle-g\rangle\times L_0'$ to $\pi^{-1}(L_0'')$.

As usual, the data for strips with no markings (i.e. the data defining Floer differential) are assumed to be translation invariant. We also assume the choices are made so that 
\begin{condition}\label{condition:youneedish}
In the absence of markings on the middle seam, the data (which are necessarily split-type) depend only on $\cS_r^{(1)}$ and $\cS_r^{(2)}$.
\end{condition}
Recall that in this case, $\cS_r$ still depends on the boundary identification of upper and lower strips (hence, such strips with fixed upper and lower strips are parametrized by $\mathbb{R}$). This assumption is needed to show that images of objects under (\ref{eq:twfunct2}) are twisted Yoneda bimodules.

Define the $\cW^2(T_\phi)$-$\cW(T_0)$-$\cW(M)$-trimodule $\fM$ (or equivalently the functor (\ref{eq:twfunct2})) as follows: given objects $L\subset \wh{T_0}$, $L'\subset \wh M$ and $L''\subset \wh{T_\phi}$, let $\fM(L'',L,L')$ be the $\C$-linear span of Hamiltonian chords from $\tilde L\times L'$ to $\pi^{-1}(L'')$. Define the structure maps 
\begin{align}\label{eq:trimodule}
\begin{split}
CW(L_{p-1}'',L_p'')\otimes CW(L_{p-2}'',L_{p-1}'')\otimes \dots \otimes CW(L_0'',L_1'')\otimes  \fM(L_0'',L_0,\phi^g(L_0'))\otimes \\ CW(L_1,L_0)^{g_1}\otimes \dots CW(L_m,L_{m-1})^{g_m}\otimes CW(L_1',L_0')\dots \otimes CW(L_n',L_{n-1}')\\ \to \fM(L_p'',L_m,L_n')[1-m-n-p]
\end{split}
\end{align}
by counting pseudo-holomorphic quilted strips as in Figure \ref{figure:labelledtwisted} (as before $g=\sum g_i$ and $CW(L_1,L_0)^{g_1}$ denotes the degree $g_1$ part in extra grading). More precisely, count rigid pseudo-holomorphic maps $\cS_r^f\to \wh{\tilde T_0}\times \wh{M}$ with respect to chosen Floer data with given asymptotic conditions. Notice, the asymptotic conditions put on the upper boundary markings are the natural lifts of chords in $\wh{T_0}$, and the conditions on the lower boundary markings are Hamiltonian chords in $\wh{M}$. On the other hand, the asymptotic conditions we put on the markings on the seam are given by chords from $L''_i$ to $L''_{i+1}$ in $\wh{T_\phi}$. These chords have infinitely many lifts to chords from $\pi^{-1}(L''_i)$ to $\pi^{-1}(L''_{i+1})$; however, once the labeling and other asymptotic conditions are fixed, the lift is uniquely determined (this also uses the fact that we only consider $L''$ that lifts under $\pi$). For instance, assume that the labeling is as in Figure \ref{figure:labelledtwisted}. If we know the chord from $\tilde L_0\langle -g\rangle\times L_0'$ to $\pi^{-1}(L_0'')$, this determines the component of $\pi^{-1}(L_0'')$ to which that part of the seam maps. Hence, this determines the lift of the chord from $L_0''$ to $L_1''$, determining the component of $\pi^{-1}(L_1'')$ that the seam maps to, and so on. 

To illustrate the correspondence between Figure \ref{figure:labelledtwisted} and (\ref{eq:trimodule}) further, consider the case $m=n=p=1$. Let $x\in CW(L_1,L_0)^g$, $x'\in CW(L_1',L_0')$, $x''\in CW (L_0'',L_1'')$ and $y\in \fM(L_0'',L_0, \phi^g(L_0') )$ be some generators, where $g=\sum g_i=g_1$. By definition, $y$ is a chord from $\tilde L_0\times \phi^g(L_0')$ to $\pi^{-1}(L_0'')$ and by Condition \ref{condition:equivdata}, it can be identified with a chord from $\tilde L_0\langle -g\rangle \times L_0'$ to $\pi^{-1}(L_0'')$. We use this as the trimodule input of Figure \ref{figure:labelledtwisted}. As above, the endpoint of $y$ determines a component of $\pi^{-1}(L_0'')$, and there is a unique lift of $x''$ to a chord from this component of $\pi^{-1}(L_0'')$ to $\pi^{-1}(L_1'')$, which is to be used as the input on the seam. By definition, $x$ lifts to a chord from $\tilde L_1\to \tilde L_0\langle -g\rangle$, we use this lift and $x'$ as the other inputs in Figure \ref{figure:labelledtwisted} and the output is a chord from $\tilde L_1\times L_1'$ to $\pi^{-1}(L_1'')$, which is a generator of $\fM(L_1'',L_1,L_1')$. Therefore, the count of pseudo-holomorphic quilted strips gives us a map as in (\ref{eq:trimodule}).

By condition (\ref{fdexp:trphi}), the stable discs that bubble off the seam are lifts of pseudo-holomorphic discs on $\wh{T_\phi}$. Similarly, due to conditions (\ref{fdexp:prd}) and (\ref{fdexp:prdfirstcomp}) the discs on the upper boundary come from pseudo-holomorphic discs on $\wh{T_0}$, and due to conditions (\ref{fdexp:prd}) and (\ref{fdexp:prdMcomponent}) the discs on the lower boundary component come from pseudo-holomorphic discs on $\wh{M}$. 
Moreover, by Condition \ref{condition:equivdata}, this count is equivalent to count in Figure \ref{figure:labelledtwisted2}. Using these, one can easily show that twisted trimodule equations hold by standard gluing arguments if we assume compactness. Moreover, the generators of $\fM(L'',L,L')$ are graded as usual, and the moduli spaces defining $\fM$ are oriented (see \cite{sheelthesis},\cite{GPS2}). Hence, the trimodule $\fM$ is defined over $\C$, is $\Z$-graded, and defines a functor (\ref{eq:twfunct2}) of $\Z$-graded categories over $\C$. We denote the image of $L''\subset \wh{T_\phi}$ under this functor by $\fM(L'')$ and $\fM_{L''}$ as well. It is easy to show the following:
\begin{lem}\label{lem:mequalsyoneda}
Let $L''=L\times_\phi L'$, where $L\subset \wh{T_0}, L'\subset\wh{M}$ are graded Lagrangian branes (where $L$ is endowed with a fixed lift $\tilde L$ as usual). Then, $\fM_{L''}$ is isomorphic to twisted Yoneda bimodule $h_L\otimes_{tw} h_{L'}$. 
\end{lem}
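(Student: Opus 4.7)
The plan is to prove the isomorphism in two steps: first exhibit the identification of the underlying graded vector spaces, and then verify that the twisted bimodule structure maps on $\fM_{L''}$ match those of $h_L \otimes_{tw} h_{L'}$ under this identification.

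For the underlying vector spaces, decompose the preimage
\begin{equation*}
\pi^{-1}(L\times_\phi L')=\bigsqcup_{n\in\Z} \tilde L\langle n\rangle\times \phi^n(L'),
\end{equation*}
using the presentation of $T_\phi$ as $\tilde T_0\times M$ modulo $(y,x)\sim(\tr(y),\phi(x))$. Because the Floer data is of split type at the conical end of $\wh{\tilde T_0}\times\wh{M}$ (condition (2) on the data), a Hamiltonian chord from $\tilde L_0\times \phi^g(L_0')$ to the component $\tilde L\langle n\rangle\times\phi^n(L')$ splits canonically as a pair $(\tilde\alpha,\beta)$, where $\tilde\alpha$ is a chord in $\wh{\tilde T_0}$ from $\tilde L_0$ to $\tilde L\langle n\rangle$ and $\beta$ is a chord in $\wh{M}$ from $\phi^g(L_0')$ to $\phi^n(L')$. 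By the definition of the extra grading on $\cW(T_0)$, $\tilde\alpha$ descends to an element of $CW(L_0,L)^{-n}$, while $\beta$ identifies with an element of $CW(L_0',\phi^{n-g}(L'))$ via the strict action of $\phi$ on the object set. Substituting $r=-n$ gives
\begin{equation*}
\fM(L'',L_0,\phi^g(L_0'))\;\cong\;\bigoplus_{r\in\Z}CW(L_0,L)^r\otimes CW(L_0',\phi^{-r-g}(L')),
\end{equation*}
which matches $h_L\otimes_{tw}h_{L'}$ evaluated at $(L_0,\phi^g(L_0'))$ (cf.\ Example \ref{twistedyoneda}). Degree compatibility is immediate from the product grading.

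For the structure maps, the key is Assumption \ref{assump:youneedish}: in the absence of markings on the middle seam, the chosen split-type data depends only on $\cS_r^{(1)}$ and $\cS_r^{(2)}$ independently. Since $L''=L\times_\phi L'$ is being treated as a fixed object of $\cW^2(T_\phi)$, the relevant structure maps on $\fM_{L''}$ are precisely those with no middle markings. A pseudo-holomorphic quilted strip with $m$ upper and $n$ lower markings then factorizes as a pair of ordinary pseudo-holomorphic strips: one in $\wh{\tilde T_0}$ with upper boundary on $\tilde L_0,\dots,\tilde L_m$ and seam on a fixed component $\tilde L\langle n_0\rangle$, and one in $\wh{M}$ with lower boundary on $L_0',\dots,L_n'$ and seam on the corresponding $\phi^{n_0}(L')$. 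After folding, these are exactly the disks computing $\mu^{m+1}_{\cW(\wh{\tilde T_0})}$ and $\mu^{n+1}_{\cW(\wh M)}$; $\tr\times\phi$-equivariance of the data (Assumption \ref{assump:equivdata}) then identifies these with the right-module operations of $h_L$ in $\cW(T_0)$ and of $h_{L'}$ (or its $\phi$-translates) in $\cW(M)$ respectively.

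The twist built into $h_L\otimes_{tw}h_{L'}$ then emerges automatically: inserting upper inputs of total degree $\sum g_i$ shifts the component index of the output seam by $-\sum g_i$, and the resulting translation on the $\wh M$-side corresponds to applying $\phi^{-\sum g_i}$ to the lower component, which is exactly the swap rule in Note \ref{note:rule}. Vanishing of the mixed $(m\neq 0,\,n\neq 0)$-operations in Example \ref{twistedyoneda} is visible from the split-type factorization as well: the two factor strips are independent, and there is no product contribution mixing upper and lower markings. The main obstacle I anticipate is bookkeeping: one must check that the canonical asymptotic lifts to $\pi^{-1}(L'')$ and the consistent choice of strip-like ends align signs and orientations so that the isomorphism is strictly compatible with the $A_\infty$-structures, rather than only up to a (still computable) sign twist. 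Once this is verified, the isomorphism $\fM_{L''}\simeq h_L\otimes_{tw}h_{L'}$ follows.
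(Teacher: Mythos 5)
Your proof follows the paper's own strategy: identify the generators of $\fM_{L''}$ with those of $h_L\otimes_{tw} h_{L'}$ using the split-type data and the decomposition of $\pi^{-1}(L'')$, and then match the structure maps via Assumptions \ref{assump:youneedish} and \ref{assump:equivdata}, as in \cite[Prop 7.3, Prop 9.4]{sheelthesis}. Your identification of the underlying graded spaces and your account of how the twist arises from shifting the seam component by $-\sum g_i$ are both correct.

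The one place where the argument is compressed to the point of having a gap is the vanishing of the mixed operations $\mu^{1|m;n}$ with $m,n>0$. Writing that ``the two factor strips are independent, and there is no product contribution mixing upper and lower markings'' states the conclusion rather than proving it: a priori, a rigid pair $(u_1,u_2)$ of factored strips could still contribute to the rigid count. The missing step --- which is precisely what Assumption \ref{assump:youneedish} is engineered for, and is spelled out in the paper --- is a rigidity argument. In the absence of seam markings the seam condition on $u=(u_1,u_2)$ decouples into ordinary Lagrangian boundary conditions $u_1(\text{seam})\subset\tilde L\langle n_0\rangle$ and $u_2(\text{seam})\subset\phi^{n_0}(L')$, which do not see the identification of $r_1$ and $r_2$; and by Assumption \ref{assump:youneedish} the Floer data does not see it either. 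Hence any solution of the quilt equation with $m,n>0$ lies in a one-parameter family obtained by varying the identification, contradicting rigidity, so the count is zero. Once you replace the ``independence'' assertion by this argument, the rest of the proof is correct and coincides with the paper's.
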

\begin{proof}
The proof of the same statement for untwisted case is given in \cite[Prop 9.4]{sheelthesis} and we adapt their argument to twisted case.

We start by matching the generators: the generators of $\fM_{L''}(L_0,L_0')$ are given by chords from $\tilde L_0\times L_0'$ to $\pi^{-1}(L\times_\phi L')=\bigsqcup_{r\in\mathbb{Z}} \tilde L\langle -r\rangle \times \phi^{-r}(L')$. Conditions (\ref{fdexp:prd}) and (\ref{fdexp:prdfirstcomp}) imply that the strips with no markings on the middle seam are endowed with product type Floer data coming from $\wh{T_0}\times \wh{M}$; therefore, the graded vector space generated by these chords can be identified with $\bigoplus_{r\in\Z}CW(L_0,L)^r\otimes CW(L_0', \phi^{-r}(L') )$, where $CW(L_0,L')^r$ denotes the degree $r$ part in the extra grading as before. 
Hence, as a graded vector space, we have an identification of $\fM_{L''}(L_0,L_0')$ with $(h_L\otimes_{tw} h_{L'})(L_0,L'_0)$ (see (\ref{eq:twyon})).

That $\fM_{L''}$ and $h_L\otimes_{tw}h_{L'} $ have the same structure maps follows from the same proof as \cite[Prop 7.3, Prop 9.4]{sheelthesis}: first, the differential $\mu^{1|0;0}$ of $\fM_{L''}$ is defined via the count of pseudo-holomorphic quilted strips up to translation. As remarked the absence of marked points on the seam, and Conditions (\ref{fdexp:prd}) and (\ref{fdexp:prdfirstcomp}), imply the Floer data on these strips are of product type coming from $\wh{T_0}\times \wh{M}$. Therefore, such quilted strips can be identified with pairs of pseudo-holomorphic strips mapping into $\wh{ T_0}$ and $\wh{M}$, where one strip is constant and the other is a Floer strip. Therefore, under the identification as vector spaces above, $\mu^{1|0;0}$ turns into the differential $\pm\mu^1\otimes 1\pm 1\otimes \mu^1$ of $h_L\otimes_{tw}h_{L'} $.

The proof that $\mu^{1|0;s}, s\neq 0$ match is similar. We prove matching of $\mu^{1|r;0},r\neq 0$. For simplicity assume $r=1$. Then, the structure map is as follows:
\begin{equation}
\mu^{1|1;0}:\fM_{L''}(L_0,\phi^g(L_0') )\otimes CW(L_1,L_0)^g\to \fM_{L''}(L_1,L_0')
\end{equation}
Let $y''$ be a generator of $\fM_{L''}(L_0,\phi^g(L_0') )$, i.e. a chord from $\tilde L_0\times\phi^g(L_0')$ to $\pi^{-1}(L'')$, with components $y$ and $y'$ (hence identifying with a generator $y\otimes y'$ of $(h_L\otimes_{tw} h_{L'})(L_0,\phi^g(L_0'))$). Let $x$ be a chord from $L_1$ to $L_0$ of degree $g$. To define $\mu^{1|1;0}$ we count strips as in Figure \ref{figure:labelledtwisted} with $\fM$-input $y''$, with one input, $x$, on the upper boundary, and with no marked points on the seam or lower boundary. As before, for this, one identifies $y''$ with a chord from $\tilde L_0\langle -g\rangle\times L_0'$ to $\pi^{-1}(L'')$ by applying $(\tr\times\phi)^{-g}$. The new chord has components $\tr^{-g}(y)$ and $\phi^{-g}(y')$. One identifies the quilted strips in consideration
with pairs of marked strips in $\wh{T_0}$ and $\wh{M}$. By rigidity, $\wh{M}$-component is a constant strip with output $\phi^{-g}(y')$, and the count on $\wh{T_0}$-component gives $\pm \mu^2(y,x)$. In other words, the count of quilted strips as in Figure \ref{figure:labelledtwisted} gives $\pm \mu^2(y,x)\otimes \phi^{-g}(y')$. This is exactly the structure map of $h_L\otimes_{tw}h_{L'}$, applied to $(y\otimes y'|x)$. 

To prove the vanishing of $\mu^{1|r;s}$ when $r,s>0$, consider a rigid pseudo-holomorphic quilted strip with no markings on the seam, but with at least one marking on the upper and lower boundary components. One can change the identification of $r_1$ and $r_2$ to obtain a one parameter family of quilted strips, and by Condition \ref{condition:youneedish}, they all contribute to the count. This contradicts the rigidity; therefore, there is no such strip.
\end{proof}
\begin{figure}
	\centering
	\includegraphics[height=3cm]{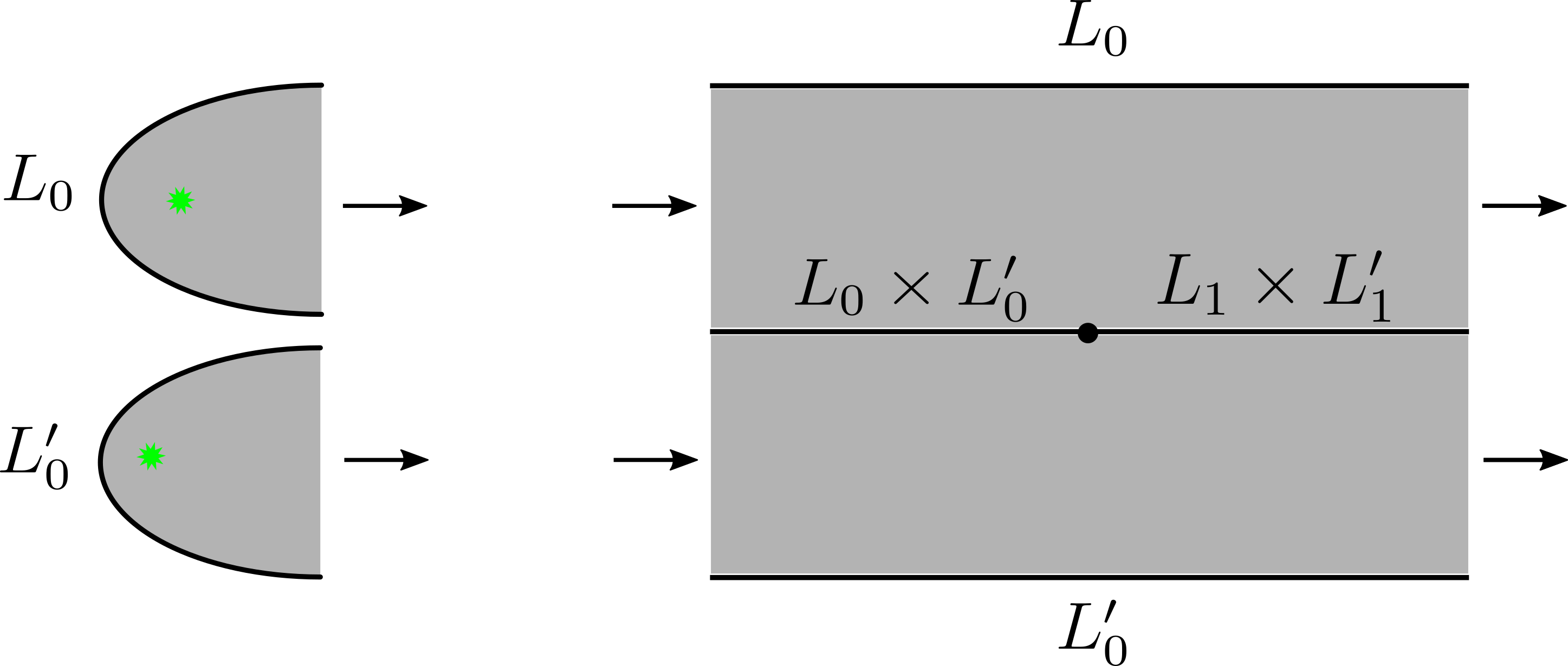}
	\caption{Rigid discs defining $e_{L_0}\otimes e_{L_0'}$ (on the left) and the initial component of the map denoted by $\fM$ in (\ref{eq:gammacompose}) (on the right)}
	\label{figure:quiltprecompose}
\end{figure}
Now, we turn to the proof of fully faithfulness of (\ref{eq:twfunct2}). Instead of explicit identification of generators, we give a geometric argument. By Lemma \ref{lem:mequalsyoneda}, the bimodule $\fM_{L''}$ is (strictly) isomorphic to $h_L\otimes_{tw}h_{L'}$. In particular, there exists an element of $\fM_{L''}(L,L')$ corresponding to $e_L\otimes e_{L'}\in (h_L\otimes_{tw}h_{L'})(L,L')$. We denote this element by $e_L\otimes e_{L'}$ as well. Moreover, by Yoneda Lemma (Lemma \ref{lem:twyoneda}), the map 
\begin{equation}
hom(\fM_{L''},\mathfrak{ N})\to \mathfrak{N}(L,L')
\end{equation}
that sends $f$ to $f^{1|0;0}(e_L\otimes e_{L'})$ is a quasi-isomorphism. Recall we denote this map by $\gamma_{L,L'}$. Therefore, given Lagrangian branes $L_0''=L_0\times_\phi L_0'\subset\wh{T_\phi}$ and $L_1''=L_1\times_\phi L_1'\subset\wh{T_\phi}$, we have a diagram
\begin{equation}\label{eq:gammacompose}
\xymatrixcolsep{5pc}\xymatrix{CW(L_0'',L_1'')\ar[r]^{\fM}\ar@/_1pc/[rd]^{\Gamma} & hom(\fM_{L_0''},\fM_{L_1''})\ar[d]^{\gamma_{L_0,L_0'}} \\ & \fM_{L_1''}(L_0,L_0')   }
\end{equation}
where $\Gamma$ is the composition. Showing $\fM$ is a quasi-isomorphism is equivalent to showing $\Gamma$ is a quasi-isomorphism, and the latter follows from a geometric description of $\Gamma$. For simplicity in labeling, consider the untwisted case (i.e. $\phi=1_M$). Cohomological units for $CW(L_0,L_0)$ and $CW(L_0',L_0')$ can be obtained by counting rigid pseudo-holomorphic discs with one output marking point and with boundary condition given by $L_0$, resp. $L_0'$ (see \cite[Section (8c)]{seidelbook}). Hence, a schematic picture for the maps in (\ref{eq:gammacompose}) is as in Figure \ref{figure:quiltprecompose} and a picture for the composition $\Gamma$ is as in Figure \ref{figure:shortquiltcompose}. More precisely, one would need to fold the quilt before composing (since, we choose Floer data on the folded strip). The green asterisks are auxiliary unconstrained points that rigidify the discs, and the marking on the middle seam is the input from $CW(L_0'',L_1'')$. After folding and gluing, $\Gamma$ looks like Figure \ref{figure:shortfoldedcompose}. As before, the purpose of the green asterisk is to stabilize the surface. Figure \ref{figure:shortfoldedcompose} is equivalent to count of rigid strips with Floer data used in definition of $\cW^2(T_\phi)$ on the input, and split type product data on the output. Hence, this is a continuation map, and it is a quasi-isomorphism. An explicit quasi-inverse can be defined using the same count of pseudo-holomorphic strips with types of Floer data reversed on the incoming and outgoing ends. 
\begin{figure}
\centering
\includegraphics[height=3cm]{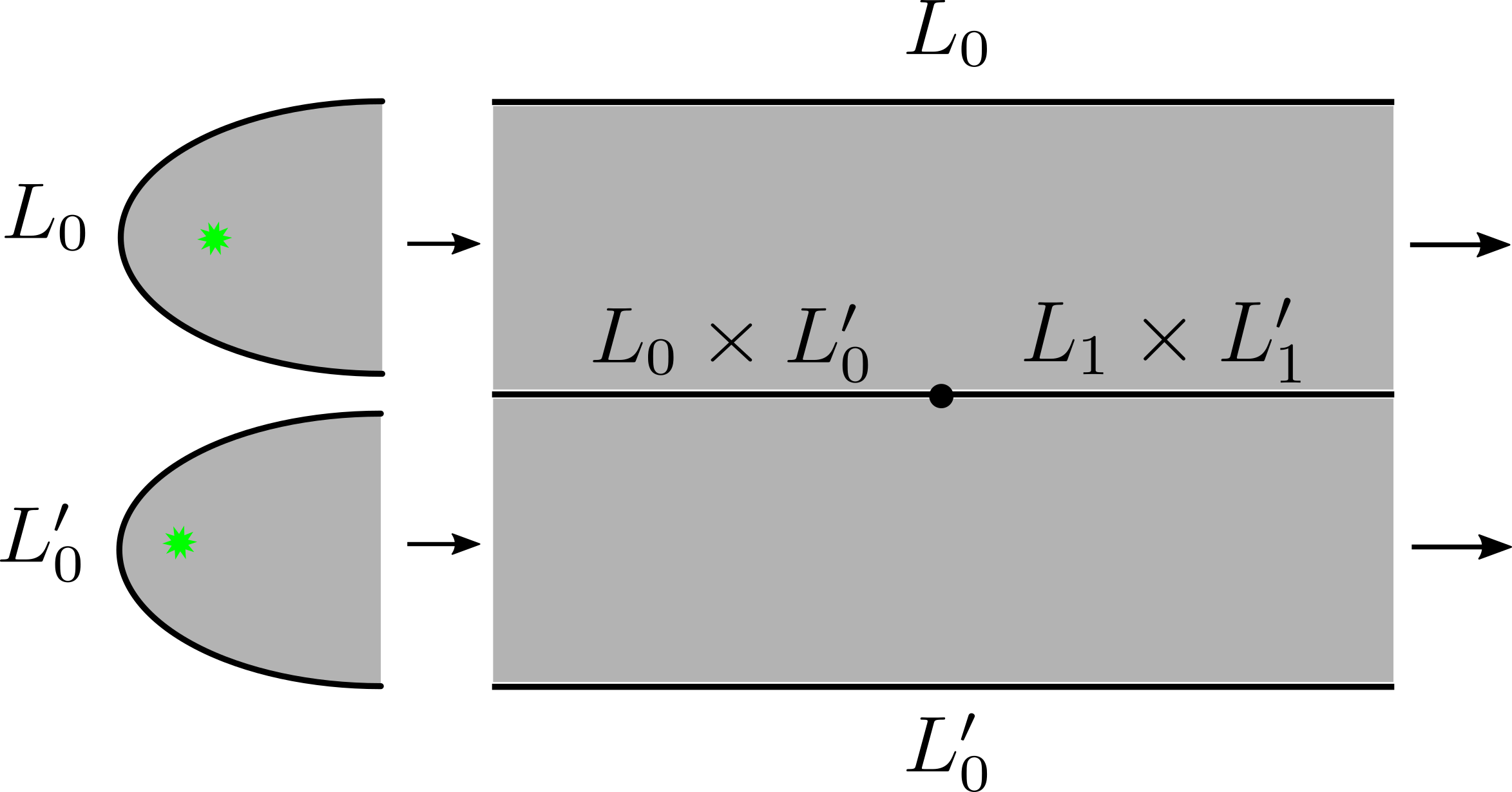}
\caption{The composition $\Gamma$}
\label{figure:shortquiltcompose}
\end{figure}
\begin{figure}
	\centering
	\includegraphics[height=3cm]{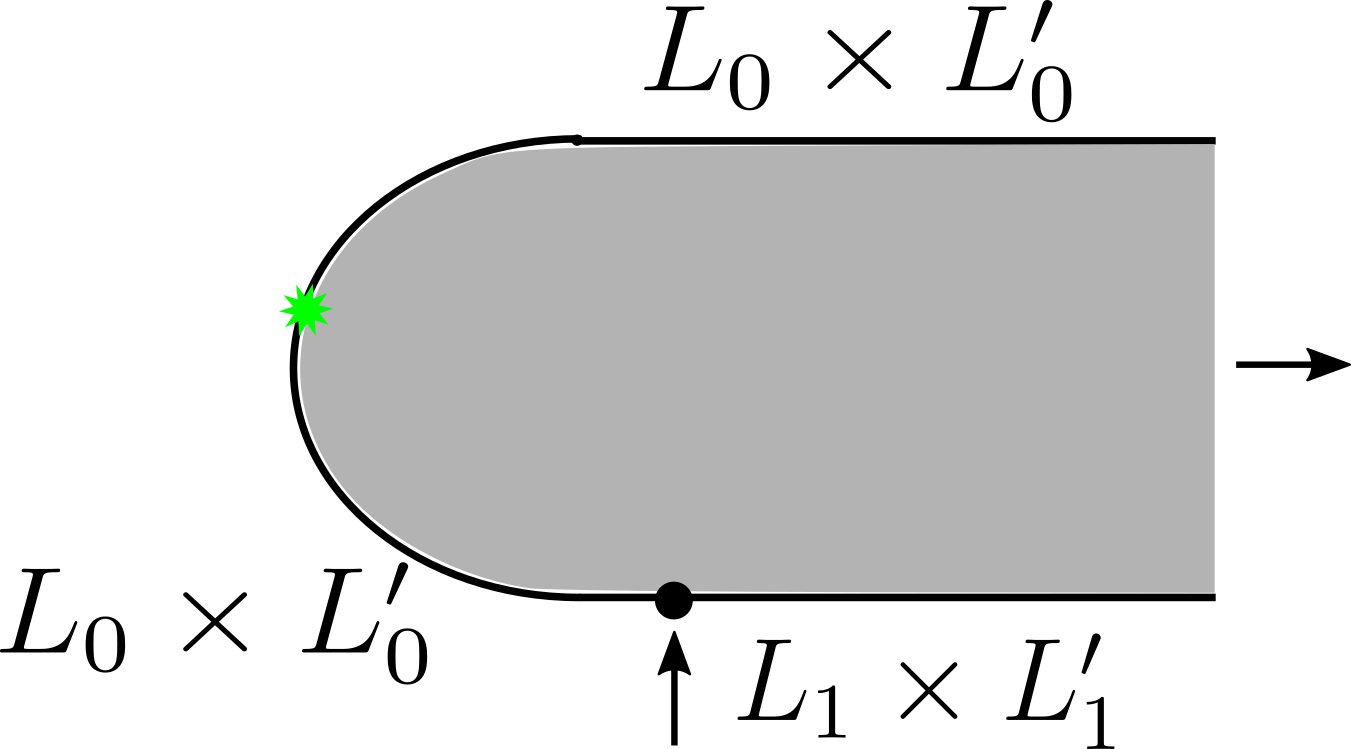}
	\caption{The composition $\Gamma$ after folding and gluing}
	\label{figure:shortfoldedcompose}
\end{figure}

The description of $\Gamma$ is similar in the twisted case: one modifies the labeling in Figure \ref{figure:shortquiltcompose} by replacing $L_0$ by the fixed lift $\tilde L_0$, $L_0''$ by $\pi^{-1}(L_0'')$ and $L_1''$ by $\pi^{-1}(L_1'')$. However, the part with label $\pi^{-1}(L_0'')$ would necessarily map to $\tilde L_0\times L_0'\subset \pi^{-1}(L_0'')$, and the rest of the seam maps to the component of $\pi^{-1}(L_1'')$ determined by the input chord in $\wh{T_\phi}$ (recall the clarification we have given about the labels on seam). The labeling in Figure \ref{figure:shortfoldedcompose} is modified similarly, and its count gives $\Gamma$ (up to homotopy). 
The rest of the argument is identical, and this proves fully faithfulness of (\ref{eq:twfunct2}) up to compactness issue that we have slid under the rug so far. One can define the appropriate Floer data (including a non-zero perturbation term) so that Gromov compactness holds. This is the approach that we took in our thesis \cite{kartalthesis}; however, it makes analysis substantially harder. Instead, we will switch to the definition of wrapped Fukaya category given in \cite{GPS1}. This will make the algebra slightly more elaborate, but simplify the analysis substantially.
\subsection{Reminder of wrapped Fukaya categories}\label{subsec:reminderwfuk}
In this section, we briefly recall the definition of wrapped Fukaya categories following \cite{GPS1}.

Let $X$ be a Liouville domain and let $X\subset \wh{X}$ denote its completion. Let $L_0, L_1\subset \wh{X}$ be two exact, cylindrical Lagrangian branes, i.e. exact, cylindrical Lagrangians equipped with grading and spin structures, and assume they are transverse to each other (recall that a Lagrangian is called cylindrical if it is invariant under the Liouville flow outside a compact subset of $\wh{X}$). Denote the complex vector space generated by the elements of $L_0\cap L_1$ by $CF(L_0,L_1)$. As we equip $L_0$ and $L_1$ with gradings, $CF(L_0,L_1)$ carries a natural $\Z$-grading. For a generic cylindrical almost complex structure (i.e. an almost complex structure that is invariant under Liouville flow outside a compact subset of $\wh{X}$), the count of pseudo-holomorphic strips with boundary components on $L_0$ and $L_1$ defines a differential $\mu^1$ of degree $1$ on $CF(L_0,L_1)$. More precisely, given generators $x,y\in L_0\cap L_1$, the moduli space of such holomorphic strips asymptotic to $x$ and $y$ is an oriented manifold of dimension $deg(y)-deg(x)-1$. Therefore, when $deg(y)=deg(x)+1$, one can define the coefficient of $y$ in $\mu^1(x)$ to be the signed count of the elements of the corresponding $0$-dimensional moduli space. Standard gluing and compactness arguments show that $\mu^1\circ \mu^1=0$. Similarly, given Lagrangian branes $L_0,\dots ,L_n$ that are pairwise transverse to each other, one can define a map \begin{equation}
\mu^n:CF(L_{n-1},L_n)\otimes \dots \otimes CF(L_0,L_1) \to CF(L_{0},L_n)[2-n]
\end{equation}
by counting pseudo-holomorphic discs with $n+1$ points removed from its boundary. These can be shown to satisfy $A_\infty$-equations, as long as the almost complex structures are chosen consistently. See \cite{seidelbook} for more details. 

Now consider a set of cylindrical Lagrangian branes, i.e. cylindrical Lagrangians equipped with grading and spin structures, in $\wh{X}$. One can ``wrap'' a Lagrangian in this set by applying the Hamiltonian flow of a function that is linear and positive on the conical end (outside a compact set). Choose a set of wrapped Lagrangians for each such brane, denoted by $\{L^{(i)}:i\in\mathbb{N} \}$. More precisely, each $L^{(i+1)}$ is obtained from $L^{(i)}$ by a positive isotopy as above. Also assume this set is cofinal in the sense that for any such positive linear isotopy $L\to L^+$, one can find an $i$ such that there is a positive linear isotopy $L^+\to L^{(i)}$. Further assume $L_0^{(i)}$ and $L_1^{(j)}$ are transverse if $j>i$ (or $i>j$). 

Let $\Om(X)$ denote the $A_\infty$-category whose objects are $L^{(i)}$ for all $i$ and $L$. The set of objects is partially ordered by $i$ (i.e. $L_1^{(j)}>L_0^{(i)}$ if and only if $j>i$). Hence, we will drop the superscript and denote the objects by $L_0$, $L_1$ etc. One defines morphism sets to be 
\begin{equation}\label{eq:homsetsom}
\Om(X)(L_0,L_1)=\begin{cases}
CF(L_0,L_1),& \text{if }L_1>L_0\\
\C,&\text{if } L_1=L_0\\
0&\text{otherwise}
\end{cases}
\end{equation}
We modified the order in \cite{GPS1}. As mentioned above, to define the $A_\infty$-structure on $\Om(X)$, we make asymptotically consistent choices of cylindrical almost complex structures as in \cite{seidelbook} (it suffices to make the choice for labelings satisfying $L_n>L_{n-1}>\dots>L_0$). Gromov compactness holds (either by maximum principle or by \cite[Lemma 7.2]{abousei}), and for a generic choice of almost complex structures, we obtain an ordered $A_\infty$-category as usual (we let $1\in \Om(X)(L_0,L_0)= \C$ act as a strict unit). See \cite{GPS1} for more details.

For each positive isotopy $L\to L^+$, one has a continuation element $c_{L,L^+}\in CF(L,L^+)$. This element can be defined by the count of rigid pseudo-holomorphic discs with (positively) varying boundary conditions (see Figure \ref{figure:contelt}).
\begin{figure}
	\centering
	\includegraphics[height=3cm]{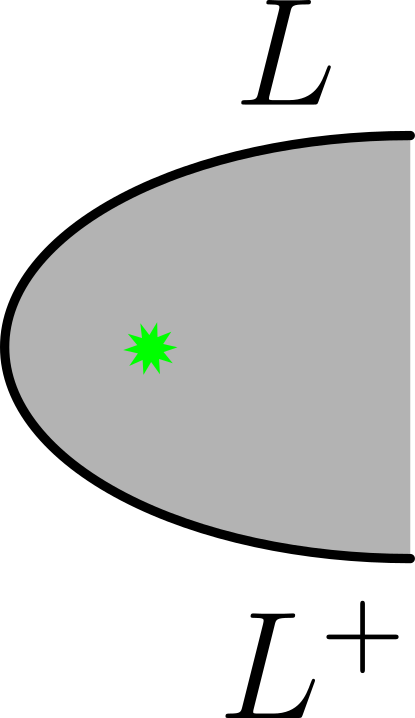}
	\caption{Rigid discs defining the continuation element}
	\label{figure:contelt}
\end{figure}
To define this element, one has to choose a family of almost complex structures parametrized by the rigid disc that restricts to chosen almost complex structure for the pair $(L,L^+)$ on the strip-like end. It is easy to see that different choices give rise to homologous continuation elements. Moreover, the product $\mu^2(c_{L^+,L^{++} },c_{L,L^+})$ is homologous to continuation element for $L\to L^{++}$. Let $C$ denote the set of all continuation elements for all $L^{(i)}\to L^{(i+1)}$. Define the wrapped Fukaya category $\cW(X)$ to be $C^{-1}\Om(X)$--- the localization of $\Om(X)$ on all continuation elements. See \cite{GPS1} for more details. 

The advantage of this definition is that the lack of perturbation term in Fukaya category makes compactness arguments substantially easier. 
\begin{notation}
Let $\Om_X(L_0,L_1)$, resp. $\cW_X(L_0,L_1)$ also denote the hom-complex $\Om(X)(L_0,L_1)$, resp. $\cW(X)(L_0,L_1)$.
\end{notation}
Let us now define an analogue of wrapped Fukaya category with split Hamiltonians for $T_\phi$, which is analogous to $\cW^2$ of \cite{sheelthesis}, $\cW^s$ of \cite{gaofunctor} and $\cW^{prod}$ of \cite{GPS2}. The basic idea is the following: even though $T_\phi$ is not a product, its conical end can be identified with the conical end of the product. Hence, one can talk about the product type data on the conical end.

More precisely:
\begin{equation}
\wh T_\phi\setminus T_\phi=\big((\wh{ \tilde T_0}\times\wh M)\setminus (\tilde T_0\times M)\big)/(\tr\times \phi)=\atop \bigg[\big((\wh{ \tilde T_0}\setminus \tilde T_0)\times \wh M\big)/(\tr\times\phi)\bigg]\cup \bigg[\big(\wh{ \tilde T_0}\times (\wh M\setminus M)\big)/(\tr\times\phi)\bigg]
\end{equation}
$(\wh{ \tilde T_0}\setminus \tilde T_0)$ is isomorphic to infinitely many copies of $\wh T_0\setminus T_0$ and $\tr$ moves one to the next. Hence
\begin{equation}\label{eq:prd1}
\big((\wh{ \tilde T_0}\setminus \tilde T_0)\times \wh M\big)/(\tr\times\phi)\cong (\wh{  T_0}\setminus T_0)\times \wh M
\end{equation}
Moreover, $\phi$ acts trivially on $(\wh M\setminus M)$; hence
\begin{equation}\label{eq:prd2}
\big(\wh{ \tilde T_0}\times (\wh M\setminus M)\big)/(\tr\times\phi)\cong \wh T_0\times (\wh M\setminus M)
\end{equation} 
The intersection of these subsets is isomorphic to $(\wh T_0\setminus T_0)\times (\wh M\setminus M)$ with the obvious embeddings. Hence, the conical end of $T_\phi$ can be written as the union of products (\ref{eq:prd1}) and (\ref{eq:prd2}). Define
\begin{defn}\label{defn:om2phitype}
An almost complex structure on $\wh{T_\phi}$ is called $\Om^2(T_\phi)$-type if its restriction to $\overline{\wh{  T_0}\setminus T_0}\times \wh M$, resp. $\wh T_0\times \overline{\wh M\setminus M}$ is product type and its first, resp. second component is cylindrical outside a compact subset of $\wh{T_\phi}$. 
A family of such data is called $\Om^2(T_\phi)$-type if further the compact subset of $\wh{T_\phi}$ 
can be chosen locally uniformly over the family (c.f. Condition \ref{condition:loccon}).
\end{defn}
\begin{lem}\label{lem:existom2acs}
$\Om^2(T_\phi)$-type almost complex structures exists and the space of such is weakly contractible (i.e. any $\Om^2(T_\phi)$-type family extend to the cone of its parameter space).
\end{lem}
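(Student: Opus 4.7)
The plan is to reduce the lemma to the standard fact that the space of $\omega$-compatible almost complex structures, subject to pointwise convex constraints, is non-empty and contractible. Both existence and weak contractibility then follow by direct construction.

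For existence, I would first pick a cylindrical $\omega_{T_0}$-compatible almost complex structure $J_{T_0}$ on $\wh{T_0}$ and a cylindrical $\omega_M$-compatible almost complex structure $J_M$ on $\wh{M}$. The product $J_{T_0}\oplus J_M$, read through the identifications (\ref{eq:prd1}) and (\ref{eq:prd2}) (these are well-defined precisely because $\phi$ is compactly supported in the interior of $M$, so it acts trivially on $\wh{M}\setminus M$), defines a smooth almost complex structure on the open neighborhood of infinity
\begin{equation*}
U := \big(\overline{\wh{T_0}\setminus T_0}\times\wh{M}\big)\cup\big(\wh{T_0}\times\overline{\wh{M}\setminus M}\big)\subset \wh{T_\phi},
\end{equation*}
and it manifestly satisfies the product-type and cylindricity conditions of Definition \ref{defn:om2phitype}; on the overlap both prescriptions agree. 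It then remains to extend this $J$ to a globally $\omega_{T_\phi}$-compatible almost complex structure on the remaining compact region $\wh{T_\phi}\setminus U$, which is the standard partition-of-unity argument: at each point $p$, the space of $\omega_{T_\phi,p}$-compatible linear complex structures on $T_p\wh{T_\phi}$ is contractible, so one can interpolate between $J|_U$ and any fixed compatible $J'$ defined near the complement.

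For weak contractibility, I would use the formulation given in the statement, namely that every family over a parameter space $P$ extends over the cone $CP$. Let $\mathcal{J}$ denote the space of $\Om^2(T_\phi)$-type structures, and consider the restriction map $\mathcal{J}\to \mathcal{J}|_U$. Its fiber over a fixed $J_0$ on $U$ consists of $\omega_{T_\phi}$-compatible extensions to $\wh{T_\phi}\setminus U$, which is contractible pointwise and so has weakly contractible total space; hence we reduce to the weak contractibility of $\mathcal{J}|_U$. But on each of the two product regions, being product type with cylindricity prescribed on one factor is an affine/convex condition on $J$ with respect to the product splitting (cylindricity being itself an affine condition on the cylindrical ends). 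The matching condition along the overlap $\overline{\wh{T_0}\setminus T_0}\times\overline{\wh{M}\setminus M}$ is also a linear condition, so $\mathcal{J}|_U$ is a fiber product, along linear maps, of spaces each of which is convex (hence contractible). A family $P\to\mathcal{J}$ can therefore be extended to $CP$ by convex combination with the constant family at a chosen basepoint in $\mathcal{J}$.

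The only subtle bookkeeping point, but not a real obstacle, will be verifying that convex combinations in the relevant parametrizations of $\omega$-compatible $J$'s preserve both product-type and cylindricity simultaneously on the overlap region; this follows because both conditions are closed affine constraints with respect to the natural parametrization of compatible almost complex structures (by symmetric matrices positive definite with respect to $\omega$), so convex combinations cause no trouble.
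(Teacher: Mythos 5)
Your existence argument is the same as the paper's: build a product $J$ on the conical end, then extend over the remaining compact piece using pointwise contractibility of compatible linear complex structures.

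For weak contractibility you and the paper take genuinely different routes. The paper interpolates one factor at a time: given two $\Om^2(T_\phi)$-type structures, it first finds an intermediate $J_{int}$ that agrees with $J$ in the $\wh{T_0}$-directions and with $J'$ in the $\wh{M}$-directions, then homotopes the second component while freezing the first, and then vice versa; contractibility of the space of cylindrical compatible structures on a single factor does all the work. You instead argue directly that the constrained space is convex. The main imprecision in your write-up is the phrase ``the natural parametrization of compatible almost complex structures (by symmetric matrices positive definite with respect to $\omega$).'' The assignment $J\mapsto g_J(\cdot,\cdot)=\omega(\cdot,J\cdot)$ is a bijection onto a \emph{proper, non-convex} subset of positive-definite symmetric forms (already on $\mathbb{R}^2$ the image is exactly the determinant-one forms), so the set of compatible $J$'s is not itself a convex cone and one cannot literally take convex combinations there. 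What saves the argument is the standard retraction from the convex set of \emph{all} $\omega$-taming metrics onto $\mathcal{J}(\omega)$ via polar decomposition, which does preserve product type and cylindricity (this is exactly what the paper signals by ``retraction of the space of metrics onto almost complex structures''). With that fix, your global convexity argument and the paper's factor-by-factor interpolation are both valid and comparably elementary; the paper's version avoids having to check that all the mixed constraints (product on two overlapping regions, cylindricity of one factor on each) are simultaneously affine, by decoupling them one factor at a time.
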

\begin{proof}
For the existence, put almost complex structures on $\overline{\wh{  T_0}\setminus T_0}\times \wh M$, resp. $\wh T_0\times \overline{\wh M\setminus M}$ that agree on the intersection and that satisfy the assumptions (product type with components that are cylindrical on conical ends), then extend smoothly to the rest of $\wh{T_\phi}$.

For the connectedness of such almost complex structures, one needs to preserve the product type assumption. Let $J$ and $J'$ be two such almost complex structures, and for simplicity assume that cylindrical components assumption holds over all $\overline{\wh{  T_0}\setminus T_0}\times \wh M$ and $\wh T_0\times \overline{\wh M\setminus M}$. First, construct an almost complex structure $J_{int}$ that agree with $J$ in the first components (on the subsets $\overline{\wh{  T_0}\setminus T_0}\times \wh M$ and $\wh T_0\times \overline{\wh M\setminus M}$) and with $J'$ on the second components. Using the connectedness of spaces of cylindrical almost complex structures on the second components, one can interpolate $J$ and $J_{int}$ along almost complex structures that are product type on the same subsets and that have the same first components. Then, one can interpolate $J_{int}$ and $J'$ similarly. The homotopies extend to the interior of $\wh{T_\phi}$ (A simple illustration of the idea is the following: to connect two metrics $g_1\times g_2$ and $g_1'\times g_2'$ on the product along product type metrics, one first connects $g_1\times g_2$ to $g_1\times g'_2$ then $g_1\times g'_2$ to $g'_1\times g'_2$. Indeed, this together with retraction of the space of metrics onto almost complex structures let us connect product type almost complex structures as well).  

The higher connectivity and contractability are similar. Indeed, one can do the same for a family of almost complex structures parametrized by a topological space $X$. Namely, first extend the family from $X\cong X\times\{0\}$ to a family of almost complex structures parametrized by $X\times[0,1]$, where the first components are the same over $X\times \{1\}$. Then, use the contractability of almost complex structures on the second component to extend this family to cone of $X$ (i.e. to $X\times[0,2]/X\times\{2\}$).
\end{proof}
To define the category $\Om^2(T_\phi)$, choose a generating set of Lagrangians for $\cW(T_0)$ and $\cW(M)$ (indeed, we take this set to be $\{L_{pur},L_{gr}\}$ for the former and the set of cocores of a Weinstein structure for the latter). Endow them with brane structure, and choose a cofinal set of positive isotopies $L\to L^{(i)}\subset\wh{T_0}$, $L'\to L'^{(j)}\subset\wh{M}$ for these generators, as before (satisfying transversality of ordered pairs). Then, one obtains a set of Lagrangians $L^{(i)}\times_\phi L'^{(j)}$, indexed by $\mathbb{N}\times\mathbb{N}$ (let the lifts of positive wrappings be determined by the fixed lifts of $L_{pur}$ and $L_{gr}$). Define a partial ordering on this set by $L_1^{(i_1)}\times_\phi L_1'^{(j_1)}>L_0^{(i_0)}\times_\phi L_0'^{(j_0)}$ if and only if $i_1>i_0$ and $j_1>j_0$. We will drop the superscripts as before, and denote the Lagrangians in $\wh{T_\phi}$ by $L''$, $L''_k$, etc. Notice, if $L_1''>L_0''$, then $L_0''$ and $L_1''$ are transverse, as they are transverse in each component. Define $CF(L_0'',L_1'')$ as the linear span of intersections. Make asymptotically consistent choices of $\Om^2(T_\phi)$-type families of almost complex structures over moduli of stable discs for all labelings, which is possible due to Lemma \ref{lem:existom2acs}. 

Let $\Om^2(T_\phi)$ denote the category with objects $L^{(i)}\times_\phi L'^{(j)}$, and with hom-sets defined similar to (\ref{eq:homsetsom}). For a generic choice of almost complex structures, one can use the count of pseudo-holomorphic discs to define an $A_\infty$-structure. Similar to before, one has continuation elements in $CF(L'',L''^{+})$ for any componentwise positive linear isotopy $L''\to L''^{+}$ (defined using the count of stable discs as in Figure \ref{figure:contelt} with $\Om^2(T_\phi)$-type almost complex structures). Define $\cW^2(T_\phi)$ to be the localization of $\Om^2(T_\phi)$ at all continuation elements.  
\begin{notation}
Let $\Om^2(L_0'',L_1'')$, resp. $\cW^2(L_0'',L_1'')$ also denote the hom-complexes $\Om^2(T_\phi)(L_0'',L_1'')$, resp. $\cW^2(T_\phi)(L_0'',L_1'')$. 
\end{notation}
\begin{rk}
One can restrict the set of objects to those with diagonal indices, i.e. $L^{(i)}\times_\phi L'^{(i)}$. After localization, one obtains the same category since $\{(i,i):i\in\mathbb{N} \}$ is cofinal in the poset $\mathbb{N}\times\mathbb{N}$. See \cite[Section 6.5]{GPS2}.
\end{rk}
\begin{rk}
The compactness hold for $\Om^2(T_\phi)$ by applying (integrated) maximum principle to each component. More precisely, since we use $\Om^2(T_\phi)$-type almost complex structures, outside a compact subset they are product type over $\overline{\wh{  T_0}\setminus T_0}\times \wh M$, resp. $\wh T_0\times \overline{\wh M\setminus M}$. Therefore, the curves cannot escape to infinity over $(\wh{  T_0}\setminus T_0)$-component, resp. $(\wh{M}\setminus M)$-component.
\end{rk}
The first step to define (\ref{eq:twfunct}) is to define a functor 
\begin{equation}
\Om^2(T_\phi)\to Bimod_{tw}(\Om(T_0),\Om(M) )
\end{equation}
To define the right hand side, we assume $\Om(T_0)$ is endowed with an extra grading as before (i.e. via the covering map $\wh{\tilde T_0}\to \wh{T_0}$), and $\Om(M)$ is made $\phi$-equivariant as before (assume that $\phi^k(L')^{(i)}=\phi^k(L'^{(i)})$). After defining the functor, we will show it descends to a fully faithful functor of wrapped Fukaya categories. 
\begin{note}\label{note:w2equalsw}
One still needs to show that $\cW^2(T_\phi)$ and $\cW(T_\phi)$ are equivalent. A proof of equivalence of these categories is given in \cite{GPS2} in the untwisted case (i.e. when $\phi=1_M$ and $T_\phi=T_0\times M$, they consider more general products). Their proof applies verbatim in the twisted case, since the cylindrical end of $\wh{T_\phi}$ can be identified with that of the product. Thanks to this identification, one can use their method (``the cylindrization'') to make Lagrangians of type $L\times_\phi L'$ cylindrical. One can also use same type of almost complex structures (on the cylindrical end) to write  a $\Om^2(T_\phi)$-$\Om(T_\phi)$-bimodule that induces functors $\Om^2(T_\phi)\to \Om(T_\phi)^{mod}$, and $\cW^2(T_\phi)\to \cW(T_\phi)^{mod}$. The essential image is the span of Yoneda modules over $\cW(T_\phi)$ of Lagrangians of type $L\times_\phi L'$. By Corollary \ref{torusgenerators}, this span generates $\cW(T_\phi)$.
Same proof in \cite{GPS2} for fully faithfulness applies. We will not include this proof here.
\end{note}
\begin{rk}
Presumably, an alternative proof of fully faithfulness can be given similar to geometric way we presented in Section \ref{subsec:defnkunnethearly}: namely, one can compose the map $\cW^2(T_\phi)(L_0'',L_1'')\to hom_{\cW(T_\phi)}(h_{L_0''},h_{L_1''})$ induced by the functor with unit insertion/Yoneda quasi-isomorphism, and one gives a geometric description of the composition as a count of curves similar to Figure \ref{figure:shortfoldedcompose}. Then it is easy to describe a quasi-inverse as before. We were unable to solve compactness related problems that can occur in the definition of \cite{generation}; however, these can be overcome as in \cite{GPS2} by switching the definitions. Then, one has to provide further algebraic arguments similar to Section \ref{subsec:algmodify}. 
\end{rk}
\subsection{Compactness issues}
 In this section, we find conditions on the almost complex structures defining the trimodule such that Gromov compactness holds. The trimodule structure is defined by counting pseudo-holomorphic curves into $\wh{\tilde T_0}\times\wh{M}$. Therefore, one needs to control these curves from escaping to infinity on the conical end and escaping to right/left ends of infinite type Liouville manifold $\wh{\tilde T_0}$. The former is standard, and is achieved via integrated maximum principle. For the latter, we use almost complex structures such that the energy of pseudo-holomorphic curves increase at least by a fixed amount each time they cross any of the pre-determined shells. This can presumably seen as a version of intermittently bounded almost complex structures of \cite{groman}. 
\subsubsection{Escaping the conical end}\label{subsubsec:noescapeconical}
When there is no perturbation term, curves can be prevented from escaping to conical end using standard maximum principle. Alternatively, one can use the baby version of \cite[Lemma 7.2]{abousei}: namely, let $(X,\lambda_X)$ be a Liouville domain with completion $\wh{X}$, $L_X\subset\wh{X}$ be a Lagrangian that is cylindrical on $\wh{X}\setminus X$ such that $\lambda_X|_{L_X}=0$ and choose an almost complex structure that is cylindrical on the conical end. Let $\Sigma$ be a closed Riemann surface with corners such that $\partial\Sigma$ can be written as union of smooth curves $\partial_l\Sigma$ and $\partial_n\Sigma$. Then, any pseudo-holomorphic map $u:\Sigma\to \wh{X}$ such that $u(\partial_l\Sigma)\subset L_X$ and $u(\partial_n\Sigma)\subset \partial X$ is contained entirely in $X$. 

This can be used for split type almost complex structures that are cylindrical in each component, which we already remarked for $\Om^2(T_\phi)$-type almost complex structures on $\wh{T_\phi}$. Now, consider the cylindrical end $\wh{\tilde T_0}\setminus \tilde T_0$ of $\tilde T_0$. It has infinitely many components, which are isomorphic to cylindrical end $\wh{T_0}\setminus T_0\cong \mathbb{R}_{>0}\times S^1$. Let $\tilde B_i$ denote the closures of these components (enumerated by $i\in\Z$ such that $\tr$ moves $\tilde B_i$ to $\tilde B_{i+1}$). We define 
\begin{defn}\label{defn:o2type}
An almost complex structure $J$ on $\wh{\tilde T_0}\times\wh{M}$ is called $\Om^2$-type if 
\begin{itemize}
\item for each $i\in\Z$, the restriction of $J$ to $\tilde B_i\times\wh{M}$ is of split type and its $\tilde B_i$-component is cylindrical outside a compact subset of $\tilde B_i$
\item the restriction of $J$ to $\wh{\tilde T_0}\times \overline{\wh{M}\setminus M}$ is of split type and its $\overline{\wh{M}\setminus M}$ component is cylindrical outside a compact subset of $\overline{\wh{M}\setminus M}$
\end{itemize}
\end{defn}
\begin{rk}
We will make an assumption about the families of $\Om^2$-type data similar to one in Definition \ref{defn:om2phitype} (see Condition \ref{condition:loccon}). However, we do not assume any uniformity for the compact subsets of $\tilde B_i$ on which $J$ can be non-cylindrical as $i$ varies. By the (integrated) maximum principle, the projection of pseudo-holomorphic curve (with fixed asymptotic/boundary conditions) to $\wh{\tilde{T}_0}$ is contained in a compact subset that depends on $\tilde B_i$. Next, we will put a condition on our almost complex structures preventing curves from going to left/right infinite ends of $\tilde T_0$; hence, only finitely many of $\tilde B_i$ will be of concern.
\end{rk}
\subsubsection{Escaping the left and right ends of $\tilde T_0$}\label{subsubsec:noescapelr}
$\tilde T_0\times M$ is not of finite type; therefore, we have to find a class of almost complex structures for which pseudo-holomorphic curves with fixed (finite) boundary conditions, fixed asymptotic conditions and bounded energy do not escape to left and right ends of $\tilde T_0$. 
Presumably, one can use $i$-bounded almost complex structures in \cite{groman}. However, there is a simpler solution, which can actually be seen as a baby version of $i$-boundedness of \cite{groman}. Choose an annulus $A\subset T_0$ as in Figure \ref{figure:annuliontori}. In particular it satisfies:
\begin{enumerate}
	\item $A$ lifts to $\tilde T_0$
	\item Every path connected subset of $\tilde T_0$ that is not contained in a finite subdomain crosses infinitely many lifts of $A$ on both boundary components.
\end{enumerate}
Fix a symplectic trivialization of fibration $T_\phi\to T_0$ over $A$, i.e. identify the pre-image of $A$ with $A\times M$. Let $\tilde A_i\subset \tilde T_0$ denote the lifts of $A$. We will choose the almost complex structure on $\wh{\tilde T_0}\times \wh{M}$ to be of product type over 
$\tilde A_i\times \wh{M}$ (we do not have to say with respect to which trivialization of $\tilde A_i\times \wh{M}\to \tilde A_i$, as they all differ by $1\times \phi^k$ and this does not effect the product type assumption). 
\begin{figure}
	\centering
	\includegraphics[height=4cm]{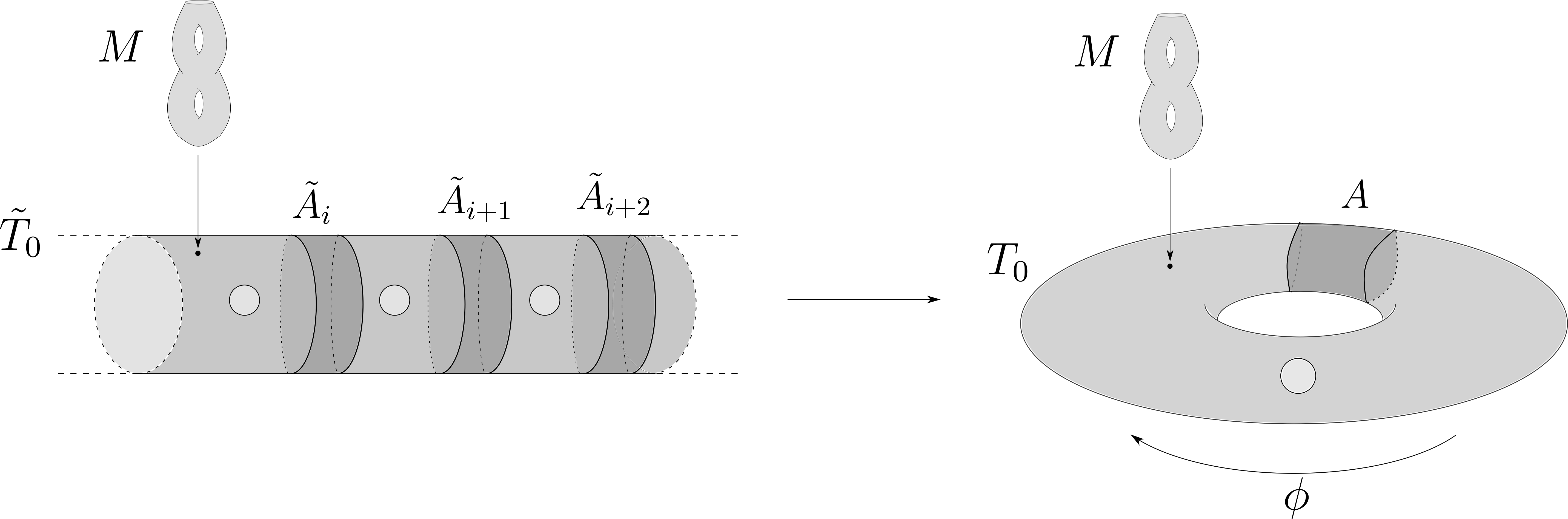}
	\caption{The annulus $A\subset T_0$ and its lifts to $\tilde T_0$}
	\label{figure:annuliontori}
\end{figure}

The projection of a given pseudo-holomorphic map into $\wh{\tilde T_0}\times \wh{M}$ is pseudo-holomorphic over the annuli $\tilde A_i$. This is due to product type assumption. More precisely, let $u:\Sigma\to \tilde A_i\times\wh{M}$ be a pseudo-holomorphic map from a closed Riemann surface $\Sigma$ with boundary such that $u(\partial\Sigma)\subset\partial \tilde A_i\times\wh{M}$ and such that $u(\partial\Sigma)$ intersects both components of $\partial \tilde A_i\times\wh{M}$. Then the energy of $u$ is bounded below by $Area(\tilde A_i)=Area(A)$. This holds since the restriction of $u$ to $u^{-1}(\tilde A_i\times\wh{M})$ splits into pseudo-holomorphic maps into $\tilde A_i$ and $\wh{M}$, and the first component has the given energy bound. 

Therefore, pseudo-holomorphic curves with fixed boundary/asymptotic conditions (hence fixed energy) cannot cross infinitely many $\tilde A_i$. 

Now, let us define a left-right-right $\Om^2(T_\phi)$-$\Om(T_0)$-$\Om(M)$-trimodule with twisting between $\Om(T_0)$-$\Om(M))$ components, which we still denote by $\fM$. In other words, we define a functor 
\begin{equation}\label{eq:omfunct}
\Om^2(T_\phi)\to Bimod_{tw}(\Om(T_0),\Om(M))
\end{equation}
This will be defined using the same count of quilted strips as before. 

Recall that the objects $L^{(i)}\times_\phi L'^{(j)}$ of $\Om^2(T_\phi)$ are partially ordered by their superscript $(i,j)$. It is easy to see that this defines a partial order on $ob(\Om^2(T_\phi))\sqcup ob(\Om(T_0))\times ob(\Om(M))$ as well. For instance, $L_1^{(3)}\times_\phi L_1'^{(5)}$ seen as an object of $\Om^2(T_\phi)$ is greater than $(L_0^{(2)},L_0'^{(4)})$, smaller than $(L_0^{(4)},L_0'^{(6)})$, but cannot be compared to $(L_0^{(5)},L_0'^{(3)})$. For simplicity, we assume $ob(\Om(T_0))\times ob(\Om(M))$ and $ob(\Om^2(T_\phi))$  can be identified via $(L,L')\mapsto L\times_\phi L'  $, and we denote the pair $(L,L')$ by $L\times_\phi L'$ as well. 

By assumptions, $L_0\times_\phi L_0'$ and $L_1\times_\phi L_1'$ are transverse when $L_1\times_\phi L_1'>L_0\times_\phi L_0'$. Given $L\in \Om(T_0)$,  $L'\in\Om(M)$ and $L''\in \Om^2(T_\phi)$, define 
\begin{equation}\label{eq:grdvsoftrimodule}
\fM(L'',L,L'):=\begin{cases}
CF(\tilde L\times L',\pi^{-1}(L'')),&\text{if }L''>L\times_\phi L'\\
0,&\text{otherwise}
\end{cases}
\end{equation}
as a graded vector space (here $\tilde L$ denotes the fixed lift of $L$ as usual). In other words, it is the $\C$-linear span of intersection points of $\pi^{-1}(L'')$ and $\tilde L\times L'$, when $L''>L\times_\phi L'$. This span admits a canonical grading, once the gradings on $L$, $L'$ and $L''$ are fixed. We will define a trimodule structure on $\fM$.

To define the structure maps, we need to choose a family $J$ of almost complex structures on $\wh{\tilde T_0}\times\wh{M}$
parametrized by the folded strip $\cS_r^{f}$ for each $r$. We impose the following conditions:
\begin{enumerate}
\newcounter{fdcond}
\item\label{fd:trphi} it is $\tr\times\phi$-invariant on $N_{1/3}$ (i.e. it is pull-back of an almost complex structure on $\wh{T_\phi}$)
\item\label{fd:prd} it is split type on $\cS_r^f\setminus N_{2/3}$ (i.e. it decomposes into almost complex structures on $\wh{\tilde T_0}$ and $\wh{M}$)
\item\label{fd:prdfirstcomp} $\wh{\tilde T_0}$-component extends to an almost complex structure on $\cS_r^{(1)}\setminus N_{2/3}$ and it is $\tr$-invariant (i.e. it is pull-back of an almost complex structure on $\wh{T_0}$)
\item\label{fd:prdMcomponent} $\wh{M}$-component extends to an almost complex structure on $\cS_r^{(2)}\setminus N_{2/3}$
\addtocounter{fdcond}{\value{enumi} }
\end{enumerate}
Recall $\cS_r^{(1)}$ and $\cS_r^{(2)}$ denote the complements of markings in components $r_1$ and $r_2$ of the quilted strip $r$, and $N_\epsilon$ denote the set of points that have distance less than $\epsilon$ to at least one of the marked points on the lower boundary of $\cS_r^{f}$.
Conditions (\ref{fd:trphi})-(\ref{fd:prdMcomponent}) are analogous to the previously given ones, and are necessary for trimodule equations. We also have the following conditions to ensure compactness:
\begin{enumerate}
\setcounter{enumi}{\value{fdcond}}
\item\label{fd:om2} for each $z\in \cS_r^{(f)}$, $J_z$ is $\Om^2$-type 
\item\label{fd:annulusproduct} for each $z\in \cS_r^{(f)}$ and each $i\in\Z$, 
the restriction of $J_z$ to $\tilde A_i\times \wh{M}$ is product type
\setcounter{fdcond}{\value{enumi} }
\end{enumerate}
As standard, we implicitly assume the almost complex structures are translation invariant on strip-like ends (for the upper markings of $\cS_r^{(1)}$, resp. $\cS_r^{(2)}$ the $\wh{\tilde T_0}$, resp. $\wh{M}$ components are translation invariant). We need the following (local) consistency condition for families of almost complex structures:
\begin{condition}\label{condition:loccon}
The compact subsets of $\tilde B_i$ (as defined in Section \ref{subsubsec:noescapeconical}), resp. $\overline{\wh{M}\setminus M}$ on which the respective component of $J_z$ is allowed to be non-cylindrical (see Definition \ref{defn:o2type}) can be chosen uniformly in a neighborhood of $z$. In particular, this condition holds for consistent choices of almost complex structures over the moduli spaces $\overline{\cQ(\textbf{d})}$.
\end{condition}
For completeness, we prove:
\begin{lem}\label{lem:exists2} Families of almost complex structures satisfying (\ref{fd:trphi})-(\ref{fd:annulusproduct}) exists and the space of such families is weakly contractible.
\end{lem}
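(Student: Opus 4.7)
The plan is to mimic the two-step construction from Lemma \ref{lem:existom2acs}, but now carried out parametrically in $z\in\cS_r^f$, and with the additional $\tr\times\phi$-invariance condition over $N_{1/3}$ and the product-type condition over the annuli $\tilde A_i\times\wh M$. The key point I would exploit throughout is that all the constraints (\ref{fd:trphi})–(\ref{fd:annulusproduct}) are imposed on disjoint (or nested) regions of the product $\wh{\tilde T_0}\times\wh M$, so they can be assembled factor-by-factor before being combined via a partition of unity in the complement.

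First I would construct a family satisfying (\ref{fd:trphi})–(\ref{fd:prdMcomponent}). Over $N_{2/3}^c\cap\cS_r^{(1)}$ pick a $z$-dependent cylindrical almost complex structure $J^{(1)}_z$ on $\wh{\tilde T_0}$ that is $\tr$-invariant (equivalently, pulled back from $\wh{T_0}$); the space of such structures is contractible because cylindrical almost complex structures on $\wh{T_0}$ form a contractible space and pullback preserves this. Independently pick a $z$-dependent $J^{(2)}_z$ on $\wh M$ over $N_{2/3}^c\cap\cS_r^{(2)}$, again in a contractible space. Declare $J_z:=J^{(1)}_z\times J^{(2)}_z$ on $\cS_r^f\setminus N_{2/3}$ — this gives (\ref{fd:prd}), (\ref{fd:prdfirstcomp}), (\ref{fd:prdMcomponent}). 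Over $N_{1/3}$ choose a $z$-dependent almost complex structure $\widetilde J_z$ on $\wh T_\phi$; its pullback along $\pi$ is automatically $\tr\times\phi$-invariant and satisfies (\ref{fd:trphi}). On the annulus $N_{2/3}\setminus N_{1/3}$ use the convexity of the space of tame almost complex structures on $\wh{\tilde T_0}\times\wh M$ to interpolate between $\pi^*\widetilde J_z$ and $J^{(1)}_z\times J^{(2)}_z$ via a partition of unity in $z$; the result still tames the symplectic form.

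Next I would modify this family to satisfy the $\Om^2$-type condition (\ref{fd:om2}) and the product-type condition (\ref{fd:annulusproduct}). These are conditions over specific regions of the target, not of the domain $\cS_r^f$. Following Lemma \ref{lem:existom2acs}, I interpolate one factor at a time: first adjust $J_z$ over $\tilde B_i\times\wh M$ and $\wh{\tilde T_0}\times\overline{\wh M\setminus M}$ to be split with cylindrical components, and then over each $\tilde A_i\times\wh M$ to be product type. Because any two cylindrical almost complex structures on a fixed surface (such as $\tilde B_i$ or $\tilde A_i$) are connected through cylindrical ones, and because we may independently vary the $\wh M$-component, these modifications can be carried out through families that continue to tame the form. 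Assumption \ref{assump:loccon} follows automatically from the local-in-$z$ character of the construction. The $\tr\times\phi$-equivariance over $N_{1/3}$ persists because all modifications on $\tilde B_i\times\wh M$ (respectively $\tilde A_i\times\wh M$) can be made $\tr$-equivariantly in the base factor, using the observation that the product-type condition is preserved under pullback by $1\times\phi^k$.

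For weak contractibility, exactly the same argument runs with parameter space $X$ in place of a point: given a family of admissible $J$'s over $X\cong X\times\{0\}$, I extend to $X\times[0,1]$ by first interpolating the two factors on $N_{2/3}^c$ through contractible spaces of cylindrical almost complex structures on each factor separately, then interpolating on the transition region via partition of unity, and finally collapsing $X\times\{1\}$ because by that stage the family is canonically determined up to contractible choice. The genuine obstacle is bookkeeping the \emph{non-convex} conditions (split type, product type over $\tilde A_i$, pullback from $\wh T_\phi$ near the seam); the standard convex-combination argument does not preserve them, which is why the interpolations must be performed one factor at a time rather than directly on the total almost complex structure. Once that one-factor-at-a-time scheme is in place, the existence and contractibility statements reduce to the well-known contractibility of spaces of tame cylindrical almost complex structures on each of $\wh{T_0}$, $\wh M$ and $\wh T_\phi$.
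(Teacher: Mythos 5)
Your overall plan is the right one, but the execution has a gap which you yourself flag at the end of the write-up without repairing it in the body of the construction. In your second paragraph you interpolate between $\pi^*\widetilde J_z$ and $J^{(1)}_z\times J^{(2)}_z$ over the transition region $N_{2/3}\setminus N_{1/3}$ by convex combination (``partition of unity''); but the split-type and product-type conditions (\ref{fd:om2}) and (\ref{fd:annulusproduct}) are conditions on the \emph{target} $\wh{\tilde T_0}\times\wh M$ and must hold for \emph{every} $z\in\cS_r^f$, and a convex combination of two split-type almost complex structures is not split type. So the resulting family already violates (\ref{fd:om2}) and (\ref{fd:annulusproduct}) exactly where you just glued, and the subsequent ``modification'' in the following paragraph is under-specified: it would also have to act on $N_{1/3}$ (where your $\widetilde J_z$ was a generic almost complex structure on $\wh T_\phi$, not an $\Om^2(T_\phi)$-type one that is product over $A\times\wh M$), and any such modification now threatens the already-imposed (\ref{fd:trphi})--(\ref{fd:prdMcomponent}). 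Your closing remark that ``the interpolations must be performed one factor at a time rather than directly on the total almost complex structure'' is precisely the correct fix, but it contradicts the construction you actually wrote.

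The paper's proof avoids this by building (\ref{fd:om2}) and (\ref{fd:annulusproduct}) in from the start: over $N_{1/3}$ one chooses $\Om^2(T_\phi)$-type almost complex structures on $\wh T_\phi$ that are additionally product type over $A\times\wh M$, so that the pull-back along $\pi$ already satisfies (\ref{fd:trphi}), (\ref{fd:om2}), \emph{and} (\ref{fd:annulusproduct}); over $N_{2/3}^c$ one takes genuine split data pulled back from $\wh T_0\times\wh M$, for which (\ref{fd:annulusproduct}) is automatic. The interpolation over the transition region is then done by a nested extension: extend from $N_{1/3}$ to $N_{2/5}$ while freezing the first factor on $\partial N_{2/5}$, then to $N_{1/2}$ while freezing the second factor, then extend from $N_{2/3}^c$ to $N_{1/2}^c$ matching at $\partial N_{1/2}$. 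Each step only varies one factor of the restriction to $\tilde A_i\times\wh M$, $\tilde B_i\times\wh M$ and $\wh{\tilde T_0}\times\overline{\wh M\setminus M}$ at a time, so the (non-convex) conditions are preserved throughout. If you rewrite your transition-region step in this one-factor-at-a-time form (and upgrade your $N_{1/3}$ choice to the $\Om^2(T_\phi)$-type-and-product-over-$A$ family), the rest of your argument, including the parametrized version for weak contractibility, goes through.
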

\begin{proof}
The proof is similar to	Lemma \ref{lem:existom2acs}; hence, we will not give the full details. 

The proof of Lemma \ref{lem:existom2acs} can easily be modified to show the existence of $\Om^2(T_\phi)$-type almost complex structures that are of product type over $A\times\wh{M}\subset\wh{T_\phi}$ as well (in addition to being product type over $\overline{\wh{T_0}\setminus T_0}\times\wh{M}$ and $\wh{T_0}\times\overline{\wh{M}\setminus M}$). 
The pull-back of such almost complex structures under $\wh{\tilde T_0}\times \wh{M}\to \wh{T_\phi}$ satisfy (\ref{fd:trphi}), (\ref{fd:om2}) and (\ref{fd:annulusproduct}). Hence, fix a family of such almost complex structures over $\overline{N_{1/3}}$. One can also construct a family of $\Om^2$-type almost complex structures satisfying (\ref{fd:prd}), (\ref{fd:prdfirstcomp}) and (\ref{fd:prdMcomponent}) over $\overline{N_{2/3}^c}$ by pulling back products of almost complex structures on $\wh{T_0}$ and $\wh{M}$ ((\ref{fd:annulusproduct}) is automatic in this case). 

To extend/interpolate it over $\cS_r^{(f)}$, one again imitates the proof of Lemma \ref{lem:existom2acs}: First, extend from $N_{1/3}$ to $N_{2/5}$, while keeping conditions (\ref{fd:om2}) and (\ref{fd:annulusproduct}) so that the first components of almost complex structures on $\tilde A_i\times \wh{M}$, $\tilde B_i\times \wh{M}$ and $\tilde T_0\times \overline{\wh{M}\setminus M}$ are constant over $\partial N_{2/5}\setminus\partial\cS_r^{f}$. Then extend this to $N_{1/2}$ while fixing the first coordinates and while keeping conditions (\ref{fd:om2}) and (\ref{fd:annulusproduct}) so that the second coordinates of almost complex structures on $\tilde A_i\times \wh{M}$, $\tilde B_i\times \wh{M}$ and $\tilde T_0\times \overline{\wh{M}\setminus M}$ are constant over $\partial N_{1/2}\setminus\partial\cS_r^{f}$. Hence, over $\partial N_{1/2}\setminus\partial\cS_r^{f}$, the restrictions of almost complex structures to $\tilde A_i\times \wh{M}$, $\tilde B_i\times \wh{M}$ and $\tilde T_0\times \overline{\wh{M}\setminus M}$ does not vary. One can extend from $N_{2/3}^c$ to $N_{1/2}^c$ such that over the boundary of $N_{1/2}$, the almost complex structures on union of $\tilde A_i\times \wh{M}$, $\tilde B_i\times \wh{M}$ and $\tilde T_0\times \overline{\wh{M}\setminus M}$ are the same as the extension to $N_{1/2}$. This proves the existence (it is easy to smooth this family of almost complex structures). 

The proof that consistent choices exists and are weakly contractible requires the above construction over families, and it is the same as in Lemma \ref{lem:existom2acs}.
\end{proof}
Consider labellings as in Figure \ref{figure:labelledtwisted} satisfying $L_p''>\dots >L_0''>L_0\times_\phi L_0'$, $L_0>\dots >L_m$, and $L_0'>\dots L_n'$. Assume we make (asymptotically) consistent choices of almost complex structures satisfying (\ref{fd:trphi})-(\ref{fd:annulusproduct}), Condition \ref{condition:youneedish}, and Condition \ref{condition:loccon} (hence, implicitly the choices of $\Om^2(T_\phi)$-type data made to define this category satisfy a product type assumption over $A\subset T_0$ as well). The asymptotic consistency of $J$ means that near the boundary of moduli $\overline{\cQ(\textbf{d})}$, $J$ and the data obtained by gluing from the lower dimensional strata matches up to infinite order (see \cite{abousei}, \cite{generation}). Such a choice is possible thanks to Lemma \ref{lem:exists2}.

Consider pseudo-holomorphic maps $u:\cS_r^{f}\to \wh{\tilde T_0}\times\wh{M}$. We have:
\begin{lem}
For fixed Lagrangian boundary conditions and fixed asymptotic conditions on ends, the moduli of stable quilted pseudo-holomorphic strips is compact.
\end{lem}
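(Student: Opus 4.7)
The plan is to establish three kinds of a priori bounds on any stable pseudo-holomorphic quilted strip $u:\cS_r^f\to \wh{\tilde T_0}\times\wh{M}$ with the prescribed boundary/asymptotic data, and then invoke standard Gromov compactness once we know the image lies in a compact subset. The three bounds are: (i) an energy bound coming from the asymptotics; (ii) no escape to the conical ends of the fibers or of $\wh{\tilde T_0}$ within a given $\tilde B_i$; (iii) no escape to the two infinite ends of $\wh{\tilde T_0}$.

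For (i), since the Lagrangians involved are exact and the asymptotic chords are fixed, the geometric energy is bounded by a topological quantity (the differences of the primitives/action values on asymptotic orbits), as usual; call this bound $E_0$. Sphere and disk bubbling in the interior and on the boundary are excluded by exactness, and breaking of strips/discs on the ends is accounted for in the boundary strata of $\overline{\cQ(\textbf d)}$. The only thing left to prove is that $u(\cS_r^f)$ is contained in a compact subset of $\wh{\tilde T_0}\times\wh{M}$.

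For (ii), I would apply the integrated maximum principle of \cite{abousei} fiberwise. By condition (\ref{fd:om2}), the almost complex structure is of split type on $\wh{\tilde T_0}\times\overline{\wh M\setminus M}$ with a cylindrical $\wh M$-component outside a compact subset, so $\pi_M\circ u$ is pseudo-holomorphic there and the boundary of its preimage maps to $\tilde L\times L'$ or $\pi^{-1}(L'')$ (both of which sit in $\wh{\tilde T_0}\times M$ outside a compact set thanks to cylindricality of $L'$ and compact support of $\phi$). Hence the $\wh{M}$-projection stays in a fixed compact subset of $\wh{M}$. Similarly, on each $\tilde B_i\times\wh{M}$, the almost complex structure is of split type with cylindrical $\tilde B_i$-component outside a compact set depending on $i$, and the seam/upper boundary conditions are cylindrical there, so the $\tilde B_i$-projection remains in a compact subset of $\tilde B_i$. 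Assumption \ref{assump:loccon} ensures the relevant compact subsets can be chosen uniformly over the parameter, so the maximum principle applies uniformly.

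For (iii)—the main difficulty, since $\tilde T_0$ has infinite type—I would use condition (\ref{fd:annulusproduct}) exactly as sketched in Section \ref{subsubsec:noescapelr}. Over each $\tilde A_i\times\wh M$ the almost complex structure is split, so the restriction of $u$ to $u^{-1}(\tilde A_i\times\wh M)$ decomposes into a pseudo-holomorphic map into $\tilde A_i$ and one into $\wh M$. The Lagrangian boundary/seam conditions over $\tilde A_i$ project to curves in $\tilde A_i$ that, together with the image of the boundary of $u^{-1}(\tilde A_i\times\wh M)$, bound a region in $\tilde A_i$; whenever a component of $u^{-1}(\tilde A_i\times\wh M)$ connects the two boundary circles of $\tilde A_i$, its symplectic area in the $\tilde A_i$ factor is bounded below by $\mathrm{Area}(A)>0$. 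Because the total energy is at most $E_0$, the curve can only cross at most $\lceil E_0/\mathrm{Area}(A)\rceil$ of the shells $\tilde A_i$. Combined with the fact (standard for strips with finitely many asymptotic markers) that the image of $u$ is path-connected after removing the asymptotic ends and that its asymptotic ends lie over fixed lifts, this forces the $\wh{\tilde T_0}$-projection of $u$ into a finite type subdomain $\tilde T_0(N)\subset\wh{\tilde T_0}$ determined by $E_0$ and the asymptotic data.

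The hard step is (iii); the subtle point is making sure the annulus-crossing energy lower bound is actually charged only once per crossing and that it dominates after passing to the Gromov limit (i.e.\ that no limit bubble can slip out to infinity via a thin neck over $\tilde A_i$). This is handled by the usual observation that a limiting bubble would still satisfy the split-type equation over $\tilde A_i\times\wh M$ and its energy would be counted in $E_0$. Combining (i)–(iii) confines $u$ to a fixed compact subset $\tilde T_0(N)\times K$ of $\wh{\tilde T_0}\times\wh M$, at which point standard Gromov compactness for quilted pseudo-holomorphic strips (as in \cite{quiltedstrip}) yields compactness of the moduli space.
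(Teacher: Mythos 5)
Your proof follows essentially the same three-step strategy as the paper: an a priori energy bound from exactness and fixed asymptotics, the integrated maximum principle applied via the $\Om^2$-type (split/cylindrical) conditions to confine the $\wh{M}$-factor and the $\tilde B_i$-factors, and the annulus-crossing energy argument from Section \ref{subsubsec:noescapelr} to bound the $\wh{\tilde T_0}$-projection to a finite type subdomain. The paper's proof is more terse (it cites the earlier sections rather than re-deriving the bounds), but the content and logic are the same as yours.
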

\begin{proof}
It suffices to show the existence of a compact subset that contains all such strips. Fixing the asymptotic conditions and boundary conditions fixes the energy of $u$. Hence, as remarked, there is a bound to the number of annuli $\tilde A_i$ that the projection of such a curve can cross. Moreover, by $\Om^2$-assumption, the $\wh M$ components of these curves are contained in a compact subset of $\wh{M}$. Similarly, for each $\tilde B_i$, the parts of these curves that map to $\tilde B_i\times\wh{M}$ live over a compact subset of $\tilde B_i$. This finishes the proof.
\end{proof}
Hence, Gromov compactness holds, and it is standard to show that moduli spaces of such maps are cut out transversally for generic choices of almost complex structures. Therefore, by standard gluing arguments, the count of such maps define a left-right-right $\Om^2(T_\phi)$-$\Om(T_0)$-$\Om(M)$-trimodule structure on (\ref{eq:grdvsoftrimodule}) with a twisting among the last two components (or equivalently a functor (\ref{eq:omfunct})). Note that we define the structure maps to be zero when one of the conditions $L_p''>\dots >L_0''>L_0\times_\phi L_0'$, $L_0>\dots >L_m$, or $L_0'>\dots L_n'$ is not satisfied.
The trimodule equations/$A_\infty$-functor equations are satisfied thanks to Conditions (\ref{fd:trphi})-(\ref{fd:prdMcomponent}), as before. We denote both the trimodule and the functor (\ref{eq:omfunct}) by $\fM$. As before, the image of $L''\subset \wh{T_\phi}$ will be denoted by $\fM(L'')$ or $\fM_{L''}$. In the next section, we will show this induces a fully faithful functor $\cW^2(T_\phi)\to Bimod_{tw}(\cW(T_0),\cW(M))$.
\subsection{The modifications in the fully faithfulness argument}\label{subsec:algmodify}
In this section, we will show that (\ref{eq:omfunct}) induces a fully faithful functor 
\begin{equation}
\cW^2(T_\phi)\to Bimod_{tw}(\cW(T_0), \cW(M) )
\end{equation}
The proof of fully faithfulness is a modification of the one in Section \ref{subsec:defnkunnethearly}. Since the units of $\Om(T_0)$ and $\Om(M)$ do not have a geometric description, we replace them, by continuation elements. Insertion of continuation element becomes a Yoneda type quasi-isomorphism in the limit. Similarly, the composition described by Figure \ref{figure:shortfoldedcompose}, becomes a quasi-isomorphism in the limit, giving us what we desire. 

Observe that for $L_0''=L_0\times_\phi L_0'\in ob(\Om^2(T_\phi) )$, the twisted $\Om(T_0)$-$\Om(M)$-bimodule $\fM_{L''_0}$ is not isomorphic to twisted Yoneda bifunctor. Instead, it is strictly isomorphic to a truncated version of twisted Yoneda bimodule. In other words,
\begin{equation}
\fM_{L_0''}(L,L')=\begin{cases}
(h_{L_0}\otimes_{tw} h_{L_0'})(L,L'),&\text{for } L_0>L\text{ and } L_0'>L'\\
0,&\text{otherwise} 
\end{cases}
\end{equation}
The structure maps of the bimodule $\fM_{L_0''}$ are obtained by restriction from $h_{L_0}\otimes_{tw} h_{L_0'}$. This follows in the same way as Lemma \ref{lem:mequalsyoneda}. 
Notice, one cannot define a Yoneda map
\begin{equation}
hom_{\Om-\Om}(\fM_{L_0''},\fM_{L_1''})\to \fM_{L_1''}(L_0,L_0')
\end{equation}
by unit insertion (here we use $hom_{\Om-\Om}$ as an abbreviation for the homomorphisms in $Bimod_{tw}(\Om(T_0),\Om(M) )$). However, one still has canonical elements in $\fM_{L_0''}(L_0^-,L_0'^-)$ given by the tensor product of continuation elements of positive linear isotopies $L_0^-\to L_0$ and $L_0'^-\to L_0'$, where $L_0^-<L_0$ and $L_0'^-<L_0'$. 
Hence, we have maps
\begin{equation}\label{eq:preyoneda}
hom_{\Om-\Om}(\fM_{L_0''},\fM_{L_1''})\to \fM_{L_1''}(L_0^-,L_0'^-)
\end{equation}
given by the insertion of continuation elements. The left hand side of Figure \ref{figure:labelledcompforgpsy} (i.e. gluing with rigid discs) can be seen as a pictural representation of (\ref{eq:preyoneda}). 
These maps are well defined up to chain homotopy. We do not claim them to be quasi-isomorphisms; however, they induce quasi-isomorphisms after localization. To see this, first notice:
\begin{lem}
The image of $\fM_{L_0''}$ under the natural functor 
\begin{equation}
Bimod_{tw}(\Om(T_0),\Om(M)  )\to Bimod_{tw}(\cW(T_0),\cW(M))
\end{equation}
is quasi-isomorphic to twisted Yoneda bimodule $h_{L_0}\otimes_{tw}h_{L_0'}$. 
\end{lem}
\begin{proof}
One can realize $\fM_{L_0''}$ as $\tilde h_{L_0}\otimes_{tw}\tilde h_{L_0'}$, where $\tilde h_{L_0}$ and $\tilde h_{L_0'}$ denote truncated versions of Yoneda modules, i.e. they are equal to Yoneda module except at $L_0$, resp. $L_0'$, where they become $0$. For instance, $\tilde h_{L_0}$ satisfies 
\begin{equation}
\tilde h_{L_0}(L)=\begin{cases}
h_{L_0}(L)=CF(L,L_0), & \text{if } L_0>L\\ 0, & \text{if } L_0=L\\
h_{L_0}(L)=0, & \text{otherwise} 
\end{cases}
\end{equation}
with the obvious structure maps, and $\tilde h_{L_0'}$ is similar. Twisted exterior tensor product of $\tilde h_{L_0}$ and $\tilde h_{L_0'}$ is defined similar to $h_{L_0}\otimes_{tw} h_{L_0'}$. 

Observe that there is a natural submodule inclusion $\tilde h_{L_0}\to h_{L_0}$ with the cone supported only at $L_0$; therefore, the cone becomes acyclic, and $\tilde h_{L_0}$ and $h_{L_0}$ become quasi-isomorphic after localization $\Om(T_0)\to\cW(T_0)$. The same holds for $\tilde h_{L_0'}$ and $h_{L_0'}$. The formation of exterior tensor products commute with localization; hence, $\fM_{L_0''}\simeq \tilde h_{L_0}\otimes_{tw}\tilde h_{L_0'}$ and $h_{L_0}\otimes_{tw}h_{L_0'}$ become quasi-isomorphic.
\end{proof}
\begin{figure}
	\centering
	\includegraphics[height=3cm]{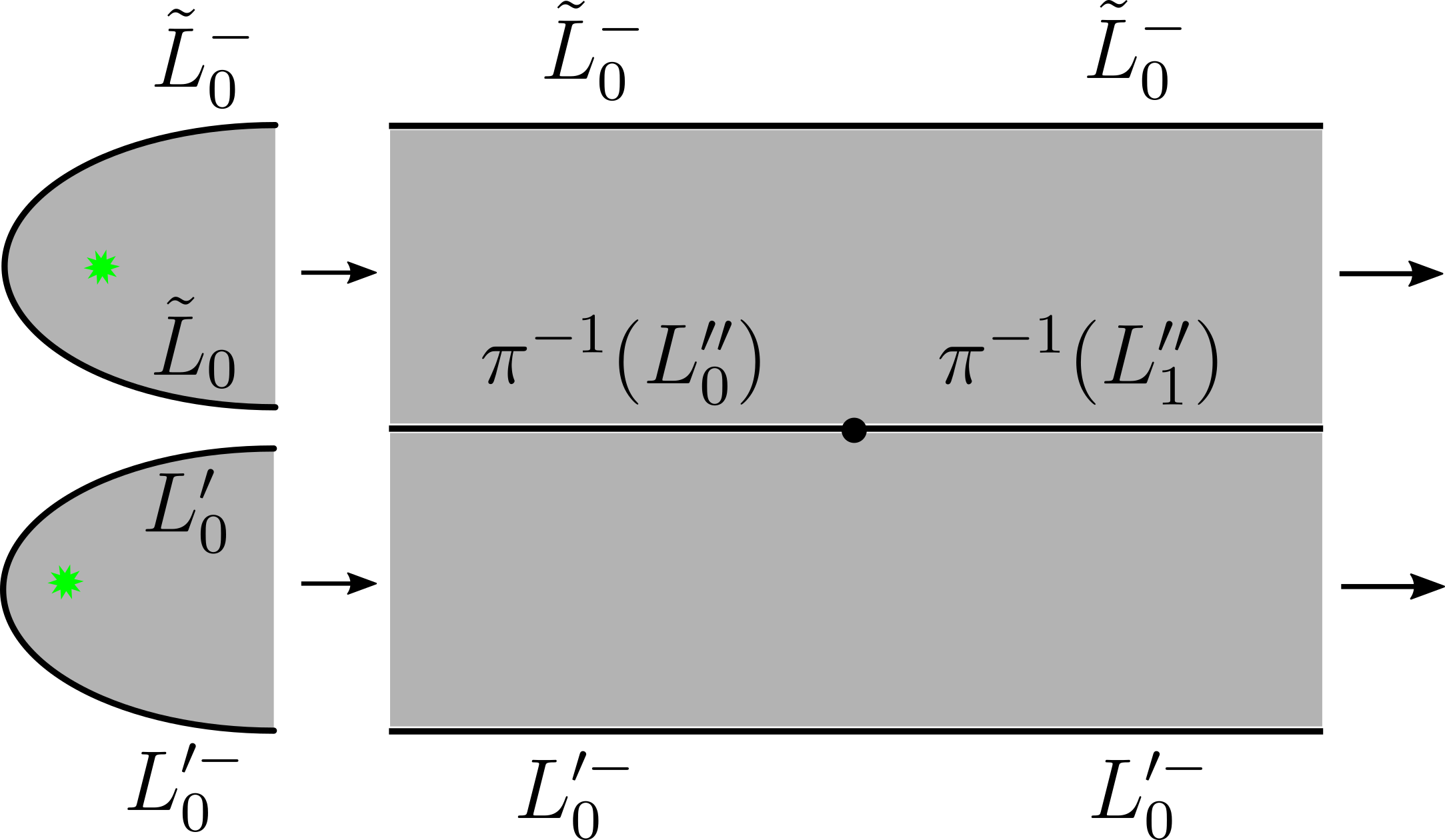}
	\caption{The composition of (\ref{eq:varyingcompose}) where gluing with rigid discs represent the map (\ref{eq:preyoneda})}
	\label{figure:labelledcompforgpsy}
\end{figure}
Denote the image of $\fM_{L_0''}$ by $\fM_{L_0''}^\cW$. In conclusion, the bottom horizontal arrow of the homotopy commutative diagram  
\begin{equation}\label{eq:diag1}
\xymatrix{hom_{\Om-\Om}(\fM_{L_0''},\fM_{L_1''})\ar[r]\ar[d]& \fM_{L_1''}(L_0^-,L_0'^-) \ar[d]\\ hom_{\cW-\cW}(\fM_{L_0''}^\cW,\fM_{L_1''}^\cW)\ar[r]^{\,\;\;\,\;\;\simeq}&\fM_{L_1''}^\cW(L_0^-,L_0'^-) }
\end{equation}
is a quasi-isomorphism by Lemma \ref{lem:twyoneda}. 

Consider the composition functor
\begin{equation}
\Om^2(T_\phi)\to Bimod_{tw}(\Om(T_0),\Om(M)  )\to Bimod_{tw}(\cW(T_0),\cW(M))
\end{equation}
To show this induces a functor $\cW^2(T_\phi)\to Bimod_{tw}(\cW(T_0),\cW(M))$, we need to show image of continuation elements are invertible. Let $L_1\to L_1^+$ and $L_1'\to L_1'^+$ be positive isotopies. There is a natural homotopy commutative diagram
\begin{equation}\label{eq:diag2}
\xymatrix{\Om^2(L_0'',L_1'')\ar[r]\ar[d]& hom_{\Om-\Om}(\fM_{L_0''},\fM_{L_1''})\ar[r]\ar[d]&\fM_{L_1''}(L_0^-,L_0'^-)\ar[d]\\ \Om^2(L_0'',L_1''^+)\ar[r]& hom_{\Om-\Om}(\fM_{L_0''},\fM_{L_1''^+})\ar[r]&\fM_{L_1''^+}(L_0^-,L_0'^-)
	}
\end{equation}
where $\Om^2(L_0'',L_1''):=\Om^2(T_\phi)(L_0'',L_1'')$, the leftmost vertical arrow is the multiplication with continuation element in $\Om^2(T_\phi)$, the middle vertical arrow is the composition with the image of this continuation element under the functor (\ref{eq:omfunct}) (denote this image by $g$), and the right vertical arrow is the lowest degree term of the twisted bimodule map $g$. If $L_1''>L_0''$, 
\begin{equation}\label{eq:fmopen1}
\fM_{L_1''}(L_0^-,L_0'^-)=\bigoplus_{r\in\Z}\Om_{T_0}(L_0^-,L_1)^r\otimes \Om_{M}(L_0'^-,\phi^{-r}(L_1'))
\end{equation}
\begin{equation}\label{eq:fmopen2}
\fM_{L_1''^+}(L_0^-,L_0'^-)=\bigoplus_{r\in\Z}\Om_{T_0}(L_0^-,L_1^+)^r\otimes \Om_{M}(L_0'^-,\phi^{-r}(L_1'^+))
\end{equation}
and the rightmost vertical arrow can be described geometrically as the count of rigid strips in $\wh{\tilde T_0}\times\wh{M}$ with upper boundary condition given by $\tilde L_0''^-:=\tilde L_0^-\times L_0'^-$ and with positively varying lower boundary condition from $\pi^{-1}(L_1'')$ to $\pi^{-1}(L_1''^+)$ (see Figure \ref{figure:cont2}). The almost complex structure is of product type on the left and right strip-like ends as well as on a neighborhood of the upper boundary, whereas it is of $\Om^2(T_\phi)$-type on a neighborhood of a point on the lower boundary (the variation of boundary condition happens in this neighborhood). This description simply follows from gluing rigid discs defining continuation element $L_1''\to L_1''^+$ in $\cW^2(T_\phi)$ with the folded strips defining structure maps of $\fM$ along the strip like end on the lower boundary (i.e. the seam before folding). 
\begin{figure}
	\centering
	\includegraphics[height=3cm]{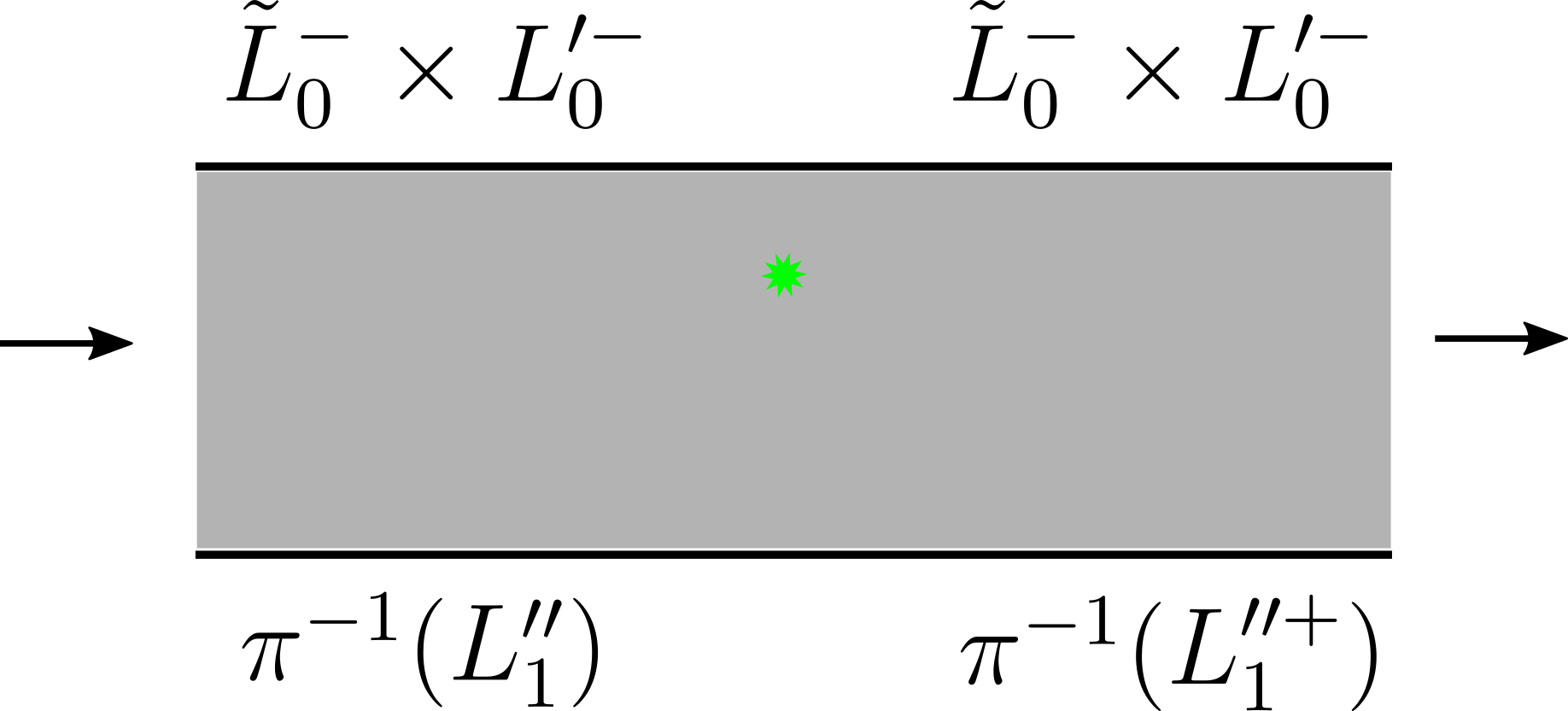}
	\caption{The rightmost vertical arrow in (\ref{eq:diag2})}
	\label{figure:cont2}
\end{figure}

Another natural map from (\ref{eq:fmopen1}) to (\ref{eq:fmopen2}) is given by multiplication by continuation elements $L_1\to L_1^+$ and $L_1'\to L_1'^+$ in each factor (more precisely, multiplication by $\phi^{-r}(c_{L_1',L_1'^+})$ in each of summand of the latter). The multiplication $\Om_{T_0}(L_0^-,L_1)\to \Om_{T_0}(L_0^-,L_1^+)$ can be described by a count of rigid strips in $\wh{\tilde T_0}$ with fixed upper boundary condition $\tilde L_0^-$ and with positively varying lower boundary condition from $\tilde L_1\langle -r\rangle\subset \pi^{-1}(L_1)$ to $\tilde L_1^+\langle -r\rangle\subset \pi^{-1}(L_1^+)$ (where $r$ is the degree in extra grading). There is a similar (indeed simpler) description of the map $\Om_{M}(L_0'^-,\phi^{-r}(L_1'))\to  \Om_{M}(L_0'^-,\phi^{-r}(L_1'^+))$ as well. Therefore, the map between the tensor products can be described by the count of rigid strips in $\wh{\tilde T_0}\times \wh{M}$ with the same varying boundary conditions, but with $\tr$-invariant, product type almost complex structure everywhere. Moreover, these two families of almost complex structures on $\wh{\tilde T_0}\times \wh{M}$ can be homotoped to each other while keeping conditions (\ref{fd:om2})-(\ref{fd:annulusproduct}) which ensure compactness. Therefore, the map induced by the lowest term of $g$ is homotopic to multiplication by continuation elements of $L_1\to L_1^+$ and $L_1'\to L_1'^+$ on each factor. As a result, the map from $\fM^\cW_{L_1''}(L_0^-,L_0'^-)$ to $\fM^\cW_{L_1''^+}(L_0^-,L_0'^-)$ is invertible due to localization at continuation elements. 
By (\ref{eq:diag1}), the induced map from $hom_{\cW-\cW}(\fM_{L_0''}^\cW,\fM_{L_1''}^\cW)$ to $hom_{\cW-\cW}(\fM_{L_0''}^\cW,\fM_{L_1''^+}^\cW)$ is also invertible. Hence, (\ref{eq:omfunct}) induces a functor 
\begin{equation}\label{eq:funcinduced}
\cW^2(T_\phi)\to Bimod_{tw}(\cW(T_0),\cW(M))
\end{equation}
\begin{figure}
	\centering
	\includegraphics[height=3cm]{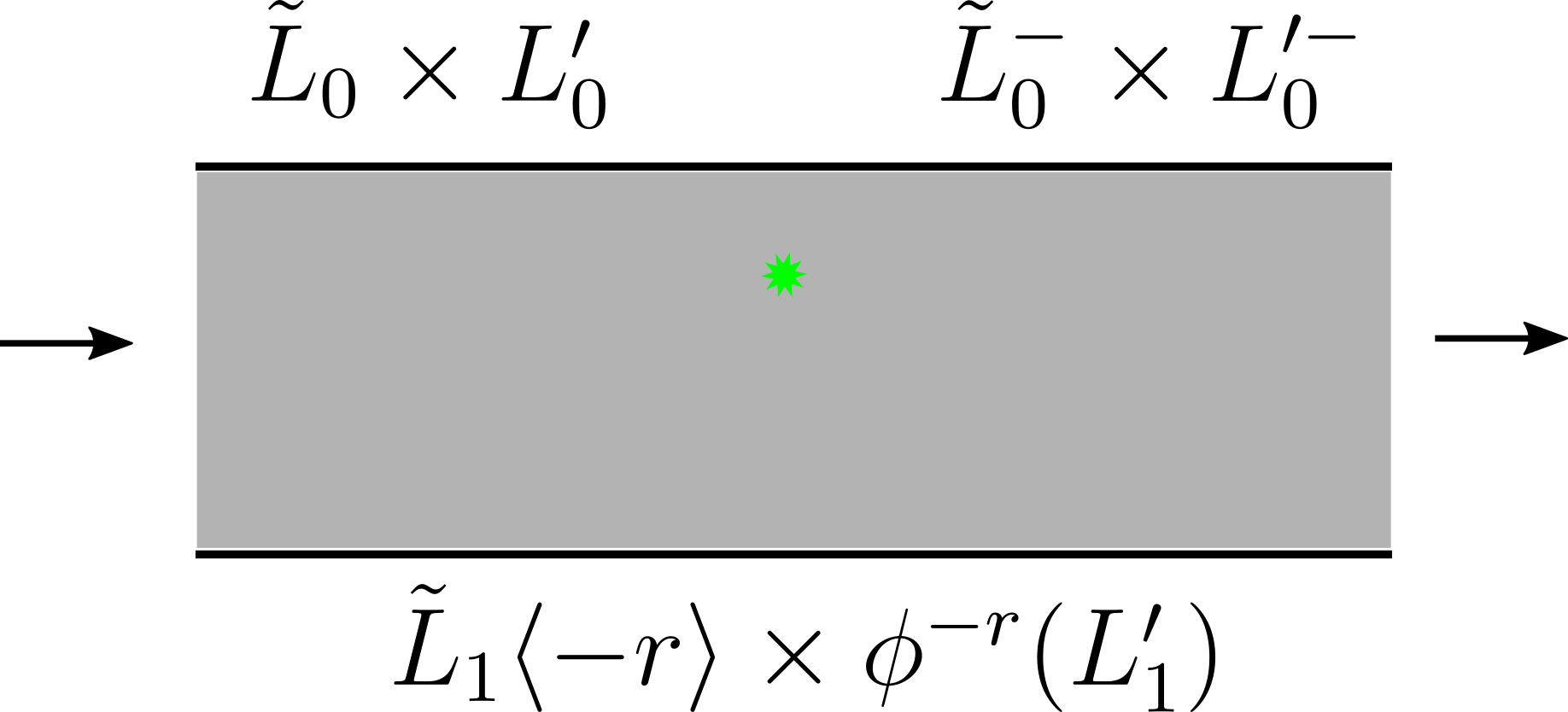}
	\caption{A geometric description of (\ref{eq:varyingcompose})}
	\label{figure:varyingcompose}
\end{figure}

To prove fully faithfulness, we start with a similar geometric description of the composition 
\begin{equation}\label{eq:varyingcompose}
\Om^2(L_0'',L_1'')\to hom_{\Om-\Om}(\fM_{L_0''},\fM_{L_1''})\to \fM_{L_1''}(L_0^-,L_0'^-)
\end{equation}
After folding and gluing, this composition can be described by a count of rigid strips as in Figure \ref{figure:shortfoldedcompose}, where the fixed boundary condition $L_0\times L_0'$ is replaced by a (positively) varying boundary condition from $\tilde L_0^-\times L_0'^-$ to $\tilde L_0\times L_0'$ in the counter clockwise direction and the boundary condition $L_1\times L_1'$ is replaced by $\tilde L_1\langle -r\rangle\times \phi^{-r}(L_1')$ for an integer $r$ depending on the input (the positive isotopy from $\tilde L_0^-\times L_0'^-$ to $\tilde L_0\times L_0'$ is just the product of isotopies from $\tilde L_0^-$ to $\tilde L_0$ and $L_0'^-$ to $L_0'$). This is the same as count in Figure \ref{figure:varyingcompose} (as before the green asterisk only rigidifies the count, so its location does not matter).
More precisely, one puts pull-back of $\Om^2(T_\phi)$-type data to the input end and translation invariant product type data to the output end. We do not claim that (\ref{eq:varyingcompose}) is invertible; however, it admits a ``pre-inverse'':
\begin{equation}\label{eq:preinv}
\fM_{L_1''}(L_0^-,L_0'^-)=\bigoplus_{r\in\Z}\Om_{T_0}(L_0^-,L_1)^r\otimes \Om_{M}(L_0'^-,\phi^{-r}(L_1'))\to \Om^2(L_0''^{--},L_1'')
\end{equation}
for any positive isotopy $L_0''^{--}<L_0'^-$. This pre-inverse is given by the same count of strips with the type of data on both ends reversed, and with upper boundary varying from $\tilde L_0^{-}\times L_0'^{-}$ to $\tilde L_0^{--}\times L_0'^{--}$. See upper right strip in Figure \ref{figure:preinvandcomposition}. Moreover, the left and right compositions of (\ref{eq:varyingcompose}) and (\ref{eq:preinv}) can also be described by a similar count of strips; however, one needs to choose the almost complex structures to be of the same type on both ends. For instance, the composition map
\begin{equation}\label{eq:componeway}
\Om^2(L_0'',L_1'')\to \Om^2(L_0''^{--},L_1'')
\end{equation}
can be described by a strip count as in Figure \ref{figure:preinvandcomposition} (as the bottom strip after gluing). 
\begin{figure}
	\centering
	\includegraphics[height=3.5cm]{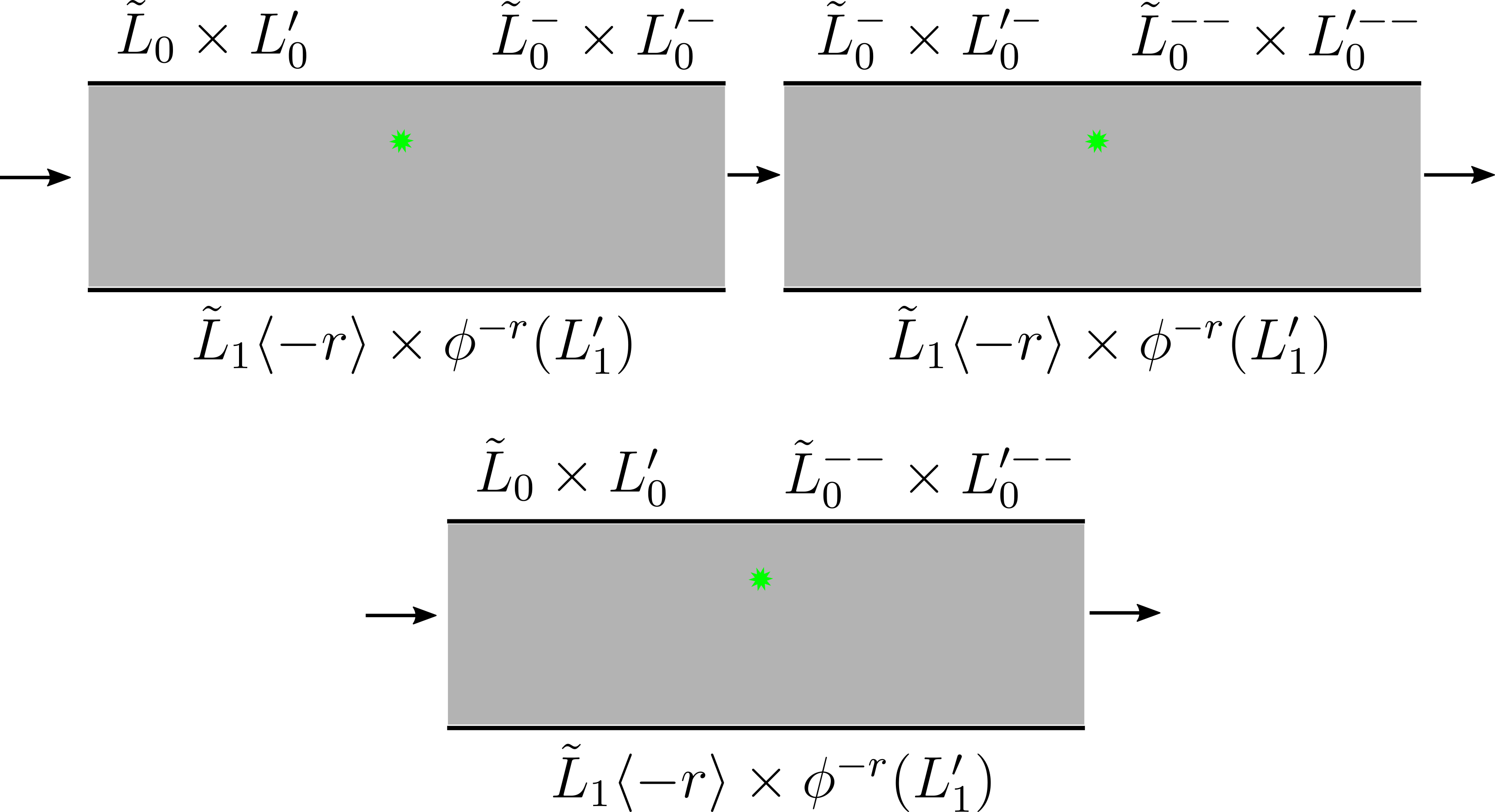}
	\caption{The composition (\ref{eq:componeway}) before and after gluing}
	\label{figure:preinvandcomposition}
\end{figure}
Moreover, the family of almost complex structures can be homotoped to $\Om^2(T_\phi)$-type while keeping conditions (\ref{fd:om2})-(\ref{fd:annulusproduct}) which ensure compactness. Therefore, (\ref{eq:componeway}) is actually homotopic to multiplication by a continuation element $L_0''^{--}\to L_0''$. A similar conclusion holds for the other composition 
\begin{equation}
\bigoplus_{r\in\Z}\Om_{T_0}(L_0,L_1)^r\otimes  \Om_{M}(L_0',\phi^{-r}(L_1'))\to \bigoplus_{r\in\Z}\Om_{T_0}(L_0^{--},L_1)^r\otimes \Om_{M}(L_0'^{--},\phi^{-r}(L_1'))
\end{equation}
The composition 
\begin{equation}\label{eq:wrpvaryingcompose}
\cW^2(L_0'',L_1'')\to hom_{\cW-\cW}(\fM^\cW_{L_0''},\fM^\cW_{L_1''})\to \fM^\cW_{L_1''}(L_0^-,L_0'^-)
\end{equation}
of (\ref{eq:funcinduced}) with the insertion of continuation element can be seen as the direct limit of compositions of (\ref{eq:varyingcompose}) as $L_1''\to \infty$. More precisely, by \cite[Lemma 3.37]{GPS1} the cohomology of $\cW^2(L_0'',L_1'')$, $\cW_{T_0}(L_0,L_1)$ and $\cW_M(L_0',L_1')$ can be described as the direct limit of cohomologies of $\Om^2(L_0'',L_1'')$, $\Om_{T_0}(L_0,L_1)$ and $\Om_M(L_0',L_1')$ respectively, where the maps in diagrams are given by left multiplication with continuation elements. For instance, \begin{equation}
H^*(\cW^2(L_0'',L_1'') )=\lim\limits_{L_1''\to\infty} H^*(\Om^2(L_0'',L_1''))
\end{equation}
Moreover, these maps commute in cohomology with the composition of (\ref{eq:varyingcompose}) and with (\ref{eq:preinv}). This can be seen using explicit geometric descriptions. For instance, consider the diagram
\begin{equation}\label{eq:somediag}
\xymatrix{\Om^2(L_0'',L_1'') \ar[r]\ar[d]& \fM_{L_1''}(L_0^-,L_0'^-)\ar[d] \\ \Om^2(L_0'',L_1''^+) \ar[r]& \fM_{L_1''^+}(L_0^-,L_0'^-)}
\end{equation}
where the horizontal arrows are given by the composition of (\ref{eq:varyingcompose}); therefore, admit a geometric description as in Figure \ref{figure:varyingcompose} (for the bottom arrow, one replaces the labeling $\tilde L_1\langle -r\rangle \times \phi^{-r}(L_1')$ by $\tilde L_1^+\langle -r\rangle \times \phi^{-r}(L_1'^+)$ to be precise). The vertical arrows are composition with the respective continuation elements and admit a geometric description similar to Figure \ref{figure:cont2}. We implicitly use the identification
\begin{equation}
\fM_{L_1''}(L_0^-,L_0'^-)=\bigoplus_{r\in\Z}\Om_{T_0}(L_0^-,L_1)^r\otimes \Om_{M}(L_0'^-,\phi^{-r}(L_1'))
\end{equation} and its analogue for $L_1''^+$ when composing with the continuation elements of $L_1\to L_1^+$ and $L_1'\to L_1'^+$. After gluing, the two possible compositions 
\begin{equation}
\xymatrix{\Om^2(L_0'',L_1'')\to \fM_{L_1''^+}(L_0^-,L_0'^-)}
\end{equation}
in diagram (\ref{eq:somediag}) can be described by a count of rigid strips with the same boundary conditions and with homotopic choices of almost complex structures; therefore, (\ref{eq:somediag}) commutes in cohomology. The compatibility of the pre-inverse (\ref{eq:preinv}) with multiplication by continuation elements follows from the same idea.

As a result, the ``pre-inverse'' induces a map in the other direction
\begin{equation}\label{eq:wrppreinv}
\fM^\cW_{L_1''}(L_0^-,L_0'^-)\to \cW^2(L_0''^{--},L_1'')
\end{equation}
Since, the composition of (\ref{eq:wrpvaryingcompose}) with (\ref{eq:wrppreinv}) can now be seen as a direct limit of multiplication with continuation elements, we conclude it is invertible (since continuation elements are invertible in the limit). Same holds in the other direction as well. Therefore, the composition of (\ref{eq:wrpvaryingcompose}) is invertible. Moreover, since the localization $\fM_{L_1''}^\cW$ of the truncated version of twisted Yoneda bimodule is quasi-isomorphic to the twisted Yoneda bimodule, insertion of continuation element is now invertible. Therefore, 
\begin{equation}
\cW^2(L_0'',L_1'')\to hom_{\cW-\cW}(\fM^\cW_{L_0''},\fM^\cW_{L_1''})
\end{equation}
is also invertible; hence, $\fM^\cW$ as a functor from $\cW^2$ to $Bimod_{tw}(\cW(T_0),\cW(M) )$ is fully faithful in cohomology. 

The essential image of $\fM^\cW$ is spanned by $\fM^\cW_{L_1''}$, which are quasi-isomorphic to twisted Yoneda bimodules; therefore, $\cW^2$ is quasi-equivalent to the ``twisted tensor product'' of $\cW(T_0)$ and $\cW(M)$. Combined with Note \ref{note:w2equalsw}, this implies:
\begingroup
\def\thethm{\ref{mainthmsymp}}
\begin{thm}
	$\cW(T_\phi)$ is quasi-equivalent to twisted tensor product of $\cW(T_0)$ and $\cW(M)$.
\end{thm}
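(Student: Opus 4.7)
The plan is to realize $\cW(T_\phi)$ as a twisted tensor product by constructing an explicit $A_\infty$ functor to twisted bimodules, and proving fully faithfulness via a TQFT count of quilted strips combined with a Yoneda-type argument. The overall structure mirrors the untwisted K\"unneth theorems of \cite{sheelthesis} and \cite{GPS2}, but several modifications are forced upon us by (a) the non-product geometry of $T_\phi$, (b) the infinite-type base $\wh{\tilde T_0}$, and (c) the twisting between $\cW(T_0)$ and $\cW(M)$ components.

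First, I would introduce the auxiliary split-type category $\cW^2(T_\phi)$ whose objects are $L\times_\phi L'$ and whose structure maps use $\Om^2(T_\phi)$-type almost complex structures (product type on the conical end, which is identified with a conical end of the product since $\phi$ is compactly supported). An adaptation of the argument of \cite{GPS2} showing $\cW^{prod}\simeq \cW$ in the product case applies verbatim here, because only the conical end matters and it is a product; this reduces the theorem to producing a fully faithful functor out of $\cW^2(T_\phi)$ whose image lies in the twisted Yoneda bimodules. By Corollary \ref{torusgenerators}, this suffices since objects of type $L\times_\phi L'$ split-generate $\cW(T_\phi)$.

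Next, I would set up the functor at the level of the localization-free categories of \cite{GPS1}: define a left-right-right $\Om^2(T_\phi)$-$\Om(T_0)$-$\Om(M)$ trimodule $\fM$ by counting the folded $3$-quilted strips of Figure \ref{figure:labelledtwisted}, with almost complex structure data on $\cS^f_r$ satisfying conditions (\ref{fd:trphi})--(\ref{fd:annulusproduct}): pulled back from $\wh{T_\phi}$ near the seam, of split type away from the seam, $\tr\times\phi$-equivariant, of $\Om^2$-type along the fiber, and strictly product type over a periodic collection of annuli $\tilde A_i\subset \tilde T_0$. The first three conditions give the twisted trimodule equations (via the usual gluing arguments, using Assumption \ref{assump:equivdata} to pass between the labelings of Figures \ref{figure:labelledtwisted} and \ref{figure:labelledtwisted2}). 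The second and third ensure Gromov compactness: the $\Om^2$-type condition plus integrated maximum principle of \cite{abousei} prevents escape into conical ends, while the product-type condition on $\tilde A_i\times \wh{M}$ forces each crossing of a lift $\tilde A_i$ to cost at least $\mathrm{Area}(A)$ in energy, so a curve of fixed energy stays in a finite-type subdomain. The main obstacle is precisely this compactness: the image target $\wh{\tilde T_0}\times \wh M$ is of infinite type, and one needs the annular energy trick to be compatible with all the other prescribed data on the seam and ends simultaneously. I would verify the existence and weak contractibility of such consistent data by the inductive interpolation argument as in Lemma \ref{lem:existom2acs}.

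Finally, I would establish fully faithfulness algebraically after localization. The image $\fM_{L''}$ is a \emph{truncated} twisted Yoneda bimodule, which after localization becomes quasi-isomorphic to the genuine twisted Yoneda bimodule $h_L\otimes_{tw}h_{L'}$. Insertion of a pair of continuation elements gives a map
\begin{equation*}
\gamma\colon hom_{\Om\text{-}\Om}(\fM_{L_0''},\fM_{L_1''})\to \fM_{L_1''}(L_0^-,L_0'^-),
\end{equation*}
which becomes the Yoneda quasi-isomorphism in the colimit (diagram \eqref{eq:diag1}). I would then show, by the geometric description of the composition $\cW^2(L_0'',L_1'')\to hom\to \fM_{L_1''}(L_0^-,L_0'^-)$ as a strip count with varying boundary condition (Figure \ref{figure:varyingcompose}), that an explicit ``pre-inverse'' (same count with the types of data on incoming and outgoing ends swapped) composes to multiplication by continuation elements, and hence becomes an honest inverse after localization. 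This gives fully faithfulness of the induced functor $\cW^2(T_\phi)\to Bimod_{tw}(\cW(T_0),\cW(M))$, whose essential image lies in the span of twisted Yoneda bimodules. Invoking the identification $\cW^2(T_\phi)\simeq \cW(T_\phi)$ from the first step and the definition of the twisted tensor product as the span of twisted Yoneda bimodules (Definition \ref{defn:twdga} and Corollary \ref{cor:genbimodtw}) completes the proof. The most delicate step is the simultaneous verification of the compactness-ensuring conditions with Assumptions \ref{assump:equivdata} and \ref{assump:youneedish}; once the right class of almost complex structures is pinned down, the remaining algebra is a direct-limit argument using \cite[Lemma 3.34]{GPS1}.
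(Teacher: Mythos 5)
Your proposal is correct and follows essentially the same route as the paper: define $\cW^2(T_\phi)$ using $\Om^2(T_\phi)$-type data and identify it with $\cW(T_\phi)$ via \cite{GPS2} and Corollary~\ref{torusgenerators}; build the twisted trimodule $\fM$ from folded quilted strips with data satisfying (\ref{fd:trphi})--(\ref{fd:annulusproduct}), securing compactness by the integrated maximum principle and the periodic annuli energy argument; and then deduce fully faithfulness after localization by showing $\fM_{L''}$ is a truncated twisted Yoneda bimodule and that composition with continuation-element insertion admits a geometric pre-inverse that becomes an inverse in the direct limit by \cite[Lemma 3.34]{GPS1}. This matches Sections~\ref{sec:kunneth} and in particular Note~\ref{note:w2equalsw}, the conditions defining the trimodule, and Section~\ref{subsec:algmodify} of the paper.
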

\addtocounter{thm}{-1}
\endgroup

\section{Examples of symplectic manifolds satisfying Assumption \ref*{assumption:symp} and applications}\label{sec:examples}
In this section, we give a large class of examples satisfying Assumption \ref{assumption:symp}. We search for examples among Liouville manifolds with periodic Reeb flow since it is easier to compute the Conley-Zehnder indices. More specifically, we will confine ourselves to complements of smooth ample divisors. For Assumption \ref{assumption:symp}, we need
\begin{enumerate}
	\item Vanishing first and second Betti numbers
	\item Reeb orbits with sufficiently large degree
\end{enumerate}
Let us start by addressing (1):
\begin{lem}\label{lembet}
Let $X$ be a smooth and projective variety and $D\subset X$ be a smooth, connected hypersurface that is given as a transverse hyperplane section of a projective embedding. 
Further assume:
\begin{enumerate}
\item $b_1(X)=b_1(D)=0$
\item $b_2(X)=b_2(D)=1$
\end{enumerate}	Let $M=X\setminus ND\simeq X\setminus D$, where $ND$ is a tubular neighborhood of $D$. Then, $b_1(M)=b_2(M)=0$.
\end{lem}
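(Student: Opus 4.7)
The strategy is to combine the long exact sequence of the pair $(X, X\setminus D)$ with the Thom isomorphism to express $H^*(M)$ in low degrees in terms of the Gysin pushforward $i_!:H^*(D)\to H^{*+2}(X)$, and then use that $[D]\in H^2(X)$ is nonzero (because $D$ is a hyperplane section) to conclude. Since $M$ deformation retracts onto $X\setminus D$, it suffices to work with the pair $(X, X\setminus D)$. As $D$ is a smooth complex hypersurface, excision together with the Thom isomorphism for the normal bundle of $D$ (whose complex orientation gives a canonical Thom class) yields $H^k(X, X\setminus D;\Q) \cong H^{k-2}(D;\Q)$, and the connecting map of the pair becomes the Gysin pushforward $i_!$. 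This rewrites the long exact sequence as
\begin{equation*}
\cdots \to H^{k-2}(D)\xrightarrow{i_!} H^k(X)\to H^k(M)\to H^{k-1}(D)\xrightarrow{i_!} H^{k+1}(X)\to \cdots
\end{equation*}

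For $k=1$ this reads $0=H^1(X)\to H^1(M)\to H^0(D)\xrightarrow{i_!} H^2(X)$, where both end terms are one-dimensional, and $i_!(1)=[D]\in H^2(X;\Q)$. Because $D$ is a transverse hyperplane section of a projective embedding of $X$, the class $[D]$ is a nonzero positive multiple of the pullback of the hyperplane class. Since $b_2(X)=1$, it then spans $H^2(X;\Q)$, so $i_!$ is an isomorphism in this degree, forcing $H^1(M;\Q)=0$.

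For $k=2$ the sequence becomes $H^0(D)\xrightarrow{i_!} H^2(X)\to H^2(M)\to H^1(D)$. The leftmost map is the isomorphism just established, so $H^2(X)\to H^2(M)$ is the zero map, while the rightmost term vanishes by the hypothesis $b_1(D)=0$. Exactness therefore gives $H^2(M;\Q)=0$ as desired. The only subtle point is the identification of the connecting homomorphism with the Gysin map $i_!$, which is standard (it factors through the fundamental class of $D$ via the Thom class), together with the non-vanishing of $[D]\in H^2(X)$ --- both of which are immediate from the ample/hyperplane section hypothesis.
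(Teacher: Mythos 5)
Your argument is correct, and it takes a genuinely different and more direct route than the paper's. You use the Thom/Gysin long exact sequence for the pair $(X, X\setminus D)$, which reduces everything to the single fact that the map $H^0(D)\xrightarrow{i_!}H^2(X)$ sends $1\mapsto [D]$, a nonzero class (since $D$ is ample and $b_2(X)=1$). The paper instead passes through the contact boundary $\partial M$: it uses excision and the long exact sequence of the pair $(X,D)$ to show $H^1(M)\to H^1(\partial M)$ is an isomorphism and $H^2(M)\to H^2(\partial M)$ is injective, then analyzes the Serre spectral sequence of the circle bundle $S^1\hookrightarrow\partial M\to D$, and proves the relevant $d_2$ differential is nonzero by a comparison with the ambient projective space $\mathbb{P}^N$ and its hyperplane. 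Both arguments hinge on the same geometric input (the nonvanishing of the Euler/first Chern class of the normal bundle of $D$), but your version accesses it in one step via the Thom class, whereas the paper's version reconstructs it as a $d_2$ differential. Your approach is shorter, more self-contained, and — as a small bonus — does not actually invoke the hypothesis $b_2(D)=1$, which the paper also appears not to need.
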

\begin{proof}
	First note $H^*(X,D)\cong H^*(M,\partial M)$ by excision. Consider the long exact sequence 
	\begin{equation}
	H^0(X,D)\to H^0(X)\xrightarrow{\cong} H^0(D) 
	\rightarrow H^1(X,D)\to H^1(X)\xrightarrow{\cong} H^1(D) \atop
	\rightarrow H^2(X,D)\to H^2(X)\hookrightarrow H^2(D) \dots
	\end{equation}
($H^2(X)\to H^2(D)$ is injective since $H^2(X)$ is one dimensional and this map carries a K\"{a}hler class on $X$ to one on $D$). Thus, $H^1(M,\partial M)=H^1(X,D)=0$, and $H^2(M,\partial M)=H^2(X,D)=0$. This implies $H^1(M)\to H^1(\partial M)$ is an isomorphism, and $H^2(M)\to H^2(\partial M)$ is injective by a similar long exact sequence. Hence, it is sufficient to prove $ H^1(\partial M)= H^2(\partial M)=0$.
	
	Consider Serre spectral sequence for the fibration $S^1\hookrightarrow  \partial M\rightarrow D$ given by \begin{equation}
	E_2^{pq}=H^p(D,\{H^q(S^1) \})\Rightarrow H^{p+q}(\partial M)
	\end{equation}
	where $\{H^q(S^1) \}$ denotes the local system formed by the cohomology of each fiber. $\partial M\to D$ is the circle bundle of a complex line bundle; thus, it is an oriented bundle and $\{H^1(S^1) \}$ is the trivial local system (i.e. constant sheaf). Same holds for $\{H^0(S^1) \}$ easily. 
	This implies $p=1,q=0$ and $p=1,q=1$ terms in the $E_2$-page vanish; thus, $H^1(\partial M)$ and $H^2(\partial M)$ can be obtained as the cohomology of 
	\begin{equation}
	H^0(D,H^1(S^1) )\xrightarrow{d_2}H^2(D,H^0(S^1) ) 
	\end{equation}
	where $d_2$ is the differential of $E_2$-page. Both groups are of rank $1$, and to finish the proof we only need $d_2\neq 0$.
	
	By assumption, there exist a projective embedding $X\hookrightarrow \mathbb P^N$ and a hyperplane $\mathcal H\subset \mathbb P^N$ such that $X\pitchfork\mathcal H$ and $D=X\cap \mathcal H$. Then, $N_{\mathcal H}\cong \Om_{\mathbb P^N}(\mathcal{H})|_{\mathcal H}$ restricts to $N_D\cong\Om_X(D)|_D$. Let $S_{\mathcal H}$ and $S_D\cong \partial M$ denote the corresponding circle bundles. There is a similar spectral sequence for $S_{\mathcal H}$ as well and we have a diagram
	\begin{equation}\label{blah2}
	\xymatrix{H^0(D,H^1(S^1) )\ar[r]^{d_2}&H^2(D,H^0(S^1) )\\
		H^0(\mathcal H,H^1(S^1) )\ar[r]^{d_2}\ar[u]&H^2(\mathcal H,H^0(S^1) )\ar[u]  }
	\end{equation}
	by naturality. In previous considerations, we can replace $X$ by $\mathbb P^N$ and $D$ by $\mathcal H$, and conclude 
	\[H^0(\mathcal H,H^1(S^1) )\xrightarrow{d_2} H^2(\mathcal H,H^0(S^1) ) \] computes $H^1(S_{\mathcal{H}})$ and $H^2(S_{\mathcal{H}})$. But we know $H^1(\mathbb{P}^N\setminus \mathcal H)\cong H^1(S_{\mathcal H})$ (we proved $H^1(M)\cong H^1(\partial M)$, apply this to the case $X=\mathbb{P}^N,D=\mathcal H$). Hence, $H^1(S_{\mathcal H})=0$. A simpler way to see this is: topologically $\Om_{\mathbb P^N}(\mathcal{H})|_{\mathcal H}\cong \Om_{\mathbb P^N}(-\mathcal{H})|_{\mathcal H}$ which is just the circle bundle of the tautological bundle over $\mathcal H$. It is easy to see this is homeomorphic to $$S^{2N-1}\to S^{2N-1}/S^1$$ hence the total space has vanishing $H^1$.
	
	In summary, the lower horizontal arrow in (\ref{blah2}) cannot vanish. The right vertical arrow cannot vanish since it carries at least one K\"ahler class to one on $D$. Left vertical arrow is an isomorphism by the connectedness of $D$. All the groups in (\ref{blah2}) are $1$-dimensional; thus, the composition does not vanish and neither does the upper horizontal $d_2$. This completes the proof.
\end{proof}
To make sure $SH^1(M)=SH^2(M)=0$, we need Reeb orbits to be of sufficiently large degree. More precisely, we will use the spectral sequence in \cite{biasedview} whose $E_1$-page is given by:
\begin{equation}\label{eq:paulspectral}
E^{pq}_1=\begin{cases}
  H^{q}(M)&p=0 \\ H^{p(1-\mu)+q}(\partial M)&p<0\\ 0&p>0
\end{cases}
\end{equation}
and which converges to symplectic cohomology of $M$ (note that we made a degree shift on $SH^*(M)$ by $n$ so that $SH^*(M)\cong HH^*(\cW(M))$ by \cite[Theorem 1.1]{sheelthesis}). 
Here, $\mu\in 2\Z$ is a Conley-Zehnder type index defined in \cite{biasedview}. 

Assume we are in the setting of Lemma \ref{lembet} and $K_X=\Om(mD)$. Then we have:
\begin{lem}\label{lem:mu}
$\mu=-2m-2$.
\end{lem}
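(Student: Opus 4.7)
The plan is to compute $\mu$ directly using its definition in \cite{biasedview}: up to the standard factor of $2$, it is the weight of the $S^1$-action generated by the periodic Reeb flow on a trivializing section of $K_M$ along a simple Reeb orbit. I will proceed in three steps: fix a local model near $\partial M$, construct an explicit trivialization of $K_M$ from $K_X = \Om(mD)$, and compute the weight.

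First, I identify the local model. By the symplectic tubular neighborhood theorem, a Liouville neighborhood of $\partial M$ in $M$ is the complement of a neighborhood of the zero section in $N_D$, under which $\partial M$ is the unit circle bundle of $N_D$. The Reeb flow is the standard $S^1$-rotation of the fibers with period $1$; in local coordinates $(d_1,\ldots,d_{n-1},z)$ near a point of $D$ with $z\in\C$ the fiber coordinate, it reads $\phi_\theta(d,z) = (d, e^{-i\theta} z)$. The sign arises from the orientation of $\partial M$ as the boundary of $M = X\setminus D$, which is opposite to that of the boundary of a small disc bundle around $D$.

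Second, the hypothesis $K_X = \Om(mD)$ yields an isomorphism $\Om_X \cong K_X \otimes \Om(-mD)$. The global trivializing section $1 \in H^0(X,\Om_X)$ corresponds to a meromorphic section $\Omega$ of $K_X$ with divisor exactly $mD$ (a zero of order $m$ along $D$ if $m > 0$, a pole of order $|m|$ if $m < 0$), and in either case $\Omega$ is nowhere vanishing on $M$. In the local coordinates above, $\Omega = g(d)\, z^m\, dd_1\wedge \cdots\wedge dd_{n-1}\wedge dz$ for some nowhere vanishing holomorphic function $g(d)$ on $D$.

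Third, a direct computation gives
\begin{equation*}
\phi_\theta^*\Omega \;=\; g(d)\,(e^{-i\theta}z)^m \, dd_1\wedge\cdots\wedge dd_{n-1}\wedge d(e^{-i\theta}z) \;=\; e^{-i(m+1)\theta}\,\Omega,
\end{equation*}
so the weight of the Reeb $S^1$-action on the trivialization $\Omega$ along a simple Reeb orbit is $-(m+1)$. Since $\mu$ is defined as twice this weight (equivalently, as the Conley--Zehnder type index of the simple orbit computed with respect to the trivialization of $K_M$ given by $\Omega$, where the factor of $2$ reflects the standard relation between Maslov indices of loops in $U(n)$ and CZ indices), we conclude $\mu = -2(m+1) = -2m - 2$. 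The main obstacle is to pin down the sign conventions in \cite{biasedview} to confirm the normalization $\mu = 2w$; the linear dependence on $m$ with slope $-2$ is forced by the computation. As a sanity check, $X = \C\mathbb{P}^n$ with $D$ a hyperplane gives $m = -(n+1)$, hence $\mu = 2n$, which agrees with the grading shift needed for $SH^*(\C^n) = 0$ in Seidel's spectral sequence and matches the CZ index of the simple Hopf orbit on $S^{2n-1}$ with the flat trivialization of $K_{\C^n}$.
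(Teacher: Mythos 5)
Your computation is correct and amounts to the same argument as the paper's, just carried out on the dual side: you pull back the trivializing section $\Omega$ of $K_X$ by the Reeb rotation $\phi_\theta$ and read off the winding $e^{-i(m+1)\theta}$, whereas the paper works on $T_X$ directly, constructing the frame change $f(\alpha,v,v')=(\alpha,v,\alpha^{-m}v')$ induced by $\Omega$ and tracing the circle action through it to get the loop $z\mapsto z^{-m-1}$ in $U(n)$. Both compute the same degree $-(m+1)$ of $\det$ of the Reeb loop in the $\Omega$-trivialization, hence $\mu=-2m-2$; the caveat you flag about normalization in \cite{biasedview} is exactly what the paper pins down by unwinding the definition of $\mu/2$ as the class of that loop in $\pi_1(Sp(2n))$.
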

\begin{proof}
Recall how $\mu$ is defined: given a Liouville domain $M$ with $1$-periodic Reeb flow at its contact boundary, choose a trivialization of $K_M$. Let $x$ be a Reeb orbit on $\partial M$. We obtain a trivialization of $x^*TM$ as a symplectic bundle. Hence, the Reeb flow defines a path in $Sp(2n)$ and the class of this path in $\pi_1(Sp(2n))\cong \Z$ is the number $\mu/2$. 

To compute $\mu$, identify a neighborhood of $D$ with a neighborhood of zero section of $\mathcal N_D$ (the holomorphic normal bundle of $D$). Let $d\in D$ and let $F$ denote a fiber of $\mathcal N_D$. Then, $T_X|_F\cong F\times (T_{D,d}\oplus \mathcal N_{D,d})=:E$ as a symplectic bundle. In other words, it is the trivial bundle with fiber $T_{D,d}\oplus \mathcal N_{D,d}$. Under this trivialization, the circle action induced by Reeb vector field is 
\begin{equation}
\xymatrix{F\times (T_{D,d}\oplus \mathcal N_{D,d})\ar[r]& F\times (T_{D,d}\oplus \mathcal N_{D,d})\\ (\alpha,v,v')\ar@{|->}[r]& (z^{-1}\alpha,v,z^{-1}v')   }
\end{equation}
If we trivialize using a section $\Omega$ of $K_X$, the section can be chosen to have vanishing order $m$ along $D$. Hence, the dual section has vanishing order $(-m)$ along $D$. Therefore, the map 
\begin{equation}
\xymatrix{f:SF\times (T_{D,d}\oplus \mathcal N_{D,d}) \ar[r] & SF\times (T_{D,d}\oplus \mathcal N_{D,d})=E|_{SF}
	\\ (\alpha,v,v') \ar@{|->}[r]& (\alpha,v,\alpha^{-m} v')}
\end{equation} is the new trivialization (symplectic trivialization obtained by using $\Omega$). Here, $SF$ is the unit circle in $F$. The right hand side is considered to be the restricted bundle, and the left hand side is considered to be a trivial bundle, and the trivialization map is the framing (if $E'$ is a vector bundle over $F'$, then a trivialization is a map $F'\times V\to E'$ for a vector space $V$). The $S^1$-action is by $z^{-1}$ on the right hand side. In other words, \begin{equation}z:(\alpha,v,\alpha^{-m}v')\mapsto (z^{-1}\alpha,v,z^{-1}\alpha^{-m}v')
\end{equation}
or \begin{equation}
f(\alpha,v,v')\mapsto f(z^{-1}\alpha,v,z^{-m-1}v')
\end{equation}
More diagrammatically
\begin{equation}
\xymatrix{SF\times (T_{D,d}\oplus \mathcal N_{D,d}) \ar[r]\ar[d] & E|_{SF} \ar[d] \\SF\times (T_{D,d}\oplus \mathcal N_{D,d}) \ar[r] & E|_{SF} }
\;\;\;\; 
\xymatrix{  (\alpha,v,v') \ar@{|->}[r]^f\ar@{|->}[d]& (\alpha,v,\alpha^{-m} v')\ar@{|->}[d] \\  (z^{-1}\alpha,v,z^{-m-1}v') & (z^{-1}\alpha,v,z^{-1}\alpha^{-m} v')\ar@{|->}[l]_{f^{-1}  } }
\end{equation}
Hence, the path in $U(n)\subset Sp(2n)$ induced by the circle action is \begin{equation}
\xymatrix{S^1\ar[r]& U(T_{D,d}\oplus \mathcal N_{D,d})\\ z\ar@{|->}[r]&  id_{T_{D,d}}\oplus z^{-m-1}id_{\mathcal N_{D,d}} }
\end{equation}
If we compose this map with $\det$, we obtain a map of degree $-m-1$. Thus, $\mu/2=-m-1$, and $\mu=-2m-2$.
\end{proof}
Combining the spectral sequence \eqref{eq:paulspectral} and Lemmas \ref{lembet} and \ref{lem:mu}, we obtain:
\begin{cor}
Assume $m>0$. Then, $SH^*(M)$ vanishes for $*<0$ or $*=1,2$ and it is 1 dimensional for $*=0$.
\end{cor}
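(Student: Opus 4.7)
The plan is to plug the outputs of Lemma \ref{lembet} and Lemma \ref{lem:mu} into the spectral sequence \eqref{eq:paulspectral} and read off the answer by a degree count. Since $\mu=-2m-2$ by Lemma \ref{lem:mu}, for $p<0$ we have
\begin{equation*}
E_1^{p,q}=H^{p(1-\mu)+q}(\partial M)=H^{(2m+3)p+q}(\partial M),
\end{equation*}
so a term at $(p,q)$ with $p+q=k$ contributes $H^{k+p(2m+2)}(\partial M)$ to $SH^{k}(M)$. Along $p=0$ we have $H^k(M)$, and by Lemma \ref{lembet} together with $H^0(M)=\C$ (connectedness), the column $p=0$ contributes only $\C$ in degree $0$ and vanishes in degrees $k=1,2$ and $k<0$. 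Along $p\leq -1$, since $m\geq 1$ we have $p(2m+2)\leq -4$, so the shift $k+p(2m+2)$ is strictly negative for every $k\in\{\text{negatives},0,1,2\}$ and every $p\leq -1$; as $\partial M$ is a manifold (so $H^{<0}(\partial M)=0$), every such $E_1$-term vanishes.

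Therefore on the $E_1$-page the total-degree $k$ diagonal is zero for $k<0$, $k=1$, $k=2$, and consists of the single summand $\C=H^0(M)$ for $k=0$. Any differential $d_r\colon E_r^{p,q}\to E_r^{p+r,q-r+1}$ starting from $E_r^{0,0}$ lands in a column with $p>0$, which is zero by \eqref{eq:paulspectral}; and any differential hitting $E_r^{0,0}$ would have to start from $E_r^{-r,r-1}$, which has total degree $-1$, a diagonal we have just shown to be zero. Hence $E_\infty^{0,0}=E_1^{0,0}=\C$ and the total-degree $k$ diagonal remains zero at $E_\infty$ for $k<0,1,2$. This gives the claimed $SH^{<0}(M)=SH^1(M)=SH^2(M)=0$ and $SH^0(M)=\C$. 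The only substantive inputs are the two lemmas; the rest is bookkeeping in the spectral sequence, and I expect no real obstacle beyond making sure no $H^0(\partial M)$ term can sneak in, which is ruled out precisely by the hypothesis $m>0$ since $p(2m+2)=0$ forces $p=0$.
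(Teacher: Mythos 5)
Your proof is correct and is exactly the bookkeeping the paper leaves implicit when it says "Combining the spectral sequence (\ref{eq:paulspectral}) and Lemmas \ref{lembet} and \ref{lem:mu}, we obtain": the degree count $k+p(2m+2)\leq k-4<0$ for $p\leq -1$ and $k\leq 2$ kills all $\partial M$-columns in the relevant range, and the $p=0$ column is controlled by Lemma \ref{lembet} and connectedness. The check that $E_1^{0,0}=\C$ survives to $E_\infty$ is also fine, and in fact robust to the choice of direction convention for $d_r$, since both the adjacent diagonals ($k=\pm 1$) and the $p>0$ columns vanish.
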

We also have:
\begin{lem}
Assume $m>0$. Then, $\cW(M)$ is proper in each degree, i.e. $HW(L,L')$ is finite dimensional in each degree and bounded below for any pair of objects of $\cW(M)$.
\end{lem}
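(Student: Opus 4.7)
The plan is to construct a spectral sequence for $HW^*(L,L')$ analogous to the one used in the proof of Lemma \ref{lem:mu} (taken from \cite{biasedview}) for $SH^*(M)$, and then use the computation $\mu=-2m-2\leq-4$ to extract the desired finiteness and boundedness. First I would recall that for our $M$, the boundary $\partial M$ is a circle bundle over $D$ whose Reeb flow is the $S^1$-action with quotient $D$; in particular the Reeb flow is $1$-periodic. After a compact Hamiltonian perturbation, one may assume $L$ and $L'$ intersect $\partial M$ transversely in Legendrian submanifolds $\Lambda,\Lambda'\subset\partial M$. Hamiltonian chords for a quadratic Hamiltonian then split into a finite-action (``closed'') piece generated by $L\cap L'$, and a positive-action piece generated by Reeb chords from $\Lambda$ to $\Lambda'$. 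The latter chords are organized by their winding number $k\geq 1$ around the Reeb $S^1$-orbits: for each $k$, the set of Reeb chords of winding $k$ is a Morse--Bott family obtained from the intersection $\Lambda\cap \phi_R^k(\Lambda')$ in $\partial M$, which (by periodicity) is just a perturbation of $\Lambda\cap\Lambda'$.

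Next I would set up the analogue of the action--filtration spectral sequence of \cite{biasedview}, now on $CW^*(L,L')$. The $E_1$-page should read
\begin{equation*}
E_1^{pq}=\begin{cases} HF^q(L,L') & p=0,\\ H^{p(1-\mu)+q}(\Lambda\cap\Lambda';\,\text{local system}) & p<0,\\ 0 & p>0,\end{cases}
\end{equation*}
converging to $HW^{p+q}(L,L')$. Here $\mu=-2m-2$ is the Conley--Zehnder-type shift computed in Lemma \ref{lem:mu}; it governs the degree shift of the $k$-fold iterate of a Reeb chord relative to the $(k-1)$-fold iterate, exactly as it did for $SH^*(M)$, since the trivialization of $K_X$ used in Lemma \ref{lem:mu} is the same one controlling the Maslov indices of the Reeb chords. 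The computation of this shift is identical to the closed-orbit case: the vertical part of the linearized flow is multiplication by $z^{-m-1}$ in the trivialization induced by a section of $K_X$ vanishing to order $m$ on $D$, so $k$ iterations contribute a shift of $k\mu$.

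With this spectral sequence in hand, the finiteness and boundedness claims follow from the same linear-algebra count as in the case of $SH^*(M)$. Fix a total degree $d$; then the contribution of $E_1^{p,d-p}$ is non-zero only when $0\leq p(1-\mu)+(d-p)=d+p(-\mu)\leq \dim(\Lambda\cap\Lambda')\leq 2n-1$. Since $-\mu=2m+2>0$ and $p\leq 0$, this confines $p$ to a finite interval, and forces $d\geq 0$ (so $HW^{<0}(L,L')$ vanishes, giving the ``bounded below'' property). The finite-dimensionality of each $E_1^{pq}$ (as cohomology of a compact manifold or of the compact Floer complex for the closed part) then yields finite-dimensionality of $E_\infty^{p,d-p}$ for each $p$, and hence of $HW^d(L,L')$.

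The main obstacle is the construction of the spectral sequence itself in the wrapped Floer setting: one must rigorously break Hamiltonian chords of a quadratic Hamiltonian into ``winding classes'' indexed by $p\leq 0$, and build a filtration on $CW^*(L,L')$ whose $E_1$-page is as described. This is standard once one knows the Reeb flow is periodic and $L,L'$ are cylindrical at infinity --- the required Morse--Bott analysis along fibers of $\partial M\to D$ runs parallel to \cite{biasedview}; alternatively one can invoke the wrapped analog established for Lefschetz fibration complements by working with an autonomous Hamiltonian $H(r)=r^2/2$ on the cylindrical end and Morse-perturbing along each $S^1$-family of Reeb chords. The only essentially new ingredient beyond the $SH^*$ case is the presence of endpoints on Legendrians, which only changes the base of the Morse--Bott family from $D$ to the finite-dimensional intersection locus, and does not affect the key shift estimate.
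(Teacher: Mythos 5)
Your proposal reaches the right conclusion via the same key input, namely $\mu=-2m-2<0$ from Lemma \ref{lem:mu}, but takes a genuinely heavier route than the paper. The paper's proof stays entirely at the chain level: it chooses cylindrical cocores, so that $\partial M\cap L_i$ are Legendrians, and simply observes that the generators of $CW(L_0,L_1)$ fall into finitely many interior chords, finitely many length-$<1$ chords, and iterated concatenations of the latter with the underlying periodic Reeb orbit; concatenating $k$ times shifts the degree by $-k\mu = k(2m+2)$, so only finitely many generators can lie in any degree $\leq d$. That's the whole argument --- no spectral sequence is needed, so the construction you flag as the ``main obstacle'' is avoided entirely.

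Two technical points in your version need attention if you wanted to carry it through. First, the $E_1$-page you write for $p<0$ has entries $H^{p(1-\mu)+q}(\Lambda\cap\Lambda')$, but the Morse--Bott family of Reeb chords of a given winding is not parametrized by the intersection $\Lambda\cap\Lambda'$; it is the space of pairs $(x,t)$ with $x\in\Lambda$, $\phi_R^t(x)\in\Lambda'$, $t$ in the appropriate window, which is a different locus (and may be nonempty even when $\Lambda\cap\Lambda'=\emptyset$). Second, the assertion that the degree constraint ``forces $d\geq 0$, so $HW^{<0}(L,L')$ vanishes'' is too strong: the $p=0$ column $HF^*(L,L')$ can carry classes in negative degree, so one should only claim that the $p<0$ columns contribute in positive degrees, which, together with finite-dimensionality of $HF^*(L,L')$, still gives boundedness below (but not vanishing in all negative degrees). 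Neither issue is fatal to your strategy, but both are avoided by the paper's direct count.
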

\begin{proof}
Consider the generating subcategory of $\cW(M)$ spanned by cocores. One can arrange the cocores to be cylindrical; hence, their intersections with the contact boundary (with periodic Reeb flow) are Legendrian submanifolds. Let $L_0$ and $L_1$ be two such Lagrangians. The generators of $CW(L_0,L_1)$ are given by 
\begin{enumerate}
\item finitely many chords in the interior
\item finitely many chords in the contact end of length less than $1$. Note each such chord lives on a unique Reeb orbit because of periodicity
\item chords obtained by concatenating a chord  of length less than $1$ with the Reeb orbit it lives on $k$ times (where $k\in\Z_{\geq 0}$)
\end{enumerate}
Let $x$ be an orbit living on a Reeb orbit, and assume $y$ is obtained by concatenating the Reeb orbit $k$-times. A straightforward calculation shows $deg(y)=deg(x)-k\mu$, where $\mu$ is as before (i.e. as in \cite{biasedview}). By Lemma \ref{lem:mu}, $\mu=-2m-2<0$. Thus, the degree of $y$ grows as one increases $k$. In other words, there are only finitely many generators of $CW(L_0,L_1)$ of degree less than $d$ (for every $d$). This completes the proof.
\end{proof}
Combining the results of this section, we have:
\begin{prop}\label{prop:example}
Assume the pair $(X,D)$	satisfies the assumptions of Lemma \ref{lembet} and $K_X\cong \Om(mD)$ such that $m>0$. Then $M$ (the Liouville domain corresponding to $X\setminus D$) satisfies Assumption \ref{assumption:symp}.
\end{prop}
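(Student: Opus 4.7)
The plan is to observe that the proposition is essentially an assembly of the lemmas that have already been proved in this section, with only a small numerical check left to carry out. So I would not introduce any new machinery; I would simply walk through each clause of Assumption \ref{assumption:symp} and point to the corresponding statement.

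First, I would handle the Betti number hypotheses needed to get a $\mathbb{Z}$-grading on $\cW(M)$ and $SH^*(M)$. These are immediate from Lemma \ref{lembet}, which already gives $b_1(M)=b_2(M)=0$ under precisely the hypotheses of the proposition (note that $b_2(D)=1$ holds by Lefschetz hyperplane for $\dim X\geq 4$, otherwise one checks this separately). Then I would check the vanishing and normalization of symplectic cohomology by running the spectral sequence \eqref{eq:paulspectral}. By Lemma \ref{lem:mu}, $\mu=-2m-2$, so $1-\mu=2m+3\geq 5$. For $p\leq -1$, the only nonzero terms $E_1^{p,q}=H^{p(1-\mu)+q}(\partial M)$ occur with $q\geq -p(1-\mu)$, giving total degree
\begin{equation*}
p+q\;\geq\; p-p(1-\mu)\;=\;p\mu\;=\;(-p)(2m+2)\;\geq\;2m+2\;\geq\;4.
\end{equation*}
Hence no negative column contributes in total degrees $<0$, $=1$, $=2$, or $=0$. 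The remaining column $p=0$ contributes $E_1^{0,q}=H^q(M)$, which vanishes for $q<0$ by dimension, for $q=1,2$ by Lemma \ref{lembet}, and equals $\mathbb{C}$ for $q=0$ since $M$ is connected. Putting these together yields $SH^{*}(M)=0$ for $*<0,*=1,*=2$ and $SH^0(M)=\mathbb{C}$.

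Finally, the properness and bounded-below-in-each-degree condition for $\cW(M)$ is exactly the content of the final lemma preceding the proposition: periodicity of the Reeb flow together with negativity of $\mu$ forces the degrees of chords obtained by iterating around a Reeb orbit to tend to $+\infty$, leaving only finitely many generators below any fixed degree. I expect no genuine obstacle in this proof; the only thing to be a bit careful about is the inequality chain showing that the $p\leq -1$ columns contribute only in total degrees $\geq 2m+2\geq 4$, which uses $m>0$ in an essential way (and indicates why the assumption $m>0$, rather than the weaker $m\geq 0$, is needed).
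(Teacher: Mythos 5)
Your proof is correct and takes essentially the same approach as the paper, which presents Proposition \ref{prop:example} as a direct assembly of Lemma \ref{lembet}, Lemma \ref{lem:mu}, the corollary on $SH^*(M)$, and the lemma on properness of $\cW(M)$; your spectral-sequence computation simply fills in the (unstated) proof of that corollary, and the degree bound $p+q\geq 2m+2\geq 4$ for $p\leq -1$ is exactly the observation the paper relies on.
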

\begin{cor}\label{cor:hypersurface}
Let $X$ be a smooth hypersurface in $\C\mathbb P^{n+1}$ (for $n\geq 4$) of degree at least $n+3$ (i.e. of general type) and $D$ be a transverse hyperplane section. Let $\wh M=X\setminus D$. Then $M$ satisfies Assumption \ref{assumption:symp}. 
\end{cor}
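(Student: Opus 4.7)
The plan is to verify that the pair $(X,D)$ satisfies all the hypotheses of Proposition \ref{prop:example}, after which the conclusion is immediate. There are three items to check: the Betti number conditions on $X$ and $D$, the connectedness and smoothness of $D$, and the canonical bundle formula $K_X \cong \Om(mD)$ with $m>0$.

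First, I will verify the cohomological conditions via the Lefschetz hyperplane theorem. The variety $X$ is a smooth hypersurface of complex dimension $n \geq 4$ in $\C\mathbb{P}^{n+1}$, so Lefschetz gives $H^i(X;\mathbb{Z}) \cong H^i(\C\mathbb{P}^{n+1};\mathbb{Z})$ for $i < n$. In particular, since $n \geq 4$, one has $b_1(X) = 0$ and $b_2(X) = 1$. For $D$, note that $D$ is the transverse intersection of $X$ with a hyperplane, hence a smooth complete intersection in $\C\mathbb{P}^{n+1}$ of dimension $n-1 \geq 3$. Applying Lefschetz twice (either iteratively or directly to the complete intersection $D \subset \C\mathbb{P}^{n+1}$) yields $H^i(D;\mathbb{Z}) \cong H^i(\C\mathbb{P}^{n+1};\mathbb{Z})$ for $i < n-1$. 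Since $n-1 \geq 3$, this gives $b_1(D) = 0$ and $b_2(D) = 1$, and the same range also covers $H^0$, so $D$ is connected.

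Next, I will compute $K_X$ using the adjunction formula. Writing $d \geq n+3$ for the degree of the hypersurface $X$, we have $K_X \cong (K_{\C\mathbb{P}^{n+1}} \otimes \Om_{\C\mathbb{P}^{n+1}}(X))|_X \cong \Om_X(d - n - 2)$. Because $D$ is a hyperplane section, $\Om_X(D) \cong \Om_X(1)$, and therefore $K_X \cong \Om_X(mD)$ with $m = d - n - 2$. The hypothesis $d \geq n+3$ is exactly the condition $m \geq 1 > 0$.

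With these three facts in hand, all hypotheses of Proposition \ref{prop:example} hold: the Betti number conditions feed into Lemma \ref{lembet} giving $b_1(M) = b_2(M) = 0$, and the canonical bundle computation with $m > 0$ produces (via Lemma \ref{lem:mu}) a Conley-Zehnder index $\mu = -2m - 2 < 0$, so the spectral sequence \eqref{eq:paulspectral} yields $SH^{*}(M) = 0$ for $* < 0$ and $* = 1, 2$, with $SH^0(M) = \C$, and also ensures $\cW(M)$ is bounded below and cohomologically proper in each degree. Thus $M$ satisfies Assumption \ref{assumption:symp}. The only substantive point is that the threshold $n \geq 4$ is exactly what is needed for the Lefschetz argument on $D$ to reach through degree $2$; this is the main (and only) constraint, and it matches the stated hypothesis.
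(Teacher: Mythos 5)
Your proof is correct and follows the same route as the paper, which simply cites the Lefschetz hyperplane theorem and Proposition \ref{prop:example}; you have filled in the routine Lefschetz computations for $X$ and $D$ and the adjunction-formula calculation $m = d - n - 2 \geq 1$ that the paper leaves implicit.
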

\begin{proof}
This follows from Lefschetz hyperplane theorem and Proposition \ref{prop:example}.
\end{proof}
\begin{rk}As commented in the Section \ref{sec:intro2}, powers of Dehn twists act non-trivially on $\cW(M^{2n})$, when $n>1$; hence giving us applications of the Theorem. However, the least trivial examples are when $\phi$ is (pseudo-)isotopic to identity relative to $\partial M$. We are not aware of such examples when $n=dim(M)/2$ is odd, but powers of Dehn twists give such examples when $n$ is even. Indeed, the order of a Dehn twist in mapping class group divides $4|\Phi_{2n+1}|$, where $\Phi_{2n+1}$ is the group of homotopy spheres of dimension $2n+1$ (see \cite{krylov1},\cite{krylovkauffman}).
\end{rk}
%
Now, we will show that $T_\phi$ and $T_0\times M$ cannot be distinguished by their symplectic cohomology for a large class of examples provided by Proposition \ref{prop:example}. More precisely:
\begin{lem}\label{lem:samesh}
Let $(X,D)$ be as in Proposition \ref{prop:example} and $\phi$ be an even power of a Dehn twist along a spherical Lagrangian in $M$. Assume $n=dim_\C(X)=dim_{\mathbb R}(X)/2$ is even. Then, $SH^*(T_\phi)\cong SH^*(T_0\times M)$ as vector spaces, if $m+1>n$. In particular, this holds if $X$ is an hypersurface in $\C\mathbb{P}^{n+1}$ ($n\geq 4$) of degree larger than $2n+1$.
\end{lem}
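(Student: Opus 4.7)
The plan is to translate the question into Hochschild cohomology and show that under the stated hypotheses the $\phi$-twisting becomes invisible in cohomology. By Theorem \ref{comparison} together with \cite[Theorem 1.1]{sheelthesis} (applied to both $T_\phi$ and $T_0 \times M$, which are both non-degenerate Weinstein), we have
\begin{equation*}
SH^*(T_\phi) \cong HH^*(\cW(T_\phi)) \cong HH^*(M_\phi) , \qquad SH^*(T_0\times M) \cong HH^*(M_{1_M}).
\end{equation*}
So it suffices to show $HH^*(M_\phi) \cong HH^*(M_{1_M})$ as graded vector spaces.

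The key step is to prove that the auto-equivalence $\phi$ acts trivially on $HH^*(\cW(M)) \cong SH^*(M)$ under the assumption $m+1 > n$. For this, I would re-examine the spectral sequence (\ref{eq:paulspectral}): by Lemma \ref{lem:mu}, $\mu = -2m-2$, so $1-\mu = 2m+3$. For $k \geq 0$, a contribution from row $p \leq -1$ to $SH^k(M)$ would require $H^{p(2m+3)+k}(\partial M) \neq 0$ with $p(2m+3)+k \in [0, 2n-1]$. Taking $p = -1$ forces $k \leq 2n-1 - (2m+3) = 2n - 2m - 4 < 0$ since $m \geq n$. Hence only the $p = 0$ row survives, and $SH^k(M) \cong H^k(M)$ for all $k \geq 0$ (while $SH^k(M) = 0$ for $k < 0$ by Assumption \ref{assumption:symp}). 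Since $\phi$ is an even power of a Dehn twist and $n$ is even, $\phi$ is smoothly isotopic to the identity by \cite{krylov1,krylovkauffman}, so $\phi^* = \mathrm{id}$ on $H^*(M)$. The Viterbo spectral sequence is natural with respect to compactly supported exact symplectomorphisms (which act trivially on the boundary Reeb dynamics and preserve the action filtration), so $\phi_* = \mathrm{id}$ on the $E_1$-page, and therefore on the collapsed $SH^*(M) \cong H^*(M)$ (in all degrees).

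Finally, I would use the twisted tensor product structure $M_\phi \simeq \Om(\T_0)_{dg} \otimes_{tw} \cW(M)$ established in Section \ref{subsec:mttwi} to compute $HH^*(M_\phi)$. The Hochschild complex of a twisted tensor product admits a Künneth-type spectral sequence whose $E_2$-page involves $HH^*(\Om(\T_0)_{dg})$ with coefficients in the $\phi$-twisted coefficient system built from $HH^*(\cW(M))$; when $\phi$ acts trivially on $HH^*(\cW(M)) = SH^*(M)$, this twisted coefficient system is isomorphic to the untwisted one, and the $E_2$-page coincides with that of the analogous spectral sequence for the untwisted tensor product $\Om(\T_0)_{dg} \otimes \cW(M) \simeq M_{1_M}$. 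Tracking this agreement through to the $E_\infty$-page yields $HH^*(M_\phi) \cong HH^*(M_{1_M})$ as graded vector spaces, which is what we want.

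The main obstacle is establishing the appropriate Künneth-type spectral sequence for Hochschild cohomology of twisted tensor products with sufficient control to conclude that a trivial action on $HH^*(\cW(M))$ forces agreement with the untwisted case; this should ultimately follow from the algebraic analysis of $M_\phi$ in \cite{ownpaperalg}, but it must be set up so that only the cohomological action of $\phi$ (not its chain-level data) enters the $E_\infty$-page.
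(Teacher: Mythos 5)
Your overall strategy matches the paper's: reduce to showing $\phi$ acts trivially on $SH^*(M)$, then analyze the spectral sequence \eqref{eq:paulspectral}. However, there are two significant issues.

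\textbf{Degree computation error.} You wrote that a contribution from $p\leq -1$ to $SH^k(M)$ requires $H^{p(2m+3)+k}(\partial M)\neq 0$, and concluded that the negative columns never contribute for $k\geq 0$, hence $SH^k(M)\cong H^k(M)$ for all $k\geq 0$. This is wrong: you substituted $k$ for $q$, but the spectral sequence converges to $SH^{p+q}$, so $q=k-p$ and the correct exponent is $p(1-\mu)+q = p(2m+2)+k$. With $p=-1$ this gives $k-(2m+2)\in[0,2n-1]$, i.e.\ $k\in[2m+2,\,2n+2m+1]$ — a \emph{non-empty} positive range. The $p<0$ columns absolutely do contribute to $SH^k(M)$ for large $k$, and the claim $SH^k(M)\cong H^k(M)$ for all $k\geq 0$ is false. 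What the hypothesis $m+1>n$ actually buys (and what the paper uses) is not vanishing of those columns but a \emph{degree gap}: the $p$-th column is supported in total degree $[\mu p,\,2n-1+\mu p]$, and $m+1>n$ forces $\mu(p-1)>2n-1+\mu p+1$, so consecutive columns are separated by at least $2$ in total degree. This kills all differentials (degeneration at $E_1$) and, since each $SH^k$ then receives a contribution from at most one column, also kills the extension problem, so triviality of $\phi$ on the $E_1$-page propagates to $SH^*(M)$. You need to make this gap argument rather than claim the columns are empty; as written, your argument for degrees $\geq 2m+2$ rests on a false premise, even though you do assert (correctly) that $\phi_*=\mathrm{id}$ on the $E_1$-page.

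\textbf{The reduction to the $\phi$-action on $SH^*(M)$.} The paper invokes \cite[Prop 5.13]{ownpaperalg}, which already packages the statement that $SH^*(T_\phi)$ as a graded vector space is determined by $SH^*(M)$ and the $\phi$-action on it, as a corollary of Theorem \ref{mainthmsymp} and \cite[Theorem 1.1]{sheelthesis}. You instead propose to set up a K\"unneth-type spectral sequence for $HH^*$ of twisted tensor products from scratch, and explicitly flag that this is the main obstacle. That is not a different route so much as an unnecessary re-derivation of the cited black box; the cleaner move is to simply cite the existing result (as the paper does), which makes the proof of this lemma entirely about the $\phi$-action on $SH^*(M)$.
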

\begin{proof}
First, note that one can recover $SH^*(T_\phi)$ as a vector space from $SH^*(M)$ and action of $\phi$ on $SH^*(M)$. This follows for instance by combining \cite[Prop 5.13]{ownpaperalg}, \cite[Theorem 1.1]{sheelthesis}, and Theorem \ref{mainthmsymp}. Hence, it is sufficient to show that $\phi$ acts trivially on $SH^*(M)$ if $m+1>n$.

One obtains the spectral sequence (\ref{eq:paulspectral}) by using the length filtration on the Reeb orbits. Notice that $\phi$ acts trivially on $p=0$ terms (i.e. on $H^*(M)$) by Picard-Lefschetz formula, and it acts trivially on $p<0$ terms since it is compactly supported (and continuation maps defining $\phi$ action on $SC^*(M)$ are length decreasing).

Since $M$ is a Weinstein domain of dimension $2n$, it has the homotopy type of an $n$-dimensional CW complex, and its cohomology is supported in degree $0,\dots,n$. Hence, $p=0^{th}$ column of (\ref{eq:paulspectral}) is supported in degrees $0,\dots,n$. Similarly, the cohomology of $\partial M$ is supported in degrees $0,\dots,2n-1$; therefore, for $p<0$, $(p,q)$ term can be non-zero only if 
\begin{equation}
2n-1\geq (1-\mu)p+q\geq 0
\end{equation}
which is equivalent to 
\begin{equation}
2n-1+\mu p\geq p+q\geq \mu p
\end{equation}
In other words, $p^{th}$ column is supported in degree $\mu p,\dots, 2n-1+\mu p$. By assumption, $\mu(p-1)>(2n-1+\mu p)+1$; hence, terms of $(p-1)^{th}$ column and $p^{th}$ column do not interact. Same holds with $(-1)^{th}$ column and $0^{th}$ column as well. 
Hence, the spectral sequence degenerates in $E_1$-page and the action of $\phi$ on each term is trivial. This implies that $\phi$ acts trivially on $SH^*(M)$ (in summary, one can filter the complex $SC^*(M)$ by length and the action of $\phi$ is trivial on the cohomology of associated graded. Moreover, orbits of different length differ at least by degree $2$, implying the desired result). 
\end{proof}
\begin{rk}
Presumably, when the degree of the hypersurface $X$ is sufficiently large, $SH^*(T_\phi)$ and $SH^*(T_0\times M)$ agree as BV-algebras as well. Indeed, we strongly believe that for any finite set of $BV_\infty$-operations, one can increase the degree of the hypersurface to produce examples where $SH^*(T_\phi)$ and $SH^*(T_0\times M)$ are isomorphic with an isomorphism respecting these operations (for instance, one can produce examples where symplectic cohomologies are the same as $A_n$-algebras). The rationale is that we believe one can recover $SH^*(T_\phi)$ as an $A_n$-algebra (or whichever finite set of operations we are considering), from the $A_n$-algebra $SH^*(M)$ and the action of $\phi$ as an $A_n$-algebra map; therefore, as long as $\phi$ acts trivially as an $A_n$-algebra map, $SH^*(T_\phi)$ and $SH^*(T_0\times M)$ should coincide. In the exotic examples above, $\phi$ acts trivially on $H^*(M)$ and it acts as identity on the contact boundary. By assuming the degree of $X$ is sufficiently large, we can create a degree gap between the Reeb chords of positive length and the cohomology of $M$ that eliminates the possibility of interaction among corresponding elements of the symplectic cohomology under finitely many higher maps of $\phi$. In other words, $\phi$ acts trivially as an $A_n$-algebra map, and $SH^*(T_\phi)$ and $SH^*(T_0\times M)$ coincide.

We believe one can still recover the $BV_\infty$-structure on $SH^*(T_\phi)$ out of $SH^*(M)$ and $\phi$, but we do not know how to produce examples where $\phi$ acts trivially, as a $BV_\infty$-map.
\end{rk}

\appendix
\section{Proof of Theorem \ref*{mainthmsymp} using the gluing formula for wrapped Fukaya categories}\label{sec:appendixgluing}
One can give an alternative proof of Theorem \ref{mainthmsymp} using the gluing formula in \cite{GPS2}. The proof is easy after the algebraic setup given in Section \ref{sec:twtensor}, and we sketch this proof in this appendix. A notational remark: in \cite{GPS1} Liouville sectors are defined with their infinite ends; however, we omit the completions from the notation throughout this section similar to Section \ref{subsec:extragr} (i.e. we write $M$ instead of $\wh{M}$, $T_0$ instead of $\wh{T_0}$ etc.).

Recall the notation from Section \ref{subsec:extragr}: $T$ denotes the $1$-handle that is shown in yellow in Figure \ref{figure:handletorus} and $N$ denotes $\overline{T_0 \setminus T}$ (more precisely, one has to consider the completions). See also Figure \ref{figure:cuttori}. As a sector, $T$ is isomorphic to $T^*[0,1]$ and $\cW(T)\simeq \C$. Similarly, $N$ is equivalent to a cylinder with one stop on each boundary component and $\cW(N)$ is derived equivalent to $D^b(Coh(\mathbb{P}^1) )$. To calculate $\cW(\wh{T_\phi})$, decompose $T_\phi$ into two sectors $T\times M$ and $N\times M$. In other words, $T_\phi= T\times M\cup N\times M$ and these subsectors intersect on $(T^*[0,1]\times M)\sqcup (T^*[0,1]\times M)=T^*[0,1]\times(M\sqcup M)$. Since $M$ is a Weinstein domain, $M\sqcup M$ and the horizontal completions of $T\times M$, $N\times M$ are Weinstein. Moreover, $T_\phi$ is a Liouville domain (as opposed to a more general Liouville sector). Therefore, the assumptions of \cite[Theorem 1.20]{GPS2} are satisfied, and one has a homotopy push-out diagram similar to (\ref{eq:gluediagsymp}):
\begin{equation}\label{eq:gluediagtorussymp}
\xymatrix{ \cW(N\times M)\ar[r]& \cW(T_\phi)\\ \cW(T^*[0,1]\times M)\coprod \cW(T^*[0,1]\times M)\ar[u]\ar[r]& \cW(T^*[0,1]\times M)\simeq \cW(T\times M)\ar[u]  }
\end{equation}
Hence, one has a homotopy coequalizer diagram 
\begin{equation}\label{eq:inclwrapcop}
\cW(T^*[0,1]\times M)\rightrightarrows \cW(N\times M)\to \cW(T_\phi)
\end{equation}
The map $\cW(N\times M)\to \cW(T_\phi)$ is induced by the inclusion and the maps $\cW(T^*[0,1]\times M)\rightrightarrows \cW(N\times M)$ are induced by $j_0\times 1_M$ and $j_1\times \phi$ (recall $j_0,j_1$ were used to denote both the inclusion maps from $T$ into $N$ shown in Figure \ref{figure:cuttori} and the functors induced by these inclusions).
By the K\"unneth theorem \cite[Theorem 1.5]{GPS2}, $\cW(T^*[0,1]\times M)\simeq \cW(T^*[0,1])\otimes\cW( M)$ and $\cW(N\times M)\simeq \cW(N)\otimes \cW(M)$. Note that the Weinstein property of $M$ is needed for the K\"unneth map to be essentially surjective as well. Under these quasi-equivalences (\ref{eq:inclwrapcop}) can be identified with the diagram 
\begin{equation}\label{eq:inclwrapcop2}
\cW(T^*[0,1])\otimes \cW(M)\rightrightarrows \cW(N)\otimes \cW(M)\to \cW(T_\phi)
\end{equation}
where the arrows are $j_0\otimes 1_{\cW(M)}$, $j_1\otimes \phi$. 
Recall that $\cW(T^*[0,1])\simeq \C$ and $\cW(N)\simeq \Om(\mathbb{P}^1)_{dg}$ (see Example \ref{exmp:wt=c} and Example \ref{exmp:wn=p1}). Moreover, 
$j_0$ and $j_1$ turn into $i_0$ and $i_\infty$ under these identifications. In summary, we have a homotopy coequalizer diagram 
\begin{equation}\label{eq:coeqsympl}
\cW(M)\rightrightarrows \Om(\mathbb{P}^1)_{dg}\otimes\cW(M)  \to \cW(T_\phi) 
\end{equation}
where the arrows $\cW(M)\rightrightarrows \Om(\mathbb{P}^1)_{dg}\otimes \cW(M)$ are given by $i_0\otimes 1_{\cW(M)}$ and $i_\infty\otimes \phi$. As the situation is symmetric, one can swap $i_0$ and $i_\infty$ or replace $\phi$ by $\phi^{-1}$ (different identifications may lead to this).

Let us now describe $M_\phi$ as a similar homotopy pushout. Let $\A\simeq\cW(M)$. Recall that the diagram (\ref{eq:acommdiag}) and the quasi-isomorphism (\ref{eq:hocolimtot0}) from the homotopy coequalizer to $\Om(\Tt_0)_{dg}$ are strictly $\tr$ equivariant. Hence, there exists a quasi-equivalence 
\begin{equation}
\big(hocolim\big(Pt_\infty\rightrightarrows \Om(\mathbb{P}^1\times \Z)_{dg}\big)\otimes\A\big)\#\Z \xrightarrow{\simeq} \big(\Om(\Tt_0)_{dg}\otimes \A\big)\#\Z=M_\phi
\end{equation}
Following \cite{GPS2}, we described the homotopy coequalizer as a localization of the Grothendieck construction (see (\ref{eq:hocolimbilbil})). Hence, $M_\phi$ is equivalent to $((C^{-1}\cG r)\otimes\A)\#\Z$. It is easy to see that localization commutes with tensoring with $\A$, i.e. 
\begin{equation}
(C^{-1}\cG r)\otimes\A\simeq (C\otimes 1)^{-1}(\cG r\otimes\A)
\end{equation}
where $C\otimes 1$ is the set of morphisms $\{(c\otimes 1_{L'}):c\in C,L'\in ob(\A) \}$ (in the absence of strict units, choose a $\phi$-equivariant set of cohomological units). Moreover, as $C$ is $\tr$-invariant, localization commutes with smash product as well. Hence, 
\begin{equation}\label{eq:amtlocalgroth}
M_\phi\simeq (C\otimes 1)^{-1}\big((\cG r\otimes\A\big)\#\Z)
\end{equation}
It is easy to see that $\cG r\otimes \A$ is the Grothendieck construction for the diagram 
\begin{equation}
Pt_\infty\otimes \A\rightrightarrows \Om(\mathbb P^1\times\Z)_{dg}\otimes \A
\end{equation}
and $(\cG r\otimes\A\big)\#\Z$ is the Grothendieck construction for 
\begin{equation}\label{eq:coeqwithA}
(Pt_\infty\otimes \A)\#\Z\rightrightarrows (\Om(\mathbb P^1\times\Z)_{dg}\otimes \A)\#\Z
\end{equation}
Hence, by (\ref{eq:amtlocalgroth}), $M_\phi$ is the homotopy coequalizer of the diagram (\ref{eq:coeqwithA}). $\Z$-action is still by $\tr\otimes \phi$; however, translation carries components of $Pt_\infty$, resp. $\mathbb{P}^1\times \Z$ to different components. Hence, 
\begin{equation}\label{eq:identinfsmash}
(Pt_\infty\otimes \A)\#\Z\simeq \A\text{ and }(\Om(\mathbb P^1\times\Z)_{dg}\otimes \A)\#\Z\simeq \Om(\mathbb{ P}^1)_{dg}\otimes\A
\end{equation}
If there were no $\A$ in (\ref{eq:coeqwithA}), the arrows would become $i_0$ and $i_\infty$ under the identification (\ref{eq:identinfsmash}), as remarked in Section \ref{subsec:extragr}. On the other hand, as $\Z$-action is given by $\tr\otimes\phi$, the arrows in (\ref{eq:coeqwithA}) become different under the identification (\ref{eq:identinfsmash}). More precisely, one of them becomes $i_0\otimes 1_\A$ and the other one becomes $i_\infty\otimes \phi$. Hence, we have a coequalizer diagram 
\begin{equation}\label{eq:coeqalgtorus}
\A\rightrightarrows\Om(\mathbb{P}^1)_{dg}\otimes \A\to M_\phi
\end{equation}
where the arrows $\A\rightrightarrows\Om(\mathbb{P}^1)_{dg}\otimes \A$ are given by $i_0\otimes 1_\A$ and $i_\infty\otimes\phi$. Notice under different identifications of $Pt_\infty\#\Z\simeq \C$ and $\Om(\mathbb{P}^1\times \Z)\#\Z\simeq \Om(\mathbb{P}^1 )$, these arrows could turn into $i_0\otimes \phi^{-1}$ and $i_1\otimes 1_\A$ either.

By (\ref{eq:coeqsympl}) and (\ref{eq:coeqalgtorus}), both $\cW(T_\phi)$ and $M_\phi$ are the homotopy coequalizers of equivalent diagrams. Hence, they are equivalent, completing the second proof of Theorem \ref{mainthmsymp}.
\begin{note}\label{note:finalnote}
One can see the commuting of smash products and localization in two ways: the first is writing explicit zigzags using the definition in \cite{GPS2}. More precisely, let $\B$ be a dg category with a strict $\Z$ action and let $C$ be a $\Z$-invariant set of morphisms. Then, by adding cones to $\B$ (and extending the action), the problem turns into showing that applying $\#\Z$ and taking the quotient by a $\Z$-invariant subcategory commute (i.e. $(\B/\B_0)\#\Z\simeq (\B\#\Z)/(\B_0\#\Z)$), which can be achieved using the explicit model in \cite{lyubaquot}, and \cite{partiallywrapped}. The hom-complexes for $(\B\#\Z)/(\B_0\#\Z)$ may look larger. To show it is equivalent to $(\B/\B_0)\#\Z$, one has to first extend the category $\B$ to a quasi-equivalent category by adding objects $(g,b)$ (for all $g\in\Z, b\in ob(\B_0)$) equivalent to $gb$. Then, the quotient of extended categories (with objects added to $\B_0$ as well) is quasi-equivalent to $\B/\B_0$. Smash product with this quasi-equivalent category gives $(\B\#\Z)/(\B_0\#\Z)$. 

The second way is to see $\B\#\Z$ as another colimit. Namely, consider the diagram of categories given by one category, $\B$, and endofunctors $g\in\Z$. Then, the corresponding Grothendieck construction (as in \cite{thomasongroth}) is exactly $\B\#\Z$. In this situation, one does not need to localize with respect to corresponding set of morphisms, as they are already invertible, and one can easily show the colimit property. 
Then, it is easy to see that $(C^{-1}\B)\#\Z$ and $C^{-1}(\B\#\Z)$ can be characterized by the same universal property.
\end{note}



\bibliographystyle{alpha}
\bibliography{biblioforspefibre}	
\end{document}